\newcommand{\C}{\mathbb{C}}
\newcommand{\R}{\mathbb{R}}
\newcommand{\T}{\mathbb{T}}
\newcommand{\Z}{\mathbb{Z}}
\newcommand{\N}{\mathbb{N}}
\newcommand{\V}{\mathbb{V}}
\newcommand{\HH}{\mathbb{H}}
\newcommand{\Ss}{\mathbb{S}}
\newcommand{\eps}{\varepsilon}
\newcommand{\mc}{\mathcal}
\newcommand{\dd}{\mathrm{d}}
\DeclareMathOperator{\vol}{\mathrm{vol}}
\DeclareMathOperator{\Tr}{Tr}
\DeclareMathOperator{\Op}{Op}
\DeclareMathOperator{\ran}{ran}
\DeclareMathOperator{\Div}{div}
\DeclareMathOperator{\supp}{supp}
\theoremstyle{plain}
\newtheorem{theorem}{Theorem}
\newtheorem*{theorem*}{Theorem}
\newtheorem{lemma}{Lemma}[section]
\newtheorem{proposition}{Proposition}[section]
\newtheorem{corollary}{Corollary}[section]
\theoremstyle{definition}
\newtheorem{definition}{Definition}[section]
\newtheorem{remark}{Remark}[section]
\begin{document}

\newpage

\title[Rigidity of manifolds with hyperbolic cusps]{Local rigidity of manifolds with hyperbolic cusps \\ I. Linear theory and microlocal tools}

\author{Yannick Guedes Bonthonneau}
\address{Univ Rennes, CNRS, IRMAR - UMR 6625, F-35000 Rennes, France}
\email{yannick.bonthonneau@univ-rennes1.fr}

\author{Thibault Lefeuvre}
\address{Université de Paris and Sorbonne Université, CNRS, IMJ-PRG, F-75006 Paris, France.}
\email{tlefeuvre@imj-prg.fr}

\begin{abstract}
This paper is the first in a series of two articles whose aim is to extend a recent result of Guillarmou-Lefeuvre on the local rigidity of the marked length spectrum from the case of compact negatively-curved Riemannian manifolds to the case of manifolds with hyperbolic cusps. In this first paper, we deal with the linear (or infinitesimal) version of the problem and prove that such manifolds are \emph{spectrally} \emph{rigid} for compactly supported deformations. More precisely, we prove that the X-ray transform on symmetric solenoidal 2-tensors is injective. In order to do so, we expand the microlocal calculus developed by Bonthonneau and Bonthonneau-Weich to be able to invert pseudo-differential operators on Sobolev and Hölder-Zygmund spaces modulo compact remainders. This theory has an interest on its own and will be extensively used in the second paper in order to deal with the nonlinear problem.
\end{abstract}

\maketitle

%
%

\section{Introduction}

\subsection{Spectral rigidity}

On a smooth closed (i.e. compact without boundary) Riemannian manifold $(M,g)$, the famous question of Kac \emph{``Can one hear the shape of a drum?"} \cite{Kac-66} asks whether one can reconstruct the Riemannian structure from the knowledge of the spectrum of the (non-negative) Laplacian $-\Delta_g$
\begin{equation}
\label{equation:spectrum}
\mathrm{spec}(-\Delta_g) = \left\{ \lambda_0 = 0 < \lambda_1 \leq \lambda_2 \leq ... \right\}.
\end{equation}
This question has triggered a lot of work, especially on negatively-curved manifolds, and is known to be false globally: Vigneras \cite{Vigneras-80} exhibited pairs of hyperbolic isospectral surfaces which are not isometric. Nevertheless, one can ask for a weaker statement in the spirit of Guillemin-Kazhdan \cite{Guillemin-Kazhdan-80, Guillemin-Kazhdan-80-2}, such as the \emph{infinitesimal spectral rigidity} of the manifold $(M,g)$. We recall that $(M,g)$ is said to be \emph{infinitesimally spectrally rigid} if any smooth isospectral deformation $(g_{\lambda})_{\lambda \in (-1,1)}$ (i.e. metric deformations with same spectrum of the Laplacian \eqref{equation:spectrum}) of the metric $g$ is trivial, namely there exists an isotopy $(\phi_{\lambda})_{\lambda \in (-1,1)}$ such that $\phi_\lambda^*g_\lambda = g$. In this article, we are interested in the question of \emph{infinitesimal spectral rigidity} of negatively-curved manifolds with hyperbolic cusps. In this case, the $L^2$-spectrum of the Laplacian is no longer discrete. It decomposes into a pure point spectrum -- eigenvalues -- and a continuous spectrum. Associated to the continuous spectrum, one can define some resonances which are complex numbers quantifying the way energy is lost in the cusps, for example in the heat equation. Gathering resonances and eigenvalues one obtains the \emph{resonant set} --- see \cite{Muller-83} for more details. We will say that two manifolds with cusps are isospectral if they have the same resonant set (when counted with multiplicity). 

In the case of a closed manifold, the infinitesimal spectral rigidity usually boils down to proving that the X-ray transform $I_2^{g}$ — that is, the integration of symmetric $2$-tensors along closed geodesics in $(M,g)$ — is injective on symmetric \emph{solenoidal} or \emph{divergence-free} $2$-tensors. This will be called \emph{solenoidal injectivity} in the rest of the paper. It is now well-known (at least for closed negatively-curved manifolds) that the solenoidal injectivity of the X-ray transform is a first step in proving the infinitesimal and local rigidity of the \emph{marked length spectrum}. Recall that on a closed negatively-curved manifolds $(M,g)$, the set of closed geodesics is in $1$-to-$1$ correspondence with the set $\mc{C}$ of free homotopy classes: in other words, given $c \in \mc{C}$, there exists a unique closed geodesic $\gamma_g(c) \in c$. The marked length spectrum is then defined as the map
\[
L_g : \mc{C} \rightarrow \R_+, ~~~~ c \mapsto \ell_g(\gamma_g(c)),
\]
where $\ell_g$ denotes the Riemannian length computed with respect to $g$. Up to restricting to \emph{hyperbolic free homotopy classes} (see below), one can still define the same notion of marked length spectrum on negatively-curved manifolds with hyperbolic cusps.

Contrary to the case of the Laplace spectrum, it is conjectured that the marked length spectrum of closed negatively-curved manifolds should determine the metric up to isometries. This is known as the Burns-Katok \cite{Burns-Katok-85} conjecture. More precisely, if $(M,g_1)$ and $(M,g_2)$ are two closed negatively-curved Riemannian manifolds such that $L_{g_1} = L_{g_2}$, it is conjectured that there exists a smooth diffeomorphism $\phi : M \rightarrow M$, isotopic to the identity, such that $\phi^*g_1 = g_2$. This was proved for surfaces in \cite{Croke-90,Otal-90} and in some other particular cases (see \cite{Katok-88,Besson-Courtois-Gallot-95,Hamenstadt-99}) but the conjecture remains open in full generality in dimension $\geq 3$. The \emph{infinitesimal marked length spectrum rigidity} question consists in considering a family of metric $(g_\lambda)_{\lambda \in (-1,1)}$ such that $L_{g_\lambda} = L_{g_0}$ and proving that there exists an isotopy $(\phi_\lambda)_{\lambda \in (-1,1)}$ such that $\phi^*_\lambda g_\lambda = g_0$. This result is also implied by the solenoidal injectivity of the X-ray transform operator $I_2^g$, as was shown by Guillemin-Kazhdan \cite{Guillemin-Kazhdan-80}. More recently, Guillarmou and the second author \cite{Guillarmou-Lefeuvre-18} showed that on closed negatively-curved manifolds, the solenoidal injectivity of the X-ray transform also implies the \emph{local rigidity of the marked length spectrum}: if two negatively-curved metrics that are close enough share the same marked length spectrum, then they are isometric. This result locally solves the Burns-Katok conjecture \cite{Burns-Katok-85}. The goal of our series of two papers is to extend this result from the case of closed negatively-curved manifolds to the case of negatively-curved manifolds with hyperbolic cusps. The main Theorem, similar to that of \cite{Guillarmou-Lefeuvre-18}, will be eventually proved in the second paper \cite{Bonthonneau-Lefeuvre-19-2}. Hence, just as in the compact case, a first step is to establish is solenoidal injectivity of the X-ray transform, which is the goal of the present paper.

As far as the solenoidal injectivity of the X-ray transform operator $I_2^g$ is concerned, it was first obtained for negatively-curved closed \emph{surfaces} by Guillemin-Kazhdan in their celebrated paper \cite{Guillemin-Kazhdan-80}. More generally, their proof works for tensors of any order $m \in \N$. This result was then extended by Croke-Sharafutdinov \cite{Croke-Sharafutdinov-98} to negatively-curved closed manifolds of arbitrary dimension. More recently, \cite{Paternain-Salo-Uhlmann-14-2} obtained the solenoidal injectivity of $I_2^g$ for any \emph{Anosov Riemannian surface} $(M,g)$, namely surfaces for which the geodesic flow is \emph{Anosov} or \emph{uniformly hyperbolic} on the unit tangent bundle $SM$. Guillarmou \cite{Guillarmou-17-1} then extended the result on Anosov surfaces to tensors of arbitrary order $m \in \N$. More generally, it is conjectured that the X-ray transform $I_m^g$ is solenoidal injective on closed Anosov Riemannian manifolds but the question remains open in dimension $\geq 3$. \\

In this article, we are interested in the solenoidal injectivity of $I_2^g$ on noncompact complete manifolds of negative curvature whose ends are real hyperbolic cusps and this does not seem to have been considered before in the literature. More precisely, the case we will consider will be that of a complete negatively-curved Riemannian manifold $(M,g)$ with a finite numbers of ends of the form
\[
Z_{a,\Lambda} = [a,+\infty[_y \times (\R^d/\Lambda)_\theta,
\]
where $a>0$, and $\Lambda$ is a crystallographic group (i.e a cocompact discrete group of isometries of $\R^d$) with covolume $1$. On this end, we have the metric $g = y^{-2} (dy^2 + d\theta^2)$. The sectional curvature of $g$ is constant equal to $-1$, and the volume of $Z_{a,\Lambda}$ is finite. All ends with finite volume and curvature $-1$ take this form. In dimension two, all cusps are the same (we must have $\Lambda = \Z$). However, in higher dimensions, if $\Lambda$ and $\Lambda'$ are not in the same orbit of $SO(d,\Z)$, $Z_{a,\Lambda}$ and $Z_{a',\Lambda'}$ are never isometric. In the following, we will sometimes call \emph{cusp manifolds} such manifolds. Up to taking a finite cover, we can always assume that each $\Lambda$ is a lattice in $\R^d$ (i.e only acts by translations).

\begin{figure}[h!]
\begin{center}
\centering
\def\svgwidth{0.5\linewidth}
\begingroup%
  \makeatletter%
  \providecommand\color[2][]{%
    \errmessage{(Inkscape) Color is used for the text in Inkscape, but the package 'color.sty' is not loaded}%
    \renewcommand\color[2][]{}%
  }%
  \providecommand\transparent[1]{%
    \errmessage{(Inkscape) Transparency is used (non-zero) for the text in Inkscape, but the package 'transparent.sty' is not loaded}%
    \renewcommand\transparent[1]{}%
  }%
  \providecommand\rotatebox[2]{#2}%
  \newcommand*\fsize{\dimexpr\f@size pt\relax}%
  \newcommand*\lineheight[1]{\fontsize{\fsize}{#1\fsize}\selectfont}%
  \ifx\svgwidth\undefined%
    \setlength{\unitlength}{173.35972236bp}%
    \ifx\svgscale\undefined%
      \relax%
    \else%
      \setlength{\unitlength}{\unitlength * \real{\svgscale}}%
    \fi%
  \else%
    \setlength{\unitlength}{\svgwidth}%
  \fi%
  \global\let\svgwidth\undefined%
  \global\let\svgscale\undefined%
  \makeatother%
  \begin{picture}(1,0.41280851)%
    \lineheight{1}%
    \setlength\tabcolsep{0pt}%
    \put(0,0){\includegraphics[width=\unitlength,page=1]{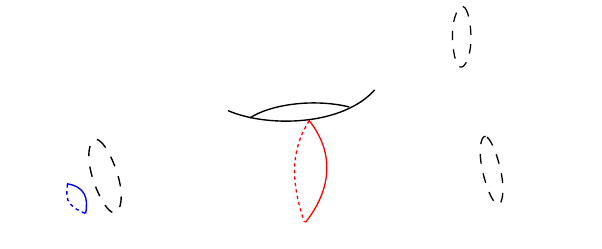}}%
    \put(0.79978635,0.32757294){\color[rgb]{0,0,0}\makebox(0,0)[lt]{\lineheight{1.25}\smash{\begin{tabular}[t]{l}$Z_1$\end{tabular}}}}%
    \put(0.84895247,0.12458668){\color[rgb]{0,0,0}\makebox(0,0)[lt]{\lineheight{1.25}\smash{\begin{tabular}[t]{l}$Z_2$\end{tabular}}}}%
    \put(0.02928133,0.15268167){\color[rgb]{0,0,0}\makebox(0,0)[lt]{\lineheight{1.25}\smash{\begin{tabular}[t]{l}$Z_3$\end{tabular}}}}%
    \put(0.33551669,0.11053919){\color[rgb]{0,0,0}\makebox(0,0)[lt]{\lineheight{1.25}\smash{\begin{tabular}[t]{l}$M_0$\end{tabular}}}}%
    \put(0,0){\includegraphics[width=\unitlength,page=2]{schema.pdf}}%
  \end{picture}%
\endgroup%

\caption{A surface with three cusps.}\label{figure:schema}
\end{center}
\end{figure}

In our case, we denote by $\mathcal{C}$ the set of hyperbolic free homotopy classes on $M$, which is in one-to-one correspondence with the set of hyperbolic conjugacy classes of $\pi_1(M,\cdot)$. These are the set of conjugacy classes of hyperbolic elements of the fundamental group which can be seen as isometric transformations on the universal cover $\widetilde{M}$ of $M$ ($\widetilde{M}$ is diffeomorphic to a ball) with two distinct fixed points on the boundary at infinity. From elementary Riemannian geometry, just as in the closed case, we know that for each such class $c\in \mathcal{C}$ of $C^1$ curves on $M$, there is a unique closed curve $\gamma_g(c)$ which is a \emph{geodesic} for $g$. We define the X-ray transform $I_2^g : C_{\mathrm{comp}}^\infty(M,S^2 T^*M) \rightarrow \ell^\infty(\mc{C})$ (where $\ell^\infty(\mc{C})$ denotes bounded sequences indexed by $\mc{C}$) as the following operator: if $h \in C_{\mathrm{comp}}^\infty(M,S^2 T^*M)$ then
\[
I_2^g h(c) = \frac{1}{\ell(\gamma_g(c))} \int_{0}^{\ell(\gamma_g(c))}  h_{\gamma(t)}(\dot{\gamma}(t),\dot{\gamma}(t)) dt,
\]
where $\gamma$ is a parametrization by arc-length. Of course, it is possible to define the X-ray transform for a larger class of tensors than those in $C_{\mathrm{comp}}^\infty(M,S^2 T^*M)$. The main result of our paper is the following:

\begin{theorem}
\label{theorem:xray-injectivite}
Let $d \geq 1$ and $(M^{d+1},g)$ be a negatively-curved complete manifold whose ends are real hyperbolic cusps. Let $-\kappa_0 < 0$ be the maximum of the sectional curvature. Then, for all $\alpha \in (0,1)$ and $\beta \in (0,\sqrt{\kappa_0}\alpha)$, the X-ray transform $I^{g}_2$ is injective on
\[
y^\beta C^\alpha(M, S^2 T^*M) \cap H^1(M, S^2 T^*M) \cap \ker D^*
\]
\end{theorem}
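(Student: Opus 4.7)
The argument follows the strategy used for closed negatively curved Anosov manifolds by Guillemin-Kazhdan, Croke-Sharafutdinov and Paternain-Salo-Uhlmann, with the essential new input being the microlocal calculus for cusp manifolds developed earlier in the paper, which allows us to invert the generator of the geodesic flow on weighted Hölder-Zygmund spaces modulo compact remainders.

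\emph{Step 1: transport equation on $SM$.} Let $h$ satisfy the assumptions and $I^g_2 h = 0$. Lift $h$ to $f := \pi^* h$ on the unit tangent bundle $SM$, where $\pi : SM \to M$ is the canonical projection; $f$ inherits from $h$ the Hölder regularity, the $y^\beta$ weight, and the $H^1$-integrability. The hypothesis says that the average of $f$ vanishes on every closed orbit of the geodesic flow $\varphi_t$ lying in a hyperbolic free homotopy class. The goal is to produce $u$ on $SM$, with the same weighted Hölder regularity as $f$, such that $Xu = f$, where $X$ is the generator of $\varphi_t$.

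\emph{Step 2: construction of the primitive $u$.} Set $u := R(0)f$, where $R(s) = (X+s)^{-1}$ is the resolvent. On closed Anosov manifolds, $R(s)$ admits a meromorphic extension across $\mathrm{Re}\, s = 0$ on suitable anisotropic spaces (Faure-Sjöstrand, Dyatlov-Zworski, Guillarmou). In our cusp setting the same scheme should go through once one has a pseudodifferential calculus invertible modulo compact operators on weighted Hölder-Zygmund spaces — this is precisely what the first part of the paper provides. The hypothesis that all closed-orbit periods of $f$ vanish, combined with a Livšic-type argument on the non-wandering set (which is compactly contained in $SM$ since hyperbolic geodesics cannot escape into the cusps), shows that $f$ lies in the image of $X$ and that $R(s)f$ is regular at $s=0$, so $u = R(0)f$ is well defined and satisfies $Xu = f$. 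Anosov regularity bootstrap (propagation of singularities together with Journé's theorem) then upgrades $u$ to be smooth along the weak stable/unstable foliations on the non-wandering set, while on the cusp ends $u$ is obtained by explicit integration along escaping geodesics, with $y^\beta$-weighted bounds inherited from $f$.

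\emph{Step 3: Pestov identity and conclusion.} Apply the Pestov identity on $SM$,
\[
\|\mathbb{V} X u\|^2 - \|X \mathbb{V} u\|^2 = \langle \mathcal{R}\, \mathbb{V} u, \mathbb{V} u\rangle + (\text{terms handled by the solenoidal reduction}),
\]
where $\mathbb{V}$ is the vertical gradient and $\mathcal{R}$ encodes the sectional curvature. Negativity of the curvature makes the right-hand side sign-definite, and combined with $Xu = \pi^* h$ and the solenoidal constraint $D^* h = 0$, this forces $u \equiv 0$ and hence $h \equiv 0$. The delicate analytic point is justifying the integration by parts underlying the Pestov identity on the non-compact manifold $M$: cutting off at $\{y = Y\}$ in each cusp and sending $Y \to \infty$ produces boundary integrals whose smallness depends on the weight of $u$ against the shrinking horospherical volume. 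The condition $\beta < \sqrt{\kappa_0}\,\alpha$ is precisely the range in which these boundary terms vanish in the limit, because $\sqrt{\kappa_0}$ controls the maximal Lyapunov exponent of the flow and therefore the effective regularity gained along horocyclic directions when one propagates a $C^\alpha$ function with weight $y^\beta$.

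\emph{Main obstacle.} The algebraic Pestov part is essentially unchanged from the closed case; the genuinely new work is constructing $u$ in the correct weighted space on a manifold with cusps. Building the resolvent of $X$ modulo compact remainders on weighted Hölder-Zygmund spaces — the core microlocal contribution of the paper — is what makes the classical strategy go through, and the numerical threshold $\beta < \sqrt{\kappa_0}\,\alpha$ is precisely the regime in which the resolvent estimates of Step 2 and the boundary-term control of Step 3 are simultaneously available.
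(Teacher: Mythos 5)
Your overall skeleton (reduce to a transport equation $Xu=\pi_2^*h$ on $SM$, then conclude by a Pestov identity with the solenoidal condition) matches the paper, but the two places where you put the real analytic work are not correct as stated. First, Step 2: the paper does \emph{not} construct $u$ as $R(0)f$ from a meromorphic resolvent of $X$ on anisotropic spaces; that machinery (non-elliptic, Bonthonneau--Weich type) is explicitly deferred to the sequel, and the elliptic cusp calculus built here does not by itself give it. Worse, your justification rests on the claim that the non-wandering set is compactly contained in $SM$ because "hyperbolic geodesics cannot escape into the cusps": on a finite-volume negatively curved cusp manifold the geodesic flow is conservative and the non-wandering set is all of $SM$, which is non-compact, so the Liv\v{s}ic step cannot be localized to a compact invariant set. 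The paper instead proves an \emph{exact} Liv\v{s}ic theorem by the classical dynamical construction: define $u$ by integrating $f$ along a dense orbit, close up orbit segments with the shadowing lemma, and estimate $|u(x_2)-u(x_1)|$ using that the shadowing orbit converges at rate $e^{-\sqrt{\kappa_0}\min(s,T-s)}$ while its height in the cusp can grow at most like $e^{\min(s,T-s)}y(x_1)$; the resulting integral $\int e^{(\beta-\alpha\sqrt{\kappa_0})\min(s,T-s)}\,\dd s$ converges exactly when $\beta<\sqrt{\kappa_0}\,\alpha$. That is where the threshold comes from (and note $\sqrt{\kappa_0}$ is the \emph{lowest} expansion rate, since $-\kappa_0$ is the maximum of the curvature, not the maximal Lyapunov exponent as you say).

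Second, Step 3: you attribute the condition $\beta<\sqrt{\kappa_0}\,\alpha$ to boundary terms at $\{y=Y\}$ in the Pestov identity, but no such cutoff analysis occurs and the threshold plays no role there. The Pestov identity is integrated over the finite-volume, boundaryless manifold $SM$ and extended by density once one knows $u\in H^1(SM)$ with $\nabla^vXu\in L^2$; the genuinely delicate point you leave unaddressed is precisely this $H^1$ upgrade of $u$. In the paper it is obtained by a bootstrap: $f\in H^1$ gives $\nabla^vXu\in L^2$, the Pestov inequality and pinched negative curvature give $\nabla_X\nabla^vu,\nabla^vu\in L^2$, the commutator $[X,\nabla^v]=-\nabla^h$ gives $\nabla^hu\in L^2$, and then the left parametrix for the Sasaki gradient $\nabla_S$ (the actual application of the cusp pseudodifferential calculus in this theorem, via the indicial/left-elliptic analysis) converts $\nabla_Su\in L^2$ into $u\in H^1$. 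Without this step, and with the resolvent construction and the compact-trapped-set claim of Step 2 failing, the proposal has a genuine gap in producing $u$ in a class where the Pestov argument applies.
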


Here, $D^*$ denotes the divergence on $2$-tensors: as usual, a tensor $f$ is declared to be \emph{solenoidal} if and only if $D^*f = 0$ (see \S\ref{ssection:xray} for further details). It is defined as the formal transpose (for the $L^2$-scalar product) of the operator $D:=\sigma \circ \nabla$ acting on $1$-forms, where $\nabla$ is the Levi-Civita connection and $\sigma$ is the operator of symetrization of $2$-tensors. In turn, the previous Theorem implies the spectral rigidity for smooth compactly supported isospectral deformations.

\begin{corollary}
\label{corollary:xray-injectivite}
Let $d \geq 1$ and $(M^{d+1},g)$ be a negatively-curved complete manifold whose ends are real hyperbolic cusps. Let $(g_\lambda)_{\lambda \in (-1,1)}$ be a smooth isospectral deformation of $g=g_0$ with compact support in $M$. Then, there exists an isotopy $(\phi_\lambda)_{\lambda \in (-1,1)}$ such that $\phi_\lambda^* g_\lambda = g$.
\end{corollary}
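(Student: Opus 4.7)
The plan is to reduce Corollary \ref{corollary:xray-injectivite} to Theorem \ref{theorem:xray-injectivite} via the standard trace-formula plus gauge-fixing strategy, adapted to the cusp setting. Set $h := \partial_\lambda g_\lambda|_{\lambda=0}$; this is a smooth symmetric $2$-tensor on $M$ with compact support, since the deformation is compactly supported. Preservation of the resonant set along $\lambda \mapsto g_\lambda$ translates, through a Selberg-type wave-trace formula for negatively curved manifolds with hyperbolic cusps, into preservation of the length spectrum. Differentiating at $\lambda = 0$ yields $\partial_\lambda \ell_{g_\lambda}(\gamma_\lambda)|_{\lambda=0} = 0$ for every hyperbolic closed geodesic $\gamma$, where $\gamma_\lambda$ denotes the $g_\lambda$-geodesic in the free homotopy class of $\gamma$. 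The first variation formula for arc length identifies this derivative with $\tfrac{1}{2} \ell(\gamma) \cdot I_2^g h(c)$, so $I_2^g h \equiv 0$ on $\mathcal{C}$.

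Next, perform a solenoidal decomposition $h = h^s + Dp$ with $D^* h^s = 0$, carried out in the functional framework developed earlier in the paper. Ellipticity of $D^* D$ together with its indicial analysis on hyperbolic cusps allows one to solve $D^* D p = D^* h$ with $p$ decaying at the cusps at a definite rate; in particular $h^s \in y^\beta C^\alpha \cap H^1$ for admissible $\alpha, \beta$. Integration by parts along any closed geodesic kills the potential part: writing $Dp(\dot\gamma,\dot\gamma) = \tfrac{d}{dt} p(\dot\gamma)$ since $\nabla_{\dot\gamma}\dot\gamma = 0$, one gets $I_2^g(Dp) \equiv 0$, hence $I_2^g h^s \equiv 0$, and Theorem \ref{theorem:xray-injectivite} forces $h^s = 0$. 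Thus $h = Dp = \tfrac{1}{2} L_X g$, where $X$ is the vector field $g$-dual to $p$: the infinitesimal deformation is pure gauge.

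To promote this infinitesimal triviality to the finite conclusion $\phi_\lambda^* g_\lambda = g$, apply the standard gauge-fixing argument. Construct a smooth family $(\phi_\lambda)$ of diffeomorphisms, equal to the identity outside a common compact set, such that $\tilde g_\lambda := \phi_\lambda^* g_\lambda$ satisfies $D^*(\tilde g_\lambda - g) = 0$ for every $\lambda$. Diffeomorphism invariance of the resonant set keeps $(\tilde g_\lambda)$ isospectral to $g$; running the first two steps with basepoint $\lambda_0$ in place of $0$ shows $\partial_\lambda \tilde g_\lambda \equiv 0$ on $(-1, 1)$. Hence $\tilde g_\lambda \equiv g$, which is the desired isotopy.

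The principal obstacle is Step 1, namely the trace formula: on cusp manifolds one must handle the continuous spectrum through the resonances and still recover the length spectrum from the resulting ``renormalized'' wave trace, with leading coefficient at each singularity $t = \ell(\gamma)$ proportional to $\partial_\lambda \ell(\gamma_\lambda)$. The fact that closed geodesics of bounded length stay in a fixed compact region helps isolate the cuspidal contribution from the geodesic contribution, but the separation remains delicate. A secondary difficulty, entirely handled by the body of the paper, is the cusp decay of $h^s$ in Step 2, which relies on the microlocal inversion theory for $D^* D$ on cusp manifolds in the Sobolev and Hölder-Zygmund scales.
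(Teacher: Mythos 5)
Your Steps 1 and 2 are essentially the paper's own argument: the regularized (0-trace) wave-trace argument gives constancy of the marked length spectrum, differentiation in $\lambda$ gives vanishing of the X-ray transform of the variation, and the solenoidal decomposition with cusp decay (Lemmas \ref{lemma:laplacian-symmetric} and \ref{lemma:projection-solenoidal}) combined with Theorem \ref{theorem:xray-injectivite} kills the solenoidal part. The gap is in Step 3. The paper never gauge-fixes: it runs the differentiation at \emph{every} $\lambda$, decomposes $\partial_\lambda g_\lambda = D_{g_\lambda} u_\lambda$ with respect to $g_\lambda$ itself, and integrates the resulting time-dependent vector field $X_\lambda = u_\lambda^\sharp$ into the isotopy; no slice is needed. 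Your gauge-fixing step, by contrast, asserts the existence of diffeomorphisms $\phi_\lambda$, equal to the identity outside a common compact set, with $D^*(\phi_\lambda^* g_\lambda - g) = 0$. This nonlinear solenoidal reduction on a cusp manifold is not available at this stage -- it is precisely a key piece of the hard content of the sequel \cite{Bonthonneau-Lefeuvre-19-2} -- and the compact-support claim is unjustified and most likely false: the vector fields produced by Lemma \ref{lemma:projection-solenoidal} only decay like $y^{-\epsilon}$ in the cusps, and the paper explicitly remarks at the end of its proof that the isotopy is a priori \emph{not} compactly supported even though the deformation is.

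Second, even granting the gauge, the conclusion ``$\partial_\lambda \tilde g_\lambda \equiv 0$'' does not follow from ``running the first two steps at basepoint $\lambda_0$''. The gauge condition gives $D_g^*\,\partial_\lambda \tilde g_\lambda = 0$, i.e. solenoidal with respect to the fixed background metric $g$, whereas Theorem \ref{theorem:xray-injectivite} applied at $\lambda_0$ gives injectivity of $I_2^{\tilde g_{\lambda_0}}$ only on $\ker D_{\tilde g_{\lambda_0}}^*$. For $\lambda_0 \neq 0$ these kernels differ, so what you actually obtain is that $\partial_\lambda \tilde g_\lambda|_{\lambda_0}$ is $\tilde g_{\lambda_0}$-potential and $g$-solenoidal; to conclude that it vanishes you would need an additional perturbative (or continuity) argument comparing the two solenoidal decompositions, which you have not supplied, and which would have to be uniform over the whole interval. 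The paper's route -- decompose with respect to $g_\lambda$ at each $\lambda$, conclude $\partial_\lambda g_\lambda = \mathcal{L}_{X_\lambda} g_\lambda$, and solve the ODE $\partial_\lambda \phi_\lambda = X_\lambda \circ \phi_\lambda$ -- avoids both difficulties entirely, so you should replace your Step 3 by that argument.
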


This is the equivalent for cusp manifolds of the classical Guillemin-Kazhdan result \cite{Guillemin-Kazhdan-80-2}.

As mentioned earlier, Theorem \ref{theorem:xray-injectivite} is the first step towards proving the local rigidity of the marked length spectrum on such manifolds, as the X-ray transform on symmetric $2$-tensors turns out to be the differential of the marked length spectrum, and this program will be carried out in the following paper \cite{Bonthonneau-Lefeuvre-19-2}. We will refer to the local marked length spectrum rigidity question as the \emph{nonlinear problem}. In contrast, the \emph{linear} or \emph{infinitesimal problem} will be that of the solenoidal injectivity of the X-ray transform which is dealt in the present paper.

In order to prove Theorem \ref{theorem:xray-injectivite}, we will need — together with a Livsic-type theorem which does not really differ from the compact case — to study the decomposition of symmetric $2$-tensors into a \emph{potential part} and a \emph{solenoidal part} (or \emph{divergence-free part}). Namely, we will need to prove that any symmetric $2$-tensor $f$ can be written as $f = Dp + h$, where $p$ is a $1$-form and $h$ is solenoidal. The existence of such a decomposition relies on the analytic properties of the elliptic differential operator $D$ and in particular on the existence of a parametrix with compact remainder. Since the manifold $M$ is not compact, this theory is made harder (smoothing operators are no longer compact) and one has to resort to a careful analysis of the behaviour of the operator on the infinite ends of the manifold. A large part of this article is devoted to this study as the next paragraph explains.

\subsection{Pseudo-differential calculus on manifolds with hyperbolic cusps}

A careful study of the operators on the infinite ends of the models will be needed. The relevant techniques are that of Melrose's b-calculus \cite{Melrose-APS-93} which we will adapt to our setting. They will also be used in our second paper \cite{Bonthonneau-Lefeuvre-19-2}. While the operators $D$ and $D^*D$ studied in this first article are very likely to belong to the ``fibered cusp calculus'' introduced by Mazzeo-Melrose \cite{Mazzeo-Melrose-98}, we rather chose to expand the microlocal calculus developed in \cite{Bonthonneau-16} and \cite{Bonthonneau-Weich-17} and this for two main reasons:

\begin{enumerate}

\item First of all, in order to deal with the nonlinear problem in \cite{Bonthonneau-Lefeuvre-19-2}, we use the resolvent of the generator $X$ of the geodesic flow on the unit tangent bundle $SM$, as it was studied in \cite{Bonthonneau-Weich-17}. Since $X$ is \emph{not elliptic}, the techniques of Melrose \cite{Melrose-APS-93} \emph{cannot} be applied to study its analytic properties (at least, not in a straightforward manner ...) and to prove, in particular, the meromorphic extension of $(X\pm\tau)^{-1}$  to the whole complex plane. It was the purpose of \cite{Bonthonneau-Weich-17} to expand the relevant calculus introduced in \cite{Bonthonneau-16} in order to deal with such a non-elliptic operator.

\item Secondly, we will mostly be interested in the analytic behaviour of the operator $D^*D$ on \emph{weighted Hölder-Zygmund} spaces. On the one hand, this type of spaces does not seem to have been considered so far by the microlocal school working on noncompact manifolds neither in the general context of Melrose calculi, or specifically with cusps and for which we refer to \cite{Muller-83, Mazzeo-Melrose-98, Vaillant-01}. In particular, we prove boundedness results of pseudo-differential operators on such manifolds and show how to construct a parametrix on these spaces modulo a compact remainder. On the other hand, boundedness on this type of spaces for pseudo-differential operators on manifolds with \emph{bounded geometry} seems to have been considered by various authors (see \cite{Skrzypczak-98, Taylor-III-96edition} for instance). Roughly speaking, this assumption asserts that the manifold is \emph{uniformly comparable} to $\R^{d}$ and that the usual results known on $\R^d$ can be transferred to such manifolds. However, in our case, since the radius of injectivity collapses to $0$ in the cusps, we are not dealing with a bounded geometry and we cannot use such results. Hence, combining all the existing literature on the subject, it seems that our detailed study of pseudo-differential operators acting on Hölder-Zygmund spaces on manifolds with cusps is new. We refer to Section \S\ref{sec:fibred-cusp} for a more extensive discussion.

\end{enumerate}

Let us now briefly explain our analysis of pseudo-differential operators on cusp manifolds. Recall that the cusps are of the form
\[
Z_{a,\Lambda} = [a,+\infty[_y \times (\R^d/\Lambda)_\theta,
\]
where $y$ denotes the height variable and $\theta$ the slice variable, and we will take advantage of this product decomposition. Given $f \in C^\infty_{\mathrm{comp}}(M)$, we can always decompose $f|_{Z_{a,\Lambda}}$ in restriction to $Z_{a,\Lambda}$ as a sum $f = f_0 + f_\bot$, where $f_0 \in C^\infty_{\mathrm{comp}}(Z_{a,\Lambda})$ is independent of the slice variable $\theta$,
\[
f_0(y) := \int_{\R^d/\Lambda} f(y,\theta) d \theta
\]
is called the \emph{$0$-th Fourier mode of $f$} (note that we do not need to divide by the volume of the slice, since by assumption the volume of $\R^d/\Lambda$ is equal to $1$), and $f_\bot := f-f_0 \in C^\infty_{\mathrm{comp}}(Z_{a,\Lambda})$ is called the \emph{non-zero Fourier modes of $f$}. Of course, such a decomposition can be naturally extended to sections of vector bundles.

In the core of this article, we will be working with \emph{admissible pseudo-differential operators} (see Definition \ref{def:R-L^2-admissible-operator} for a precise statement) acting on sections of vector bundles over the cusp manifold. Roughly speaking, these operators are of \emph{geometric nature} (they will be constructed out of the metric, just like the Laplace-Beltrami operator acting on functions for instance) and, as a consequence of their geometric definition, they will act diagonally on the decomposition of sections into zero and non-zero Fourier modes, modulo some compact remainders. As we will see, the construction of a parametrix for an elliptic operator is made difficult due to the zero Fourier mode and this is really where the non-compactness of the manifold comes into play. 

The precise study of the operator on the zero Fourier mode is done through an \emph{indicial operator} as in Melrose \cite{Melrose-APS-93} which reveals some \emph{indicial roots}, playing a crucial role in the description of the elliptic operator as a Fredholm operator (i.e. an operator with finite kernel and cokernel). In particular, according to the \emph{weighted spaces} considered (these are functional spaces where the norm includes a weight $y^\rho$ for some $\rho \in \R$), the behaviour of an operator might be drastically different. As for the introduction, we state a simpler version of our main theorem of inversion (see Theorem \ref{theorem:parametrix-compact}) in the case where the fiber over the cusp is trivial (it is a point) and the operator acts on distributions on $M$.

\begin{theorem}
Let $(M^{d+1},g)$ be a negatively-curved complete manifold whose ends are real hyperbolic cusps. Let $P$ be a differential operator on $M$. Assume that it is $\R$-admissible in the sense of Definition \ref{def:R-L^2-admissible-operator}. Also assume that it is uniformly elliptic in the sense of Definition \ref{def:elliptic}. Then there is a discrete set $S(P)\subset \R$ such that for each connected component $I \subset \R\setminus S(P)$, and $\rho\in I$, $P$ is Fredholm on every space $y^{\rho-d/2}H^s(M)$ and $y^\rho C^s_\ast(M)$, with a Fredholm index which only depends on $I$.
\end{theorem}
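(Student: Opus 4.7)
The strategy is to construct a (right and left) parametrix $Q$ for $P$ such that $PQ-\mathrm{Id}$ and $QP-\mathrm{Id}$ are compact on the relevant weighted spaces, which immediately yields the Fredholm property. Outside of a compact piece $M_0\subset M$, the manifold splits into a finite union of cusps $Z_{a,\Lambda}$, and the $\R$-admissibility of $P$ means that on each cusp it respects, modulo a compact remainder, the decomposition $u=u_0+u_\bot$ into zero and non-zero Fourier modes in the slice variable $\theta$. I therefore split the parametrix construction into three parts --- an interior piece handled by the standard elliptic calculus on $M_0$, a non-zero Fourier mode piece, and a zero Fourier mode piece --- and patch them together with cutoff functions.

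On the non-zero Fourier modes, the tangential Laplacian $-\Delta_\theta$ on $\R^d/\Lambda$ has a positive spectral gap $\lambda_\bot>0$. Because the metric is $y^{-2}(dy^2+d\theta^2)$, the transverse derivatives contribute a coercive term of size $y^2\lambda_\bot$ at infinity, so the uniform ellipticity of $P$ in the sense of Definition \ref{def:elliptic} gives genuine invertibility with polynomial gain in $y$ on these modes. A parametrix construction in the cusp calculus of \cite{Bonthonneau-16,Bonthonneau-Weich-17} then inverts $P$ on the non-zero modes up to a smoothing operator whose Schwartz kernel decays sufficiently in $y$ to be compact on $y^{\rho-d/2}H^s(M)$ and on $y^\rho C^s_\ast(M)$ for every $\rho\in\R$.

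On the zero Fourier mode, $P$ reduces to an ordinary differential operator $P_0$ in $y$ on $[a,+\infty)$. Associated to it is the indicial family $I(P)(s)$, obtained after the logarithmic change of variable $y=e^t$ by taking the $t\to+\infty$ limit of the coefficients: it is a polynomial in $s$. The indicial roots form a discrete subset of $\C$, and I define $S(P)$ as the set of their real parts, shifted by $d/2$ to account for the volume factor $y^{-d-1}\dd y\,\dd\theta$. For $\rho$ in a connected component $I\subset\R\setminus S(P)$, the Mellin transform in $y$ diagonalizes the model operator on the contour $\{\mathrm{Re}(s)=\rho\}$ and produces an explicit inverse on the $L^2$-based weighted spaces; the corresponding statement on Hölder-Zygmund scales is obtained through a Littlewood-Paley / dyadic decomposition in $y$. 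The difference between $P$ and its model is of lower order in the cusp calculus and gives a compact remainder.

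Gluing the three pieces yields the parametrix, hence the Fredholm property on each $y^{\rho-d/2}H^s(M)$ and $y^\rho C^s_\ast(M)$ for $\rho\notin S(P)$. For the Fredholm index, I use that conjugation by $y^{\rho-d/2}$ intertwines $P$ acting on $y^{\rho-d/2}H^s$ with the operator $y^{-\rho+d/2}Py^{\rho-d/2}$ acting on $H^s$, and similarly on Hölder-Zygmund spaces. This conjugated family depends continuously on $\rho$, and as long as $\rho$ stays in the connected component $I$ no indicial root is crossed, so the operator remains Fredholm throughout the homotopy; the index is therefore locally constant in $\rho$, hence constant on $I$. The main obstacle is the treatment of the zero mode on weighted Hölder-Zygmund spaces: the Mellin inversion is naturally adapted to $L^2$, and transporting the invertibility and compactness statements to the Hölder-Zygmund scale --- which has not been considered in the cusp setting --- requires a genuinely new analysis, together with the careful verification that all the remainders produced by the cutoff and model-approximation steps are truly compact, and not merely bounded or smoothing, on these less standard spaces.
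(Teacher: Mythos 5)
Your outline follows the paper's own route: a standard interior parametrix, compactness on the non-zero Fourier modes coming from the spectral gap of $-\Delta_\theta$ (the Lax--Phillips observation behind Lemmas \ref{lemma:compact-injection-non-constant-mode} and \ref{lemma:embedding-nonzero}), and a Mellin/indicial analysis of the zero mode in $r=\log y$, with $S(P)$ given by the real parts of the indicial roots and the parametrix corrected on the zero mode by the contour-integral inverse of the indicial family (this is exactly \S\ref{sssection:indicial-family}--\S\ref{sec:improving-I}). Two steps, however, are not justified as written. The first is the sentence ``the difference between $P$ and its model is of lower order in the cusp calculus and gives a compact remainder'': on the zero Fourier mode this principle fails, because an operator that merely gains derivatives but no power of $y$ is not compact there --- the zero-mode problem is a half-line problem, and the embedding $y^{\rho}H^{s}\hookrightarrow y^{\rho}H^{s'}$ restricted to $\theta$-independent functions is not compact; Lemma \ref{lemma:compact-injection} needs \emph{both} $s>s'$ and an improvement of the weight. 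What rescues the argument is the admissibility hypothesis itself: Definition \ref{def:R-L^2-admissible-operator} requires $\mathcal{E}_Z\chi_P\left(\mathcal{P}_Z P\mathcal{E}_Z-I_Z(P)\right)\chi_P\mathcal{P}_Z$ to be $\R$-residual, i.e. smoothing \emph{and} gaining arbitrary powers of $y$; equivalently, after your Mellin correction the remainder has vanishing indicial operator, hence maps into spaces with strictly better zero-mode weight (Lemma \ref{lemma:refinement}), and only then is it compact. The compactness must be argued through this weight gain, not through the order. (Similarly, on the non-zero modes the compactness of the remainder comes from the Wirtinger-type trade of regularity for decay, not from decay of its Schwartz kernel.)

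The second point concerns the index. Your conjugation-homotopy argument (continuity of $\rho\mapsto y^{-\rho+d/2}Py^{\rho-d/2}$) gives constancy of the index in $\rho$ on each component, for a fixed space; but the statement also asserts that the index is independent of $s$ and is the \emph{same} on the Sobolev and H\"older--Zygmund scales. The paper obtains this by showing, via the improved parametrix, that every kernel element (and, through the duality of Lemma \ref{lemma:l2-identification}, every cokernel element) lies in $\bigcap_{k,\eps,\rho_\bot}H^{k,\rho_-^I+\eps-d/2,\rho_\bot}$, so kernel and cokernel are literally the same finite-dimensional spaces on all the spaces in question (Propositions \ref{proposition:fredholm-index} and \ref{proposition:fredholm-index-ii}); some argument of this kind is needed to get the full claim. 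Finally, the H\"older--Zygmund half is asserted rather than proved in your plan; the paper's treatment requires the boundedness of the calculus on $C^s_\ast$ (Proposition \ref{proposition:pseudo-borne-cs}), compactness of $\mathbbm{1}-\mathcal{E}_Z\chi\mathcal{P}_Z$ on H\"older spaces via Poincar\'e--Wirtinger, and boundedness of the indicial inverse on $C^s_\ast$ obtained by splitting it into a pseudo-differential convolution operator plus an exponentially decaying smooth kernel --- a genuine piece of work beyond a dyadic decomposition in $y$, as you rightly anticipate.
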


The spaces $H^s(M)$ are the usual Sobolev spaces built from the metric. The spaces $C^s_\ast(M)$ are the Hölder-Zygmund spaces, introduced in Section \S\ref{section:holder-zygmund}. They coincide with the usual Hölder spaces $C^s(M)$ built from the distance (induced by the metric) for $s \in \R_+ \setminus \N$ (see Proposition \ref{proposition:correspondence-espaces-holder}). By a little more effort, we also obtain a relative Fredholm index Theorem (see Formula \eqref{equation:fredholm-index-theorem}), allowing to describe the jump of the Fredholm index as one crosses an indicial root (see Propositions \ref{prop:bigger-kernel} and \ref{prop:smaller-image}). This also holds in the Hölder-Zygmund category.

\subsection{Outline of the paper}

In Section \S\ref{section:pseudo}, we introduce the basic functional spaces and the class of pseudo-differential operators we will be working with. In Section \S\ref{section:holder-zygmund}, we prove boundedness properties of our class of pseudo-differential operators on Hölder-Zygmund spaces. Section \S\ref{section:parametrices1} is dedicated to the notion of \emph{indicial operator} and to the inversion of an elliptic pseudo-differential operators on weighted Sobolev and Hölder-Zygmund spaces. 

In the last Section \S\ref{section:geometry}, we show how the previous theory can be applied to the operators $\nabla_S$ (the gradient of the Sasaki metric on the unit tangent bundle $SM$), $D$ and $D^*D$. This will provide the decomposition of tensors into a potential and a solenoidal part. We also obtain a Livsic Theorem (see Theorem \ref{theorem:livsic}) which is rather similar to the compact case. In the end, gathering all these different pieces together, we will deduce Theorem \ref{theorem:xray-injectivite} and Corollary \ref{corollary:xray-injectivite}. \\

\noindent \textbf{Acknowledgements:} We warmly thank the anonymous referee for his helpful comments which improved the exposition of the article. We also thank Viviane Baladi, Sébastien Gouëzel, Colin Guillarmou, Sergiu Moroianu, Davi Obata, Frédéric Paulin, Frédéric Rochon, for helpful remarks and useful discussions. T.L. also thanks the reading group on b-calculus in Orsay for sharing their knowledge and enthusiasm. T.L. has received funding from the European Research Council
(ERC) under the European Union’s Horizon 2020 research and innovation programme
(grant agreement No. 725967). This material is based upon work supported by the National Science Foundation under Grant No. DMS-1440140 while T.L. was in residence at the Mathematical Sciences Research Institute in Berkeley, California, during the Fall 2019 semester.

\section{Pseudo-differential operators on manifolds with hyperbolic cusps}

\label{section:pseudo}

\subsection{The geometric setup and main result}

\subsubsection{Admissible bundles}

Throughout the article, we will rely on constructions from \cite{Bonthonneau-Weich-17}, itself based on \cite{Bonthonneau-16}. In the former paper, the techniques from Melrose \cite{Melrose-APS-93} had to be adapted to deal with operators that are \emph{not} elliptic. In Section \S\ref{sec:fibred-cusp}, we will compare our setup to that of Mazzeo and Melrose's \emph{fibred cusp calculus}. Since we want to state results in some generality, we will consider in this whole section the following situation: we are given a non-compact manifold $N$ with a finite number of ends $N_\ell$, which take the form 
\begin{equation}\label{eq:trivial-fibred-end}
Z_{\ell,a}\times F_\ell.
\end{equation}
Here, $Z_{\ell,a} = \{ z \in Z_\ell\ |\ y(z)> a\}$, and
\[
Z_\ell = ]0,+\infty[_y \times \left(\R^d/\Lambda_\ell\right)_\theta.
\]
In all generality, $\Lambda_\ell \subset O(d)\ltimes \R^d$ is a crystallographic group. However, according to Bieberbach's Theorem, up to taking a finite cover, we can assume that $\Lambda\subset \R^d$ is a lattice of translations. We will work with that case, and check that the results are stable by taking quotients under free actions of finite groups of isometries. The slice $(F_\ell,g_{F_\ell})$ is assumed to be a closed Riemannian manifold. We will use the variables $(y,\theta,\zeta) \in Z_\ell \times F_\ell$ where $(y,\theta) \in [a,+\infty) \times \R^d/\Lambda_\ell$. We assume that $N$ is endowed with a metric $g_N$, equal over the cusps to the product metric
\[
g_N|_{Z_{\ell,a}} = \frac{dy^2 + d\theta^2}{y^2} + g_{F_\ell}.
\]
We therefore have a decomposition
\[
N = N_0 \cup_\ell N_\ell,
\]
where $N_0$ is a compact submanifold of $N$ with boundary. We will refer to $N_0$ as the compact part of $N$ and to the $N_{\ell}$'s as the cuspidal parts of $N$. An example of such a manifold is given by $N = SM$, the unit tangent bundle of our manifold $(M,g)$ with hyperbolic cusps. 

We will also have a vector bundle $L\to N$, and will assume that for each $\ell$, there is a vector bundle $L_\ell \to F_\ell$, so that
\[
L_{|N_\ell} \simeq Z_\ell \times L_\ell.
\]
Whenever $L$ is a hermitian vector bundle with metric $g_L$, a compatible connection $\nabla^L$ is one that satisfies
\[
X g_L(f_1,f_2) = g_L(\nabla^L_X f_1, f_2) + g_L(f_1, \nabla^L_X f_2),
\]
where $f_{1,2} \in C^\infty_{\mathrm{comp}}(N,L)$. Taking advantage of the product structure, we impose that when $X$ is tangent to $Z$,
\begin{equation}\label{eq:structure-connection}
\nabla_X^L f(y,\theta,\zeta) = d_{(y,\theta)} f(X)+ A_{(y,\theta)}(X)\cdot f,
\end{equation}
(since the cylinder $Z$ has a flat structure, the differential $d_{(y,\theta)}$ is well defined). Here the connection form $A_x(X)$ is an anti-symmetric endomorphism depending linearly on $X$, and $A(y\partial_y)$, $A(y\partial_\theta)$ do not depend on $y,\theta$. In particular, we get that the curvature of $\nabla^L$ is bounded, as are all its derivatives.

\begin{definition}
\label{definition:admissible-bundle}
Such data $(L\to N, g, g_L,\nabla^L)$ will be called an \emph{admissible bundle}.
\end{definition}

Given a cusp manifold $(M,g)$, the bundle of differential forms over $M$ is an admissible bundle. Since the tangent bundle of a cusp is trivial, any linearly constructed bundle over $M$ is admissible. For example, the bundle of forms over the Grassmann bundle of $M$, or over the unit cosphere bundle $S^\ast M$. The Sasaki metric on $SM$ is not a product metric, however, it is uniformly equivalent to the natural product metric in the cusps (see Appendix C in \cite{Bonthonneau-16}). In particular, it defines the same classes of regularity.

\subsubsection{Functional spaces}

\label{sec:functional-spaces}

Let $f$ be a function on $N$. We define for an integer $k \geq 0$:
\[
\|f\|_{C^k(N)} :=  \sup_{0 \leq j \leq k} \sup_{z \in N}\|\nabla^j f(z)\|,
\]
where $\nabla$ is the Levi-Civita connection induced by $g_N$. We write $f \in C^\infty(N)$ if all the derivatives of $f$ are bounded. If $f$ is infinitely many times differentiable, but its derivatives are not bounded, we simply say that $f$ is \textit{smooth}.

The Christoffel coefficients of the metric in the cusp in the frame
\[
X_y := y\partial_y,\ X_\theta:= y\partial_\theta,\ X_\zeta:= \partial_\zeta
\]
are independent of $(y,\theta)$ (see \cite[Appendix A.3.2]{Bonthonneau-thesis}). As a consequence, in the cusp, there are uniform constants such that
\begin{equation}\label{eq:equivalent-local-Ck-norm}
\sup_{0 \leq j \leq k} \|\nabla^j f(z)\| \asymp \sup_{|\alpha| \leq k} |X_\alpha f(z)|,
\end{equation}
for all $z=(y,\theta,\zeta) \in N$. Here, $\alpha$ is an ordered multiindex with values in $\{y,\theta,\zeta\}^k$. 

We now introduce Hölder spaces. Let $0 < \beta < 1$. We will write $f \in C^\beta(N)$ if:
\[ 
\|f\|_{C^\beta} := \sup_{z \in N} |f(z)| + \sup_{z,z' \in N, z \neq z'} \frac{|f(z)-f(z')|}{d(z,z')^\beta} = \|f\|_\infty + \|f\|_\beta < \infty, 
\]
where $d(\cdot,\cdot)$ refers to the Riemannian distance induced by the metric $g_N$. In particular, a function $f$ may be $\beta$-Hölder continuous, with a uniform Hölder constant of continuity (i.e. $\|f\|_\beta < \infty$), but may not be in $C^\beta(N)$ if $\|f\|_\infty = \infty$ for instance. It also makes sense to define $C^\beta$ for $\beta \in \R_+ \setminus \N$ by asking that $f \in C^{[\beta]}(N)$ and that the $[\beta]$-th derivatives of $f$ are $\beta-[\beta]$ Hölder-continuous.

The Lebesgue spaces $L^p(N)$, for $p \geq 1$, are the usual spaces defined with respect to the Riemannian measure $d\mu$ induced by the metric $g_N$. Over the cusp, it has the particular expression $d\mu = y^{-d-1} dy d\theta d\vol_{F_\ell}(\zeta)$, where $d\vol_{F_\ell}$ denotes the Riemannian measure induced by the metric $g_{F_\ell}$. For $s \in \R$, we define (via the spectral theorem): 
\begin{equation}
\label{equation:norm-sobolev}
\|f\|_{H^s(N)}:= \|(-\Delta+1)^sf\|_{L^2(N)},
\end{equation}
and $H^s(N)$ is the completion of $C_{\mathrm{comp}}^\infty(N)$ with respect to this norm. Here $\Delta$ is the Laplacian induced by the metric $g_N$.

Let $\widetilde{y} : N \rightarrow \R_+$ be a smooth positive function such that $\widetilde{y}(z) = y$, for $z \in Z_{\ell,a}$. \textbf{In the following, we will abuse notations and confuse the function $\widetilde{y}$ defined globally on $M$ with the $y$ coordinate on the cusps.} We can now introduce weighted spaces $y^\rho \mathfrak{X}^s(N)$ (for $\mathfrak{X}=C,H$) by the following formulas:
\[
\|f\|_{y^\rho \mathfrak{X}^s(N)} := \|y^{-\rho}f\|_{\mathfrak{X}^s(N)}.
\]
For the reader to get familiar with these spaces, let us mention here some embedding lemmas.

\begin{lemma}
\begin{enumerate}
\item Let $0\leq s < s' <1$ and $\rho-d/2< \rho'$. Then
\[
y^{\rho} C^{s'}(N) \hookrightarrow y^{\rho'}H^s(N)
\]
is a continuous embedding.
\item Let $k \in \N$, $s > \frac{d+1}{2}+k$. Then
\[
y^{-d/2}H^{s}(N) \hookrightarrow C^k(N)
\]
is a continuous embedding.
\end{enumerate}
\end{lemma}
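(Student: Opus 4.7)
The plan is to reduce both statements to standard Sobolev-type embeddings on a bounded-geometry model by exploiting the scale-invariance of the hyperbolic cusp. The cusps lack bounded geometry globally (the injectivity radius collapses at infinity), but around each point they admit scale-adapted charts on which the metric is uniformly equivalent to a fixed model: at $p=(y_0,\theta_0,\zeta_0)$, the change of variables $y=y_0 e^u$, $\theta=\theta_0+y_0 t$ pulls back the hyperbolic factor to $du^2+e^{-2u}|dt|^2$, which is independent of $y_0$ and uniformly of bounded geometry on boxes of fixed size in $(u,t,\zeta)$; the pulled-back volume form is $e^{-du}\,du\,dt\,d\vol_{F_\ell}$, uniformly comparable to Lebesgue measure on the box. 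All computations below will use a covering of each cusp by such charts together with a subordinate partition of unity.

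For part (1), I would first treat $s=0$ directly. Setting $g=y^{-\rho}f$, the cusp contribution to $\|f\|_{y^{\rho'}L^2}^2$ is
\[
\int_{Z_{\ell,a}\times F_\ell} |g|^2 \, y^{2(\rho-\rho')-d-1}\,dy\,d\theta\,d\vol_{F_\ell} \leq \|g\|_\infty^2\, \vol(F_\ell) \int_a^\infty y^{2(\rho-\rho')-d-1}\,dy,
\]
and the integral converges precisely because $\rho-\rho'<d/2$; the compact part is trivial. For $0<s<s'<1$ I would use the scaled charts: on each such chart, the local Hölder–Sobolev embedding $C^{s'}\hookrightarrow H^s$ holds with uniform constants (via the Gagliardo seminorm characterization, whose integral $\iint |x-y|^{2(s'-s)-n}\,dx\,dy$ converges locally because $s<s'$). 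Patching these local bounds via the partition of unity $\{\chi_k\}$ subordinate to the vertical strips $\{ae^{kc}\leq y\leq ae^{(k+1)c}\}$, and using the equivalence of the global spectral $H^s$-norm with the $\ell^2$-sum of the local $H^s$-norms on bounded-geometry patches, yields the global inequality with the same weight condition $\rho-\rho'<d/2$.

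For part (2), I would apply the same scaled charts. On each chart the classical Euclidean Sobolev embedding $H^s\hookrightarrow C^k$ for $s>(d+1)/2+k$ holds with uniform constants. Applied to $h:=y^{d/2}f$ it gives
\[
|h(p)|+\sum_{1\leq|\alpha|\leq k} |X_\alpha h(p)| \lesssim \|h\|_{H^s(\text{chart})} \lesssim \|h\|_{H^s(N)}.
\]
Since $y\asymp y_0$ on the chart, $y^{-d/2}\asymp y_0^{-d/2}$, and the identities $X_y(y^{d/2})=\tfrac{d}{2}y^{d/2}$, $X_\theta(y^{d/2})=X_\zeta(y^{d/2})=0$ allow one to express $y^{d/2}X_\alpha f$ as a linear combination of $X_\beta h$ with $|\beta|\leq|\alpha|$. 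Combined with the equivalence \eqref{eq:equivalent-local-Ck-norm} between the Riemannian $C^k$-norm and the $X_\alpha$-derivatives in the cusp, this yields $|X_\alpha f(p)|\lesssim y_0^{-d/2}\|h\|_{H^s(N)}$ uniformly in $p$, hence $\|f\|_{C^k(N)}\lesssim \|y^{d/2}f\|_{H^s(N)}$; the compact part of $N$ is handled by the standard Sobolev embedding.

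The main obstacle I anticipate is justifying the comparison between the spectrally defined global $H^s(N)$-norm and the $\ell^2$-sum of local $H^s$-norms over the scaled chart cover. Concretely, one must show that the Laplacian $\Delta_{g_N}$, when written in the scaled coordinates, differs from the bounded-geometry model Laplacian $\partial_u^2+e^{2u}\Delta_t+\Delta_{F_\ell}+\text{lower order}$ only by terms with uniformly bounded coefficients in the $X_\alpha$-frame. Once this Littlewood–Paley/partition-of-unity decomposition is secured (using that the Christoffel symbols are independent of $(y,\theta)$ in the $X_{y,\theta,\zeta}$ frame, as noted just before \eqref{eq:equivalent-local-Ck-norm}), both embeddings reduce to the standard Euclidean statements applied uniformly on each chart.
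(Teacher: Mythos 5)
Your final inequalities are true, but the reduction to bounded geometry has a genuine flaw, and it sits exactly where the paper warns it must: the scaled maps $y=y_0e^u$, $\theta=\theta_0+y_0t$ are \emph{not} charts of the cusp. A box of fixed size in $t$ covers the torus $\R^d/\Lambda$ with multiplicity $\asymp y_0^{d}$ (the injectivity radius at height $y_0$ is $\asymp 1/y_0$), so for the periodic lift $\tilde h$ of a function $h$ one has $\|\tilde h\|_{L^2(B,\,du\,dt\,d\zeta)}^2\asymp y_0^{d}\,\|h\|^2_{L^2(\{y\asymp y_0\},\,d\mu)}$, not $\lesssim \|h\|^2_{H^0(N)}$. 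Hence your key step $\|h\|_{H^s(\mathrm{chart})}\lesssim\|h\|_{H^s(N)}$ is false, and so is the bound you deduce from it, $|f(p)|\lesssim y_0^{-d/2}\|y^{d/2}f\|_{H^s(N)}$ (equivalently $H^s\hookrightarrow C^0$ unweighted): take $f=\phi(y/T)$, a bump in the zero Fourier mode at height $\asymp T$; then $\|f\|_{C^0}=1$ while $\|y^{d/2}f\|_{H^s}\asymp1$, so no factor $T^{-d/2}$ can appear. In part (2) the multiplicity loss $y_0^{d/2}$ and the weight $y^{-d/2}$ cancel exactly — that cancellation \emph{is} the content of the $d/2$ shift — but your write-up never records the multiplicity, so the correct conclusion is reached only by two compensating errors. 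In part (1) the same issue does real damage: the unit box has unit volume, whereas the stated range $\rho'>\rho-d/2$ comes from the gain $\vol(\{y\asymp y_k\})^{1/2}\asymp y_k^{-d/2}$ when an $L^\infty$ bound is converted into an $L^2(d\mu)$ bound (your $s=0$ computation captures precisely this, which is why it works); running your chart scheme for $0<s<1$ destroys that gain and, even granting the localization, only yields the embedding under a strictly stronger weight condition. Relatedly, the "equivalence of the global spectral $H^s$-norm with the $\ell^2$-sum of the local $H^s$-norms" that you flag as the remaining obstacle is not a matter of checking the coefficients of $\Delta$ in the frame of \eqref{eq:equivalent-local-Ck-norm} (that part is true and easy); as stated it is false, for the very reason the paper gives for not invoking bounded-geometry results.

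A repair has to work on the quotient and keep track of the collapsing cross-sections: either treat separately the zero Fourier mode (a genuine bounded-geometry problem on $\R_r\times F$, where the measure $y^{-d-1}dy\,d\theta$ produces the $d/2$ shift) and the nonzero modes (where Wirtinger/lattice summation gives the extra decay), or, as the paper does in Section \S\ref{sec:embeddings}, use the cusp-adapted Littlewood--Paley equivalence $\|u\|^2_{H^s}\asymp\sum_j4^{js}\|\Op(\varphi_j)u\|^2_{L^2}$ together with kernel estimates in which the sum over $\Lambda$ is what generates the weight (the $y^\rho L^1\to L^\infty$ bound with $\rho=-d$ in the proof of Lemma \ref{lemma:embedding-hs-in-ck}), then interpolate and invoke Proposition \ref{proposition:correspondence-espaces-holder} to pass from $C^s_\ast$ to $C^s$; Lemma \ref{lemma:embedding-holder-to-sobolev-spaces} is proved by the same dyadic mechanism, with the factor $y^{-d/2+\eps}$ inserted exactly at the $L^\infty\to L^2$ step.
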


The shift by $y^{d/2}$ will often appear throughout the article and is due to the fact that Sobolev spaces are built from the $L^2$ space induced by the hyperbolic measure $dyd\theta d\vol(\zeta)/y^{d+1}$. We will prove (and even refine) these embedding lemmas in Section \S\ref{sec:embeddings}.

So far, we have only introduced the functional spaces for functions. When working with sections of vector bundles, one needs a connection to compute derivatives. Then one can define the relevant functional spaces in essentially the same way. For those spaces to behave in a reasonable fashion, one needs that the connection itself is uniformly $C^\infty$ with respect to itself. It is in particular the case when it is the connection of an admissible bundle.

\subsection{Pseudo-differential operators on cusps}

We now introduce our algebra of pseudo-differential operators. We want to consider the action of operators on sections of $L\to N$ or more generally from sections of $L_1\to N$ to sections of $L_2\to N$ where $L_{1,2}$ are admissible bundles. In the paper \cite{Bonthonneau-Weich-17}, an algebra of \emph{semi-classical} operators was described using results from \cite{Bonthonneau-16}. It consists of families of operators depending on a small parameter $h>0$. In this paper, most of the time, we will be using \emph{classical} operators, which is equivalent to fixing the value of $h$ to $1$. The description of the algebra of operators we are using relies on two points: first, we need to say which types of smoothing remainders are allowed; second, we need to describe the quantization we will manipulate.

\subsubsection{Various types of smoothing operators}

\label{sssection:smoothing}

The non-compactness of the manifold, and the fact that we consider Sobolev and H\"older-Zygmund spaces makes it unavoidable to use several classes of smoothing operators in the paper. Let us take the time to properly present them. 

\begin{enumerate}
\item The smallest class of smoothing operators, which we will call \textbf{$\R$-residual operators} and denote by $\dot{\Psi}^{-\infty}_\R$ comprises the operators $R$ that are bounded as maps
\[
R : y^{\rho} H^{-k}(N,L_1) \to y^{-\rho} H^k(N,L_2),
\]
for any $\rho > 0, k \geq 0$. We will define in Section \S\ref{sec:functional-spaces} some H\"older-Zygmund spaces $C^s_\ast$, and we could replace the spaces $H^k$ by the spaces $C^s_\ast$ in the definition of $\R$-residual operators, and \emph{obtain the same class of operators}, as is stated in Proposition \ref{prop:maximallyresidualholder}.

\item Another class of operators of interest is that of \textbf{$\R$-smoothing operators}, denoted $\Psi^{-\infty}_\R$, which comprises the operators $R$ that are bounded as maps
\[
R : y^{\rho} H^{-k}(N,L_1) \to y^{\rho} H^k(N,L_2),
\]
for any $\rho \in \R, k \geq 0$. If one replaces Sobolev spaces by H\"older-Zygmund spaces, one obtains a notion of $\R$-smoothing operators in Hölder-Zygmund. \emph{In this case, it is not clear to us whether both classes are the same.} We will make further comments on this right after the proof of Proposition \ref{prop:relation-symbol-geometric}.

\item Now, it will turn out that the range of $\rho$'s that are allowed may have to be restricted, so that one has to introduce a bit more of notations. Specifically, given a non-trivial interval $I=  (\rho_-, \rho_+)\subset \R$, we will say that $R$ is an \textbf{$I$-residual operator} if for any $\rho, \rho'\in I$, and any $k\in \R$, $R$ is bounded as a map
\[
R : y^{\rho - d/2}H^{-k} \to y^{\rho' -d/2} H^k.
\]
The $-d/2$ factor is here to take into account that the measure in the cusps is $y^{-d-1}dyd\theta d\zeta$.  The class of such operators is denoted $\dot{\Psi}^{-\infty}_I$. We will see that it is equivalent to requiring that for the same range of parameters, $R$ is bounded as a map
\[
R : y^{\rho}C^{-k}_\ast \to y^{\rho'}C^k_\ast.
\]

\item We also have \textbf{$I$-smoothing operators}, which are operators $R$ such that for all $\rho\in I$, and all $k\in \R$, $R$ is bounded as a map 
\[
R : y^{\rho - d/2}H^{-k} \to y^{\rho -d/2} H^k.
\]
The class of these operators is denoted $\Psi^{-\infty}_I$. Again, if one replaces Sobolev spaces with H\"older-Zygmund spaces, it defines a class of $I$-$L^\infty$-smoothing operators. \emph{It is not clear to us if this defines the same class.}

\end{enumerate}

The reader familiar with Melrose's b-calculus will observe that $\R$-residual here is the equivalent of Melrose's maximally residual operators.

\subsubsection{Hyperbolic quantization}

\label{sssection:hyperbolic-quantization}

Let us now describe our quantization. In the compact part, we will use usual pseudo-differential operators with symbols $\sigma$ in the Kohn-Nirenberg class, satisfying usual estimates in charts of the form
\[
|\partial_x^\alpha \partial_\xi^\beta \sigma| \leq C_{\alpha,\beta} \langle \xi\rangle^{m-|\beta|}.
\]
For this, we refer to \cite{Zworski-book} for instance. As a consequence, it suffices to explain what we will be calling a pseudo-differential operator in the \emph{ends}. We will recall the constructions of \cite{Bonthonneau-16,Bonthonneau-Weich-17}. For this, we consider one end, and we drop the $\ell$'s. Instead of quantizing $Z_{a}$, we work with the full cusp $Z$.

Let us denote by $\Op^w$ the usual Weyl quantization on $\R^{d+1}\times \R^{k}$. Given $\chi\in C^\infty_{\mathrm{comp}}(\R)$ equal to $1$ around $0$, and $a\in \mathcal{S}'(\R^{2d+2k+2})$, we denote by $\Op^w(a)_\chi$ the operator (on $\R^{+\ast}_y \times \R^d \times \R^k$ now, to avoid $y, y' =0$) whose kernel is
\begin{equation}\label{equation:cutoff}
K(y, \theta, \zeta;y', \theta', \zeta') = \chi\left[ \frac{y'}{y} - 1 \right]K_{\Op^w(a)}(y,\theta,\zeta;y',\theta', \zeta').
\end{equation}
Next, we can associate $a \in C^\infty( T^\ast (Z\times \R^k), \mathrm{Hom}(\R^{n_1},\R^{n_2}))$ with its periodic lift 
\[
\tilde{a} \in C^\infty( T^\ast (\R_y \times \R^{d}_\theta\times \R^k_\zeta),\mathrm{Hom}(\R^{n_1},\R^{n_2})).
\]
(supported for $y>0$). Given $f\in C^\infty(Z\times \R^k,\R^{n_1})$, denoting by $\tilde{f}$ the periodic lift to $\R^{d+1}\times \R^k$, it follows from the explicit expression of $\Op^w(\tilde{a})_\chi \tilde{f}$ that it is again periodic. In particular, $\Op^w(\tilde{a})_\chi$ defines an operator from compactly supported smooth sections of $\R^{n_1} \to Z\times \R^k$ to distributional sections of $\R^{n_1} \to Z\times \R^k$.

As a consequence, it makes sense to set
\[
\Op_{\R^k}(a) f =   y^{(d+1)/2} \Op^w(a)_\chi[ y^{-(d+1)/2} f ].
\]
Using a partition of unity on $F_\ell$, we can globalize this to a Weyl quantization $\Op^w_{N_\ell,L_1\to L_2}$, and then on the whole manifold $\Op^w_{N,L_1\to L_2}$ --- the arguments in \cite[Section 14.2.3]{Zworski-book} apply. We will write $\Op$ for this Weyl quantization on the whole manifold. Since $F$ is compact, one checks that the resulting operators are uniformly properly supported above each cusp.

Now, we need to say more about the symbol estimates that we will require. We introduce
\[
\langle \xi \rangle := \sqrt{1+g^*_N(\xi,\xi)},
\]
the Japanese bracket of $\xi \in T^*N$ with respect to the natural metric $g_N^\ast$ on $T^\ast N$ (this is the dual metric to the Sasaki metric). Due to the product structure of the metric $g_N$ on $N$ over the cuspidal part, it is equal to $g_{Z_\ell}^\ast + g_{F_\ell}^\ast$. Over the cusps, we denote by $(Y,J,\eta) \in \R \times \R^d \times T^*F_{\ell}$ the dual variables to $(y,\theta,\zeta) \in [a,+\infty) \times \R^d/\Lambda \times F_{\ell}$ (such global coordinates are possible because the (co)tangent bundle of the cusp is trivial). In the case that $F_\ell$ is a point, we then have:
\[
\langle \xi \rangle = \sqrt{1+y^2|\xi|_{\mathrm{euc}}^2}  = \sqrt{1+y^2(Y^2+J^2)}.
\]
Let $\pi : T^*N \rightarrow N$ be the canonical projection on the base. The vector bundles $L_{1,2} \to N$ can be lifted to $T^*N$ in order to define vector bundles $\pi^*L_{1,2} \rightarrow T^*N$. In the following, we will consider such lifts (and canonical vector bundles constructed from it) but we will drop the $\pi^*$ notation.

\begin{definition}
A \emph{symbol of order $m$} is a smooth section $a$ of $\mathrm{Hom}(L_1,L_2)\to T^\ast N$, that satisfies the usual estimates over $N_0$ (the compact core of $N$), and above each $N_\ell$ (the cuspidal parts), and in local charts in $F_\ell$, for each $\alpha,\beta,\gamma,\alpha',\beta',\gamma'$, there is a constant $C>0$:
\[
\left|(y\partial_y)^\alpha (y\partial_\theta)^\beta (\partial_\zeta)^\gamma \ (y^{-1}\partial_Y)^{\alpha'}(y^{-1}\partial_J)^{\beta'}  (\partial_\eta)^{\gamma'} a \right|_{\mathrm{Hom}(L_1,L_2)} \leq C \langle \xi \rangle^{m - \alpha' - |\beta'| - |\gamma'|}.
\]
We write $a\in S^m(T^\ast N, \mathrm{Hom}(L_1,L_2))$.
\end{definition}

Observe that this does not actually depend on the order in which the derivatives were taken, nor does it depend on the choice of charts on $F_\ell$ since it is compact.

\subsubsection{Algebra of small pseudo-differential operators}

We can now introduce the class of \emph{small pseudo-differential operators}:

\begin{definition}
\label{definition:small}
The class of \emph{small pseudo-differential operators} is defined as:
\[
\Psi^m_{\text{small}}(N, L_1\to L_2) := \left\{ \Op(a) + R ~|~ a\in S^m(T^\ast N, \mathrm{Hom}(L_1,L_2)), R\in \Psi^{-\infty}_{\R} \right\}.
\]
\end{definition}

The choice of the adjective ``small'' refers to Melrose's small calculus \cite{Melrose-APS-93}. This class of operators satisfies the usual properties that we present now.

\begin{proposition}\label{prop:microlocal-calculus}
\begin{enumerate}
	\item Let $m \in \R$. Then $A \in \Psi^m_{\text{small}}(N, L_1\to L_2)$ is continuous from $y^\rho H^s(N,L_1)$ to $y^\rho H^{s-m}(N,L_2)$ for all $s,\rho\in \R$. 
	\item  Let $m, m' \in \R$, $a\in S^m(T^\ast N,\mathrm{Hom}(L_1,L_2))$ and $b\in S^{m'}(T^\ast N,\mathrm{Hom}(L_2,L_3))$. Then $\Op(a)\Op(b)\in \Psi^{m+m'}_{\text{small}}$, and
\[
\Op(a)\Op(b) - \Op(ab) \in \Psi^{m+m' - 1}_{\text{small}}.
\]
In particular, $\Psi^*_{\text{small}} := \cup_{m \in \R} \Psi^m_{\text{small}}$ is an algebra.
\end{enumerate}

\end{proposition}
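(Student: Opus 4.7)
The plan is to reduce both assertions to statements about the Euclidean Weyl quantization via the hyperbolic quantization in the cusps, and to invoke standard Kohn-Nirenberg calculus in the compact core. Using a partition of unity subordinate to $N = N_0 \cup \bigcup_\ell N_\ell$, both statements become local. The piece supported in $N_0$ is classical. The cuspidal piece is handled via the explicit construction $\Op^w(a)_\chi$ from \cite{Bonthonneau-16, Bonthonneau-Weich-17} together with the conjugation by $y^{(d+1)/2}$ that turns the hyperbolic $L^2$-space into the Euclidean $L^2(dy\,d\theta\,d\zeta)$.

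For part (1), since the cutoff $\chi[y'/y - 1]$ forces the Schwartz kernel of $\Op(a)$ to be properly supported near $\{y' = y\}$, the factor $y^{-\rho}(y')^\rho$ is uniformly bounded on this support, so conjugation by $y^{\pm\rho}$ only modifies the symbol in a harmless way (picking up commutator terms of lower order but the same class). It therefore suffices to prove boundedness $H^s \to H^{s-m}$ without weights. Composing with suitable powers of $(1-\Delta)$, which lie in the small calculus and are invertible on $L^2$ by the functional calculus, reduces to the zeroth-order $L^2 \to L^2$ statement. After the conjugation by $y^{(d+1)/2}$ the cusp symbol estimates become exactly the assertion that the symbol lies in a bounded-geometry class in the coordinates $(y,\theta,\zeta,Y,J,\eta)$: the derivatives $y\partial_y, y\partial_\theta, \partial_\zeta$ form an orthonormal frame for $g_N$ while $y^{-1}\partial_Y, y^{-1}\partial_J, \partial_\eta$ form the dual frame for $g_N^\ast$. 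A Calder\'on-Vaillancourt / Cotlar-Stein argument adapted to this symbol class then delivers the required $L^2$-boundedness.

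For part (2), I would follow the classical Weyl-calculus blueprint: the composition kernel is expressed as an oscillatory integral, and a stationary-phase expansion in the frequency variables produces the Moyal product $a \# b = ab + \tfrac{1}{2i}\{a,b\} + \dots$ to the appropriate order. The key compatibility of the cusp symbol class with the Moyal expansion is that the dual pairings $y\partial_\theta \leftrightarrow y^{-1}\partial_J$ (and similarly for $y,\zeta$) have their $y$-weights cancelling exactly, so each term in the expansion drops one symbolic order. The off-diagonal errors coming from the cutoff $\chi$, both inside each factor $\Op^w(\cdot)_\chi$ and in the reconstruction of $\Op^w(a\#b)_\chi$, are handled by repeated integration by parts in $(Y,J,\eta)$, which yields faster-than-polynomial decay of the kernel in $\langle \xi \rangle$; combined with the diagonal support in $y$, these errors land in $\dot{\Psi}^{-\infty}_\R$.

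The main obstacle, I expect, lies in verifying that the error terms really belong to the strongest smoothing class $\dot{\Psi}^{-\infty}_\R$, i.e.\ map $y^\rho H^{-k} \to y^{-\rho}H^k$ for every $\rho>0$ and $k\geq 0$, rather than merely preserving the weight. The point is that once $\chi$ localizes the kernel to $y \sim y'$, the rapid frequency decay together with the product structure of the cusp metric upgrades weight-preservation to weight-gain: each integration by parts against the oscillatory phase trades a power of $\langle \xi\rangle^{-1}$ for a factor reflecting the $y$-scaling of the dual variable, and iterating this procedure yields the required arbitrary-weight improvement. This is the step that genuinely uses the admissibility of the bundle and the exact product structure of the metric on the cusps.
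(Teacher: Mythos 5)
Most of your outline follows the same route as the paper, which itself defers the unweighted statements to \cite[Proposition 1.19]{Bonthonneau-16} and \cite[Proposition A.8]{Bonthonneau-Weich-17} and only adds the observation that, because the cutoff $\chi(y'/y-1)$ makes $\Op(a)$ uniformly properly supported, conjugation by $y^{\rho}$ just replaces the cutoff by $(y'/y)^{\rho}\chi(y'/y-1)$ and stays in the class; your treatment of the weights, the reduction to $L^2$ (in the paper via $\Lambda_s=\Op(\sigma_s)$ with $H^s=\Op(\sigma_s)L^2$ rather than literal fractional powers of $1-\Delta$, a point you gloss over), and the Moyal expansion with the cancelling $y$-weights are all consistent with that.

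The genuine problem is your last paragraph. You claim that the errors produced by the cutoffs and by the composition land in the residual class $\dot{\Psi}^{-\infty}_{\R}$, i.e.\ map $y^{\rho}H^{-k}\to y^{-\rho}H^{k}$ for every $\rho>0$, and you assert that iterated integration by parts "upgrades weight-preservation to weight-gain". This is false, and the proposed mechanism does not produce it: each integration by parts in the frequency variables yields a factor $\bigl((y+y')/|x-x'|\bigr)$, and on the support of these error kernels one has $y'\asymp y$, so $|x-x'|\lesssim y+y'$ and these factors are merely $O(1)$ — they give arbitrary decay in $\langle\xi\rangle$ (hence smoothing) but no decay in $y$ whatsoever. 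A properly supported smoothing operator in the cusp preserves the weight $y^{\rho}$ but does not improve it; this is exactly why $\Psi^m_{\text{small}}$ is defined with remainders in the weight-preserving class $\Psi^{-\infty}_{\R}$, why the paper stresses after Proposition \ref{prop:smoothing-remainder} that elliptic parametrix remainders are \emph{not} compact, and why the whole admissibility/indicial machinery of Section \ref{section:parametrices1} is needed at all: if symbolic errors were automatically $\R$-residual, every uniformly elliptic operator would be Fredholm on every weighted space, contradicting the role of the indicial roots. Fortunately the statement you are proving only requires the remainders to lie in $\Psi^{-\infty}_{\R}$, which is what the (correct) kernel estimates give; so you should simply drop the claimed upgrade to $\dot{\Psi}^{-\infty}_{\R}$ — and note that it has nothing to do with admissibility of the bundle or the product structure of the metric, contrary to your closing remark.
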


For the proof, we refer to \cite[Proposition A.8]{Bonthonneau-Weich-17} and \cite[Proposition 1.19]{Bonthonneau-16}. The only difficulty may be that in \cite[Proposition A.8]{Bonthonneau-Weich-17}, the boundedness was only considered on spaces $H^s$, instead of $y^\rho H^s$. However, since $\Op(\sigma)$ is uniformly properly supported (due to the introduction of the cutoff function $\chi$ in the quantization \eqref{equation:cutoff}), we can conjugate it by $y^\rho$, and observe that this gives an operator still in the class $\Psi^*_{\text{small}}$.

It will be useful to recall that the proof of the boundedness relies on building a symbol $\sigma_s$ such that $H^s = \Op(\sigma_s)L^2$ (so that $\Op(\sigma_s)$ approximates $(-\Delta + C)^{-s/2}$ for some large constant $C>0$). In the following, we will write $\Lambda_{-s} = \Op(\sigma_s)$.

\begin{definition}\label{def:elliptic}
Let $a\in S^m(T^\ast N,\mathrm{Hom}(L_1,L_2))$. We will say that $a$ is \emph{left (resp. right) uniformly elliptic} if there exists $C > 0$ and $b\in S^{-m}(T^\ast N,\mathrm{Hom}(L_2,L_1))$ such that $b$ is a left (resp. right) inverse for $a$, in the sense that $b(z,\xi)a(z,\xi) = \mathbbm{1}_{L_1}$ (resp. $a(z,\xi)b(z,\xi) = \mathbbm{1}_{L_2}$) for all $(z,\xi) \in T^*N$ such that $g_N^*(\xi,\xi) \geq C$.

When $L_1$ and $L_2$ have the same dimension, both definitions are equivalent and we just say that $a$ is uniformly elliptic.
\end{definition}

\begin{proposition}\label{prop:smoothing-remainder}
Let $a\in S^m(T^\ast N, \mathrm{Hom}(L_1,L_2))$ be left (resp. right) uniformly elliptic. Then we can find $Q \in \Psi^{-m}_{\text{small}}(N,\mathrm{Hom}(L_2,L_1))$ such that
\[
Q \Op(a) = \mathbbm{1} + R \quad (\text{resp. } \Op(a) Q = \mathbbm{1} + R),
\]
with $R \in \Psi^{-\infty}_{\text{small}}(N,\mathrm{Hom}(L_1,L_1))$.
\end{proposition}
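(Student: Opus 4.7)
The plan is the standard symbolic Neumann series (elliptic parametrix) construction, combined with an asymptotic (Borel) summation in the cuspidal symbol class $S^m$. I treat the left case; the right case follows from the symmetric argument with $a$ and $b$ swapped in the composition order.

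Start with $b_0 := b \in S^{-m}(T^\ast N, \mathrm{Hom}(L_2,L_1))$, the left inverse given by Definition \ref{def:elliptic}. Since $b_0 a - \mathbbm{1}_{L_1}$ vanishes on $\{g_N^\ast(\xi,\xi)\geq C\}$, it is compactly supported in $\xi$ and therefore in $S^{-\infty}$. Proposition \ref{prop:microlocal-calculus}(2) yields $\Op(b_0)\Op(a) = \mathbbm{1} + T_1$ with $T_1 \in \Psi^{-1}_{\text{small}}$, i.e.\ $T_1 = \Op(t_1) + R_1$ for some $t_1 \in S^{-1}$ and $R_1 \in \Psi^{-\infty}_\R$. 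Proceed inductively: assuming that symbols $b_0, \dots, b_{N-1}$ with $b_j \in S^{-m-j}$ have been chosen so that $\Op(\sum_{j<N} b_j)\Op(a) = \mathbbm{1} + T_N$ with $T_N = \Op(t_N) + R_N$, $t_N \in S^{-N}$, $R_N \in \Psi^{-\infty}_\R$, set $b_N := - t_N \cdot b_0 \in S^{-m-N}$. Using $b_0 a \equiv \mathbbm{1}_{L_1}$ modulo $S^{-\infty}$, Proposition \ref{prop:microlocal-calculus}(2) gives $\Op(b_N)\Op(a) \equiv - \Op(t_N)$ modulo $\Psi^{-N-1}_{\text{small}}$, which cancels the principal part of $T_N$, so $\Op(\sum_{j\leq N} b_j)\Op(a) = \mathbbm{1} + T_{N+1}$ with $T_{N+1} \in \Psi^{-N-1}_{\text{small}}$.

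A Borel-type construction---performed in the usual way on the compact core of $N$ and, in each cusp, via cut-offs of the form $\chi(\langle\xi\rangle/\lambda_j)$ with $\lambda_j\to\infty$---produces a symbol $b \in S^{-m}$ such that $b - \sum_{j<N} b_j \in S^{-m-N}$ for every $N$; since the cut-off depends only on $\langle\xi\rangle = \sqrt{1+g_N^\ast(\xi,\xi)}$, it is compatible with the anisotropic weights $y\partial_y, y\partial_\theta, \partial_\zeta, y^{-1}\partial_Y, y^{-1}\partial_J, \partial_\eta$ that define $S^m$. Setting $Q := \Op(b) \in \Psi^{-m}_{\text{small}}$, one has $Q - \Op(\sum_{j<N} b_j) \in \Op(S^{-m-N}) \subset \Psi^{-m-N}_{\text{small}}$, so that
\[
Q\,\Op(a) - \mathbbm{1} \;=\; \bigl(Q - \Op(\textstyle\sum_{j<N} b_j)\bigr)\Op(a) + T_N \;\in\; \Psi^{-N}_{\text{small}}
\]
for every $N \geq 0$. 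Hence $R := Q\,\Op(a) - \mathbbm{1}$ lies in $\bigcap_{N \geq 0} \Psi^{-N}_{\text{small}} = \Psi^{-\infty}_{\text{small}}$, as required.

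The one technical point requiring care is the asymptotic summation in the cuspidal symbol class: the cut-and-sum procedure must respect the anisotropic estimates uniformly as $y \to \infty$. This is ensured precisely because the Japanese bracket $\langle \xi \rangle$ is itself constructed from the dual metric $g_N^\ast$, so cut-offs of the form $\chi(\langle\xi\rangle/\lambda_j)$ behave well under the weighted derivatives $y^{-1}\partial_Y$ and $y^{-1}\partial_J$; once this compatibility is verified, the remainder of the argument is the classical symbolic Neumann scheme.
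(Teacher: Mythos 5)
Your proof is correct and follows essentially the same route as the paper: take the symbolic left inverse $q_0=b$, iterate the Neumann-type corrections $-t_N b_0$ (the paper's $q_0-\sum r_i q_0$), and Borel-sum in $S^{-m}$ to get $Q=\Op(b)$ with $Q\Op(a)-\mathbbm{1}\in\Psi^{-N}_{\text{small}}$ for every $N$. Your added remark on why the Borel cut-offs $\chi(\langle\xi\rangle/\lambda_j)$ respect the anisotropic cusp estimates is a detail the paper subsumes under ``Borel's Lemma,'' not a departure from its argument.
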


Before going on with the proof, observe that the remainder here is not necessarily a compact operator, contrary to the case of a closed manifold.

\begin{proof}
It suffices to deal with the left elliptic case. Here, we can apply the usual parametrix construction. By definition of (left/right) ellipticity, we can choose a $q_0 \in S^{-m}(T^\ast N,\mathrm{Hom}(L_2,L_1))$ such that for all $(z,\xi) \in T^*N$ such that $g_N^*(\xi,\xi)> C$,
\[
q_0(z,\xi) a(z,\xi) = \mathbbm{1}_{L_1}.
\]
Then
\[
\Op(q_0) \Op(a) = \mathbbm{1}_{L_1} + \Op(r_1) + R_1.
\]
Here, $r_1 \in S^{-1}(T^\ast N, \mathrm{Hom}(L_1))$, and $R_1$ is small smoothing. Then
\[
\Op((1-r_1)q_0) \Op(a)  = \mathbbm{1} + \Op(r_2) + R_2,
\]
where $r_2 \in S^{-2}(T^\ast N,\mathcal{L}(L_1))$ and $R_2$ is again small smoothing. Now, we can iterate this construction, and find by Borel's Lemma a symbol $\tilde{q} \in S^{-m}(T^\ast N,\mathrm{Hom}(L_2,L_1))$ with
\[
\tilde{q} \sim q_0 - \sum_{i = 1}^{+\infty} r_i q_0,
\]
where $r_i \in S^{-i}(T^\ast N, \mathrm{Hom}(L_1))$, in the sense that for all $N \geq 0$,
\[
\tilde{q} - \left(q_0 - \sum_{i= 1}^N r_i q_0\right) \in S^{-m-N-1}(T^*N , \mathrm{Hom}(L_2,L_1)),
\]
and such that
\[
\Op(q)\Op(a) = \mathbbm{1} + R,
\]
with $R$ small smoothing.
\end{proof}

\subsection{Comparison to the fibered-cusp calculus}
\label{sec:fibred-cusp}

To study Fredholm properties of differential operators on ends of the type \eqref{eq:trivial-fibred-end}, the so-called \emph{fibered-cusp calculus} was introduced by Mazzeo and Melrose in \cite{Mazzeo-Melrose-98}. We now explain the main difference between their calculus and ours, and some reasons why using ours is relevant.

So far, we have presented an algebra of pseudo-differential operators which is an extension of an algebra of \emph{differential} operators. The latter is itself the algebra generated by $\mathcal{V}_0$, the Lie algebra of vector fields of the form
\begin{equation}\label{eq:formV_0}
a y\partial_y + by\partial_\theta + X,
\end{equation}
where the coefficients $a,b$ are $C^\infty$-bounded on $Z\times F$ and $X$ is a smooth and $C^\infty$-bounded vector field tangent to $F$. Here $C^\infty$-bounded is understood with respect to the Levi-Civita connection, or the local vector fields $y\partial_y$, $y\partial_\theta$ and $\partial_\zeta$. A crucial observation is that the Laplacian associated with the metric of $Z\times F$ is in this algebra.

Let us recall on the other hand the setup of the \emph{fibred-cusp calculus} developed by Mazzeo-Melrose \cite{Mazzeo-Melrose-98}. Usually, it is presented as a calculus on a manifold with boundary. However, we can remove the boundary, and replace the distance to the boundary $u$ by $y=1/u$ as local coordinates. After such a change of coordinates, the fibered-cusp algebra $\Psi^{\mathrm{diff}}_{\mathrm{fc}}$ is seen to be the algebra of differential operators generated by the algebra $\mathcal{V}_{\mathrm{fc}}$ of vector fields of the form (in local coordinates $(y,\theta,\zeta)$ near the boundary)
\begin{equation}\label{eq:formV_fc}
\frac{1}{y}\left( a y \partial_y + b y \partial_\theta+ c  \partial_\zeta \right),
\end{equation}
where $a,b,c$ are $C^\infty$-bounded functions of $u=1/y,\zeta,\theta$ (including at $u=0$). At first glance, it seems that $\mathcal{V}_{\mathrm{fc}}$ is the same algebra as $(1/y) \mathcal{V}_0$. However, the regularity required in the Mazzeo-Melrose calculus is quite different. Indeed, the vector fields measuring the regularity of the coefficients are now $y^2 \partial_y$ ($\partial_u$ in the $u$-coordinate), $\partial_\theta$ and $\partial_\zeta$ (observe how they are \emph{not} in $\mathcal{V}_{\mathrm{fc}}$). As a consequence, the inclusion $\mathcal{V}_{\mathrm{fc}} \subset (1/y)\mathcal{V}_0$ is strict. For example, $\sin(\log y) y \partial_y$ is an element of $\mathcal{V}_0$, but $\sin(\log y) \partial_y$ is not in  $\mathcal{V}_{\mathrm{fc}}$. The reason is that it oscillates with a wavelength $\sim 1$ uniformly in the cusp, but in the fibred-cusp geometry of Mazzeo-Melrose, it oscillates at wavelength $|\log u|$, where $u$ is the distance to the boundary.

Our algebra of operators is thus a priori larger than the fibred-cusp algebra. However, it can only be defined when working with hyperbolic cusps, which is not the case of Mazzeo and Melrose's algebra. To explain this, we have to expand a bit on the geometric difference behind the definition of these algebra of vector fields. The very fact that these \emph{form} an algebra is a delicate point. 

Let us assume for a moment that the $\theta$ variable is not a variable in a torus, but rather an arbitrary compact manifold $W$, so that the end takes the form $\R^+ \times W \times F$. We want to consider then vector fields of the form \eqref{eq:formV_0}, and impose some condition on the coefficients for them to form an algebra. For this, we consider $y X_1$ and $y X_2$, directed along $\partial_\theta$. Then $X_1 = b_1 \partial_\theta$, and $X_2 = b_2 \partial_\theta$ so that
\[
[yX_1, yX_2] = y^2[X_1,X_2].
\]
This has to be of order $\mathcal{O}(y)$, so that $[X_1, X_2] =\mathcal{O}(1/y)$. This means that the vector fields $X_{1,2}$, on $W$ with parameters $y, \zeta$, are almost commuting as $y\to +\infty$. Let us further impose the non-degeneracy condition that for each point in $W$ we can find vector fields $y X_1,\dots, yX_d$ in our putative algebra which form a bounded frame around that point. Heuristically, when iterating these brackets, we find that in the region where they are non-degenerate, they are almost constant as $y\to +\infty$. Eventually, this implies that they have to be supported on the whole of $W$, and almost commute. Declaring them as an orthonormal basis defines then a metric on $W$ whose curvature goes to $0$ as $y\to + \infty$. It follows that $W$ is almost flat. Manifolds that are almost flat are classified, and are all bundles constructions with torii. As a side remark here, if we considered complex (instead of real) hyperbolic cusps, we would have a $\theta$ variable describing (instead of a torus) a torus bundle over a torus. For more insight on this type of question, we refer to \cite{Gromov-78} and \cite{Ballmann-Bruning-Carron-12}.

On the other hand,the set of vector fields $\mathcal{V}_{\mathrm{fc}}$ forms an algebra without any particular condition on the topology of the manifold $W$. Another way of putting this is that the description \eqref{eq:formV_fc} is completely local at the boundary, while the description \eqref{eq:formV_0} is semi-local, since it is actually a global description the $\theta$ variable.

Let us now explain why in our case, it is worthwhile to use our algebra instead of the fibred-cusp one. The arguments are of different nature. 

The purpose of \cite{Mazzeo-Melrose-98} was to analyze whether operators in $\Psi^{\mathrm{diff}}_{\mathrm{fc}}$ have parametrices modulo compact remainders when acting on $L^2(N')$. This involves the inversion of an \emph{indicial operator}, which is a family of operators $\hat{P}(\zeta,\eta)$, parametrized by $(\zeta,\eta)\in T^\ast F$, acting on the fiber $p^{-1}(\zeta)$ (here $\R^d/\Lambda$). To invert general elliptic operators in our class would certainly involve something more complicated. 

However, we will restrict our attention to operators which respect the geometry of the cusp. For such operators, we will be able to concentrate mostly on functions which do not depend on the variable $\theta$. For this reason, the non-compactness of the manifold will effectively appear to take the form $\R^+\times F$. The criterion for being Fredholm is then much simpler. Indeed, we only need to invert a family of operators $I(P,\lambda)$, with $\lambda \in i\R$, each such operator acting on $F$ (the base instead of the fiber). One can note here that this is very similar to another calculus of Melrose, the b-calculus, which is adapted to cylindrical ends like $\R^+\times F$.

Another argument of convenience is that the theory of Mazzeo and Melrose has so far been developped entirely in the framework of Sobolev spaces. Extending those results to the class of H\"older-Zygmund spaces would require a considerable effort, more so than developping our tools. 

The other arguments are a bit less practical, and related to the question of studying dynamics. If $P$ is a differential operator of order $m$ in our class satisfying the ``geometric condition'' yet to be stated, $u^m P\in \Psi^{\mathrm{diff}}_{\mathrm{fc}}$, so one could apply the results in \cite{Mazzeo-Melrose-98}. However
\begin{itemize}
	\item It is a bit unsatisfying that in the fibred-cusp calculus, the most natural operator in our setting, i.e the Laplace operator, is not in the algebra, but can still be dealt with via some trick --- multiplication by $u^2$. 
	\item In the case that $P$ is not differential but pseudo-differential of varying order, it is not quite obvious what would replace the correspondence $P\mapsto u^m P$. This is crucial when dealing with anisotropic spaces as in \cite{Bonthonneau-Weich-17}. This will also intervene in our second paper \cite{Bonthonneau-Lefeuvre-19-2}. 
	\item For the study of propagators and propagation of singularities (which will be important in the sequel \cite{Bonthonneau-Lefeuvre-19-2}), it is important to consider the flow of the principal symbol of the relevant operators. In our framework, such a flow is uniformly smooth with respect to the geometry, while it is not the case of the fibred-cusp calculus. 
\end{itemize}

\section{Pseudo-differential operators for Hölder-Zygmund spaces on cusps}

\label{section:holder-zygmund}

H\"older spaces, with their naive definition, are not very compatible with the tools of microlocal analysis. The usual solution is to give an alternative definition of these spaces via a Littlewood-Paley decomposition, defining thus the so-called H\"older-Zygmund spaces. On $\R^n$, or on compact manifolds, this is a classical subject. However, this has not been investigated so far for manifolds with cusps. 

In this section, we are going to define such spaces $C^s_*$ on manifolds with cusps, and then prove that the class of pseudo-differential operators defined in the previous section acts on them in a reasonnable fashion. On a compact manifold, this is a well-known fact and we refer to the arguments before \cite[Equation (8.22)]{Taylor-III-96edition} for more details. The subtleties come from the non-compactness of the manifold.

At this stage, we insist on the fact that the Euclidean Littlewood-Paley decomposition is rather remarkable insofar as it only involves Fourier multipliers (and not ``really'' pseudo-differential operators), which truly simplify all the computations. This is not the case in the hyperbolic world and some rather tedious integrals have to be estimated.

Then, we will be able to prove that the previously defined pseudo-differential operators of order $m \in \R$ map continuously $C_*^{s+m}$ to $C^s_*$, just as in the compact setting. Since we can always split the operator in different parts that are properly supported in cusps or in a fixed compact subset of the manifold (modulo a smoothing operator), we can directly restrict ourselves to operators supported in a cusp as long as we know that smoothing operators enjoy the boundedness property. 

\subsection{Definitions and properties}

In the paper \cite{Bonthonneau-16}, only Sobolev spaces were considered. So we will have to prove several basic results of boundedness of the calculus, acting now on Hölder-Zygmund spaces. We will give the proofs in the case of cusps, and leave the details of extending to products of cusps with compact manifolds to the reader.

We consider a smooth non-negative function $\psi \in C^\infty_{\mathrm{comp}}(\R)$ such that $\psi(s) = 1$ for $|s| \leq 1$ and $\psi(s) = 0$ for $|s| \geq 2$. We define, for $j \in \N^\ast$, the following function on the cotangent bundle to the hyperbolic space $T^* \HH^{d+1} \simeq \HH^{d+1}\times \R^{d+1}$:
\begin{equation}
\label{equation:definition-phi-j}
\varphi_j(x,\xi) = \psi(2^{-j}\langle \xi \rangle)-\psi(2^{-j+1}\langle \xi \rangle),
\end{equation}
where $(x,\xi) \in T^*\HH^{d+1}$, $x = (y,\theta) \in \HH^{d+1}$ and $\langle \xi \rangle := \sqrt{1 + y^2|\xi|^2}$ is the hyperbolic Japanese bracket with $|\xi| = |\xi|_{\mathrm{euc}}$ being the euclidean norm of the (co)vector $\xi \in \R^{d+1}$. Observe that 
\[
\supp \varphi_j \subset \left\{ (x,\xi) \in \HH^{d+1} \times \R^{d+1} ~|~ 2^{j-1} \leq \langle \xi \rangle \leq 2^{j+1}\right\}.
\]
Then, with $\varphi_0 = \psi(\langle \xi\rangle)$, $\sum_{j=0}^{+\infty} \varphi_j(x,\xi) = 1$. Of course, the functions $\varphi_j$ are translation invariant and thus descend to a cusp $Z = ]0,+\infty[ \times \R^d/\Lambda$. We will still denote them by $\varphi_j$. We then introduce the 
\begin{definition}
We define the \emph{Hölder-Zygmund space of order $s$} as:
\[
C^s_*(Z) := \left\{ u \in \Delta^N L^\infty(Z)+L^\infty(Z)\ |\ \|u\|_{C^s_*} < \infty \right\},
\]
where:
\[ 
\|u\|_{C^s_*} := \sup_{j \in \N} 2^{js} \|\Op(\varphi_j)u\|_{L^\infty(Z)} 
\]
and $N =0$ for $s>0$ and $N> (|s|+d+1)/2$ when $s\leq 0$.
\end{definition}

It will be shown in Lemma \ref{lemma:boundedness-decaying-symbols} below that $\Op(\varphi_j)$ is indeed bounded on $L^\infty$, hence proving that this definition is legitimate. One can check that the definition of these spaces does not depend on the choice of the initial function $\psi$ (as long as it satisfies the aforementioned properties), see Remark \ref{remark:paley-independence}. This mainly follows from Lemma \ref{lemma:paley-control} below. Note that, although a cutoff function $\chi$ around the ``diagonal'' $y=y'$ has been introduced in (\ref{equation:cutoff}) in the quantization $\Op$, we still have $\mathbbm{1} = \sum_{j \in \N} \Op(\varphi_j)$. Thus, given $u \in C^s_*$ with $s>0$, one has $u = \sum_{j \in \N} \Op(\varphi_j) u$, with normal convergence in $L^\infty$ and
\[ \|u\|_{L^\infty} \leq \sum_{j \in \N} \|\Op(\varphi_j) u\|_{L^\infty} \leq \sum_{j \in \N} 2^{-js} \underbrace{2^{js} \|\Op(\varphi_j) u\|_{L^\infty}}_{\leq \|C^s_*\|} \lesssim \|u\|_{C^s_*}. \]

We denote by $[x]$ the floor function, i.e. the integer part of $x \in \R$. It can be checked that the definition of Hölder-Zygmund space locally coincides with the usual definition on a compact manifold, that is for\footnote{For $s \in \N$, this does not exactly coincide with the set of functions that have exactly $[s]$ derivatives in $L^\infty$.} $s \notin \N$, $C^s_*$ contains the functions that have $[s]$ derivatives which are locally $L^\infty$ and such that the $[s]$-th derivatives are $s-[s]$ Hölder continuous. Indeed, if we choose a function $f$ that is localized in a strip $y \in [a,b]$, then the size of the annulus in the Paley-Littlewood decomposition is uniform in $y$ and can be estimated in terms of $a$ and $b$, so the definition of the Hölder-Zygmund spaces boils down to that of $\R^{d+1}$. This will be made precise in Proposition \ref{proposition:correspondence-espaces-holder}.

We have the equivalent of Proposition \ref{prop:microlocal-calculus}:

\begin{proposition}
\label{proposition:pseudo-borne-cs}
Let $P = \Op(\sigma)$ with $\sigma\in S^m(T^*N,\mathrm{Hom}(L_1,L_2))$ be a pseudo-differential operator in the class $\Psi^m_{\text{small}}(N, L_1 \rightarrow L_2)$. Then:
\[
P : y^\rho C_*^{s+m}(N,L_1) \rightarrow y^\rho C_*^s(N,L_2),
\]
is bounded for all $s \in \R$. If $\sigma'\in S^{m'}(T^*N,\mathrm{Hom}(L_2,L_3))$ is another symbol,
\[
\Op(\sigma) \Op(\sigma') - \Op(\sigma\sigma') \in \Psi^{m+m'-1}_{\text{small}}.
\]
\end{proposition}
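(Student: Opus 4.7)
My plan is to reduce the boundedness claim to a uniform $L^\infty$ bound on Littlewood--Paley pieces, exploit quasi-orthogonality to sum, and treat the composition formula as a consequence of the symbolic calculus that was already established at the $L^2$ level.

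\textbf{Reductions.} First, since conjugation by $y^\rho$ preserves the class $\Psi^m_{\text{small}}$ (the symbol gets multiplied by a zero-order invertible symbol, and the small-smoothing remainder stays small-smoothing), it suffices to treat $\rho=0$. Next, using a partition of unity subordinate to the decomposition $N = N_0 \cup_\ell N_\ell$, one writes $P = \chi_0 P \chi_0 + \sum_\ell \chi_\ell P \chi_\ell + R$, where $R$ is a properly supported compactly localized operator with smooth kernel. The compact-core piece $\chi_0 P \chi_0$ is handled by standard compact-manifold theory. The off-diagonal cross terms lie in $\Psi^{-\infty}_\R$ (they are quickly decaying with respect to the distance between the two cutoffs), and once we check that $\R$-residual operators are bounded from $y^{-\rho'} C^{-k}_*$ to $y^{\rho'} C^k_*$ for all $\rho',k$ — which follows by a direct kernel estimate using that $y^{-d-1}dyd\theta d\vol$ has polynomial volume growth in $\log y$ and that the kernel decays faster than any power of $y/y'$ — we may assume $P = \chi_\ell P \chi_\ell$ with $\sigma$ supported in a single cusp $Z_\ell$.

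\textbf{Key building blocks.} I would next establish the uniform $L^\infty \to L^\infty$ boundedness of $\Op(\varphi_j)$ (``boundedness-decaying-symbols'' in the text), together with the more refined estimate: for any $N \geq 0$ there is $C_N$ such that
\[
\bigl\| \Op(\varphi_k)\,\Op(\sigma)\,\Op(\varphi_j) \bigr\|_{L^\infty \to L^\infty} \leq C_N\, 2^{mj}\, 2^{-N|k-j|}.
\]
The idea is that $\Op(\varphi_k)\Op(\sigma)\Op(\varphi_j)$ has, by the composition formula (Proposition 2.1 applied symbolically — a purely algebraic fact at the level of symbols, independent of the functional space on which the operators act), an asymptotic expansion whose principal term has symbol $\varphi_k \sigma \varphi_j$. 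The latter vanishes when $|k-j| \geq 3$, and the successive correction terms gain a factor $\langle \xi \rangle^{-1} \sim 2^{-j}$ each, while the symbol itself is of size $2^{mj}$ on the frequency shell $\langle\xi\rangle \sim 2^j$. Iterating the parametrix construction $N$ times and absorbing the small-smoothing tail into the residual class yields the claimed decay. The resulting $L^\infty$ bound on each piece follows from an integral kernel estimate: the kernel of $\Op(\tau)$ for a symbol $\tau$ frequency-localized at scale $2^j$ satisfies, in the hyperbolic coordinates and thanks to the cutoff $\chi(y'/y - 1)$ in \eqref{equation:cutoff}, $|K(z,z')| \lesssim 2^{(d+1)j}(1 + 2^j d(z,z'))^{-M}$ for every $M$, which is $L^1$ in $z'$ uniformly.

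\textbf{Main argument.} Given $u \in C^{s+m}_*$, decompose $u = \sum_{j \geq 0} u_j$ with $u_j = \Op(\varphi_j) u$ and $\|u_j\|_{L^\infty} \leq 2^{-(s+m)j}\|u\|_{C^{s+m}_*}$. Then for each $k$,
\[
\bigl\| \Op(\varphi_k) P u \bigr\|_{L^\infty} \leq \sum_{j \geq 0} \bigl\| \Op(\varphi_k)\,P\,\Op(\varphi_j) \bigr\|_{L^\infty \to L^\infty}\, \|u_j\|_{L^\infty} \lesssim \|u\|_{C^{s+m}_*}\sum_j 2^{-N|k-j|}\,2^{-sj},
\]
where I take $N$ large compared to $|s|$. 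A Schur-type summation gives $2^{-sk}\|u\|_{C^{s+m}_*}$ up to a constant; multiplying by $2^{sk}$ and taking the sup over $k$ bounds $\|Pu\|_{C^s_*}$. The composition statement $\Op(\sigma)\Op(\sigma') - \Op(\sigma \sigma') \in \Psi^{m+m'-1}_{\text{small}}$ is already contained in Proposition 2.1(2): the statement is about membership in a symbolic class and has nothing to do with which functional spaces the operators are viewed as acting between, so it transfers verbatim.

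\textbf{Main obstacle.} The hard step is the quasi-orthogonality estimate for $\Op(\varphi_k)\Op(\sigma)\Op(\varphi_j)$ in $L^\infty$. In the Euclidean (or bounded-geometry) setting, $\varphi_j$ would be a Fourier multiplier and the estimate is essentially immediate from integration by parts in frequency. Here the $\varphi_j$ are genuine pseudo-differential operators on $Z_\ell$ (with a diagonal cutoff), so the composition is not reduced to pointwise multiplication of symbols, and one has to track the oscillatory integral representing the kernel through the $y$-dependent hyperbolic Japanese bracket. Getting the clean geometric decay $2^{-N|k-j|}$, rather than only $2^{-|k-j|}$, requires iterating the symbolic parametrix in the small calculus and absorbing the arbitrarily small remainder using the $\R$-residual bounds.
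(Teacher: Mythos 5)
Your overall architecture (reduce to $\rho=0$, localize to a cusp, prove a quasi-orthogonality bound on Littlewood--Paley blocks, sum by Schur) is the same as the paper's, but the way you propose to prove the key off-diagonal estimate does not work as stated. For $|k-j|\geq 3$ every term of the symbolic expansion of $\Op(\varphi_k)\Op(\sigma)\Op(\varphi_j)$ vanishes (disjoint frequency supports), so the whole operator \emph{is} the remainder of the expansion, and you must estimate that remainder directly. Your plan is to "iterate the parametrix $N$ times and absorb the small-smoothing tail into the residual class", but this is unjustified on two counts. First, the tail produced by the small calculus is only an $\R$-smoothing operator in the Sobolev sense ($y^\rho H^{-k}\to y^\rho H^k$); in this non-compact, non-bounded-geometry setting it is precisely \emph{not} known that such operators are bounded on $L^\infty$ or on $C^s_\ast$ — the paper explicitly flags this as unclear, and only the maximally residual class $\dot{\Psi}^{-\infty}_{\R}$ (with the weight flip $y^{\rho}\to y^{-\rho}$) is shown to coincide in both categories, via the embedding lemmas which lose weights. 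So "absorbing the tail" begs exactly the question being proved. Second, even the symbolic part of the remainder is only of order $m-N$ \emph{uniformly}; it does not automatically carry the factor $2^{-N\max(j,k)}$, because the remainder symbol is not frequency-localized on the shell $\langle\xi\rangle\sim 2^{\max(j,k)}$. Extracting simultaneously the decay $2^{-N\max(j,k)}$ and the decay $\langle\xi\rangle^{-N}$ requires a non-stationary phase estimate on the explicit composed symbol $\varphi_j\sharp\varphi_k$ (with the $y$-dependent brackets $\langle\xi-\xi_i\rangle$, $\langle\theta-\theta_i\rangle$ and the diagonal cutoff $\chi(y,y_1,y_2)$), which is the actual content of the paper's Lemma on $\|P_j\Op(\varphi_k)\|_{\mc{L}(L^\infty,L^\infty)}$; your proposal essentially postpones this computation rather than replacing it.

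A second, smaller gap: your main argument implicitly assumes $Pu=\sum_j P\Op(\varphi_j)u$ with each piece in $L^\infty$, which is fine for $s>0$, $s+m>0$, but for negative regularity the elements of $C^{s+m}_\ast$ are distributions in $\Delta^N L^\infty + L^\infty$ and neither the convergence of the decomposition nor the interchange with $P$ is automatic. The paper handles this by introducing the rough spaces $C^{-2n}(Z)=(-h^2\Delta+\mathbbm{1})^n L^\infty(Z)$, proving that $\Op(\sigma)$ acts on them via parametrices for $(-h^2\Delta+\mathbbm{1})^n$, and then running a separate argument for the general case; your write-up omits this step entirely. The composition statement itself is fine — as in the paper, it is purely symbolic and carried over from the $L^2$ calculus — but the boundedness claim needs the two repairs above.
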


We make the following important remark. By definition, an arbitrary small pseudo-differential operator $P \in \Psi^m_{\text{small}}(N, L_1 \rightarrow L_2)$ can be written as $P = \Op(\sigma)+R$, where $\sigma \in S^m(T^*N,\mathrm{Hom}(L_1,L_2))$ and $R$ is $\R$-smoothing i.e. it maps $y^\rho H^{-s} \rightarrow y^\rho H^s$ (for any $\rho, s \in \R$). The previous Proposition \ref{proposition:pseudo-borne-cs} shows that the first term $\Op(\sigma)$ is bounded on the Hölder-Zygmund spaces $y^\rho C_*{s+m} \rightarrow y^\rho C_*^s$. However, for the second term $R$, this is not clear. In the following, we will mostly be interested by \emph{admissible operators} (see Definition \ref{def:R-L^2-admissible-operator}) and for this restricted class of operators, we will show that the remainder is indeed bounded on Hölder-Zygmund spaces (see Proposition \ref{prop:equivalenceL2Linfty-admissible}).

As usual, since we added a cutoff function on the kernel of the operator around the diagonal $y=y'$, the statement boils down to $\rho=0$, which we are going to prove in the next paragraph.

\subsection{Basic boundedness}

The first step here is to derive a bound on $L^\infty(Z)$ spaces. We follow the notations in \cite{Bonthonneau-16}, $\tilde{f}$ denoting the lifting of a function $f$ on $Z$ to $\Lambda$-periodic functions in $\HH^{d+1}$. If $f$ is a function on the full cusp $Z$, then for $P = \Op(\sigma)$ with $\sigma \in S^m(T^*Z)$, one has:
\[
Pf(x) = \int_{\HH^{d+1}} \chi(y'/y-1) \left(\dfrac{y}{y'}\right)^{\frac{d+1}{2}} K^w_{\sigma}(y,\theta,y',\theta')\tilde{f}(y',\theta') \dd y' \dd \theta',
\]
where the kernel $K^w_\sigma$ can be written:
\[ 
K^w_{\sigma}(x,x') = \int_{\R^{d+1}} e^{i \langle x-x',\xi \rangle} \sigma\left( \dfrac{x+x'}{2}, \xi \right) \dd\xi,
\]
with $x=(y,\theta)$. If $P : L^\infty(Z) \rightarrow L^\infty(Z)$ is bounded, then:
\begin{equation}
\label{equation:schur}
\begin{split}
\|P\|_{\mc{L}(L^\infty,L^\infty)} & \leq \sup_{(y,\theta) \in \HH^{d+1}} \int_{\HH^{d+1}} \chi(y'/y-1) \left(\dfrac{y}{y'}\right)^{\frac{d+1}{2}} |K^w_{\sigma}(y,\theta,y',\theta')| \dd y' \dd\theta' \\
& \lesssim \sup_{(y,\theta) \in \HH^{d+1}} \int_{y'=y/C}^{y'=Cy} \int_{\theta' \in \R^d} |K^w_{\sigma}(y,\theta,y',\theta')| \dd y' \dd\theta'.
\end{split}
\end{equation}
Thus, we will look for bounds on $|K^w_{\sigma}(y,\theta,y',\theta')|$. A rather immediate computation shows that:
\begin{equation}
\label{equation:iteration-kernel}
\dfrac{x_\ell-x_\ell'}{i\frac{y+y'}{2}} K^w_\sigma = K^w_{X_{\ell} \sigma},
\end{equation}
where $x = (x_0,x_1,...,x_d) = (y,\theta)$ and $X_{0} = y^{-1} \partial_Y, X_{\ell}= y^{-1} \partial_{J_\ell}$ for $\ell =1,...,d$ and we will iterate many times this equality, denoting $X^\alpha = X_0^{\alpha_0}\dots X_d^{\alpha_d}$ for each multiindex $\alpha = (\alpha_0,...,\alpha_d) \in \N^{d+1}$. Since
\[
|K^w_{\sigma}(y,\theta,y',\theta')| \lesssim \int_{\R^{d+1}} |\sigma\left(\frac{x+x'}{2},\xi\right)|\dd \xi,
\]
we also get
\[
|K^w_{\sigma}(y,\theta,y',\theta')| \lesssim \left|\dfrac{x-x'}{y+y'}\right|^{-\alpha} \int_{\R^{d+1}} |X^\alpha \sigma| \dd\xi.
\]
\begin{lemma}\label{lemma:boundedness-decaying-symbols}
Let $\sigma \in S^{-m}(T^*Z)$ with $m> d+ 1$. Then $\Op(\sigma)$ is bounded on $L^\infty(Z)$.
\end{lemma}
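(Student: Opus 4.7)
The plan is to invoke the Schur bound \eqref{equation:schur} already set up in the paper, which reduces the $L^\infty \to L^\infty$ boundedness of $\Op(\sigma)$ to showing that $\int |K^w_\sigma(y,\theta,y',\theta')|\,dy'd\theta'$ is uniformly bounded in $(y,\theta)$, the integration being restricted by the cutoff $\chi(y'/y-1)$ to the band $y/C \leq y' \leq Cy$. The skeleton is standard for symbol-class arguments: produce a pointwise diagonal size bound on $K^w_\sigma$, combine it with off-diagonal decay obtained by integration by parts, and then integrate against the hyperbolic measure restricted to that band.

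For the diagonal size bound, I would use the trivial majorization $|K^w_\sigma(x,x')| \leq \int |\sigma(\tfrac{x+x'}{2},\xi)|\,d\xi \lesssim \int \langle\xi\rangle_{(x+x')/2}^{-m}\,d\xi$, and perform the substitution $\eta = \tfrac{y+y'}{2}\xi$ to extract a Jacobian $((y+y')/2)^{-(d+1)}$; the remaining Euclidean integral $\int (1+|\eta|^2)^{-m/2}\,d\eta$ converges precisely because $m > d+1$, yielding $|K^w_\sigma(x,x')| \lesssim (y+y')^{-(d+1)}$. For off-diagonal decay I iterate the identity \eqref{equation:iteration-kernel}: each application of $X_\ell$ strictly lowers the order of the symbol by one, so for any multiindex $\alpha$,
\[
|K^w_\sigma(x,x')| \lesssim \left|\frac{x-x'}{(y+y')/2}\right|^{-|\alpha|}\int \langle\xi\rangle_{(x+x')/2}^{-m-|\alpha|}\,d\xi \lesssim \left|\frac{x-x'}{(y+y')/2}\right|^{-|\alpha|}(y+y')^{-(d+1)}.
\]
Combining the two regimes, one obtains for any fixed integer $n$ the unified bound
\[
|K^w_\sigma(x,x')| \lesssim (y+y')^{-(d+1)}\min\!\left(1,\left|\tfrac{x-x'}{(y+y')/2}\right|^{-n}\right).
\]

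The last step is to plug this into the Schur integral. Since $y+y' \asymp y$ on the band $y' \sim y$, the change of variables $u = (x-x')/((y+y')/2)$ at fixed $(y,\theta)$ has Jacobian $dy'd\theta' \asymp y^{d+1}\,du$, so the two factors $y^{d+1}$ and $y^{-(d+1)}$ cancel exactly and the integral collapses to a $y$-independent quantity $\int_{u_0 \in [-c,c]} \min(1,|u|^{-n})\,du$, which is finite as soon as $n>d$ (e.g.\ $n=d+1$). The main, rather modest, obstacle is bookkeeping: one should verify that the repeated integrations by parts producing \eqref{equation:iteration-kernel} are legitimate (boundary terms at $|\xi|=\infty$ vanish because symbols of $S^{-m}$ with $m>d+1$ decay integrably together with all their $\xi$-derivatives), and that the rescaled variable $u$ genuinely decouples from $(y,\theta)$ so that the resulting bound is uniform, which is exactly what the homogeneity of $\langle\xi\rangle_{(x+x')/2}$ under the rescaling ensures.
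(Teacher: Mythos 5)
Your proof is correct and takes essentially the same route as the paper: the Schur bound \eqref{equation:schur}, the diagonal size estimate $|K^w_\sigma|\lesssim (y+y')^{-(d+1)}$ coming from the integrability of the symbol when $m>d+1$, and off-diagonal decay obtained by iterating \eqref{equation:iteration-kernel}. The only cosmetic difference is the final bookkeeping: the paper keeps the decay only in the $\theta$-direction and integrates in $\theta'$ and then $y'$ over the band $y/C\leq y'\leq Cy$, whereas you perform the rescaling $u=(x-x')/\bigl((y+y')/2\bigr)$ with Jacobian cancellation; both yield the same uniform bound.
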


\begin{proof}
Under the assumptions, $\sigma$ is integrable in $\xi$, and so are its derivatives. In particular, we get for all multiindex $\alpha$,
\[
|K^w_{\sigma}(y,\theta,y',\theta')| \lesssim \frac{C_\alpha}{(y+y')^{d+1}} \left|\frac{y+y'}{x-x'}\right|^\alpha.
\]
From this we deduce
\[
|K^w_{\sigma}(y,\theta,y',\theta')| \lesssim \frac{1}{(y+y')^{d+1}}\frac{1}{1 + \left|\frac{\theta-\theta'}{y+y'}\right|^{d+1}}
\]
and
\begin{align*}
\| \Op(\sigma)\|_{L^\infty\to L^\infty} &\lesssim \sup_{y} \int_{y/C}^{yC} \dd y' \int_{\R^d} \dd\theta \frac{1}{(y+y')^{d+1}}\frac{1}{1 + \left|\frac{\theta}{y+y'}\right|^{d+1}} \\
					&\lesssim \sup_{y} \int_{y/C}^{yC} \dd y' \frac{1}{y+y'} < \infty.
\end{align*}
\end{proof}

We now use the previous dyadic partition of unity. Given a symbol $\sigma \in S^m(T^*Z)$, we define $\sigma_j := \sigma \varphi_j \in S^{-\infty}$. Observe that
\[
P = \Op(\sigma) = \sum_{j=0}^{+\infty} \Op(\underbrace{\sigma \varphi_j}_{=\sigma_j}) = \sum_{j=0}^{+\infty} P_j,
\]
where $P_j := \Op(\sigma_j)$. We will need the following refined version of the previous lemma:
\begin{lemma}\label{lemma:paley-ordrem}
Assume that $\sigma \in S^m(T^*Z)$. Then, $\|P_j\|_{\mc{L}(L^\infty,L^\infty)} \lesssim 2^{jm}$
\end{lemma}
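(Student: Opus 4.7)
The plan is to refine the crude kernel bound of Lemma \ref{lemma:boundedness-decaying-symbols} by tracking the $2^j$-dependence on the dyadic shell $\supp \varphi_j = \{2^{j-1}\leq \langle \xi\rangle \leq 2^{j+1}\}$, and then to feed the resulting pointwise estimate into Schur's bound \eqref{equation:schur}. Throughout, the natural scale is the midpoint $\bar y := (y+y')/2$, since the Weyl quantization evaluates $\sigma$ there.

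The first step is to record the effect of the cutoff on the symbol estimates. With $X_0 = y^{-1}\partial_Y$ and $X_\ell = y^{-1}\partial_{J_\ell}$, one has $|X^\alpha \varphi_j| \lesssim \langle \xi\rangle^{-|\alpha|} \mathbbm{1}_{\supp \varphi_j}$, so Leibniz combined with the symbol estimate for $\sigma$ gives
\[
|X^\alpha \sigma_j(x,\xi)| \lesssim 2^{j(m-|\alpha|)} \mathbbm{1}_{\supp \varphi_j}(x,\xi).
\]
Moreover, on $\supp \varphi_j$ one has $|\xi| \lesssim 2^j / y$, so the $\xi$-slice has Lebesgue measure at most $C(2^j/\bar y)^{d+1}$.

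The second step is to convert this into a pointwise kernel estimate. A direct integration yields
\[
|K^w_{\sigma_j}(x,x')| \lesssim \frac{2^{j(m+d+1)}}{\bar y^{d+1}},
\]
while the iteration formula \eqref{equation:iteration-kernel} provides, for every multiindex $\alpha \in \N^{d+1}$,
\[
\left(\frac{x-x'}{\bar y}\right)^\alpha K^w_{\sigma_j} = i^{-|\alpha|} K^w_{X^\alpha \sigma_j},
\]
to which the same crude estimate applies with $m$ replaced by $m-|\alpha|$. Combining these bounds for each coordinate direction with $\alpha = (d+2) e_\ell$ gives the pointwise bound
\[
|K^w_{\sigma_j}(x,x')| \lesssim \frac{2^{j(m+d+1)}}{\bar y^{d+1}} \cdot \frac{1}{1 + (2^j |x-x'|/\bar y)^{d+2}}.
\]

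Finally, I would substitute this estimate into \eqref{equation:schur}. On the support of $\chi(y'/y-1)$ the quantities $y$, $y'$ and $\bar y$ are all comparable, so $(y/y')^{(d+1)/2} \lesssim 1$. Performing the change of variables $v := 2^j(x-x')/\bar y \in \R^{d+1}$, whose Jacobian is of order $(\bar y/2^j)^{d+1}$, the factor $\bar y^{-(d+1)}$ cancels against the Jacobian and one is left with $2^{jm} \int_{\R^{d+1}} (1+|v|^{d+2})^{-1}\, dv$, which is finite. This yields the desired bound $\|P_j\|_{\mc{L}(L^\infty,L^\infty)} \lesssim 2^{jm}$. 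The main point is bookkeeping: one must use the midpoint $\bar y$ consistently in the symbol estimates, in the integration by parts, and in the final change of variables, so that all the hyperbolic prefactors balance precisely.
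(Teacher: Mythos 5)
Your argument is correct and is essentially the paper's proof: the same dyadic kernel bound obtained by iterating \eqref{equation:iteration-kernel} against the shell estimate $|X^\alpha\sigma_j|\lesssim 2^{j(m-|\alpha|)}$, followed by Schur's test \eqref{equation:schur}, with the uniform proper support of the kernel (i.e. $y\asymp y'\asymp \bar y$ on $\supp\chi$) playing the crucial role in both. The only difference is cosmetic: you take the exponent $d+2$ in all directions and do a single change of variables $v=2^j(x-x')/\bar y$ (whose Jacobian is indeed $\asymp(2^j/\bar y)^{d+1}$ because $\bar y$ depends on $y'$ only through a factor comparable to $1$ on the cutoff's support), whereas the paper integrates in $\theta'$ first and then substitutes $u=y'/y$, $v=2^j(1-u)/(1+u)$.
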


In particular, if $u \in L^\infty(Z)$, we find that $u \in C^0_\ast(Z)$ (but the converse is not true !).

\begin{proof}
The proof is similar to the proof of Lemma \ref{lemma:boundedness-decaying-symbols}, but we have to be careful to obtain the right bound in terms of power of $2^j$. Since $\varphi_j$ has support in $\left\{2^{j-1}\leq \langle \xi \rangle \leq 2^{j+1}\right\}$, the kernel $K^w_{\sigma_j}$ of $P_j$ satisfies:
\begin{equation}
\label{equation:estimee-kernel-1}
|K^w_{\sigma_j}(x,x')|  \lesssim \int_{\left\{2^{j-1} \leq \langle \xi \rangle \leq 2^{j+1}\right\}} \langle \xi \rangle^{m} \dd\xi \lesssim \dfrac{2^{j(m+d+1)}}{(y+y')^{d+1}}
\end{equation}
Differentiating in $\xi$, we get for all multiindex $\alpha$,
\begin{equation}
\label{equation:estimee-kernel-1bis}
|K^w_{\sigma_j}| \lesssim \left|\frac{y+y'}{x-x'}\right|^\alpha \dfrac{2^{j(m-|\alpha|+d+1)}}{(y+y')^{d+1}},
\end{equation}
Combining with (\ref{equation:iteration-kernel}) (we iterate the equality $k'$ times in $y$ and $k$ times in $\theta$ that is in each $\theta_i$ coordinate), we obtain:
\begin{equation}\label{equation:estimee-kernel-2}
 |K^w_{\sigma_j}(x,x')| \lesssim \dfrac{2^{j(m+d+1)}}{(y+y')^{d+1}\left(1 + 2^{jk'}\left|\dfrac{y-y'}{y+y'}\right|^{k'} + 2^{jk}\left|\dfrac{\theta-\theta'}{y+y'}\right|^{k}\right)}
 \end{equation}
Then, integrating in (\ref{equation:schur}), we obtain:
\[ 
\begin{split}
& \|P_j\|_{\mc{L}(L^\infty,L^\infty)} \\
& \lesssim \sup_{(y,\theta) \in \HH^{d+1}} \int_{y'=y/C}^{y'=Cy} \int_{\theta' \in \R^d} |K^w_{\sigma}(y,\theta,y',\theta')| \dd y' \dd\theta' \\
& \lesssim 2^{j(m+d+1)} \sup_{(y,\theta) \in \HH^{d+1}} \int_{y'=y/C}^{y'=Cy} \int_{\theta' \in \R^d}  \dfrac{\dd y' \dd\theta' }{(y+y')^{d+1}\left(1 + 2^{jk'}\left|\frac{y-y'}{y+y'}\right|^{k'} + 2^{jk}\left|\dfrac{\theta-\theta'}{y+y'}\right|^{k}\right)} \\
& \lesssim 2^{j(m+d+1)} \sup_{(y,\theta) \in \HH^{d+1}} 2^{-jd}  \int_{y'=y/C}^{y'=Cy} \dfrac{\dd y'}{(y+y')\left(1+2^{jk'}\left|\frac{y-y'}{y+y'}\right|^{k'}\right)^{1-d/k}} \\
& \lesssim 2^{j(m+1)} \int_{1/C}^{C} \dfrac{1}{(1+u)\left(1+2^{jk'}\left|\frac{u-1}{u+1}\right|^{k'}\right)^{1-d/k}} \dd u, 
\end{split}
\]
where we have done the change of variable $u=y'/y$. We let $v = 2^{j}\frac{1-u}{1+u}$, so that $u = (1- 2^{-j} v)/(1+ 2^{-j} v)$,
\[
1/(1+u) = (1+ 2^{-j}v)/2,\quad \dd u = - \frac{2^{1-j}}{(1+2^{-j}v)^2} \dd v.
\]
and we get the bound
\[
\begin{split}
\int_{1/C}^{C} \dfrac{1}{(1+u)\left(1+2^{jk'}\left|\frac{u-1}{u+1}\right|^{k'}\right)^{1-d/k}} \dd u 	\lesssim 2^{-j}\int_{-2^{j}(C-1)/(C+1)}^{2^j (C-1)/(C+1)} \frac{1}{(1+ |v|^{k'})^{1-d/k}}  \frac{\dd v}{1+ 2^{-j} v}.
\end{split}
\]
Let now $k=d+1$ and $k' = d+2$. We can bound the term $1/(1+2^{-j}v)$ by $(C+1)/2$, and we get
\[
\|P_j\|_{\mc{L}(L^\infty,L^\infty)} \lesssim 2^{jm} \int_{\R} \frac{ \dd v }{(1 + |v|^{d+2})^{1/(d+1)}} \lesssim 2^{jm}.
\]
Here, it was crucial that the kernel is uniformly properly supported.
\end{proof}

\begin{lemma}\label{lemma:paley-control}
Let $\sigma \in S^m(T^*Z)$. For all $N \in \N$, there exists a constant $C_N > 0$ such that for all integers $j, k \in \N$ such that $|j-k| \geq 3$,
\[
\|P_j \Op(\varphi_k)\|_{\mc{L}(L^\infty, L^\infty)}, \|\Op(\varphi_k)P_j\|_{\mc{L}(L^\infty, L^\infty)} \leq C_N 2^{-N \max(j,k)},
\]
where $P_j = \Op(\sigma \varphi_j)$.  
\end{lemma}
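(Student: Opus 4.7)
My plan would be to estimate the Schwartz kernel of $P_j \Op(\varphi_k)$ directly, using integration by parts to exploit the frequency separation $|j-k| \geq 3$; the case of $\Op(\varphi_k) P_j$ is symmetric. Working on the universal cover $\HH^{d+1}$, the kernel can be written as
\[
K(x,x'') = (y/y'')^{(d+1)/2} \iiint e^{i(x-x')\cdot\xi + i(x'-x'')\cdot\eta}\, A(x,x',x'',\xi,\eta)\, d\xi\, d\eta\, dx',
\]
with amplitude
\[
A = \sigma\varphi_j\bigl(\tfrac{x+x'}{2},\xi\bigr)\, \varphi_k\bigl(\tfrac{x'+x''}{2},\eta\bigr)\, \chi(y'/y-1)\, \chi(y''/y'-1),
\]
the factors $y'^{\pm(d+1)/2}$ from the two quantizations cancelling each other.

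The key observation is that when $|j-k|\geq 3$, the shells $\{2^{j-1}\leq \langle\xi\rangle\leq 2^{j+1}\}$ and $\{2^{k-1}\leq \langle\eta\rangle\leq 2^{k+1}\}$ are disjoint, and on $\supp A$ one has $|\xi|_{\mathrm{euc}} \simeq 2^j/y$ and $|\eta|_{\mathrm{euc}} \simeq 2^k/y$ (because the cutoffs $\chi$ force $y \simeq y' \simeq y''$), so that $|\xi-\eta|_{\mathrm{euc}} \gtrsim 2^{\max(j,k)}/y$. I would then integrate by parts $N$ times in $x'$ using
\[
e^{-ix'\cdot(\xi-\eta)} = \frac{\xi-\eta}{i|\xi-\eta|^2}\cdot \nabla_{x'}\, e^{-ix'\cdot(\xi-\eta)}.
\]
Each integration yields a factor $|\xi-\eta|^{-1} \lesssim y/2^{\max(j,k)}$ and a Euclidean derivative $\partial_{x'}$ falling on the amplitude $A$. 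By the definition of the hyperbolic symbol class, each such derivative applied to $\sigma\varphi_j$, to $\varphi_k$, or to the cutoffs $\chi$ is of size $O(1/y)$, since the natural bounded derivations in this class are $y\partial_y$ and $y\partial_\theta$. The two contributions combine into a net gain of $2^{-\max(j,k)}$ per integration by parts, yielding an overall factor $2^{-N\max(j,k)}$ after $N$ iterations, with the differentiated amplitude still obeying analogous symbol estimates.

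Once this smallness has been extracted, the remaining oscillatory integral would be bounded exactly as in the proof of Lemma \ref{lemma:paley-ordrem}: one performs further integrations by parts in $\xi$ and $\eta$ to obtain polynomial decay of the resulting kernel in $(x-x')$ and $(x'-x'')$ respectively, and then applies Schur's test, integrating in $x''$ on $\HH^{d+1}$. The ensuing polynomial cost in $2^j$, $2^k$ (coming from the shell volumes $\sim 2^{j(d+1)}/y^{d+1}$ and $\sim 2^{k(d+1)}/y^{d+1}$, together with the symbol size $\lesssim 2^{jm}$) can be absorbed into the final constant $C_N$ upon choosing $N$ sufficiently large compared to $m$ and $d$.

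The hard part will be the bookkeeping in the integration-by-parts step: one must verify that each Euclidean derivative $\partial_{x'}$ of the full amplitude — including the cutoffs $\chi$ — is indeed of size exactly $O(1/y)$, uniformly in $j$ and $k$. This relies crucially on the cutoffs enforcing $y \simeq y' \simeq y''$, and on the hyperbolic symbol estimates being phrased in terms of $y\partial_y$ and $y\partial_\theta$ rather than plain Euclidean derivatives.
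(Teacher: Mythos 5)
Your proposal is correct and takes essentially the same route as the paper: the paper's proof likewise exploits the disjointness of the dyadic shells via non-stationary phase in the intermediate spatial variables of the composition, only phrased at the level of the composed Weyl symbol $\varphi_j \sharp \varphi_k$ (inserting $\langle \xi-\xi_1\rangle^{-2N}\langle D_{x_1}\rangle^{2N}$-type factors and using that the two annuli are at distance $\gtrsim 2^{\max(j,k)-2}$), whereas you work directly on the Schwartz kernel with the single factor $|\xi-\eta|^{-1}\lesssim y\,2^{-\max(j,k)}$. The remaining steps — further integrations by parts in the frequency variables to get off-diagonal decay and absolute convergence, then Schur's test as in Lemma \ref{lemma:paley-ordrem}, with the same reliance on the cutoffs forcing $y\simeq y'\simeq y''$ — coincide with the paper's.
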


\begin{proof}
This is a rather tedious computation and we only give the key ingredients. It is actually harmless to assume that $\sigma = 1$, which we will assume to hold for the sake of simplicity. We use \cite[Proposition 1.19]{Bonthonneau-16}. We know that 
\[
\Op(\varphi_j) \Op(\varphi_k) f(x) = \int_{x' \in \HH^{d+1}} \left(\dfrac{y}{y'}\right)^{\frac{d+1}{2}} K^w_{\varphi_j \sharp \varphi_k}(x,x') f(x') \dd x'
\]
where, by definition,
\begin{equation}\label{equation:kernel-composee}
K^w_{\varphi_j \sharp \varphi_k}(x,x') = \int e^{i\langle x-x',\xi\rangle} \varphi_j \sharp \varphi_k\left(\frac{x+x'}{2},\xi\right) \dd\xi 
\end{equation}
and 
\begin{equation}\label{equation:symbole-sharp}
\begin{split}
\varphi_j \sharp \varphi_k(x,\xi) = 2^{-2d-2}\int & e^{2i\left(-\langle x-x_1,\xi-\xi_1 \rangle + \langle x-x_2,\xi-\xi_2\rangle\right)}  \\
& \varphi_j(x_2,\xi_1) \varphi_k(x_1,\xi_2) \chi(y,y_1,y_2) \dd x_1 \dd x_2 \dd \xi_1 \dd \xi_2,
\end{split}
\end{equation}
where, for fixed $y$, $\chi(y,\cdot,\cdot)$ is supported in the rectangle $\left\{y/C \leq y_{1,2} \leq yC \right\}$ ($C$ not depending on $y$). To prove the claimed boundedness estimate, it is thus sufficient to prove that 
\[ 
\sup_{x \in \HH^{d+1}} \int_{x' \in \HH^{d+1}} \left(\dfrac{y}{y'}\right)^{\frac{d+1}{2}} |K^w_{\varphi_j \sharp \varphi_k}(x,x')| \dd x' \lesssim C_N 2^{-N \max(j,k)},
\]
and we certainly need bounds on the kernel $K^w_{\varphi_j \sharp \varphi_k}$. First observe that it is supported in some region $\left\{ y/C' \leq y' \leq yC'\right\}$ so, as before, the term $(y/y')^{\frac{d+1}{2}}$ is harmless in the integral. Then, we follow the same strategy as in the proof of Lemma \ref{lemma:paley-ordrem}. We deduce that it suffices to obtain bounds of the form
\[
|K^w_{X^{\alpha}(\varphi_j \sharp \varphi_k)}|, |K^w_{\varphi_j \sharp \varphi_k}| \lesssim C_N\frac{ 2^{-N \max(j,k)}}{(y+y')^{d+1}}.
\]
for $|\alpha|\leq d+2$. 

For the sake of simplicity, we only deal with the bound on $|K^w_{\varphi_j \sharp \varphi_k}|$, the others being similar. To obtain a bound on this kernel, it is sufficient to prove that $|\varphi_j \sharp \varphi_k(x,\xi)| \lesssim C_N 2^{-N\max(j,k)}\langle \xi \rangle^{-N}$ (where $N$ has to be chosen large enough). Indeed, one then obtains:
\[
|K^w_{\varphi_j \sharp \varphi_k}(x,x')| \lesssim C_N 2^{-N\max(j,k)} \int_{\R^{d+1}} \dfrac{\dd \xi}{\left(1+\left(\frac{y+y'}{2}\right)^{2}|\xi|^2\right)^{N/2}} \lesssim C_N \frac{2^{-N \max(j,k)}}{(y+y')^{d+1} }.
\]
We denote by $y_1 D_{x_{1,\ell}} := \dfrac{y_1}{2i} \partial_{x_{1,\ell}}$ the operator of derivation and we use in (\ref{equation:symbole-sharp}) the identity
\begin{equation}\label{equation:identite-classique}
(1+y_1^2|\xi-\xi_1|^2)^{-N}(1+y^2_1D_{x_1}^2)^{N}(e^{2i\langle x-x_1,\xi-\xi_1\rangle}) = e^{2i\langle x-x_1,\xi-\xi_1\rangle}
\end{equation}
where $D_{x_1}^2 = \sum_\ell D_{x_{1,\ell}}^2$. In terms of Japanese bracket, this can be rewritten shortly $\langle \xi-\xi_1 \rangle^{-2N}\langle D_{x_1} \rangle^{2N}(e^{2i\langle x-x_1,\xi-\xi_1\rangle}) = e^{2i\langle x-x_1,\xi-\xi_1\rangle}
$. We thus obtain:
\[
\begin{split}
\varphi_j \sharp \varphi_k(x,\xi) =& 2^{-2d-2}  \int e^{2i\left(\langle x-x_1,\xi-\xi_1 \rangle + \langle x-x_2,\xi-\xi_2\rangle\right)}  \langle \xi-\xi_1 \rangle^{-2N} \langle \xi-\xi_2 \rangle^{-2N} \\
&   \langle D_{x_1} \rangle^{2N} \langle D_{x_2} \rangle^{2N} \left( \varphi_j(x_2,\xi_1) \varphi_k(x_1,\xi_2) \chi(y,y_1,y_2) \right) \dd x_1 \dd x_2 \dd\xi_1 \dd\xi_2,
\end{split}
\]
We also need to use this trick in the $x$ variable (more precisely on the $\theta$ variable) to ensure absolute convergence of this integral. This yields the formula:
\[
\begin{split}
 \varphi_j \sharp \varphi_k(x,\xi)& = 2^{-2d-2} \int e^{2i\left(\langle x-x_1,\xi-\xi_1 \rangle + \langle x-x_2,\xi-\xi_2\rangle\right)} \\ 
& \langle \theta-\theta_1 \rangle^{-2M} \langle \theta-\theta_2 \rangle^{-2M}  \langle D_{J_1} \rangle^{2M} \langle D_{J_2} \rangle^{2M}  \\
& \left[ \langle \xi-\xi_1 \rangle^{-2N} \langle \xi-\xi_2 \rangle^{-2N} \langle D_{x_1} \rangle^{2N} \langle D_{x_2} \rangle^{2N} (\varphi_j(x_2,\xi_1)  \varphi_k(x_1,\xi_2) \chi(y,y_1,y_2)) \right] \\
& \qquad \dd x_1 \dd x_2 \dd\xi_1 \dd\xi_2,
\end{split}
\]
where $M$ is chosen large enough. We here need to clarify a few things. First of all, the notation is a bit hazardous insofar as $\langle \theta-\theta_1 \rangle^2 := 1+ \frac{|\theta-\theta_1|^2}{y_1^2}$ this time. This comes from the fact that the natural operation of differentiation (which preserves the symbol class) is $\langle D_{J_1} \rangle^2 := 1 + \sum_{\ell=1}^d (y^{-1}_1 \partial_{J_{1,\ell}})^2$. If ones formally develops the previous formula, one obtains a large number of terms involving derivatives — coming from the brackets 
\[
\langle D_{J_1} \rangle^{2M} \langle D_{J_2} \rangle^{2M}  \langle D_{x_1} \rangle^{2N} \langle D_{x_2} \rangle^{2N}\]
— of $\varphi_j$ and $\varphi_k$. These derivatives obviously do not change the supports of these functions and can only better the estimate (there is a $2^{-j}$ that pops up out of the formula each time one differentiates, stemming from the very definition of $\varphi_j$). As a consequence, it is actually sufficient to bound the integral if one forget about these brackets of differentiation. We are thus left to bound
\[
\begin{split}
&\int e^{2i\left(\langle x-x_1,\xi-\xi_1 \rangle + \langle x-x_2,\xi-\xi_2\rangle\right)} \langle \theta-\theta_1 \rangle^{-2M} \langle \theta-\theta_2 \rangle^{-2M}    \\
& \langle \xi-\xi_1 \rangle^{-2N} \langle \xi-\xi_2 \rangle^{-2N}  \varphi_j(x_2,\xi_1)  \varphi_k(x_1,\xi_2) \chi(y,y_1,y_2)   \dd x_1 \dd x_2 \dd \xi_1 \dd \xi_2.
\end{split}
\]
We can now assume without loss of generality that $k \geq j+3$. Then, $\varphi_j$ and $\varphi_k$ are supported in two distinct annulus whose interdistance is bounded below by $2^{k-1}-2^{j+1} \geq 2^{k-2}$. Using this fact, one can bound the integrand by 
\[ 
\begin{split}
\langle \xi -\xi_1 \rangle^{-2N} & \langle \xi -\xi_2 \rangle^{-2N} \langle \theta-\theta_1 \rangle^{-2M}  \langle \theta -\theta_2 \rangle^{-2M}  \chi(y,y_1,y_2) \\
&\lesssim  C_N 2^{-N k}\langle \xi \rangle^{-4N}  \langle \theta-\theta_1 \rangle^{-2M}  \langle \theta -\theta_2 \rangle^{-2M}  \chi(y,y_1,y_2), 
\end{split}
\]
where the last bracket is $\langle \xi \rangle := \sqrt{1+y^2|\xi|^2}$. (The estimates actually come out with a Japanese bracket in terms of $y_{1,2}$ but these are uniformly comparable to the Japanese bracket in terms of $y$ because $\chi$ is supported in the region $\left\{y/C \leq y_{1,2} \leq yC\right\}$.) We thus obtain:
\[
\begin{split}
&\Bigg|\int e^{2i\left(\langle x-x_1,\xi-\xi_1 \rangle + \langle x-x_2,\xi-\xi_2\rangle\right)} \langle \theta-\theta_1 \rangle^{-2M} \langle \theta-\theta_2 \rangle^{-2M}    \\
& \langle \xi-\xi_1 \rangle^{-2N} \langle \xi-\xi_2 \rangle^{-2N}  \varphi_j(x_2,\xi_1)  \varphi_k(x_1,\xi_2) \chi(y,y_1,y_2))   \dd x_1 \dd x_2 \dd\xi_1 \dd\xi_2\Bigg| \\
& \lesssim C_N 2^{-N k}\langle \xi \rangle^{-4N}  \int_{\substack{x_1\in \R^{d+1}\\ x_2 \in \R^{d+1} \\ 2^{j-1} \leq \langle \xi_1 \rangle \leq 2^{j+1} \\ 2^{k-1} \leq \langle \xi_2 \rangle \leq 2^{k+1} }} \langle \theta-\theta_1 \rangle^{-2M}  \langle \theta -\theta_2 \rangle^{-2M}  \chi(y,y_1,y_2) \dd\xi_1 \dd\xi_2 \dd x_1 \dd x_2 
\end{split}
\]
We simply use a volume bound of the annulus (the ball in which it is contained actually) for the $\xi_1,\xi_2$ integrals which provides:
\[
\int_{2^{j-1} \leq \langle \xi_1 \rangle \leq 2^{j+1}}  \dd\xi_1 \lesssim 2^{j(d+1)} /y_1^{d+1}
\]
As a consequence, the bound in the previous integral becomes:
\[
\begin{split}
&C_N \frac{2^{-N k}}{\langle \xi \rangle^{4N}}  \int_{\substack{x_1\in \R^{d+1} \\x_2 \in \R^{d+1}  \\ 2^{j-1} \leq \langle \xi_1 \rangle \leq 2^{j+1} \\ 2^{k-1} \leq \langle \xi_2 \rangle \leq 2^{k+1} }} \frac{\chi(y,y_1,y_2) \dd \xi_1 \dd \xi_2 \dd x_1 \dd x_2}{\langle \theta-\theta_1 \rangle^{2M}  \langle \theta -\theta_2 \rangle^{2M} } \\
& \lesssim C_N \frac{2^{-N k+(j+k)(d+1)}}{\langle \xi \rangle^{4N}}  \int_{\substack{x_1\in \R^{d+1} \\ x_2 \in \R^{d+1} }} \langle \theta-\theta_1 \rangle^{-2M}  \langle \theta -\theta_2 \rangle^{-2M}  \chi(y,y_1,y_2) \dfrac{\dd x_1 \dd x_2}{y_1^{d+1}y_2^{d+1}}
\end{split}
\]
Now, the last integral can be bounded by
\[ 
\begin{split}
\int_{y_1=y/C}^{Cy}  \int_{\theta_1 \in \R^d} \int_{y_2=y/C}^{Cy} \int_{\theta_2 \in \R^d}  \langle \theta-\theta_1 \rangle^{-2M}  \langle \theta -\theta_2 \rangle^{-2M}  \dfrac{\dd x_1 \dd x_2}{y_1^{d+1}y_2^{d+1}} \lesssim 1,
\end{split}
\]
where $M$ is large enough, which eventually yields the estimate
\[
|\varphi_j \sharp \varphi_k (x,\xi)| \lesssim C_N  2^{-Nk} 2^{(j+k)(d+1)} \langle \xi \rangle^{-4N}.
\]
Since $N$ was chosen arbitrary, we can always take it large enough so that it swallows the term $2^{(j+k)(d+1)}$. In the end, concluding by symmetry of $j$ and $k$, we obtain the sought estimate
\begin{equation}
\label{equation:composition-symbole}
|\varphi_j \sharp \varphi_k (x,\xi)| \lesssim C_N  2^{-N\max(j,k)}  \langle \xi \rangle^{-N}.
\end{equation}
This implies the estimate on the kernel $K^w_{\varphi_j \sharp \varphi_k}$ and concludes the proof.
\end{proof}

\begin{remark}
\label{remark:paley-independence}
Following the same scheme of proof, one can also obtain the independence of the definition of the Hölder-Zygmund spaces with respect to the cutoff function $\psi$ chosen at the beginning. If $\widetilde{\psi} \in C^\infty_0(\R)$ is another cutoff function such that $\widetilde{\psi} \equiv 1$ on $[-a,a]$ and $\widetilde{\psi} \equiv 0$ on $\R \setminus[-b,b]$ (and $0 < a < b$), we denote by $\Op(\widetilde{\varphi}_j)$ the operators built from $\widetilde{\psi}$ like in (\ref{equation:definition-phi-j}). Then, in order to show the equivalence of the $C^s_*$- and $\widetilde{C}^s_*$-norms respectively built from $\psi$ or $\widetilde{\psi}$, one has to compute quantities like $\|\Op(\varphi_j)\Op(\widetilde{\varphi}_k)\|_{\mc{L}(L^\infty,L^\infty)}$. If $k \in \N$ is fixed, then the terms $\Op(\varphi_j)\Op(\widetilde{\varphi}_k)$ ``interact'' (in the sense that one will not be able to obtain a fast decay estimate like (\ref{equation:composition-symbole})) for $j \in [k-1+[\log_2(a)],k+1+([\log_2(b)]+1)]$. We can improperly call these terms ``diagonal terms". Note that the number of such terms is independent of both $j$ and $k$. The content of Lemma \ref{lemma:paley-control} can be interpreted by saying that when taking the same cutoff function (that is $\psi=\widetilde{\psi}$), the diagonal terms are $\left\{j,k \in \N ~|~ |j-k|\leq 2\right\}$. In the following, we will use the definition of Hölder-Zygmund spaces with the rescaled cutoff functions $\widetilde{\psi}_h := \psi(h \cdot)$. The diagonal terms are then shifted by $\log_2(h^{-1})$.
\end{remark}

A consequence of the previous Lemma \ref{lemma:paley-control} is the following estimate. Note that it is not needed for the proof of Proposition \ref{proposition:pseudo-borne-cs} but will appear shortly after when comparing the Hölder-Zygmund spaces $C^s_\ast$ with the usual spaces $C^s$.

\begin{lemma}
\label{lemma:paley-control2}
Let $P = \Op(\sigma)$ for some $\sigma \in S^m(T^*Z), m \in \R$ and let $0 < s < m$. Then, there exists a constant $C > 0$ such that for all $j \in \N$:
\[
\|P \Op(\varphi_j)\|_{\mc{L}(C^s_\ast,L^\infty)} \leq C2^{j(m-s)}
\]
\end{lemma}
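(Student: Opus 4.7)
The plan is to decompose $P$ dyadically as $P = \sum_{\ell \geq 0} P_\ell$ with $P_\ell = \Op(\sigma \varphi_\ell)$, and then split the resulting series
\[
P \Op(\varphi_j) = \sum_{\ell \geq 0} P_\ell \Op(\varphi_j)
\]
into a near-diagonal piece ($|\ell - j| \leq 2$) and an off-diagonal piece ($|\ell - j| \geq 3$). The two pieces are controlled by rather different mechanisms, and I expect the bookkeeping of putting them together to be the only slightly subtle point.

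For the near-diagonal piece I would argue as follows. By Lemma \ref{lemma:paley-ordrem} applied to $\sigma \varphi_\ell \in S^{m}$, one has $\|P_\ell\|_{\mc{L}(L^\infty,L^\infty)} \lesssim 2^{\ell m}$, while by the very definition of $C^s_\ast$ one has $\|\Op(\varphi_j)u\|_{L^\infty} \leq 2^{-js}\|u\|_{C^s_\ast}$. Therefore, for each $\ell$ with $|\ell - j|\leq 2$,
\[
\|P_\ell \Op(\varphi_j) u\|_{L^\infty} \lesssim 2^{\ell m} \cdot 2^{-js}\|u\|_{C^s_\ast} \lesssim 2^{j(m-s)} \|u\|_{C^s_\ast},
\]
and since there are only a bounded number of such $\ell$, summing preserves the estimate.

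For the off-diagonal piece I would invoke Lemma \ref{lemma:paley-control}: for any $N \in \N$ there is $C_N > 0$ such that whenever $|\ell - j|\geq 3$,
\[
\|P_\ell \Op(\varphi_j)\|_{\mc{L}(L^\infty,L^\infty)} \leq C_N 2^{-N \max(\ell,j)}.
\]
Since $s > 0$, the embedding $C^s_\ast \hookrightarrow L^\infty$ (recalled just before Lemma \ref{lemma:paley-ordrem}) gives $\|u\|_{L^\infty} \lesssim \|u\|_{C^s_\ast}$, so
\[
\Big\|\sum_{|\ell - j|\geq 3} P_\ell \Op(\varphi_j) u\Big\|_{L^\infty} \lesssim C_N\Big(\sum_{\ell < j-2} 2^{-Nj} + \sum_{\ell > j+2} 2^{-N\ell}\Big) \|u\|_{C^s_\ast} \lesssim C_N \, j\, 2^{-Nj}\, \|u\|_{C^s_\ast}.
\]

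Finally I combine the two estimates. Because $m - s > 0$ by hypothesis, we have $2^{j(m-s)} \geq 1$, while for $N$ chosen large enough $j\,2^{-Nj}$ is uniformly bounded in $j$. Hence the off-diagonal contribution is absorbed into the near-diagonal one, giving
\[
\|P\Op(\varphi_j)u\|_{L^\infty} \lesssim 2^{j(m-s)} \|u\|_{C^s_\ast},
\]
which is the desired bound. The main (albeit mild) obstacle is ensuring that the decomposition $P = \sum_\ell P_\ell$ can be summed termwise when acting on a generic $u \in C^s_\ast$; this is legitimate because the two estimates above show that the series converges absolutely in $L^\infty$ once paired with $\Op(\varphi_j)u$.
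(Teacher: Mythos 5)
Your proposal is correct and follows essentially the same route as the paper: dyadic decomposition $P=\sum_k P_k$, the near-diagonal terms $|k-j|\leq 2$ handled via Lemma \ref{lemma:paley-ordrem} and the definition of the $C^s_\ast$-norm, the off-diagonal terms via Lemma \ref{lemma:paley-control} together with $\|u\|_{L^\infty}\lesssim\|u\|_{C^s_\ast}$, and the absorption of the latter using $m-s>0$. No substantive difference from the paper's argument.
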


\begin{proof}
This is a straightforward computation using both Lemma \ref{lemma:paley-control} (to deal with the terms $|j-k| \geq 3$ below) and Lemma \ref{lemma:paley-ordrem} (to deal with the terms $|j-k| \leq 2$):
\[
\begin{split}
\|P\Op(\varphi_j)f\|_{L^\infty} & \lesssim \sum_{k \in \N} \|P_k\Op(\varphi_j)f\|_{L^\infty} \\
& \lesssim \sum_{|k-j|\geq 3} \|P_k\Op(\varphi_j)f\|_{L^\infty} + \sum_{|k-j|\leq 2} \|P_k\Op(\varphi_j)f\|_{L^\infty} \\
& \lesssim \sum_{|k-j|\geq 3} C_N 2^{-N\max(j,k)}\|f\|_{L^\infty} + 2^{jm}\|\Op(\varphi_j)f\|_{L^\infty} \\
& \lesssim \|f\|_{L^\infty} + 2^{j(m-s)} \underbrace{2^{js}\|\Op(\varphi_j)f\|_{L^\infty}}_{\lesssim \|f\|_{C^s_\ast}} \\
& \lesssim 2^{j(m-s)} \|f\|_{C^s_\ast},
\end{split}
\]
where $N \geq 1$ is arbitrary.
\end{proof}

We can now start the proof of Proposition \ref{proposition:pseudo-borne-cs}.

\begin{proof}[Proof of Proposition \ref{proposition:pseudo-borne-cs}, case $s+m>0$, $s>0$]
We look at:
\[
\|\Op(\varphi_j)Pu\|_{L^\infty} \lesssim \sum_{|j-k| \geq 3} \|\Op(\varphi_j)P_k u\|_{L^\infty} + \|\Op(\varphi_j)\sum_{|j-k|\leq2} P_k u\|_{L^\infty} 
\]
The first term can be bounded using Lemma \ref{lemma:paley-control} and for $N \geq [s]+1$:
\[
\begin{split}
\sup_{j \in \N} 2^{js} \sum_{|j-k| \geq 3} \|\Op(\varphi_j)P_k u\|_{L^\infty}  & \leq \sup_{j \in \N} 2^{js} C_N \sum_{|j-k| \geq 3} 2^{-N \max(j,k)} \|u\|_{L^\infty} \\
& \lesssim \|u\|_{L^\infty} \lesssim \|u\|_{C^{s+m}_*}
\end{split}
\]
Concerning the second term, we use the same trick, writing $u_k := \Op(\varphi_k)u$.
\[
\begin{split}
\|\Op(\varphi_j)\sum_{|j-k|\leq 2} P_k u\|_{L^\infty} & \lesssim \|\sum_{|j-k|\leq 2} P_k u\|_{L^\infty} \\
& \lesssim \sum_{|j-k|\leq 2 } \sum_{|j-l| \geq 5} \|P_k u_l\|_{L^\infty} +  \sum_{|j-k|\leq 2 } \sum_{|j-l| \leq 4} \|P_k u_l\|_{L^\infty}
\end{split}
\]
The first term can be bounded just like before, using Lemma \ref{lemma:paley-control}. As to the second term, we use Lemma \ref{lemma:paley-ordrem}, which gives that
\[
\sup_{j \in \N} 2^{js} \sum_{|j-k|\leq 2 } \sum_{|j-l| \leq 4} \|P_k u_l\|_{L^\infty} \lesssim \sup_{j \in \N} 2^{js} 2^{jm} \sum_{|j-l|\leq4} \|\Op(\varphi_l)u\|_{L^\infty} \lesssim \|u\|_{C^{s+m}_*}
\]
Combining the previous inequalities, we obtain the desired result. Observe that the proof above also gives that for $P\in \Psi^m$, $m\in \R$,
\[
\| Pu \|_{C^{-m}_\ast} \lesssim \| u \|_{L^\infty}.
\] 
\end{proof}

Next, we want to deal with the case of negative $s$. To this end, we need to have some rough space on which 
our operators are bounded. Consider the space of distributions (for some constant $h>0$ small enough).
\[
C^{-2n}(Z) := (-h^2\Delta + \mathbbm{1})^n L^\infty(Z).
\]
equipped with the norm
\[
\|u \|_{C^{-2n}(Z)} := \inf\{ \| v \|_{L^\infty}\ |\ (-h^2\Delta + \mathbbm{1})^n v = u\}.
\]
\begin{lemma}
For $n \geq 1$ and $h$ small enough, $s>0$, and $\sigma\in S^{-2n +1-s}(T^*Z)$, $\Op(\sigma)$ is bounded on $C^{-2n}(Z)$. Also, for $n>n'$, $C^{-2n'}(Z) \subset C^{-2n}(Z)$.
\end{lemma}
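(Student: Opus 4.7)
The plan is to treat the inclusion first, since it feeds into the boundedness argument. Write $P := -h^2\Delta+\mathbbm{1}\in \Psi^2_{\text{small}}$, with scalar principal symbol $p(\xi) = 1+h^2 g_N^*(\xi,\xi)$. For the inclusion $C^{-2n'}(Z)\subset C^{-2n}(Z)$ it is enough to produce, for each integer $k\geq 1$, a bounded right inverse $P^{-k}:L^\infty\to L^\infty$: writing $u = P^{n'}v$ with $v\in L^\infty$ then gives $u = P^n(P^{-(n-n')}v)$ with $P^{-(n-n')}v\in L^\infty$. Since the hyperbolic cusp is complete with curvature bounded below, it is stochastically complete, so the heat semigroup $e^{t\Delta}$ acts as an $L^\infty$-contraction, and hence $\|e^{-tP}\|_{L^\infty\to L^\infty}\leq e^{-t}$ for $t\geq 0$. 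The subordination formula
\[
P^{-k} \;=\; \frac{1}{\Gamma(k)}\int_0^\infty t^{k-1}\,e^{-tP}\,dt
\]
then converges absolutely in operator norm on $L^\infty$, with $\|P^{-k}\|_{L^\infty\to L^\infty}\leq 1$.

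Next, for the boundedness of $\Op(\sigma)$ on $C^{-2n}(Z)$, pick $u = P^n v$ with $v\in L^\infty$ and apply Proposition \ref{prop:microlocal-calculus}. Since $\sigma\in S^{-2n+1-s}$ and $p^n\in S^{2n}$ are scalar symbols that commute pointwise, their composition in either order differs from $\Op(\sigma p^n)$ by an element of $\Psi^{-s}_{\text{small}}$, so that
\[
[\Op(\sigma),P^n] =: K \in \Psi^{-s}_{\text{small}}, \qquad \Op(\sigma)u = P^n\bigl(\Op(\sigma)v\bigr) + Kv.
\]
The bound $\|Tw\|_{C^{-m}_\ast}\lesssim \|w\|_{L^\infty}$ for $T\in \Psi^m$, noted at the end of the proof of Proposition \ref{proposition:pseudo-borne-cs}, yields $\|\Op(\sigma)v\|_{C^{2n-1+s}_\ast}\lesssim \|v\|_{L^\infty}$; since $2n-1+s>0$, the elementary embedding $C^r_\ast\hookrightarrow L^\infty$ (via $\sum_j 2^{-jr}<\infty$) gives $\Op(\sigma)v\in L^\infty$, so the first term lies in $P^n L^\infty = C^{-2n}(Z)$ with the required bound. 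Similarly $\|Kv\|_{C^s_\ast}\lesssim \|v\|_{L^\infty}$ gives $Kv\in L^\infty$, which the first step places in $C^{-2n}(Z)$. Taking the infimum over all decompositions $u = P^nv$ delivers $\|\Op(\sigma)u\|_{C^{-2n}}\lesssim \|u\|_{C^{-2n}}$.

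The single delicate point is the $L^\infty$-boundedness of $P^{-k}$: the heat-semigroup representation sidesteps a more technical parametrix/Neumann argument in which controlling the $L^\infty$ norm of the $\R$-smoothing remainders would be awkward. The assumption ``$h$ small enough'' plays no role in this particular argument and likely enters only through compatibility with the Paley-Littlewood cutoffs defining the Hölder-Zygmund spaces (cf.\ Remark \ref{remark:paley-independence}).
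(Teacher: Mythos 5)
Your treatment of the inclusion $C^{-2n'}(Z)\subset C^{-2n}(Z)$ --- reducing it to a bounded right inverse of $P^{k}$ on $L^\infty$ and producing that inverse by subordination from the sub-Markovian heat semigroup --- is correct and genuinely different from the paper, which instead builds a parametrix $(-h^2\Delta+\mathbbm{1})^n\Op(q_n)=\mathbbm{1}+h^N\Op'(r_n)$ and inverts $\mathbbm{1}+h^N\Op'(r_n)$ by a Neumann series on $L^\infty$ (this is exactly where ``$h$ small enough'' is used, together with Lemma \ref{lemma:boundedness-decaying-symbols}). Your route trades the Neumann series for the heat kernel, avoids the smallness of $h$ in this step, and only requires the standard distributional verification that $P^{k}P^{-k}=\mathbbm{1}$ (e.g.\ by duality with the $L^1$ semigroup); note also that sub-Markovianity alone gives the $L^\infty$ contraction, so the appeal to stochastic completeness is not needed.

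The boundedness part, however, has a genuine gap at precisely the point where this calculus is delicate. You set $K:=[\Op(\sigma),P^n]$, place it in $\Psi^{-s}_{\text{small}}$ via Proposition \ref{prop:microlocal-calculus}, and then invoke the bound $\|Kv\|_{C^{s}_\ast}\lesssim\|v\|_{L^\infty}$. But Proposition \ref{prop:microlocal-calculus} only identifies the composition remainder as $\Op(\tau)+R$ with $\tau\in S^{-s}$ and $R$ an $\R$-smoothing operator in the \emph{Sobolev} sense ($y^\rho H^{-k}\to y^\rho H^{k}$), whereas Proposition \ref{proposition:pseudo-borne-cs} --- including the remark at the end of its proof that you quote --- is established only for operators of the form $\Op(\tau)$ with $\tau$ a symbol. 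The paper stresses immediately after Proposition \ref{proposition:pseudo-borne-cs} (and in \S\ref{sssection:smoothing}) that it is \emph{not} known whether Sobolev-smoothing remainders are bounded on H\"older--Zygmund or $L^\infty$-type spaces on these noncompact cusps, so the estimate for $K$ does not follow from what you cite. The repair is the one the paper makes: since $(-h^2\Delta+\mathbbm{1})^n$ is a \emph{differential} operator, commute it through the explicit kernel of $\Op(\sigma)$ by integration by parts; derivatives falling on the cutoff $\chi(y'/y-1)$ only enlarge its support, so the correction term is an honest quantization $\Op'(\sigma'_h)$ with $\sigma'_h\in S^{-s}$ and a (larger, still uniformly properly supported) cutoff, to which the proof of Proposition \ref{proposition:pseudo-borne-cs} applies verbatim. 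With that replacement, your splitting $\Op(\sigma)u=P^n(\Op(\sigma)v)+\Op'(\sigma'_h)v$ --- which is the same decomposition the paper uses --- closes the argument.
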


\begin{proof}
First of all, we prove that $L^\infty \subset C^{-2n}$. To this effect, we consider parametrices
\[
(-h^2 \Delta + \mathbbm{1})^n\Op(q_n) = \mathbbm{1} + h^N \Op'(r_n),
\]
with $q_n$ of order $-2n$, and $r_n$ of order $-N$. Taking $N$ larger than $d+1$, by Lemma \ref{lemma:boundedness-decaying-symbols}, $\Op(r_n)$ is bounded on $L^\infty$ and $\Op(q_n)$ is bounded from $L^\infty$ to $C^{2n}_\ast\subset L^\infty$ by the previous Lemma. We get that for $v\in L^\infty$,
\[
(-h^2\Delta + \mathbbm{1})^n \underset{:=P_n}{\underbrace{\Op(q_n)(\mathbbm{1}+h^N\Op(r_n))^{-1}}} v = v,
\]
the inverse being defined by Neumann series for $h$ small enough and $P_n$ is of order $-2n$ so $P_nv \in C^{2n}_* \subset L^\infty$. The inclusion $C^{-2n'}\subset C^{-2n}$ follows decomposing $(-h^2\Delta+\mathbbm{1})^{n} = (-h^2\Delta+\mathbbm{1})^{n'}(-h^2\Delta+\mathbbm{1})^{n-n'}$.

For $f = (-h^2\Delta+\mathbbm{1})^n \widetilde{f} \in C^{-2n}$ (with $\widetilde{f} \in L^\infty$), observe that
\[
\Op(\sigma)f = \Op(\sigma)(-h^2\Delta+\mathbbm{1})^n \widetilde{f} = \Op'(\sigma_h') \widetilde{f} + (-h^2\Delta+\mathbbm{1})^n \Op(\sigma)\widetilde{f},
\]
with $\sigma \in S^{-2n+1-s}$ --- here $\Op'$ is a quantization with cutoffs around the diagonal with a larger support and $\sigma'_h \in S^{-s}$. By the last remark in the proof of the previous lemma, this is in $C^s_* + (-h^2\Delta+\mathbbm{1})^nC^{2n+s-1}_* \subset L^\infty + (-h^2\Delta+\mathbbm{1})^nL^\infty \subset C^{-2n}$.

\end{proof}

\begin{proof}[Proof of Proposition \ref{proposition:pseudo-borne-cs}, general case]
Given $p\in S^m$ and $n$, we can build parametrices
\[
(-h^2\Delta + \mathbbm{1})^k \Op(q_k) \Op(p) = \Op(p) + \Op(r_k),
\]
with $q_n \in S^{-2k}$, $r_n \in S^{-2n- d-1}$. With $k \geq n + (m+d+1)/2$, we get that for $u\in C^{-2n}$,
\[
\Op(p)u = (-h^2\Delta + \mathbbm{1})^k \Op(q_k) \Op(p) u - \Op(r_k) u \in C^{-2(n+k)} + C^{-2n} = C^{-2(n+k)}.
\]
In particular, $\Op(p)$ is continuous from $C^{-2n}$ to $C^{-4n - 2\lceil (m - d - 1)/2\rceil}$. Next, inspecting the proof of Lemma \ref{lemma:paley-control}, we find that it also applies to the spaces $C^{-2n}$. In particular, we obtain that for all $n\geq 0$, and every $s\in \R$,
\begin{equation}\label{eq:rough-bound}
\| \Op(p) u \|_{C^s_\ast} \leq C \| u \|_{C^{s+m}_\ast} +  C \| u \|_{C^{-2n}}.
\end{equation}
So far, we have proved that for $n\geq 0$, $s\in \R$, $m\in \R$, $\Op(p)$ is continuous as a map
\[
\{ u \in C^{-2n}\ |\ \|u\|_{C^{s+m}_\ast} <\infty\} \to \{ u\in C^{-4n - 2\lceil (m - d - 1)/2\rceil}\ |\ \| u\|_{C^{s}_\ast} < \infty \}.
\]
We would like to replace $-4n - 2\lceil (m - d - 1)/2\rceil$ by a number that only depends on $s$. To this end, we pick $u \in C^{-4n - 2\lceil (m - d - 1)/2\rceil}$ such that $\| u\|_{C^{s}_\ast} < \infty$. First off, if $s> 0$, then $u \in L^\infty$. So we assume that $s\leq 0$. Then for all $\epsilon>0$, using the estimate \eqref{eq:rough-bound},
\[
\| \Op( \langle \xi \rangle^{s - \epsilon}) u \|_{C^\epsilon_\ast} < \infty.
\]
Using parametrices again, we can find $r_N \in S^{- s - \epsilon}$ and $q \in S^{s+\epsilon}$ so that
\[
u = \Op(q_{s+\epsilon}) \Op(\langle \xi \rangle^{-s - \epsilon}) u + \Op(r_N)u.
\]
Since $\Op(r_N)u, \Op(\langle \xi \rangle^{-s - \epsilon}) u \in L^\infty$, we can apply the first part of the proof and obtain $u \in C^{-2 \lceil (s+\epsilon+d+1)/2\rceil}$.
\end{proof}

\subsection{Correspondence between Hölder-Zygmund spaces and usual Hölder spaces}
\label{sec:embeddings}

We prove that the Hölder-Zygmund spaces $C^s_\ast(Z)$ coincide with the usual spaces $C^s(Z)$ when $s \in \R_+ \setminus \N$.

\begin{proposition}
\label{proposition:correspondence-espaces-holder}
For all $s \in \R_+ \setminus \N$, $C^s_\ast(Z) = C^s(Z)$. More precisely, there exists a constant $C_s \geq 1$ such that:
\[
1/C_s \times \|f\|_{C^s_\ast(Z)}  \leq  \|f\|_{C^s(Z)} \leq C_s \|f\|_{C^s_\ast(Z)}.
\]
\end{proposition}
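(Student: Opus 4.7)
The plan is to prove the equivalence by localizing to dyadic strips $S_n := \{2^n \leq y \leq 2^{n+1}\}$ for $n \in \Z$ and then rescaling: on each such strip, the hyperbolic geometry becomes (after the dilation $(y,\theta) \mapsto (2^{-n}y, 2^{-n}\theta)$) uniformly equivalent to a fixed bounded Euclidean geometry, so that the statement reduces to the classical equivalence of $C^s$ and $C^s_\ast$ on $\R^{d+1}$.

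First, I would fix a partition of unity $(\chi_n)_{n\in \Z}$ with $\chi_n(y) = \chi_0(2^{-n}y)$ for a smooth $\chi_0$ supported in $(1/2,4)$ and equal to $1$ on $[1,2]$. Since the frame $X_y = y\partial_y$ is invariant under $y \mapsto 2^n y$, the cutoffs $\chi_n$ have uniform intrinsic $C^k(Z)$ bounds, and the multiplications by $\chi_n$ define pseudo-differential operators whose symbols satisfy uniform bounds in $n$. One then seeks the two equivalences
\[
\|f\|_{C^s(Z)} \asymp \sup_{n\in\Z} \|\chi_n f\|_{C^s(Z)}, \qquad \|f\|_{C^s_\ast(Z)} \asymp \sup_{n\in\Z} \|\chi_n f\|_{C^s_\ast(Z)}.
\]
The first is immediate from the bounded overlap of the supports. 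The second uses Proposition \ref{proposition:pseudo-borne-cs} for the direction $\lesssim$ (boundedness of multiplication by $\chi_n$ on $C^s_\ast$, uniformly in $n$), while the converse follows from $f = \sum_n \chi_n f$ as a locally finite sum together with Lemma \ref{lemma:paley-control} applied to the commutators $[\Op(\varphi_j), \chi_n]$, which control the off-diagonal interactions between the Littlewood-Paley cutoff and the strip cutoff.

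Next, I would perform on each strip the rescaling $(\tilde{y},\tilde{\theta}) := 2^{-n}(y,\theta)$ and consider $\tilde{f}(\tilde{y},\tilde{\theta},\zeta) := f(2^n \tilde{y}, 2^n \tilde{\theta}, \zeta)$, whose support after multiplication by $\tilde{\chi}_n$ lies in the fixed bounded region $\{1/2 \leq \tilde{y} \leq 4\}$. In these coordinates the hyperbolic metric $y^{-2}(dy^2 + d\theta^2)$ becomes $\tilde{y}^{-2}(d\tilde{y}^2 + d\tilde{\theta}^2)$ (uniformly Euclidean on the bounded region), the frames $y\partial_y, y\partial_\theta$ become $\tilde{y}\partial_{\tilde{y}}, \tilde{y}\partial_{\tilde{\theta}}$ (uniformly equivalent to $\partial_{\tilde{y}}, \partial_{\tilde{\theta}}$), and the hyperbolic Japanese bracket $\sqrt{1+y^2|\xi|^2}$ becomes $\sqrt{1+\tilde{y}^2|\tilde{\xi}|^2}$ under the dual rescaling $\tilde{\xi} := 2^n \xi$, uniformly equivalent to the standard Euclidean bracket $\langle \tilde{\xi} \rangle$. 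Consequently the hyperbolic norms $\|\chi_n f\|_{C^s(Z)}$ and $\|\chi_n f\|_{C^s_\ast(Z)}$ are uniformly (in $n$) equivalent to the Euclidean norms of $\tilde{\chi}_n \tilde{f}$ on $\R^{d+1}$.

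Finally I would invoke the classical Euclidean equivalence $\|\cdot\|_{C^s(\R^{d+1})} \asymp \|\cdot\|_{C^s_\ast(\R^{d+1})}$ for $s\in \R_+\setminus\N$ (see \cite{Taylor-III-96edition}) and take the supremum over $n$. The main obstacle is the equivalence $\|f\|_{C^s_\ast(Z)} \asymp \sup_n \|\chi_n f\|_{C^s_\ast(Z)}$ in the first step: the crux is a kernel computation in the spirit of Lemma \ref{lemma:paley-control}, showing that $\Op(\varphi_j)$ (which localizes in frequency at scale $2^j$) essentially commutes with the strip cutoffs $\chi_n$ (which localize in $y$ at scale $2^n$) up to operators with rapid off-diagonal decay in both $j$ and $n$, once again exploiting the uniform proper support of the quantization.
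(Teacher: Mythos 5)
Your localize-and-rescale strategy is genuinely different from the paper's proof (which proceeds by direct kernel estimates: one direction via the splitting of the Littlewood--Paley sum at the scale of $d(x,x')$ together with the Bernstein-type bound of Lemma \ref{lemma:paley-control2}, the other via inserting $f(x')-f(x)$ into the kernel and controlling the error term $f(x)\,\Op(\varphi_j)\mathbf{1}$ through Lemma \ref{lemma:op-phi-j-constante}). Your first step is essentially fine: the uniform proper support of the kernels (the cutoff $\chi(y'/y-1)$) gives bounded overlap in $n$, so the equivalence $\|f\|_{C^s_\ast}\asymp\sup_n\|\chi_n f\|_{C^s_\ast}$ does not even require a commutator estimate --- and note that Lemma \ref{lemma:paley-control} as stated concerns compositions $\Op(\varphi_j)\Op(\varphi_k)$ of two frequency-localized pieces, not commutators with spatial cutoffs, so it cannot be ``applied to $[\Op(\varphi_j),\chi_n]$'' without reproving a variant of it. The rescaling step is also sound, since the dilation $(y,\theta)\mapsto 2^{-n}(y,\theta)$ is a hyperbolic isometry under which the symbols $\varphi_j$ and the quantization are exactly equivariant.

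The genuine gap is the sentence ``Consequently the hyperbolic norms \ldots are uniformly equivalent to the Euclidean norms of $\tilde\chi_n\tilde f$ on $\R^{d+1}$,'' which is where the whole analytic content sits. Uniform comparability of the hyperbolic and Euclidean Japanese brackets on the strip only says that the symbols $\varphi_j$ live on comparable dyadic annuli; it does not compare the \emph{norms}, because even restricted to $\{1/2\leq\tilde y\leq 4\}$ the operators $\Op(\varphi_j)$ are variable-coefficient pseudo-differential operators (the symbol depends on $y$, and the quantization carries the cutoff $\chi(y'/y-1)$ and the factor $(y/y')^{(d+1)/2}$), not Fourier multipliers, so the classical equivalence $C^s(\R^{d+1})=C^s_\ast(\R^{d+1})$ cannot be invoked as a black box. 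To close this you must compare the family $\{\Op(\varphi_j)\}$ with a Euclidean Littlewood--Paley family $\{\phi_k(D)\}$ on the strip: you need $L^\infty$ bounds for the near-diagonal terms, rapid decay $\|\Op(\varphi_j)\phi_k(D)\|_{L^\infty\to L^\infty}\lesssim 2^{-N\max(j,k)}$ for $|j-k|\geq 3$ (in particular for $k=0$, which is exactly the issue that $\Op(\varphi_j)$ does not annihilate constants, handled in the paper by Lemma \ref{lemma:op-phi-j-constante}), and a Bernstein-type bound for the hyperbolic pieces to reconstruct the H\"older seminorm. These are precisely the estimates of Lemmas \ref{lemma:paley-ordrem}, \ref{lemma:paley-control}, \ref{lemma:paley-control2} and \ref{lemma:op-phi-j-constante}, and the paper's remark after the definition of $C^s_\ast$ makes clear that the strip-localized comparison \emph{is} the content of Proposition \ref{proposition:correspondence-espaces-holder}, so citing it away is circular. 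What your reduction genuinely buys is that these kernel estimates only need to be proved on a region of uniformly bounded Euclidean geometry (standard non-stationary phase, no hyperbolic measure or lattice sums), which is a real simplification; but as written the key step is asserted rather than proved.
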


For the sake of simplicity, we prove the previous proposition in the case $s \in (0,1)$, the general case being handled in a similar fashion. This will require a preliminary

\begin{lemma}
\label{lemma:op-phi-j-constante}
For all $N \geq 0$, there exists a constant $C_N > 0$ such that for all $j \in \N$:
\[
\|\Op(\varphi_j)\mathbf{1}\|_{L^\infty} \leq C_N 2^{-jN}
\]
\end{lemma}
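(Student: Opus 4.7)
The key observation is that $\Op(\varphi_j)\mathbf{1}$ is in fact constant on $Z$. Indeed, the symbol $\varphi_j(x,\xi)$ depends on $x$ only through the combination $y|\xi|$, and the quantization prescription --- the cutoff $\chi(y'/y-1)$ together with the conjugation by $y^{(d+1)/2}$ --- is invariant under the group of hyperbolic isometries generated by translations $\theta \mapsto \theta+\theta_0$ and dilations $(y,\theta)\mapsto (\lambda y,\lambda\theta)$. Since $\mathbf{1}$ is invariant under this group, so is $\Op(\varphi_j)\mathbf{1}$; it therefore equals a constant $c_j$.

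To estimate $c_j$ I compute it directly. Starting from $\Op(\varphi_j)\mathbf{1}(y,\theta) = y^{(d+1)/2}\Op^w(\varphi_j)_\chi[y^{-(d+1)/2}](y,\theta)$ and unfolding the Weyl kernel on $\R^{d+1}$, I integrate first in $\theta'\in \R^d$. Since $y^{-(d+1)/2}$ is $\theta'$-independent, this produces a Dirac mass at $J=0$ in the dual variable to $\theta$, restricting the symbol to $\varphi_j(\bar y,Y,0)$. Substituting $u=y'/y$, then $s = 2(1-u)/(1+u)$ and $\eta = y(1+u)Y/2$, all powers of $y$ cancel, which confirms that the expression is indeed a constant and yields
\[
c_j \;=\; C \int_{\R} \hat\phi(\eta)\, g_j(|\eta|)\, d\eta,
\]
where $\phi \in C^\infty_{\mathrm{comp}}(\R)$ is smooth near $s=0$ (assembled from $\chi$, the change-of-variable Jacobian, and the factors $(2\pm s)^{\pm(d+1)/2}$), and $g_j(r) = \psi(2^{-j}\sqrt{1+r^2}) - \psi(2^{-j+1}\sqrt{1+r^2})$ is the one-dimensional restriction of the annular cutoff defining $\varphi_j$.

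For $j\geq 1$, $g_j$ is supported in $\{|\eta|\gtrsim 2^j\}$ with sup-norm bounded uniformly in $j$, while $\hat\phi$ is Schwartz. For any $M$,
\[
|c_j| \;\lesssim\; \|g_j\|_\infty \int_{|\eta|\gtrsim 2^j} \frac{d\eta}{(1+|\eta|)^M} \;\lesssim_M\; 2^{-j(M-1)},
\]
and choosing $M=N+1$ gives the claim. The cases $j=0$ (and small $j$) are trivial since $\|\Op(\varphi_j)\mathbf{1}\|_{L^\infty}$ is uniformly bounded by Lemma \ref{lemma:paley-ordrem}.

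The main obstacle is purely computational: carefully tracking Jacobians and powers of $y$ through the successive changes of variables to verify that the $y$-dependence cancels. This cancellation is structural --- it is forced by the hyperbolic dilation invariance identified in the first step --- but still requires patience. Once the integral is placed in the Fourier-pairing form above, the decay follows immediately from the frequency-support disjointness between $g_j$ and the essentially zero-frequency content of $\mathbf{1}$.
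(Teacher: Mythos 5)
Your proof is correct and follows essentially the same route as the paper: after integrating out $\theta'$ (restricting to $J=0$), you perform the same substitutions ($u=y'/y$, the rescaled frequency $\eta$ which is the paper's $Z'$, and $s=2(1-u)/(1+u)$ which is the paper's $h$ up to a factor $2$), arriving at the same pairing of a Schwartz function against the annular cutoff supported at $|\eta|\sim 2^j$ and the same rapid-decay conclusion. The upfront dilation-invariance remark explaining why $\Op(\varphi_j)\mathbf{1}$ is constant is a pleasant addition, but the paper obtains the same fact directly from the cancellation of the $y$-powers in the identical computation.
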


\begin{proof}
Let us start by giving an explicit expression:
\[
\Op(\varphi_j)\mathbf{1} = \int_{\HH^{d+1}} \int_{\R^{d+1}} \chi(y'/y-1)(y/y')^{\frac{d+1}{2}} e^{i\langle x-x',\xi\rangle} \sigma_j\left(\dfrac{y+y'}{2},\xi\right) \dd \xi \dd y' \dd\theta'.
\]
Since there is no dependence in $\theta$, we can remove $\theta$ and $J$ and get
\[
\int_{y'=0}^{+\infty} \int_{Y=-\infty}^{+\infty} \chi(y'/y-1)(y/y')^{\frac{d+1}{2}} e^{i\langle y-y',Y\rangle} \sigma_j\left(\dfrac{y+y'}{2},Y,J=0\right) \dd Y \dd y'
\]
The first (symplectic) change of variable $y' = y u, Y = Z/y$ yields:
\[
\Op(\varphi_j)\mathbf{1} = \int_{u=1/C}^{C} \left(\int_{Z = -\infty}^{+\infty} \chi(u-1)u^{-\frac{d+1}{2}} e^{i\langle 1-u,Z\rangle} \sigma_j\left(\dfrac{1+u}{2},Z\right) \dd Z\right) \dd u,
\]
where
\[
\sigma_j\left(\dfrac{1+u}{2},Z\right) =  \left[\psi\left(2^{-j}\sqrt{1+\left(\frac{1+u}{2} Z\right)^2}\right) - \psi\left(2^{-j+1}\sqrt{1+\left(\frac{1+u}{2} Z \right)^2}\right)\right]
\]
The second change of variable $Z' = \frac{1+u}{2} Z$ gives:
\[
\begin{split}
\Op(\varphi_j)\mathbf{1}  = \int_{u=1/C}^{C} \frac{2\dd u}{1+u}\int_{Z' = -\infty}^{+\infty}& \chi(u-1) u^{-\frac{d+1}{2}} e^{2i\langle \frac{1-u}{1+u},Z' \rangle} \\
			& \left(\psi(2^{-j}(1+Z'^2)^{1/2}) - \psi(2^{-j+1}(1+Z'^2)^{1/2})\right) \dd Z' ,
\end{split}
\]
and for the sake of simplicity, we now forget in this expression the term $u^{-\frac{d+1}{2}}$ (as $u$ is integrates between $1/C$ and $C$, this is uniformly bounded as well as all its derivatives). The third change of variable $h := \frac{1-u}{1+u}$ (i.e. $u = \frac{1-h}{1+h}$) leads to 
\[
\begin{split}
& \int_{\R} \left(\psi(2^{-j}(1+Z'^2)^{1/2}) - \psi(2^{-j+1}(1+Z'^2)^{1/2})\right) \left( \int_{\R} e^{2i\langle h,Z' \rangle} \chi\Big( \frac{-2h}{1+h}\Big) \frac{2 \dd h}{1+h} \right) \dd Z' \\
 & = \int_{Z' = -\infty}^{+\infty} \left(\psi(2^{-j}(1+Z'^2)^{1/2}) - \psi(2^{-j+1}(1+Z'^2)^{1/2})\right) s(Z') \dd Z',
 \end{split}
\]
for some Schwartz functions $s \in \mc{S}(\R)$ (since $h \mapsto  \chi(-2h/(1+h))$ is smooth with compact support). Then, given $N \geq 0$, there exists $C_N >0$ such that $s(Z') \leq C_N \langle Z' \rangle^{-N-2020}$ and thus, using the fact that $Z' \mapsto \psi(2^{-j}(1+Z'^2)^{1/2}) - \psi(2^{-j+1}(1+Z'^2)^{1/2}$ is supported on an annulus at height $\sim 2^j$, we obtain:
\[
\begin{split}
& \left|  \int_{Z' = -\infty}^{+\infty} \left(\psi(2^{-j}(1+Z'^2)^{1/2}) - \psi(2^{-j+1}(1+Z'^2)^{1/2})\right) s(Z') \dd Z' \right| \\
&  \leq \int_{2^{j-2} \leq |Z'| \leq 2^{j+2}} \underbrace{\langle Z' \rangle^{-N}}_{\lesssim 2^{-jN}} \langle Z' \rangle^{-2020} \dd Z' \lesssim 2^{-jN}.
\end{split}
\]

\end{proof}

We can now prove Proposition \ref{proposition:correspondence-espaces-holder}:

\begin{proof}
We first prove that there exists $C > 0$ such that for all functions $f \in C^s_\ast$, $\|f\|_{C^s} \leq C\|f\|_{C^s_\ast}$. For $x,x' \in Z$ such that $d(x,x') \leq 1$, we write:
\[
\begin{split}
|f(x)-f(x')| & = \left| \sum_{j \in \N} \left(\Op(\varphi_j)f\right)(x) - \left(\Op(\varphi_j)f\right)(x') \right|  \\
& \leq \sum_{j \in \N} \left|\left(\Op(\varphi_j)f\right)(x) - \left(\Op(\varphi_j)f\right)(x')\right|
\end{split}
\]
Let $N \in \N \setminus \left\{0\right\}$ be the unique integer such that $2^{-N} \leq d(x,x') \leq 2^{-N+1}$. We split the previous sum between $j \geq N$ and $j < N$. First:
\[
\begin{split}
\sum_{j \geq N} \left|\left(\Op(\varphi_j)f\right)(x) - \left(\Op(\varphi_j)f\right)(x')\right| & \lesssim \sum_{j \in \N} \|\Op(\varphi_j)f\|_{L^\infty} \\
& \lesssim \sum_{j \geq N} 2^{-js} \|f\|_{C^s_\ast} \\
& \lesssim 2^{-sN} \|f\|_{C^s_\ast} \lesssim \|f\|_{C^s_\ast} d(x,x')^s
\end{split}
\]
Now, using Lemma \ref{lemma:paley-control2} with $P=\nabla$ (note that $0 < s < m=1$), one has:
\[
\begin{split}
\sum_{j < N} \left|\left(\Op(\varphi_j)f\right)(x) - \left(\Op(\varphi_j)f\right)(x')\right| & \lesssim \sum_{j < N} \|\nabla \Op(\varphi_j) f\|_{L^\infty} d(x,x') \\
& \lesssim 2^{N(1-s)} \|f\|_{C^s_\ast} d(x,x') \lesssim \|f\|_{C^s_\ast} d(x,x')^s 
\end{split}
\]
Eventually, using the obvious estimate $\|f\|_{L^\infty} \lesssim \|f\|_{C^s_\ast}$, one obtains $\|f\|_{C^s} \lesssim \|f\|_{C^s_\ast}$.

Let us now prove the other estimate. We start with:
\[ 
\begin{split}
\Op(\varphi_j)f(x) & = \int_{\HH^{d+1}} \chi(y'/y-1)(y/y')^{\frac{d+1}{2}} K^w_{\varphi_j}(x,x')f(x')\dd x' \\
& = \int_{\HH^{d+1}} \chi(y'/y-1)(y/y')^{\frac{d+1}{2}} K^w_{\varphi_j}(x,x')(f(x')-f(x))\dd x' \\
& \qquad + f(x) \Op(\varphi_j)\mathbf{1}
\end{split}
\]
According to Lemma \ref{lemma:op-phi-j-constante} (with $N=1$), the last term is bounded by $\lesssim \|f\|_{L^\infty} 2^{-j} \lesssim \|f\|_{C^s}2^{-j}$. As to the first term, using the Hölder property of $f$:
\[
\begin{split}
& \left| \int_{\HH^{d+1}} \chi(y'/y-1)(y/y')^{\frac{d+1}{2}} K^w_{\varphi_j}(x,x')(f(x')-f(x))\dd x' \right| \\
& \lesssim \int_{\HH^{d+1}} \chi(y'/y-1)(y/y')^{\frac{d+1}{2}} \left|K^w_{\varphi_j}(x,x')\right| d(x,x')^s \dd x' \|f\|_{C^s}
\end{split}
\]
Now, following the exact same arguments as the ones developed in Lemma \ref{lemma:paley-ordrem} and using the crucial fact that on the support of the kernel of the pseudo-differential operator (namely for $y' \in [y/C,yC]$) one can bound the distance $d(x,x') \lesssim |\log(y/y')| + \frac{|\theta-\theta'|}{y}$, one can prove the estimate
\[
\sup_{x \in \HH^{d+1}} \int_{\HH^{d+1}} \chi(y'/y-1)(y/y')^{\frac{d+1}{2}} \left|K^w_{\varphi_j}(x,x')\right| d(x,x')^s \dd x' \lesssim 2^{-js}
\]
The sought estimate $\|f\|_{C^s_\ast} \lesssim \|f\|_{C^s}$ then follows immediately.

\end{proof}

\subsection{Embedding estimates}

Using the Paley-Littlewood decompositions in the cusps, we are going to prove the embedding estimates. We can actually strengthen them to the following two Lemmas:

\begin{lemma}
\label{lemma:embedding-holder-to-sobolev-spaces}
For all $s,s' \in \R$ such that $s' > s$, $\rho, \rho' \in \R$ such that $\rho' > \rho-d/2$,
\[
y^\rho C_*^{s'}(N,L) \hookrightarrow y^{\rho'} H^s(N,L)
\]
is a continuous embedding. 
\end{lemma}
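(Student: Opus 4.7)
The plan is to begin with a partition of unity that separates the contribution of $f$ between the compact core $N_0$, on which the weights $y^\rho, y^{\rho'}$ are bounded and the statement reduces to the classical embedding between Hölder-Zygmund and Sobolev spaces on a compact manifold, and the cuspidal ends, on which the non-trivial analysis in $y$ must be performed. Writing $f = y^\rho u$ with $u \in C^{s'}_\ast$ supported in a single cusp end $Z_{\ell,a_0}\times F_\ell$, and setting $a := \rho - \rho'$, the hypothesis $\rho' > \rho - d/2$ translates to $a < d/2$, and the statement reduces to proving
\[
\|y^a u\|_{H^s} \lesssim \|u\|_{C^{s'}_\ast}.
\]

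A direct computation using the cuspidal Riemannian measure $y^{-d-1}\,\dd y\,\dd\theta\,\dd\vol_{F_\ell}$ yields
\[
\|y^a\|_{L^2}^2 \asymp \int_{a_0}^{+\infty} y^{2a-d-1}\,\dd y < \infty
\]
precisely under the hypothesis $a < d/2$. Combined with the embedding $C^{s'}_\ast \hookrightarrow L^\infty$ for $s' > 0$ (a consequence of the geometric-series bound $\sum_j 2^{-js'} < \infty$ applied to the Paley-Littlewood pieces) and the trivial inclusion $L^2 \hookrightarrow H^s$ when $s \leq 0$, this disposes of the case $s \leq 0$ via $\|y^a u\|_{L^2} \leq \|u\|_{L^\infty}\|y^a\|_{L^2}$. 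For $s > 0$ I would use the operator $\Lambda_s \in \Psi^s_{\text{small}}$ introduced after Proposition \ref{prop:microlocal-calculus} to recast the $H^s$-norm as $\|y^a u\|_{H^s} \lesssim \|\Lambda_s(y^a u)\|_{L^2}$. The key algebraic fact would then be that conjugation by $y^a$ is internal to the pseudo-differential calculus, i.e.\ $y^{-a}\Psi^m_{\text{small}} y^a \subset \Psi^m_{\text{small}}$ with unchanged principal symbol. Granting this, one writes $\Lambda_s(y^a u) = y^a Q u$ for some $Q \in \Psi^s_{\text{small}}$; Proposition \ref{proposition:pseudo-borne-cs} together with $s' - s > 0$ gives $Qu \in C^{s'-s}_\ast \hookrightarrow L^\infty$, whence $\|y^a Q u\|_{L^2} \leq \|Qu\|_{L^\infty}\|y^a\|_{L^2} \lesssim \|u\|_{C^{s'}_\ast}$.

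The main obstacle is precisely the conjugation claim $y^{-a}\Psi^m_{\text{small}} y^a \subset \Psi^m_{\text{small}}$, which must be verified at the level of Schwartz kernels. The heuristic reason for its validity is that in the natural cusp frame $X_y = y\partial_y$, $X_\theta = y\partial_\theta$, $X_\zeta$, every iterated derivative of $y^a$ either vanishes or returns a constant multiple of $y^a$ itself, so after pulling out the overall weight the symbol estimates of $\sigma$ are preserved. Rigorously, the kernel of $y^{-a}\Op(\sigma) y^a$ is $(y'/y)^a K^w_\sigma(x,x')$, and on the support of the diagonal cutoff $\chi(y'/y - 1)$ from \eqref{equation:cutoff} the factor $(y'/y)^a$ is smooth, uniformly bounded, and equal to $1$ on the diagonal; expanding it in powers of $(y-y')/((y+y')/2)$ and applying the identity \eqref{equation:iteration-kernel}, which trades each such factor for a derivative of the symbol one order lower, identifies $y^{-a}\Op(\sigma) y^a$ as $\Op(\sigma)$ plus a remainder of order $m-1$ in the small class. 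Should this kernel computation prove too delicate, a backup plan is to use the Paley-Littlewood characterization $\|v\|_{H^s}^2 \sim \sum_k 2^{2ks}\|\Op(\varphi_k) v\|_{L^2}^2$, commute each $\Op(\varphi_k)$ past $y^a$ via the same conjugation argument at fixed frequency scale, and conclude by dyadic summation against the defining decay $\|\Op(\varphi_j) u\|_{L^\infty} \leq 2^{-js'}\|u\|_{C^{s'}_\ast}$.
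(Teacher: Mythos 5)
Your proposal is correct in substance and rests on the same two pillars as the paper's own proof: commuting the weight through the quantization (which merely replaces the cutoff $\chi(y'/y-1)$ of \eqref{equation:cutoff} by $(y'/y)^a\chi(y'/y-1)$, exactly the observation the paper records just before its proof, and consistent with the remark after Proposition \ref{prop:microlocal-calculus} that conjugation by $y^\rho$ stays in $\Psi^*_{\text{small}}$), and the integrability of $y^{2a-d-1}$ on the cusp precisely when $a<d/2$. Where you differ is the packaging: the paper never leaves the dyadic level, using $\|v\|_{H^s}^2\asymp\sum_j 4^{js}\|\Op(\varphi_j)v\|_{L^2}^2$, commuting the weight past each $\Op(\varphi_j)$, bounding each block in $L^\infty$ against the square-integrable extra weight $y^{-d/2+\eps}$, and summing the geometric series $4^{j(s-s')}$ — this is literally your ``backup plan''. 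Your primary route instead funnels $H^s$ into $L^2$ through the single operator $\Lambda_s$ and then invokes Proposition \ref{proposition:pseudo-borne-cs} together with $C^{s'-s}_\ast\hookrightarrow L^\infty$. That is viable, but it buys you two chores the dyadic proof avoids: (i) the reduction $\|v\|_{H^s}\lesssim\|\Lambda_s v\|_{L^2}$ is only true modulo a smoothing remainder from the parametrix $\Lambda_{-s}\Lambda_s=\mathbbm{1}+R$, and absorbing $\|R(y^a u)\|_{H^s}$ sends you back to a low-regularity instance of the very estimate being proved; (ii) Proposition \ref{proposition:pseudo-borne-cs} must be applied to the conjugated quantization, so you need the independence of the $C^s_\ast$ norms from the choice of cutoff (true, cf. Remark \ref{remark:paley-independence}, but it should be cited). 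Finally, note a case-coverage slip: for $s\le 0$ you invoke $C^{s'}_\ast\hookrightarrow L^\infty$, which requires $s'>0$, whereas the lemma allows $s<s'\le 0$; as written neither of your branches treats that case, although your $\Lambda_s$ argument (or the dyadic backup) extends to it verbatim since it only uses $s'-s>0$.
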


\begin{lemma}
\label{lemma:embedding-hs-in-ck}
For all $s, \rho \in \R$, 
\[
y^\rho H^{s}(N,L) \hookrightarrow y^{\rho+d/2}C_*^{s-(d+1)/2}(N,L)
\]
is a continuous embeddings.
\end{lemma}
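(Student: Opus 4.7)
The plan is a Paley–Littlewood–Bernstein argument adapted to the hyperbolic cusp geometry. Using a partition of unity subordinate to $N = N_0 \cup \bigcup_\ell N_\ell$, I first reduce to two cases: (i) $u$ supported in the compact core $N_0$, where the weight $y$ is bounded above and below and the result reduces to the classical Sobolev embedding on a compact manifold with boundary, and (ii) $u$ supported in a single cuspidal end $Z_\ell \times F_\ell$. In the latter case, I exploit the dyadic decomposition $u = \sum_j \Op(\varphi_j) u$ of Section \ref{section:holder-zygmund}; the compact fiber $F_\ell$ is absorbed into the Paley–Littlewood setup and plays no essential role in the key estimate, so the essential content lies on the hyperbolic cusp $Z = (a,\infty)_y \times (\R^d/\Lambda)_\theta$.

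The heart of the argument is a \emph{Bernstein-type inequality}: for every $j \in \N$,
\[
\| \Op(\varphi_j) u \|_{L^\infty(N)} \lesssim 2^{j(d+1)/2} \| u \|_{L^2(N)}.
\]
To prove this, I compute the Schwartz kernel of $\Op(\varphi_j)$ with respect to the hyperbolic measure $d\mu$: the conjugation by $y^{(d+1)/2}$ built into the quantization \eqref{equation:cutoff} yields the kernel $\widetilde K_j(x,x') = (yy')^{(d+1)/2}\chi(y'/y - 1) K^w_{\varphi_j}(x,x')$, where $K^w_{\varphi_j}$ is the Euclidean Weyl kernel of $\varphi_j$. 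Cauchy–Schwarz gives $|\Op(\varphi_j) u(x)| \leq \| \widetilde K_j(x,\cdot)\|_{L^2(d\mu)} \| u\|_{L^2(d\mu)}$, and Plancherel in $\xi$, combined with the fact that $\varphi_j(y^{\ast},\cdot)$ is supported in an Euclidean ball of volume $\lesssim (2^j/y)^{d+1}$ (uniformly on the cutoff region $y' \sim y$ enforced by $\chi$, so $y^\ast = (y+y')/2 \sim y$), yields
\[
\| \widetilde K_j(x,\cdot)\|_{L^2(d\mu)}^2 \lesssim y^{d+1} \cdot (2^j/y)^{d+1} = 2^{j(d+1)}.
\]

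Granting Bernstein, the unweighted embedding $H^s(N) \hookrightarrow C^{s-(d+1)/2}_\ast(N)$ follows from the standard Paley–Littlewood machine: decomposing $u = \sum_k \Op(\varphi_k) u$ and inserting $\Op(\varphi_j)$, the near-diagonal terms $|k-j| \leq 2$ contribute $\lesssim 2^{j(d+1)/2}\|\Op(\varphi_k) u\|_{L^2} \lesssim 2^{j((d+1)/2 - s)}\|u\|_{H^s}$ via Bernstein and the almost-orthogonality $\|\Op(\varphi_k) u\|_{L^2} \lesssim 2^{-ks} \|u\|_{H^s}$, while the off-diagonal terms $|k-j| \geq 3$ decay rapidly in $\max(j,k)$ thanks to Lemma \ref{lemma:paley-control}. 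Multiplying by $2^{j(s-(d+1)/2)}$ and taking the supremum over $j$ delivers the unweighted embedding. To pass to the weighted version, I note that $y^{-\rho - d/2}$ lies in $S^0$ (the function is uniformly bounded on $N$ along with all its derivatives in the admissible frame $\{y\partial_y, y\partial_\theta, \partial_\zeta\}$, since $y \geq a > 0$ globally), so multiplication by it is bounded on both $H^s$ and $C^{s-(d+1)/2}_\ast$ by Propositions \ref{prop:microlocal-calculus} and \ref{proposition:pseudo-borne-cs}; rewriting $\|u\|_{y^{\rho+d/2}C^{s-(d+1)/2}_\ast} = \|y^{-\rho-d/2}u\|_{C^{s-(d+1)/2}_\ast}$ and applying the unweighted embedding to $y^{-\rho}u \in H^s$ concludes the proof, with the extension to sections of the admissible bundle $L$ being routine via local trivializations. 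The main technical obstacle is the Bernstein kernel estimate itself, which requires careful bookkeeping of how the $y^{(d+1)/2}$-conjugation in the quantization exactly compensates for the $y$-dependent support of $\varphi_j$ to produce a bound independent of $y$.
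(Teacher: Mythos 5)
The step that fails is the Bernstein inequality, and with it the unweighted embedding you deduce from it. Your kernel computation treats the Schwartz kernel of $\Op(\varphi_j)$ on the cusp as if it were the Weyl kernel $K^w_{\varphi_j}$ on $\HH^{d+1}$, but by the very definition of the quantization (see \eqref{equation:cutoff} and the discussion following it) the operator acts on the \emph{periodic lift} $\tilde u$, so the kernel of the induced operator on $Z$ is the lattice sum $\sum_{\gamma\in\Lambda}K^w_{\varphi_j}(x,x'+(0,\gamma))$, and Cauchy--Schwarz must be applied to this periodized kernel over a single fundamental domain. At heights $y\gg 2^j$ the symbol lives at Euclidean frequencies $|\xi|\sim 2^j/y\ll 1$, the kernel has essentially no decay across the unit-size torus, about $(y2^{-j})^d$ translates contribute, and the $L^2(d\mu)$-norm of the periodized kernel at such $x$ is of size $2^{j/2}y^{d/2}$, not $2^{j(d+1)/2}$. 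Indeed the inequality $\|\Op(\varphi_j)u\|_{L^\infty}\lesssim 2^{j(d+1)/2}\|u\|_{L^2(N)}$, and the unweighted embedding $H^s(N)\hookrightarrow C_*^{s-(d+1)/2}(N)$ you derive from it, are false: for a $\theta$-independent bump $u(y,\theta)=\psi(y/Y)$ at height $Y$ one has $\|u\|_{L^2(N)}\sim Y^{-d/2}\to 0$ while $\|\Op(\varphi_0)u\|_{L^\infty}\sim 1$, so $\|u\|_{C^0_*}$ stays of order $1$. The $y^{d/2}$ shift in the statement is exactly the price of the zeroth Fourier mode and must be produced \emph{inside} the cusp estimate; it cannot be grafted on afterwards by your weight-commutation step (where, in addition, the claim that $y^{-\rho-d/2}\in S^0$ is wrong for $\rho<-d/2$ --- only multiplication by the bounded symbol $y^{-d/2}$ would be legitimate there, and that is of no help once the unweighted embedding itself is false).

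Two repairs are possible. Either prove the \emph{weighted} Bernstein inequality $\|y^{-d/2}\Op(\varphi_j)u\|_{L^\infty}\lesssim 2^{j(d+1)/2}\|u\|_{L^2(N)}$, which amounts to estimating the periodized kernel (its $L^2(d\mu)\to L^\infty$ norm at height $y$ is of order $2^{j/2}\max(2^j,y)^{d/2}$, which after multiplication by $y^{-d/2}$ and using $y\geq a$ gives the claim); your Littlewood--Paley bookkeeping then goes through and yields the lemma with the correct weight. Or follow the paper's route, which avoids Bernstein altogether: one sums kernel bounds of the type \eqref{equation:estimee-kernel-2} over $\Lambda$ by a series--integral comparison (this is precisely where the factor $(y+y')^{d}$, hence the $d/2$ shift, enters), concludes that operators of order $-(d+1+\epsilon)$ are bounded from $y^{-d}L^1$ to $L^\infty$, deduces $y^{-d}W^{d+1+\epsilon,1}\hookrightarrow C^0$, and interpolates to obtain $y^{-d/2}H^{(d+1+\epsilon)/2}\hookrightarrow C^0$, the general case being handled similarly. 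Your reduction to a single cusp and the treatment of the compact part and of the bundle $L$ are fine; the missing content is entirely in the periodization.
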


Observe that the two lemmas are locally true so that it it is sufficient to prove them when the function is supported on a single fibered cusp. The key lemma here is the following

\begin{lemma}
For all $s \in \R$,
\[
\|u\|^2_{H^s(Z)} \asymp \sum_{j \in \N} \|\Op(\varphi_j)u\|^2_{L^2(Z)} 4^{js}
\]
\end{lemma}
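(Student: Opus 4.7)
The plan is to combine a Littlewood--Paley style almost-orthogonality in $L^2$ with the fact that the weight $\langle \xi\rangle^s$ is essentially constant, of size $2^{js}$, on the support of $\varphi_j$. Recall from Proposition~\ref{prop:microlocal-calculus} and the subsequent remark that the Sobolev norm can be realized as $\|u\|_{H^s}\asymp \|\Lambda_s u\|_{L^2}$ for $\Lambda_s=\Op(\sigma_s)$ approximating $(-\Delta+1)^{s/2}$, with $\sigma_s\in S^s$ elliptic. Since the symbols $\varphi_j$ are real-valued, the Weyl quantization makes $\Op(\varphi_j)$ formally self-adjoint.

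The first step is to prove an $L^2$ analogue of Lemma~\ref{lemma:paley-control}: for every $N\geq0$, there is $C_N>0$ such that whenever $|j-k|\geq 3$,
\[
\|\Op(\varphi_j)\,\Op(\varphi_k)\|_{L^2\to L^2}\leq C_N\,2^{-N\max(j,k)}.
\]
Indeed, the bounds \eqref{equation:composition-symbole} already give $|\varphi_j\sharp\varphi_k(x,\xi)|\leq C_N 2^{-N\max(j,k)}\langle\xi\rangle^{-N}$ together with analogous bounds on derivatives (by the same proof). Thus $\varphi_j\sharp\varphi_k$ is a symbol of arbitrarily negative order with semi-norms decaying as $2^{-N\max(j,k)}$, and Proposition~\ref{prop:microlocal-calculus}(1) with $\rho=0$, $s=m=0$ converts this into the claimed $L^2$-norm bound. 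As a corollary, the uniform $L^2$-boundedness of $\Op(\varphi_j)$ (again from Proposition~\ref{prop:microlocal-calculus}(1)) and a $TT^*$/Cotlar--Stein type summation give the almost-orthogonality identity
\[
\Bigl\|\sum_{j} a_j \Op(\varphi_j) u\Bigr\|_{L^2}^2 \asymp \sum_{j} |a_j|^2 \|\Op(\varphi_j)u\|_{L^2}^2
\]
for any bounded sequence of scalar weights $(a_j)$ that changes slowly with $j$; more useful for us is the version with operator coefficients stated below.

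The second step identifies $\Lambda_s\Op(\varphi_j)$ with $2^{js}\Op(\varphi_j)$ modulo a harmless error. By Proposition~\ref{prop:microlocal-calculus}(2),
\[
\Lambda_s\Op(\varphi_j)=\Op(\sigma_s\varphi_j)+R_j,\qquad R_j\in \Psi^{s-1}_{\text{small}},
\]
and on $\supp\varphi_j$ we have $\sigma_s(z,\xi)=2^{js}(1+S^{-1})\cdot\varphi_j$-pointwise, so that $\Op(\sigma_s\varphi_j)=2^{js}\Op(\varphi_j)+\Op(\widetilde r_j)$ where $\widetilde r_j$ is a symbol of order $s$ but supported in the $j$-th shell and of size $2^{(j-1)s}$ rather than $2^{js}$ (quantitatively smaller by a factor $2^{-1}$, and with better error terms upon iteration). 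A standard symbol-calculus iteration then yields
\[
\|\Lambda_s\Op(\varphi_j)v\|_{L^2}\asymp 2^{js}\|\Op(\varphi_j)v\|_{L^2},
\]
uniformly in $j\in\N$ and $v\in L^2$, with constants independent of $j$ (the lower bound uses ellipticity of $\sigma_s$ on $\supp\varphi_j$ and Proposition~\ref{prop:smoothing-remainder}).

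In the third and last step one expands
\[
\|u\|_{H^s}^2\asymp \|\Lambda_s u\|_{L^2}^2 = \sum_{j,k}\langle \Lambda_s\Op(\varphi_j)u,\Lambda_s\Op(\varphi_k)u\rangle.
\]
The off-diagonal terms $|j-k|\geq 3$ involve the composition $\Op(\varphi_k)\Lambda_s^*\Lambda_s\Op(\varphi_j)$; combining the almost-orthogonality of step~1 with the fact that $\Lambda_s^*\Lambda_s\in\Psi^{2s}_{\text{small}}$ gives operator-norm bound $\lesssim C_N 2^{(j+k)s}\,2^{-N\max(j,k)}$, and Schur's lemma applied to the sequence space $\ell^2(\N)$ absorbs these contributions into $\sum_j 4^{js}\|\Op(\varphi_j)u\|_{L^2}^2$. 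The diagonal terms, by step~2, are exactly $\asymp 4^{js}\|\Op(\varphi_j)u\|_{L^2}^2$. This gives both directions of the equivalence. The converse direction (bounding $\sum_j 4^{js}\|\Op(\varphi_j)u\|^2$ by $\|u\|_{H^s}^2$) is obtained by the same argument after writing $\Op(\varphi_j)u=\Op(\varphi_j)\Lambda_{-s}(\Lambda_s u)$ and using ellipticity of $\Lambda_s$ plus $\Lambda_{-s}\in\Psi^{-s}_{\text{small}}$.

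The main obstacle is the first step: in the cusp setting, $\Op(\varphi_j)$ is not a Fourier multiplier and its kernel is not sharply frequency-localized, so one cannot invoke Plancherel. The rapid-decay estimate on $\varphi_j\sharp\varphi_k$, established in the proof of Lemma~\ref{lemma:paley-control}, is precisely what salvages the almost-orthogonality; once it is available, the rest is bookkeeping that parallels the Euclidean case.
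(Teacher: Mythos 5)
Your overall architecture---off-diagonal decay coming from the estimate \eqref{equation:composition-symbole} on $\varphi_j\sharp\varphi_k$, plus a diagonal comparison of $\Lambda_s\Op(\varphi_j)$ with $2^{js}\Op(\varphi_j)$---is the same as the paper's, and Steps 1 and 3 are sound. The gap is in Step 2. The uniform two-sided equivalence $\|\Lambda_s\Op(\varphi_j)v\|_{L^2}\asymp 2^{js}\|\Op(\varphi_j)v\|_{L^2}$ is not established by the argument you sketch, and is dubious as stated. First, the ``error'' symbol $\sigma_s\varphi_j-2^{js}\varphi_j=(\sigma_s-2^{js})\varphi_j$ is not smaller than the main term by a definite factor: on $\supp\varphi_j$ the weight $\langle\xi\rangle^s$ runs over roughly $[2^{(j-1)s},2^{(j+1)s}]$, so the discrepancy can be of size $(2^{|s|}-1)\,2^{js}$, i.e.\ comparable to (and for $|s|\geq 1$ larger than) the main term $2^{js}\varphi_j$; a Neumann-type iteration on this perturbation therefore does not converge, and no smallness is available. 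Second, for the lower bound, a parametrix/ellipticity argument for $\Lambda_s$ on the $j$-th shell only yields estimates of the form $2^{js}\|\Op(\varphi_j)v\|\lesssim\|\Lambda_s\Op(\varphi_j)v\|+C_N2^{-jN}\|v\|_{H^{-M}}$: the additive error is measured in a fixed global norm of $v$, not by $\|\Op(\varphi_j)v\|$. Since $\Op(\varphi_j)$ is not a projection and its range is only approximately frequency-localized (the tails are controlled by $\|v\|$, not by $\|\Op(\varphi_j)v\|$, which may be far smaller than $2^{-jN}\|v\|$), this error cannot be absorbed for an individual $j$, so the clean pointwise-in-$j$ equivalence has to be given up.

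The lemma is still provable along your lines if you keep those additive, rapidly decaying errors throughout and only sum them over $j$ at the end, where they contribute $\lesssim\|u\|^2_{H^{-M}}$ and are harmless. This is in effect what the paper does, by a different device: it works semiclassically with $\Op_h$, expands $\|\Op_h(\langle\xi\rangle^s)u\|^2$ into the double sum, treats $|j-k|\geq 3$ exactly as you do via \eqref{equation:composition-symbole}, and handles the diagonal comparison not by perturbation but by the sharp G\aa rding inequality of \cite{Bonthonneau-Weich-17}, applied to the dominated symbols $\langle\xi\rangle^s\varphi_j\lesssim 2^{js}\varphi_j$ (and conversely); the resulting errors are of size $h\|u\|^2_{H^{-N}_h}$ and are absorbed for $h$ small, after which one fixes $h$ and concludes by equivalence of norms (Remark~\ref{remark:paley-independence}). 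Either repair is fine, but as written your Step 2 does not go through.
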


\begin{proof}
The proof is done using semiclassical estimates and then concluding by equivalence of norms when $h$ is bounded away from $0$. For $h > 0$, we start from
\[
\begin{split}
&\| u\|^2_{H_h^s(N)} 	\asymp  \| \Op_h(\langle \xi\rangle^s) u\|_{L^2}^2 \\
					&\ = \sum_{j,k} \langle \Op_h( \langle \xi\rangle^{s} \varphi_{j}) u, \Op_h( \langle \xi\rangle^{s} \varphi_{k}) u\rangle \\
					&\ = \sum_{|j-k|\leq 2} \langle \Op_h( \langle \xi\rangle^{s} \varphi_{j}) u, \Op_h( \langle \xi\rangle^{s} \varphi_{k}) u\rangle + \sum_{|j-k| \geq 3} \langle \Op_h( \langle \xi\rangle^{s} \varphi_{k}) \Op_h( \langle \xi\rangle^{s} \varphi_{j}) u, u\rangle.
\end{split}
\]
The first term is obviously controled by $ \sum_j \|\Op_h(\langle \xi \rangle^s\varphi_j)u\|^2_{L^2(Z)}$. To bound the last term we can first use the estimate (\ref{equation:composition-symbole}) in the proof of Lemma \ref{lemma:paley-control} which yields 
\[
\langle \Op_h( \langle \xi\rangle^{s} \varphi_{k}) \Op_h( \langle \xi\rangle^{s} \varphi_{j}) u, u\rangle  \leq C_{N} 2^{-N \max(j,k)} \|u\|_{H^{-N}_h}^2,
\]
where $N > |s|$ is taken arbitrary large and thus 
\[
\sum_{|j-k| \geq 3} \langle \Op_h( \langle \xi\rangle^{s} \varphi_{k}) \Op_h( \langle \xi\rangle^{s} \varphi_{j}) u, u\rangle \lesssim \|u\|^2_{H_h^{-N}(Z)}.
\]
Now, we also have that
\[
\begin{split}
\|u\|^2_{H_h^{-N}} & = \|\Op_h(\langle \xi \rangle^{-N}) u\|^2_{L^2} \\
& = \|\sum_j \Op_h(\langle \xi \rangle^{-N} \varphi_j ) u\|^2_{L^2} \\
& \lesssim \sum_j 2^{-j} \|\Op_h(2^j \langle \xi \rangle^{-N-s}\langle \xi\rangle^{s}  \varphi_j ) u\|^2_{L^2} \\
& \lesssim \sum_j 2^{-j} (\|\Op_h(\langle \xi\rangle^{s}  \varphi_j)u\|^2_{L^2} + h\|u\|^2_{H^{-N}_h(Z)}) \\
& \lesssim \sum_j \|\Op_h(\langle \xi\rangle^{s}  \varphi_j)u\|^2_{L^2} + h\|u\|^2_{H^{-N}_h(Z)},
\end{split}
\]
where the penultimate inequality follows from Gårding's inequality \cite[Lemma A.15]{Bonthonneau-Weich-17} for symbols of order $-(2N-1)$ since $2^j \langle \xi \rangle^{-N-s}\langle \xi\rangle^{s}  \varphi_j \in S^{-(2N-1)}$ is controlled by $\lesssim \langle \xi\rangle^{s}  \varphi_j$. For $h$ small enough, we can swallow the term $h\|u\|^2_{H^{-N}_h(Z)}$ in the left-hand side and we eventually obtain that $\|u\|^2_{H^s_h} \lesssim \sum_j \|\Op_h(\langle \xi\rangle^{s}  \varphi_j)u\|^2_{L^2}$,
where the constant hidden in the $\lesssim$ notation is independent of $h$. Actually, since $\langle \xi \rangle^s \varphi_j \lesssim 2^{js} \varphi_j$, the same arguments involving Gärding's inequality also yield
\[
\|u\|^2_{H^s_h} \lesssim \sum_j \|2^{js} \Op_h(\varphi_j)u\|^2_{L^2}.
\]

On the other hand,
\[
\sum_j \|\Op_h(\langle \xi\rangle^{s}\varphi_j)u\|^2_{L^2(Z)} = \Big \langle \sum_j \Op_h(\langle \xi\rangle^{s}\varphi_j)^2 u, u \Big \rangle. 
\]
Using expansions for products, we find that this is $\lesssim \langle \Op_h( \langle \xi\rangle^{2s}\sum \varphi_j^2) u, u\rangle$. This itself is controlled by the $H_h^{s}$ norm. Eventually, we conclude by equivalence of norms when $h$ is bounded away from $0$ (see Remark \ref{remark:paley-independence}).
\end{proof}

We will also need the following observation: $\Op(\varphi_j)(y^\rho u) = y^\rho \Op'(\varphi_j)(u)$ for some other quantization $\Op'$ (the cutoff function $\chi(y'/y-1)$ in the quantization $\Op$ is changed to $(y'/y)^\rho \chi(y'/y-1)$). In the following proof, we will denote by $\Op'$ and $\Op''$ other quantizations than $\Op$ which are produced by multiplying the cutoff function $\chi$ by some power of $y'/y$. Eventually, one last remark is that Proposition \ref{proposition:correspondence-espaces-holder} implies in particular that the spaces $C^s_*(N)$ defined for $s \in \R_+ \setminus \N$ do not depend on the choice of the cutoff function $\chi$ in the quantization (insofar as they can be identified to the usual Hölder spaces $C^s(N)$).

\begin{proof}[Proof of Lemma \ref{lemma:embedding-holder-to-sobolev-spaces}]
We fix $\rho < \rho'+d/2$ and $\eps > 0$ small enough so that $\rho < \rho'+d/2-\eps$. Then:
\[
\begin{split}
\|u\|^2_{y^{\rho'}H^s} & = \sum_{j \in \N} \|\Op(\varphi_j)(y^{-\rho'}u)\|_{L^2}^2 4^{js} \\
& \lesssim \sum_{j \in \N} \|y^{-\rho'}\Op'(\varphi_j)u\|_{L^2}^2 4^{js} 
\\
& \lesssim \sum_{j \in \N} \|y^{-\rho'-d/2+\eps}\Op'(\varphi_j)u\|_{L^\infty}^2 4^{js} \\
& \lesssim \sum_{j \in \N} \|\Op''(\varphi_j)(y^{-\rho'-d/2+\eps}u)\|_{L^\infty}^2 4^{js} \\
& \lesssim \sum_{j \in \N} 4^{j(s-s')} \underbrace{\|\Op''(\varphi_j)(y^{-\rho'-d/2+\eps}u)\|_{L^\infty}^2 4^{js'}}_{\leq\|u\|^2_{y^{\rho'+d/2-\eps}C^{s'}_*}} \lesssim \|u\|^2_{y^{\rho'+d/2-\eps}C^{s'}_*} \lesssim \|u\|^2_{y^\rho C^{s'}_*},
\end{split}
\]
since $s  < s'$.
\end{proof}

\begin{proof}[Proof of Lemma \ref{lemma:embedding-hs-in-ck}]
Let us sketch the proof for the embedding $y^{-d/2} H^{(d+1+\eps)/2} \hookrightarrow C^0$, the general case being handled in the same fashion with a little bit more work. We start by computing a $L^1(Z)\to L^\infty(Z)$ norm for $\Op(\sigma)$ when $\sigma \in S^{-(d+1+\epsilon)}(T^*Z)$. We find
\[
\| \Op(\sigma)\|_{y^\rho L^1 \to L^\infty}^2 \leq \sup_{x,x'} y^{d+1} {y'}^{2\rho}\left|\sum_{\gamma\in \Lambda} K_\sigma^w(x,y',\theta'+\gamma) \right|.
\] 
Going through the arguments of proof for equation \eqref{equation:estimee-kernel-2}, we deduce that 
\[
|K_\sigma^w(x,y',\theta')| \leq \left[(y+y')^{d+1}\left(1 + \left|\dfrac{y-y'}{y+y'}\right|^{k'} + \left|\dfrac{\theta-\theta'}{y+y'}\right|^{k}\right)\right]^{-1}.
\]
As a consequence, we have to estimate:
\begin{align*}
\sum_{\gamma\in\Lambda} |K_{\sigma}^w(x,y',\theta'+\gamma)| &\leq \sum_{\gamma\in \Lambda} \left[(y+y')^{d+1}\left(1 +\left|\dfrac{y-y'}{y+y'}\right|^{k'} +\left|\dfrac{\theta-\theta'+\gamma}{y+y'}\right|^{k}\right)\right]^{-1} \\
					& \leq \left[(y+y')^{d+1}\left(1 +\left|\dfrac{y-y'}{y+y'}\right|^{k'}\right)\right]^{-1} \sum_{\gamma\in\Lambda} \left[1 + \frac{\left|\frac{\theta-\theta'+\gamma}{y+y'}\right|^{k} }{1 +\left|\frac{y-y'}{y+y'}\right|^{k'} }\right]^{-1}\\
					\intertext{Since $y+y'>a$ the function in the sum has bounded variation, so we can apply a series-integral comparison, and replace it by the integral.}
					& \leq \frac{  C(y+y')^d}{(y+y')^{d+1}\left(1 + \left|\dfrac{y-y'}{y+y'}\right|^{k'}\right)}{\displaystyle\int_{\gamma \in \R^d}} \left[1 + \frac{\left|\frac{\theta-\theta'}{y+y'}+|\gamma|\right|^{k} }{1 + \left|\frac{y-y'}{y+y'}\right|^{k'} }\right]^{-1}\\
					& \leq \frac{1 }{y+y'}\left(1 + \left|\dfrac{y-y'}{y+y'}\right|^{k'}\right)^{-1+d/k}.
\end{align*}
We deduce that
\[
\| \Op(\varphi_j)\|_{y^\rho L^1 \to L^\infty}^2 \leq \sup_{x,x'} y^{d+1}{y'}^{\rho} \left[(y+y')\left(1 + \left|\dfrac{y-y'}{y+y'}\right|^{k'}\right)^{1-d/k}\right]^{-1}.
\]
This is bounded for $\rho=-d$. We conclude that $\Op(\sigma)$ is bounded from $y^{-d} L^1$ to $L^\infty$. Now, we recall that for $h>0$ small enough, $(-\Delta + h^{-2})^{-(d+1+\epsilon)/2} = \Op( \sigma_{d+1+\epsilon}) + R$, with $R$ smoothing, and $\sigma_{d+1+\epsilon} \in S^{-d-1-\epsilon}$. For $f \in y^{-d} W^{d+1+\epsilon,1}$, writing
\[
f = (-\Delta + h^{-2})^{-(d+1+\epsilon)/2}\underbrace{(-\Delta + h^{-2})^{+(d+1+\epsilon)/2}f}_{\in y^{-d}L^1},
\]
we deduce that $y^{-d} W^{d+1+\epsilon,1}\hookrightarrow C^0$ for $\epsilon>0$. By interpolation, we then deduce that $y^{-d/2} W^{(d+1+\epsilon)/2, 2} = y^{-d/2} H^{(d+1+\eps)/2} \hookrightarrow C^0$.
\end{proof}

To close this section, we make the important observation that $\R$-residual operators defined in terms of Hölder-Zygmund spaces $C^s_\ast$ are actually the same as the ones defined in terms of Sobolev spaces. More precisely:

\begin{proposition}\label{prop:maximallyresidualholder}
The set of operators $R$ which are bounded as
\[
R  : y^\rho C^{-s}_\ast \to y^{-\rho} C^s_\ast
\]
for all $\rho\in \R$, $s\in \R$ is equal to $\dot{\Psi}^{-\infty}_{\R}$.
\end{proposition}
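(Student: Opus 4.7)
The plan is to translate the weighted-Sobolev definition of $\dot\Psi^{-\infty}_{\R}$ into the Hölder--Zygmund characterization by sandwiching $R$ between the embeddings of Lemmas~\ref{lemma:embedding-holder-to-sobolev-spaces} and~\ref{lemma:embedding-hs-in-ck}. These two lemmas carry, respectively, a weight shift of at most $d/2$ and a regularity shift of $(d+1)/2$ when passing between Sobolev and Hölder--Zygmund spaces. In each direction of the proposition these shifts appear with signs that can be absorbed by taking intermediate parameters large enough.

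For the forward inclusion, let $R\in\dot\Psi^{-\infty}_{\R}$ and fix $\rho,s\in\R$. Pick $\rho_1>0$ with $\rho_1\geq\rho+d/2$ and an integer $k_1\geq s+(d+1)/2$, and consider the chain
\[
y^{\rho}C^{-s}_\ast \hookrightarrow y^{\rho_1}H^{-k_1} \xrightarrow{\ R\ } y^{-\rho_1}H^{k_1} \hookrightarrow y^{-\rho_1+d/2}C^{k_1-(d+1)/2}_\ast \hookrightarrow y^{-\rho}C^{s}_\ast.
\]
The first embedding follows from Lemma~\ref{lemma:embedding-holder-to-sobolev-spaces} (using $\rho_1>\rho-d/2$ and $k_1>s$), the middle arrow is continuous by definition of $\dot\Psi^{-\infty}_{\R}$, the third is Lemma~\ref{lemma:embedding-hs-in-ck}, and the final inclusion uses $-\rho_1+d/2\leq-\rho$ together with $k_1-(d+1)/2\geq s$.

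The reverse inclusion is entirely symmetric. Assume $R$ is bounded on every pair $y^{\rho}C^{-s}_\ast\to y^{-\rho}C^{s}_\ast$ and fix $\rho>0$, $k\geq 0$. Set $\rho'=\rho+d/2$ and $s'=k+(d+1)/2$, and form
\[
y^{\rho}H^{-k} \hookrightarrow y^{\rho'}C^{-s'}_\ast \xrightarrow{\ R\ } y^{-\rho'}C^{s'}_\ast \hookrightarrow y^{-\rho}H^{k},
\]
where the outer arrows come from Lemmas~\ref{lemma:embedding-hs-in-ck} and~\ref{lemma:embedding-holder-to-sobolev-spaces} respectively (the hypotheses being $-\rho>-\rho'-d/2$ and $s'>k$).

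The only technical point worth highlighting is the trivial continuous inclusion $y^{\alpha}C^{\sigma}_\ast\hookrightarrow y^{\beta}C^{\sigma}_\ast$ when $\alpha\leq\beta$ (and its Sobolev analogue), which reduces to the fact that multiplication by $y^{\alpha-\beta}$, a smooth function with all hyperbolic derivatives $X_y^{n}y^{\alpha-\beta}$ bounded, is continuous on the relevant spaces. Beyond this there is no real obstacle: the whole argument is a parameter-chasing exercise once the two embedding lemmas are available.
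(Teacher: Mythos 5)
Your proof is correct and takes exactly the route the paper does: the paper's own proof of Proposition \ref{prop:maximallyresidualholder} consists precisely in invoking Lemmas \ref{lemma:embedding-holder-to-sobolev-spaces} and \ref{lemma:embedding-hs-in-ck}, and your chains of embeddings just make the parameter bookkeeping (and the harmless monotonicity of the weights $y^\rho$, using $y\geq a>0$) explicit. No gaps.
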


\begin{proof}
This follows directly from Lemmas \ref{lemma:embedding-holder-to-sobolev-spaces} and \ref{lemma:embedding-hs-in-ck}.
\end{proof}

\section{Parametrices modulo compact operators.}\label{sec:parametrix-modulo-compact}

\label{section:parametrices1}

In this section, we will consider elliptic parametrices for pseudo-differential operators that preserve in a precise sense the geometry of the cusps. These operators will be called \emph{admissible operators}. As explained in the introduction, we will also have to introduce, on top of usual ellipticity, a concept of \emph{ellipticity at infinity}, which is related to the inversion of a model operator induced by a given admissible operator, and called the \emph{indicial operator}.

\subsection{Decomposition at infinity}

\subsubsection{Black-box formalism} 

\label{ssection:black-box}

To start with, we will present a black-box formalism in the sense of \cite{Sjostrand-Zworski-91}, as it was introduced in \cite{Bonthonneau-Weich-17}. 

Associated with each cusp $Z_\ell$, we define extension and restriction operators. For the sake of simplicity, we drop the subscript $\ell$ and denote the cusp by $Z$. The discussion is carried out here as if there were only one cusp, but the generalization is of course straightforward. First of all, we introduce the pullback operator
\[
\imath^* : C^\infty_{\mathrm{comp}}(]a,+\infty[ \times F_Z, L_Z) \rightarrow C^\infty_{\mathrm{comp}}(N,L),
\]
defined by $\imath^*f|_{N \setminus Z} = 0$ and $\imath^*f|_{Z}(y,\theta,\zeta) = f(y,\zeta)$. Consider the restriction operator on the zero Fourier mode:
\[
\mc{P}_Z : \mc{D}'(N,L) \rightarrow \mc{D}'(]a,+\infty[ \times F_Z, L_Z),
\]
defined for $f \in \mc{D}'(N,L)$ and $\phi \in C^\infty_{\mathrm{comp}}(]a,+\infty[ \times F_Z, L_Z)$ by:
\begin{equation}
\label{equation:restriction}
\langle \mc{P}_Z f, \phi \rangle := \langle f, \imath^* \phi \rangle.
\end{equation}
Consider the space $\mc{E}'_0(]a,+\infty[ \times F_Z, L_Z) \subset \mc{D}'(]a,+\infty[ \times F_Z, L_Z)$ defined in the following way: $f \in \mc{E}'_0(]a,+\infty[ \times F_Z, L_Z)$ if and only if there exists $A > a$ such that $\supp(f) \subset ]A,+\infty[ \times F_Z$. We introduce the extension operator 
\[
\mc{E}_Z : \mc{E}'_0(]a,+\infty[ \times F_Z, L_Z) \rightarrow \mc{D}'(N,L),
\]
defined for $f \in  \mc{E}'_0(]a,+\infty[ \times F_Z, L_Z) $ and $\phi \in C^\infty_{\mathrm{comp}}(N,L)$ by:
\begin{equation}
\label{equation:extension}
\langle \mc{E}_Z f, \phi \rangle = \langle f, \chi_\phi \mc{P}_Z \phi \rangle,
\end{equation}
where $\chi_\phi \in C^\infty(]a,+\infty[ \times F_Z, \R)$ is a smooth cutoff function defined in the following way: $\chi_\phi \equiv 1$ on $[a + (A-a)/2,+\infty[ \times F_Z$ and $\chi_\phi \equiv 0$ on $]a,a+(A-a)/2020] \times F_Z$. It can be checked that \eqref{equation:extension} does not depend on the choice of $\chi_\phi$ due to the support condition on $f$. 

We fix a smooth cutoff function $\chi \in C^\infty(N,\R)$ such that $\chi|_Z \equiv 1$ for $y > 3a$ and $\chi \equiv 0$ for $y < 2a$ (recall that $y : N \rightarrow \R$ is considered to be a global positive function on $N$ which is the height function in the cusps and an arbitrary smooth positive function in the compact core). Given $f \in \mc{D}'(N,L)$, we will often consider the distribution $\mc{E}_Z \chi \mc{P}_Z f \in \mc{D}'(N,L)$. This is a distribution supported in $\left\{y \geq 2a\right\}$. Roughly speaking, it corresponds to the projection on the zero Fourier mode of $f$ on $\left\{y \geq 2a\right\}$.

Consider a smooth function $\phi$ defined on $]a,+\infty[ \times F_Z$ (and typically unbounded). We define the multiplication of the zero Fourier mode of a distribution $f \in \mc{D}'(N,L)$ by $\phi$ in the following way:
\[
\mathcal{Z}(\phi) f :=  (\mathbbm{1}-\mathcal{E}_Z \chi \mathcal{P}_Z)f +  \mathcal{E}_Z ( \chi \phi \mathcal{P}_Z f ) \in \mc{D}'(N,L).
\]
Of course, if $\phi \equiv 1$, then $\mathcal{Z}(1)f = f$. The operators $\mathcal{E}_Z$, $\mathcal{P}_Z$ and $\mathcal{Z}(\phi)$ together form a black box formalism, as it was introduced by Sj\"ostrand and Zworski in \cite{Sjostrand-Zworski-91}. Once again, if there are more than one cusp, all the previous operations are naturally extended.

\begin{definition}\label{def:Hsrhospaces}
For $s,\rho_0, \rho_\bot \in \R$, we define
\[
H^{s,\rho_0,\rho_\bot}(N,L) = \mathcal{Z}( \tilde{y}^{\rho_0-\rho_\bot} )\left(y^{\rho_\bot} H^s\right).
\]
These are \emph{weighted} Sobolev spaces, with weight $y^{\rho_0}$ on the zero Fourier mode and weight $y^{\rho_\bot}$ on the non-zero Fourier modes. We also introduce the notations:
\[
\begin{array}{l}
H^{s,-\infty,\rho_\bot}(N,L)  = \cap_{\rho_0 \in \R} H^{s,\rho_0,\rho_\bot}(N,L), \\
 H^{s,\rho_0,-\infty}(N,L) = \cap_{\rho_\bot \in \R} H^{s,\rho_0,\rho_\bot}(N,L), \\
H^{s,-\infty,-\infty}(N,L) = \cap_{\rho_0, \rho_\bot \in \R} H^{s,\rho_0,\rho_\bot}(N,L)
\end{array}
\]
When both weights are equal, namely $\rho_0 = \rho_\bot =: \rho$, we will simply write $H^{s,\rho} := H^{s,\rho,\rho}$.
\end{definition}

Note that we take the same weight on each cusps, this will suffice for our purposes. To obtain compact remainders in parametrices, the following observation going back to \cite{Lax-Phillips-Automorphic-76} is essential: for any $\rho_\bot \in \R, s > s'$, the restriction of the injection $y^{\rho_\bot} H^s(N,L) \hookrightarrow y^{\rho_\bot} H^{s'}(N,L)$ to non-constant Fourier modes is compact. More precisely:

\begin{lemma}
\label{lemma:compact-injection-non-constant-mode}
For all $s > s'$, for all $k \in \N$, the operator 
\[
\mathbbm{1}-\mathcal{E}_Z \chi \mathcal{P}_Z : H^{s,\rho_0,\rho_\bot}(N,L) \rightarrow H^{s',-k,\rho_\bot}(N,L)
\]
is compact.
\end{lemma}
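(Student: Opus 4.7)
The plan is to write $A := \mathbbm{1} - \mathcal{E}_Z \chi \mathcal{P}_Z$ as an operator-norm limit of compact operators, combining a local Rellich--Kondrachov argument with the classical Lax--Phillips observation that non-zero Fourier modes on a hyperbolic cusp gain extra decay in $y$.

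First I would unpack the action of $A$. Decomposing $f|_Z = f_0 + f_\bot$ into its zero and non-zero Fourier modes along $\R^d/\Lambda$, one computes
\[
Af = (1-\chi) f_0 + f_\bot \text{ on } Z, \qquad Af = f \text{ on } N \setminus Z.
\]
The crucial feature is that the zero Fourier mode of $Af$ on the cusp equals $(1-\chi) f_0$ and is supported in $\{y \leq 3a\}$. Consequently the weight $y^{-k}$ on the zero mode of the target space is essentially free, and the real game is to control $f_\bot$ in the region $y \to +\infty$.

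Next I would pick a smooth cutoff $\tilde\chi_R$ equal to $0$ on $\{y \leq R\}$ and to $1$ on $\{y \geq R+1\}$ for $R > 3a$, and split
\[
A = \underbrace{(1-\tilde\chi_R) A}_{=: A_c^{(R)}} + \underbrace{\tilde\chi_R A}_{=: A_\infty^{(R)}}.
\]
For $R > 3a$ one has $A_\infty^{(R)} f = \tilde\chi_R f_\bot$. The operator $A_c^{(R)}$ sends $H^{s,\rho_0,\rho_\bot}$ continuously into $H^s$-functions supported on the compact set $K_R = \{y \leq R+1\}$; since the weights are uniformly bounded on $K_R$ and Rellich--Kondrachov yields a compact inclusion $H^s(K_R) \to H^{s'}(K_R)$, $A_c^{(R)}$ is compact as a map into $H^{s',-k,\rho_\bot}$.

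The heart of the matter is then to show that $\|A_\infty^{(R)}\| \to 0$ as $R \to \infty$. The key input is a Lax--Phillips-type estimate: for $\sigma > 0$ and $g$ supported on non-zero Fourier modes in the cusp,
\[
\|y^\sigma g\|_{L^2(Z)} \lesssim \|g\|_{H^\sigma(Z)}.
\]
Its origin is that on the Fourier mode $e^{2\pi i \lambda \cdot \theta}$ with $\lambda \neq 0$, the hyperbolic Laplacian contains the contribution $-y^2 \partial_\theta^2 = 4\pi^2 y^2 |\lambda|^2 \geq 4\pi^2 c_\Lambda^2 y^2$, where $c_\Lambda := \min_{\lambda \in \Lambda^* \setminus \{0\}} |\lambda| > 0$; the radial part (conjugating by $y^{d/2}$ on $L^2(Z, y^{-d-1}dy\,d\theta\,d\vol_F)$ turns it into $-\partial_t^2 + d^2/4$ on $L^2(\R, dt)$) and the fiber Laplacian are both non-negative. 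Hence $-\Delta + 1 \geq C(y^2+1)$ on non-zero modes, and operator monotonicity of $t \mapsto t^\sigma$ for $\sigma \in (0,1]$, together with iteration for $\sigma > 1$, yields the claim. Since $\tilde\chi_R \leq R^{-\sigma} y^\sigma$ on its support one deduces $\|\tilde\chi_R g\|_{L^2} \lesssim R^{-\sigma} \|g\|_{H^\sigma}$; interpolating with the trivial bound $\|\tilde\chi_R g\|_{H^\sigma} \lesssim \|g\|_{H^\sigma}$ gives
\[
\|\tilde\chi_R g\|_{H^{s'}} \lesssim R^{-(s-s')} \|g\|_{H^s}
\]
for any $s > s' \geq 0$; the case $s' < 0$ is only easier since $H^{s'} \supset L^2$. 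Applying this to $g = y^{-\rho_\bot} f_\bot$ and using that non-zero modes carry the same weight $\rho_\bot$ in both source and target yields $\|A_\infty^{(R)} f\|_{H^{s',-k,\rho_\bot}} \lesssim R^{-(s-s')} \|f\|_{H^{s,\rho_0,\rho_\bot}}$. Combining with the compactness of $A_c^{(R)}$, the lemma follows from the fact that a norm limit of compact operators is compact.

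The main obstacle I expect is the careful justification of the Lax--Phillips estimate for general $\sigma$: for $\sigma \leq 1$ it falls out of Loewner's operator-monotonicity theorem, but for $\sigma > 1$ one must either iterate on the spectral side or argue symbolically via $\langle \xi \rangle \gtrsim y$ on non-zero modes using the pseudo-differential calculus developed earlier. This is the step where the hyperbolic geometry and, crucially, the discreteness of the dual lattice $\Lambda^*$ (which gives $c_\Lambda > 0$) enter in an essential way.
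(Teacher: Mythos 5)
Your plan is essentially the paper's proof: both write $\mathbbm{1}-\mathcal{E}_Z\chi\mathcal{P}_Z$ as an operator-norm limit of truncations supported in a compact region (compact by Rellich), and both extract smallness of the tail from the Lax--Phillips mechanism that vanishing zeroth Fourier modes trade regularity for decay in $y$; the paper implements the gain by Wirtinger's inequality, giving $\|\mathbbm{1}-\psi_n\|_{\mathcal{L}(H^1_0,L^2_0)}\leq C/n$, and then interpolates across the Sobolev scale.

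Two implementation details in your version need repair, though both are easily fixed. First, a cutoff transitioning over $y\in[R,R+1]$ is not uniformly bounded in the cusp geometry ($y\partial_y\tilde\chi_R\sim R$), so your ``trivial bound'' $\|\tilde\chi_R g\|_{H^\sigma}\lesssim\|g\|_{H^\sigma}$ is not uniform in $R$ and the interpolated tail norm is only $O(R^{-s+2s'})$, which fails to decay when $s'\geq s/2>0$; take the transition over $[R,2R]$, as the paper's $\psi_n$ does. Second, the justification of $\|y^\sigma g\|_{L^2}\lesssim\|g\|_{H^\sigma}$ is shaky: $A\geq B\geq0$ does not give $A^\sigma\geq B^\sigma$ for $\sigma>1$, and even for $\sigma\leq1$ the subspace on which the form inequality $-\Delta+1\geq C(y^2+1)$ holds is not reducing for $-\Delta$, so Loewner's theorem does not apply directly. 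The estimate itself is true (it is the paper's Lemma on trading regularity for decay), but the cleaner route — and the one the paper takes — is to prove the gain at a single exponent via Wirtinger and then interpolate, noting that any rate tending to $0$ suffices for compactness; the case $s\leq0$ then also follows by interpolation (or by a duality remark), which your $\sigma>0$ argument does not yet cover.
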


In the following, we will sometimes say by simplicity that
\[
\mathbbm{1}-\mathcal{E}_Z \chi \mathcal{P}_Z : H^{s,\rho_0,\rho_\bot}(N,L) \rightarrow H^{s',-\infty,\rho_\bot}(N,L)
\]
is compact, meaning exactly that this operator maps to any $H^{s',-k,\rho_\bot}(N,L)$ and that it is compact when mapping on such spaces.

\begin{proof}
The value of $\rho_0$ is inessential here, so we take $\rho_0=\rho_\bot=\rho$. Since $[\mathbbm{1}-\mathcal{E}_Z \chi \mathcal{P}_Z,y^\rho]=0$, the lemma boils down to the case $\rho=0$. For the sake of simplicity, we assume that there is a single cusp and that $L \rightarrow N$ is a trivial bundle, the general case is handled in a similar fashion. Let $\psi_n \in C^\infty_{\mathrm{comp}}(N)$ be a smooth cutoff function such that $\psi_n \equiv 1$ on $y < n$ and $\psi_n \equiv 0$ on $y > 2n$. The operators of injection
\[
T_n := \psi_n (\mathbbm{1}-\mathcal{E}_Z \chi \mathcal{P}_Z) \in \mc{L}(H^s(N), H^{s'}(N))
\]
are compact, so it is sufficient to prove that the injection
\[
T := \mathbbm{1}-\mathcal{E}_Z \chi \mathcal{P}_Z \in \mc{L}(H^s(N), H^{s'}(N))
\]
is the norm-limit of the operators $T_n$. In other words, if we can prove that for all $n \in \N$, there exits a constant $C_n > 0$ such that: for all $f \in H^s(N)$ such that $\chi \mc{P}_Zf \equiv 0$ (we denote by $H^s_0(N)$ the space of such functions endowed with the norm $\|\cdot\|_{H^s}$), we have
\[ \|(\mathbbm{1}-\psi_n)f\|_{H^{s'}} \leq C_n \|f\|_{H^s}, \]
and that $C_n \rightarrow_{n \rightarrow +\infty} 0$, then we are done. Using Wirtinger's inequality, one can obtain like in \cite[Lemma 4.9]{Bonthonneau-Weich-17} that
\[
\|\mathbbm{1}-\psi_n\|_{\mc{L}(H_0^1,L^2_0)} \leq C/n
\]
for some uniform constant $C > 0$ (depending on the lattice $\Lambda$). Since the $(\psi_n)_{n \geq 0}$ can be designed so that $\|\mathbbm{1}-\psi_n\|_{\mc{L}(H_0^1,H_0^1)} \leq 1$, we obtain by interpolation that $\|\mathbbm{1}-\psi_n\|_{\mc{L}(H_0^1,H_0^s)} \leq (C/n)^{1-s}$ for all $s \in [0,1]$. Since $\|\mathbbm{1}-\psi_n\|_{\mc{L}(H_0^k,H_0^k)} \leq 1$ for all $k \in \Z$, we can interpolate once again to conclude.
\end{proof}

\begin{lemma}\label{lemma:compact-injection}
Consider $\rho_\bot\in\R$, $\rho_0< \rho_0'$, and $s > s'$. Then $H^{s,\rho_0,\rho_\bot}(N,L)\hookrightarrow H^{s',\rho_0',\rho_\bot}(N,L)$ is a compact injection.
\end{lemma}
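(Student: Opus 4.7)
The plan is to use the cusp decomposition $f = f_0 + f_\bot$ with $f_0 := \mc{E}_Z \chi \mc{P}_Z f$ and $f_\bot := f - f_0$ (summed over the finitely many cusps), and to show compactness separately for $f \mapsto f_\bot$ and $f \mapsto f_0$ as maps $H^{s,\rho_0,\rho_\bot}(N,L) \to H^{s',\rho_0',\rho_\bot}(N,L)$.

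First, for the map $f \mapsto f_\bot$: one computes $\mc{P}_Z f_\bot = (1-\chi)\mc{P}_Z f$, which is supported in $\{y \leq 3a\}$. Hence the zero Fourier mode of $f_\bot$ vanishes on $\{y \geq 3a\}$, and on the remaining compact piece $\{y \leq 3a\}$ all weights $y^{\rho_0}$, $y^{\rho_0'}$, $y^{\rho_\bot}$ are uniformly equivalent. Consequently, the $H^{s',\rho_0',\rho_\bot}$-norm of $f_\bot$ is equivalent to its $y^{\rho_\bot} H^{s'}$-norm, and the $H^{s,\rho_0,\rho_\bot}$-norm of $f$ dominates the $y^{\rho_\bot} H^{s}$-norm of $f_\bot$. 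Compactness of $f \mapsto f_\bot$ is then a direct consequence of Lemma~\ref{lemma:compact-injection-non-constant-mode}.

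Second, for $f \mapsto f_0$: the image is supported in the cusp and has the form $\mc{E}_Z(\chi g)$ with $g = \mc{P}_Z f$ a zero-Fourier-mode section, so that the target norm reduces (up to equivalence) to $\|f_0\|_{y^{\rho_0'} H^{s'}}$, and the source norm bounds $\|f_0\|_{y^{\rho_0} H^s}$. Fix a cutoff sequence $\chi_n \in C^\infty_{\mathrm{comp}}(N,[0,1])$ with $\chi_n \equiv 1$ on $\{y \leq n\}$ and $\supp \chi_n \subset \{y \leq 2n\}$, and split $f_0 = \chi_n f_0 + (1-\chi_n) f_0$. The map $f \mapsto \chi_n f_0$ is compact, since its image lies in a fixed compact piece of $N$ on which the classical Rellich--Kondrakov theorem applies to $H^s \hookrightarrow H^{s'}$. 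For the tail, the claim is that
\[
\|(1-\chi_n) f_0\|_{y^{\rho_0'} H^{s'}} \,\lesssim\, n^{\rho_0 - \rho_0'}\, \|f_0\|_{y^{\rho_0} H^s};
\]
since $\rho_0 - \rho_0' < 0$, the right-hand side tends to $0$, so $f \mapsto f_0$ is a norm limit of compact operators, hence compact. Assembling the two pieces finishes the proof.

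The main technical point is the tail estimate. Writing $u := y^{-\rho_0} f_0$, it amounts to
\[
\|y^{\rho_0-\rho_0'}(1-\chi_n)\,u\|_{H^{s'}} \,\lesssim\, n^{\rho_0-\rho_0'}\|u\|_{H^s}.
\]
This follows from the commutator relations $[y\partial_y, y^\alpha] = \alpha y^\alpha$ and $[y\partial_\theta, y^\alpha] = 0 = [\partial_\zeta, y^\alpha]$: iterated commutators show that multiplication by $y^\alpha$ is a zeroth-order admissible operator in the cusp calculus, hence bounded on $H^{s'}$ by Proposition~\ref{prop:microlocal-calculus}; since $\alpha = \rho_0 - \rho_0' < 0$, restricting to the region $\{y \geq n\}$ produces the extra factor $\sup_{y \geq n} y^\alpha = n^\alpha$, which is the desired gain.
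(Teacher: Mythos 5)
Your proof is correct, and for the zero-Fourier-mode part it takes a genuinely different route from the paper. Both arguments split $f$ into $(\mathbbm{1}-\mc{E}_Z\chi\mc{P}_Z)f$ and $\mc{E}_Z\chi\mc{P}_Z f$ and dispose of the first piece via Lemma \ref{lemma:compact-injection-non-constant-mode} (your treatment of $f_\bot$, using that its zeroth mode is supported in $\{y\le 3a\}$, is the same in substance). For the zeroth mode, the paper passes to the half-cylinder via $y=e^r$ and quotes the classical compactness of $e^{\rho r}H^s(\R)\hookrightarrow e^{\rho' r}H^{s'}(\R)$ for $\rho<\rho'$, $s>s'$; you instead exhaust the cusp by cutoffs $\chi_n$, apply Rellich--Kondrakov on the compact pieces, and show the tail operators tend to $0$ in norm at rate $n^{\rho_0-\rho_0'}$, so the inclusion is a norm limit of compact operators. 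Your version is self-contained (it re-proves the one-dimensional fact rather than citing it) and gives an explicit quantitative truncation error, at the cost of being a bit longer; the paper's version is shorter but relies on the external classical statement. One step should be phrased more carefully: you cannot literally pull $\sup_{y\ge n}y^{\rho_0-\rho_0'}$ out of an $H^{s'}$ norm. Instead write $y^{\rho_0-\rho_0'}(1-\chi_n)=n^{\rho_0-\rho_0'}\,m_n$ with $m_n:=(y/n)^{\rho_0-\rho_0'}(1-\chi_n)$, choose the $\chi_n$ as rescalings of a fixed profile in $\log y$, and observe that $(m_n)_n$ is a bounded family of multiplication operators with uniformly bounded derivatives under $y\partial_y$, $y\partial_\theta$, $\partial_\zeta$ (this is exactly what your commutator remark provides), hence uniformly bounded on $H^{s'}$ by Proposition \ref{prop:microlocal-calculus}; this yields the claimed tail bound. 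Finally, like the paper's own proof, your argument implicitly uses the boundedness of $f\mapsto \mc{E}_Z\chi\mc{P}_Z f$ on the weighted spaces (cf.\ Lemma \ref{lemma:relation-projected-spaces}), so you are on equal footing there.
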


\begin{proof}
One can write $f = (\mathbbm{1}-\mathcal{E}_Z \chi \mathcal{P}_Z)f + \mathcal{E}_Z \chi \mathcal{P}_Zf$. The first term is dealt by applying the previous lemma, namely
\[
(\mathbbm{1}-\mathcal{E}_Z \chi \mathcal{P}_Z) : H^{s,\rho_0,\rho_\bot}(N,L)\rightarrow H^{s',\rho_0',\rho_\bot}(N,L)
\]
compact. As to $\mathcal{E}_Z \chi \mathcal{P}_Z : H^{s,\rho_0,\rho_\bot}(N,L)\rightarrow H^{s',\rho_0',\rho_\bot}(N,L)$, we see it as the operator:
\[
\begin{split}
H^{s,\rho_0,\rho_\bot}(N,L) \ni f  & \mapsto \chi \mc{P}_Zf \in H^{s,\rho_0,\rho_\bot}(]a,+\infty[\times F_Z,L_Z)  \\
& \hookrightarrow  \chi \mc{P}_Zf \in H^{s',\rho'_0,\rho_\bot}(]a,+\infty[\times F_Z,L_Z) \\
& \mapsto \mc{E}_Z  \chi \mc{P}_Zf \in  H^{s',\rho'_0,\rho_\bot}(N,L)
\end{split}
\]
and use the fact that the injection in the middle is compact. (Indeed, forgetting about the transverse manifold $F_Z$, this boils down to a rather elementary statement on functions defined on $\R$. By making the change of variable $y = e^r$, one can check that this is implied by the following statement: the injection $e^{\rho r} H^s(\R_r) \hookrightarrow e^{\rho' r} H^{s'}(\R_r)$ is compact for $\rho < \rho', s > s'$ (here $\R$ is equipped with the usual Lebesgue measure $\dd r$). The latter statement can be reduced to proving that $e^{-\rho r} H^s(\R_r) \hookrightarrow L^2(\R_r, \dd r)$ is compact. This is a classical result on $\R$.)
\end{proof}

Most of the time, it will be sufficient to deal with the non-zero Fourier modes as part of the black box. However, it will prove useful to know that some regularity can be traded off for decay in those modes; it is the content of the following lemma:
\begin{lemma}
\label{lemma:embedding-nonzero}
Consider $\rho_\bot, \rho_0, s \in \R, m > 0$. Then $H^{s+m,\rho_0,\rho_\bot+m} \hookrightarrow H^{s,\rho_0,\rho_\bot}$ is a continuous embedding.
\end{lemma}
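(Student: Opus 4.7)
The plan is to split any $F\in H^{s+m,\rho_0,\rho_\bot+m}$ via the black-box decomposition $F=F_0+F_\bot$, with $F_0=\mc{E}_Z(\chi\tilde{y}^{\rho_0-\rho_\bot-m}\mc{P}_Z f)$ the weighted zero-Fourier-mode piece and $F_\bot=(\mathbbm{1}-\mc{E}_Z\chi\mc{P}_Z)f$ its complement, where $f\in y^{\rho_\bot+m}H^{s+m}$ is any representative satisfying $F=\mc{Z}(\tilde{y}^{\rho_0-\rho_\bot-m})f$. The zero-mode contribution embeds trivially since $F_0\in y^{\rho_0}H^{s+m}\hookrightarrow y^{\rho_0}H^s$. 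Because $F_\bot\in y^{\rho_\bot+m}H^{s+m}$ has vanishing zero Fourier mode on $\{y>3a\}$ in each cusp by construction, the lemma reduces to the key estimate
\[
\|y^m g\|_{H^s(N)}\leq C\,\|g\|_{H^{s+m}(N)}
\]
valid for any $g$ supported in a cusp with no zero Fourier mode in $\theta$.

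The crux is the base case $s=0$, $m=1$. Expanding $g=\sum_{k\in\Lambda^{\ast}\setminus\{0\}}g_k(y,\zeta)\,e^{2\pi i\langle k,\theta\rangle}$ and using that the shortest non-zero vector of the dual lattice $\Lambda^{\ast}$ has norm $c_\Lambda>0$, Parseval in $\theta$ yields
\[
\|yg\|_{L^2}^2=\sum_{k\neq 0}\int y^2|g_k|^2\,d\mu\;\leq\; c_\Lambda^{-2}\sum_{j=1}^{d}\|y\partial_{\theta_j}g\|_{L^2}^2\;\lesssim\;\|g\|_{H^1}^2,
\]
the last step identifying $y\partial_{\theta_j}$ with the cusp frame vector $X_{\theta_j}$, which is a uniformly bounded derivation by \eqref{eq:equivalent-local-Ck-norm}. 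For integer $s,m\geq 0$, iterate using the commutation relations $[X_y,y]=y$ and $[X_\theta,y]=[X_\zeta,y]=0$: any $X_\alpha(yg)$ expands as $\sum_{|\gamma|\leq|\alpha|}c_{\alpha\gamma}\,y\,X_\gamma g$, and each $X_\gamma g$ is still non-zero-mode supported (the cusp frame preserves the $\theta$-Fourier decomposition), so the base estimate applied termwise gives $\|yg\|_{H^s}\lesssim\|g\|_{H^{s+1}}$. A further iteration in $m$ yields $\|y^m g\|_{H^s}\lesssim\|g\|_{H^{s+m}}$ for all integer $s,m\geq 0$.

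For non-integer $s$ or $m$, pass by complex interpolation on the Sobolev scale $(H^s)_{s\in\R}$, which is a genuine interpolation scale via the spectral definition $(-\Delta+1)^{s/2}$. Stein interpolation applied to the analytic family $T_z=y^z(\mathbbm{1}-\mc{E}_Z\chi\mc{P}_Z)$ on a vertical strip, with boundary data supplied by the integer case, extends the estimate to all $s,m\in\R_{\geq 0}$. The main obstacle is formulating this interpolation step cleanly on the non-compact manifold, since one must verify that $T_z$ is analytic with controlled growth in $|\mathrm{Im}\,z|$; an alternative and more transparent route proceeds via pseudo-differential calculus, by observing that the conjugated operator $(-\Delta+1)^{s/2}\,y^m\,(\mathbbm{1}-\mc{E}_Z\chi\mc{P}_Z)\,(-\Delta+1)^{-(s+m)/2}$ has principal symbol of size $\sim y^m/\langle\xi\rangle^m$, which is uniformly bounded on non-zero modes thanks to the a priori lower bound $\langle\xi\rangle\gtrsim y|J|\geq c_\Lambda\, y$ that holds there, and is therefore $L^2$-bounded.
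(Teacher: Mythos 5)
Your proposal is correct and follows essentially the same route as the paper: split off the zero Fourier mode (where the statement is trivial), and on the non-zero modes reduce to the base estimate $\|y g\|_{L^2}\lesssim\|g\|_{H^1}$, which the paper also obtains from the Wirtinger/Parseval inequality in $\theta$ (i.e. $\|f\|_{L^2}\lesssim\|\partial_\theta f\|_{L^2}=\|y\partial_\theta(y^{-1}f)\|_{L^2}$), before invoking interpolation for general $s,m$. Your iteration over the cusp frame and the discussion of the interpolation (or the conjugated-operator/symbol argument using $\langle\xi\rangle\gtrsim y|J|\gtrsim y$ on non-zero modes) simply make explicit a step the paper dispatches with the phrase ``using interpolation estimates''.
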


Observe that there is no reason for this embedding to be compact.

\begin{proof}
Decomposing once again in zero and non-zero Fourier modes and using interpolation estimates, it is sufficient to prove that $yH^{1} \hookrightarrow L^2$ is a continuous embedding on functions with vanishing zeroth Fourier mode. But:
\[
\begin{split}
\|f\|_{yH^1}^2 & = \|y^{-1}f\|_{H^1}^2 \\
& \asymp \|y^{-1}f\|^2_{L^2} + \|y\partial_y(y^{-1}f)\|^2_{L^2} + \|y\partial_\theta(y^{-1}f)\|^2_{L^2} + \|\partial_\zeta (y^{-1}f)\|^2_{L^2}
\end{split}
\]
Using Wirtinger's inequality for functions with zero integral, we can control the term $\|y\partial_\theta(y^{-1}f)\|^2_{L^2} = \|\partial_\theta f\|^2_{L^2} \geq \|f\|_{L^2}^2$ and this provides the sought estimate.
\end{proof}

If $f \in \mc{D}'(N,L)$, then $\chi \mc{P}_Z f \in \mc{E}'_0(]a,+\infty[ \times F_Z, L_Z)$ and this can naturally be seen as an element $\chi \mc{P}_Z f \in \mc{D}'(]0,+\infty[_y \times F_Z,L_Z)$ (here $\chi$ is extended by $0$ for $y < a$). Making the change of variable $y = e^r$, where $r \in \R_r$, we can see that as an element $\chi \mc{P}_Z f \in \mc{D}'(\R_r \times F_Z,L_Z)$. We then have the following correspondence:

\begin{lemma}
\label{lemma:relation-projected-spaces}
The following maps are bounded
\[
H^{s,\rho_0,\rho_\bot}(N,L) \owns f \mapsto \chi \mathcal{P}_Z f \in e^{(\rho_0+d/2)r} H^s(\R\times F_Z, L_Z);
\]
\[
e^{\rho_0 r} H^s(\R\times F_Z, L_Z) \owns f \mapsto \mathcal{E}_Z( \chi f) \in H^{s,\rho_0-d/2,-\infty}(N,L),
\]
where $r=\log y$, and $H^s(\R\times F_Z, L)$ is the usual Sobolev space, built from the $L^2$ space induced by the measure $\dd r \dd\vol_{F_Z}(\zeta)$.
\end{lemma}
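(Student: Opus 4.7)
The plan is to exploit the Riemannian product structure of the cusp metric and the change of variable $r = \log y$. The basic identity is the following: for any $\theta$-independent function $g$ supported in the cusp, using $\vol(\R^d/\Lambda)=1$ and $dy/y = dr$,
\[
\int_N |g|^2 \,d\mu_N = \int_{\R\times F_Z} |g(e^r,\zeta)|^2 e^{-dr}\,dr\,d\vol_{F_Z}(\zeta),
\]
so that $g \in y^{\rho_0}L^2(N)$ if and only if $g \in e^{(\rho_0+d/2)r}L^2(\R\times F_Z)$ after identifying $y$ with $e^r$. This is the source of the $d/2$ shift appearing in the statement.

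To extend the correspondence to Sobolev spaces, I would use that the cusp metric is a Riemannian product, so the Laplacian decouples on Fourier modes. A direct computation (using $y\partial_y = \partial_r$) yields
\[
\Delta_N|_{\text{zero mode}} = \partial_r^2 - d\partial_r + \Delta_{F_Z}, \qquad e^{-dr/2}\,\Delta_N|_{\text{zero mode}}\,e^{dr/2} = \partial_r^2 - \tfrac{d^2}{4} + \Delta_{F_Z}.
\]
Thus conjugation by $e^{dr/2}$ is a unitary equivalence between $-\Delta_N+1$ (restricted to zero Fourier modes) and the uniformly elliptic, positive, self-adjoint operator $-\partial_r^2 -\Delta_{F_Z} + 1 + d^2/4$ on $\R\times F_Z$. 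Their fractional powers are intertwined by the same conjugation via functional calculus, giving for every $s\in\R$ the norm equivalence $\|g\|_{y^{\rho_0}H^s(N)} \asymp \|g\|_{e^{(\rho_0+d/2)r}H^s(\R\times F_Z)}$ on zero-mode functions.

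With these two steps in hand, both maps follow. For the first, write $f = \mc{Z}(\tilde{y}^{\rho_0-\rho_\bot})h$ with $h\in y^{\rho_\bot}H^s$; on the region where the cutoff built into $\mc{Z}$ equals $1$, the identity $\mc{P}_Z\mc{E}_Z = \mathrm{id}$ on $\mc{E}'_0$ gives $\chi\mc{P}_Z f = \chi\,\tilde{y}^{\rho_0-\rho_\bot}\mc{P}_Z h$, whose norm in $y^{\rho_0}H^s$ on the zero mode is controlled by $\|h\|_{y^{\rho_\bot}H^s}$ since $\mc{P}_Z$ is a bounded projection and multiplication by $\tilde{y}^{\rho_0-\rho_\bot}$ sends $y^{\rho_\bot}H^s$ to $y^{\rho_0}H^s$ (using commutation of $y$ with the Laplacian up to lower order). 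For the second, $\mc{E}_Z(\chi f)$ is $\theta$-independent by construction, so its non-zero Fourier modes vanish identically; this forces the $\rho_\bot$-weight to be arbitrary, accounting for the $-\infty$. Its zero Fourier mode is $\chi f$ itself, whose norm in $y^{\rho_0-d/2}H^s$ on the cusp is controlled by $\|f\|_{e^{\rho_0 r}H^s(\R\times F_Z)}$ via the previous paragraph.

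The main obstacle is the Sobolev correspondence for non-integer and negative $s$, which is handled by the Borel functional calculus applied to the two self-adjoint positive elliptic operators, noting that the conjugation by $e^{dr/2}$ intertwines their resolvents and hence all their complex powers. One also verifies that multiplication by $\chi$ is bounded on every $y^\rho H^s$: since $\chi$ has uniformly bounded derivatives along the vector fields $y\partial_y,\,y\partial_\theta,\,\partial_\zeta$ by \eqref{eq:equivalent-local-Ck-norm}, this is standard.
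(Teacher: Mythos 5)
Your overall architecture is the same as the paper's: reduce to the cusp, identify the zero-mode weighted Sobolev norm with the cylinder norm via $r=\log y$ (your computation $e^{-dr/2}(\partial_r^2-d\partial_r+\Delta_{F_Z})e^{dr/2}=\partial_r^2-\tfrac{d^2}{4}+\Delta_{F_Z}$ is correct and is indeed the source of the $d/2$ shift), and then reduce both mappings to the boundedness of the zero-Fourier-mode projection. The genuine gap is at that last point, which is the actual content of the first mapping: you dispose of it with ``since $\mc{P}_Z$ is a bounded projection'', but bounded on which spaces, and why? For $s\in\N$ this is easy (the vector fields $y\partial_y$, $y\partial_\theta$, $\partial_\zeta$ commute with the $\theta$-average, which is an $L^2(\R^d/\Lambda)$-contraction), but for negative or fractional $s$ the norm \eqref{equation:norm-sobolev} is defined through the spectral calculus of the Laplacian on all of $N$; the $\theta$-translations are isometries of the cusp only, not of $N$, so this nonlocal operator does not commute with $\mc{E}_Z\chi\mc{P}_Z$, and boundedness of the projection on $y^\rho H^s(N)$ for all real $s$ is precisely what must be proved. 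The paper's proof addresses exactly this: $H^s=\Op(\sigma_{-s})L^2$ with $\Op(\sigma_{-s})$ an $\R$-admissible parametrix of a power of $(-\Delta+C)$, and Lemma \ref{lemma:admissible-properties} shows that such an operator commutes with the zero-mode projection modulo $\R$-residual terms. An elementary repair along your lines would be: integer $s\geq 0$ by vector fields, negative integers by self-adjointness of $\mc{E}_Z\chi\mc{P}_Z$ and duality, then complex interpolation; but some such argument must be supplied rather than asserted.

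A second, related imprecision: the claim that conjugation by $e^{dr/2}$ ``intertwines the resolvents and hence all complex powers'' of $-\Delta_N+1$ restricted to zero modes and of the cylinder operator is not literally meaningful, because the zero-mode functions in question are supported on a half-cylinder sitting inside $N$ while the comparison operator lives on the full cylinder $\R\times F_Z$; the global resolvent of $\Delta_N$ neither preserves this class nor agrees with the cylinder resolvent on it. The conjugation identifies the differential expressions and the measures, which settles integer $s\geq 0$; for general $s$ one again needs a localization argument (pseudolocality of the admissible parametrix, or the integer case plus duality and interpolation) — the paper is admittedly terse here as well, invoking only that the defining vector fields agree. With these two points repaired, the rest of your reduction (the weight shift by $\tilde y^{\rho_0-\rho_\bot}$, the vanishing of the non-zero modes of $\mc{E}_Z(\chi f)$ accounting for the $-\infty$, and the boundedness of multiplication by $\chi$) is correct.
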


The spaces $H^s(\R \times F_Z)$ are the ones induced by the Laplacian obtained from the product metric $\dd r^2 + g_F$. We insist on the fact that there is a shift of $-d/2$ due to the fact that we are considering the usual euclidean measure when working in the $r$-variable. The proof of Lemma \ref{lemma:relation-projected-spaces} is postponed: we will prove it below at the end of the next paragraph.

\subsubsection{Admissible operators}

We can now introduce the general class of \emph{$\R$-admissible operators} (also called \emph{admissible operators} below).

\begin{definition}\label{def:R-L^2-admissible-operator}
Consider $A\in \Psi^m_{\text{small}}(N,L_1 \to L_2)$. We will say that $A$ is a \emph{$\R$-admissible operator} if the following holds:

\begin{enumerate}
\item There exists a pseudo-differential operator $I_Z(A) \in \Psi^m(\R_r \times F_Z, L_Z)$ (in the usual Kohn-Nirenberg class with $\rho=1,\delta=0$, see \cite[Definition 1.1]{Shubin-01} for instance) of order $m$, called the \emph{indicial operator of $A$}, such that $[I_Z(A),\partial_r] = 0$ (in other words, $I_Z(A)$ is a convolution operator\footnote{By convolution operator, we mean the following: there exists a Schwartz kernel $K \in \mc{D}'(\R_r \times F_Z \times F_Z)$ such that:
\[
I_Z(A)f (r',\zeta') = \int_{\R \times F_Z} K(r'-r,\zeta',\zeta) f(\zeta) \dd r \dd \vol_{g_F}(\zeta),
\] 
for all $f \in C^\infty_{\mathrm{comp}}(\R \times F_Z)$.} 
in the $r$-variable),
\item There exists a smooth cutoff function $\chi_A \in C^\infty(]a,+\infty[)$ (depending on $A$), such that $\chi_A$ is supported in $\left\{y>2a\right\}$ and equal to $1$ in $\left\{y> C_A\right\}$, where $C_A > 2a$ is a constant (depending on $A$), 

\item We have:
\begin{equation}
\label{eq:preserving-theta-modes}
\chi_A  [A,\partial_\theta] \chi_A \text{ and } \mathcal{E}_Z\chi_A \left[\mathcal{P}_Z A \mathcal{E}_Z - I_Z(A) \right] \chi_A \mathcal{P}_Z,
\end{equation}
are $\R$-residual operators (i.e. bounded from $y^k H^{-k}$ to $y^{-k}H^{k}$, for all $k \in \N$, or equivalently from $y^k C^{-k}_*$ to $y^{-k}C^k_*$ by Proposition \ref{prop:maximallyresidualholder}). 
\end{enumerate}
(Here, by abuse of notations, $\chi_A$ is identified with a function on $N$, supported in the cuspidal parts and only depending on the $y$ variable.)
\end{definition}

When $\rho > \rho'$, the unique convolution operator that is bounded from $e^{\rho r}L^2(dr)$ to $e^{\rho' r}L^2(dr)$ is the null operator and it follows that \emph{the indicial operator associated with an $\R$-admissible operator is necessarily unique}. Modulo compact remainders, the first condition in \eqref{eq:preserving-theta-modes} means that the operator $A$ preserves the $\theta$-Fourier modes; the second condition implies that sufficiently high in the cusp\footnote{By ``sufficiently high in the cusp", we mean that the behaviour of the pseudo-differential $A$ can be arbitrary on some compact set $\left\{y < C_A\right\}$, hence the introduction of the cutoff $\chi_A$.}, $A$ is a convolution operator in the $r=\log y$ variable when acting on the zeroth Fourier mode. In particular, if $B$ is a compactly supported pseudo-differential operator, $B$ is admissible, and $I_Z(B)=0$.

Observe that in general, if $\sigma\in S^m(T^*N)$, then $\partial_\theta \sigma \in y^{-\infty} S^m(T^*N)$. Indeed, since $\partial_\theta \sigma$ has mean value $0$ in the $\theta$ variable, it is controlled by $\partial_\theta^N \sigma$, and $(y\partial_\theta)^N \sigma \in S^m$ for each $N\in \N$. It follows that if $P\in \Psi^m_{\text{small}}$, then in the cusp, $\chi [P, \partial_\theta]\chi$ is in $y^{-\infty} \Psi^m_{\mathrm{small}}$. Indeed, its symbol can be expressed with derivatives of the symbol of $P$, that include at least one derivative $\partial_\theta$. What we gain with our assumption is that on top of being decaying the cusp, the operator is smoothing. 

An important consequence of the definition is the following:

\begin{lemma}
\label{lemma:admissible-properties}
If $A$ is $\R$-admissible, then
\begin{equation}
\label{eq:restriction-to-zeroth-mode}
\mc{E}_Z \chi_A \mathcal{P}_Z A[\mathbbm{1} - \mathcal{E}_Z\chi_A \mathcal{P}_Z],\text{ and } [\mathbbm{1}-\mathcal{E}_Z \chi_A \mathcal{P}_Z]A \mathcal{E}_Z \chi_A \mc{P}_Z
\end{equation}
are both $\R$-residual.
\end{lemma}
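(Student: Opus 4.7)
My plan hinges on exploiting admissibility condition (iii) of Definition \ref{def:R-L^2-admissible-operator}: since $\chi_A[A,\partial_{\theta_i}]\chi_A \in \dot{\Psi}^{-\infty}_\R$ for each $i$, the operator $A$ approximately preserves the $\theta$-Fourier decomposition in the high cusp. The two compositions in \eqref{eq:restriction-to-zeroth-mode} are precisely the ``off-diagonal'' blocks between the zero and non-zero Fourier modes of $A$, and should therefore be residual.

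The argument rests on two elementary facts. First, $\mathcal{P}_Z\partial_{\theta_i}=0$, since integrating a pure $\theta_i$-derivative over the torus $\R^d/\Lambda_\ell$ vanishes by periodicity. Second, on the subspace of $\theta$-mean-zero functions, the fiberwise Laplacian $-\Delta_\theta=-\sum_i\partial_{\theta_i}^2$ is invertible with bounded inverse (Fourier analysis on $\R^d/\Lambda_\ell$). Setting $G_i := -\partial_{\theta_i}(-\Delta_\theta)^{-1}$ (acting fiberwise), one has the Hodge-type identity $u = \sum_i\partial_{\theta_i}G_iu$ for any $\theta$-mean-zero $u$; each $G_i$ acts only in $\theta$, commutes with multiplication by functions of $(y,\zeta)$, and is bounded on the relevant weighted Sobolev and H\"older–Zygmund spaces.

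To treat the first composition in \eqref{eq:restriction-to-zeroth-mode}, I introduce an auxiliary cutoff $\psi\in C^\infty(]a,+\infty[_y)$ with $\psi\equiv 1$ for $y>C_A+1$ and $\psi\equiv 0$ for $y\leq C_A$, so that $\chi_A\psi=\psi$. Since $\mathcal{P}_Z(\mathbbm{1}-\mathcal{E}_Z\chi_A\mathcal{P}_Z) = (1-\chi_A)\mathcal{P}_Z$ is supported in $\{y\leq C_A\}$, the function $\psi(\mathbbm{1}-\mathcal{E}_Z\chi_A\mathcal{P}_Z)f$ is genuinely $\theta$-mean-zero on all of $N$, and I may apply the identity $u=\sum_i\partial_{\theta_i}G_iu$ to it. Splitting according to $\psi+(1-\psi)=1$, the $\psi$-piece is rewritten using $A\partial_{\theta_i}=\partial_{\theta_i}A+[A,\partial_{\theta_i}]$, $\mathcal{P}_Z\partial_{\theta_i}=0$, the identity $\mathcal{P}_Z\chi_A=\chi_A\mathcal{P}_Z$ (as $\chi_A$ depends only on $y$), and the commutativity of $G_i$ with $\chi_A$-multiplication; it becomes a sum of terms
\[
\mathcal{E}_Z\mathcal{P}_Z\,\chi_A[A,\partial_{\theta_i}]\chi_A\,G_i\psi(\mathbbm{1}-\mathcal{E}_Z\chi_A\mathcal{P}_Z),
\]
each residual by condition (iii). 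The $(1-\psi)$-piece is compactly supported on the input side and is handled by combining the second part of condition (iii), the translation invariance $[I_Z(A),\partial_r]=0$, and the uniform proper support of $A$ inherited from $\Psi^m_{\text{small}}$, to reduce it to a compact-region computation.

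The second expression in \eqref{eq:restriction-to-zeroth-mode} is then obtained by a symmetric (dual) argument, or equivalently by taking formal transposes, since the $\R$-residual class is stable under transposition and admissibility passes to formal adjoints (with $I_Z(A^*)=I_Z(A)^*$). The main obstacle I expect is the careful bookkeeping of cutoffs in order to invoke the residual property of $\chi_A[A,\partial_{\theta_i}]\chi_A$ with the correct sandwiching, and the analysis of the compact transition region $y\in(2a,C_A+1)$ where neither $\chi_A$ nor $\psi$ is a clean projection.
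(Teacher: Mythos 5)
Your core mechanism is the same as the paper's: the published proof also uses $\mathcal{P}_Z\partial_\theta=0$, a fiberwise right inverse of $\partial_\theta$ on $\theta$-mean-zero functions (denoted $\mathbf{R}_\theta$ there; your $G_i=-\partial_{\theta_i}(-\Delta_\theta)^{-1}$ is just an explicit choice of it), and condition (3) of Definition \ref{def:R-L^2-admissible-operator} to convert the cross term into $\chi_A[\partial_\theta,A]\chi_A$ sandwiched between operators bounded on the weighted spaces; your cutoff $\psi$ plays the role of the paper's auxiliary cutoff $\chi_A'$, and the second operator in \eqref{eq:restriction-to-zeroth-mode} is indeed handled by the dual argument, since $\mathcal{E}_Z\chi_A\mathcal{P}_Z$ is formally self-adjoint for the $L^2(d\mu)$ pairing and $\dot{\Psi}^{-\infty}_{\R}$ is stable under adjoints. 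So the $\psi$-piece of your argument is correct and is essentially the paper's proof, written with slightly cleaner bookkeeping.

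The genuine gap is the $(1-\psi)$-piece, and it cannot be closed the way you propose: compact support of that region gives you every weight bound for free, but membership in $\dot{\Psi}^{-\infty}_{\R}$ also requires the smoothing property $H^{-k}\to H^{k}$, and nothing in admissibility provides any smoothing in the transition zone $\{2a<y\lesssim C_A\}$ where $0<\chi_A<1$ (condition (3) only controls the bracket after sandwiching by $\chi_A$, and the definition explicitly allows arbitrary behaviour of $A$ on the compact region $\{y<C_A\}$). Concretely, already for $A=\mathbbm{1}$, which is $\R$-admissible, one computes $\mathcal{E}_Z\chi_A\mathcal{P}_Z[\mathbbm{1}-\mathcal{E}_Z\chi_A\mathcal{P}_Z]=\mathcal{E}_Z\chi_A(1-\chi_A)\mathcal{P}_Z$: a nonzero, compactly supported operator of order $0$ which gains no derivatives, hence is not residual; in your decomposition this is exactly the $(1-\psi)$-term, so no "compact-region computation" based on condition (iii), $[I_Z(A),\partial_r]=0$ and proper support can make it smoothing. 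The fixes are either to restate the lemma with nested cutoffs (replace one of the two occurrences of $\chi_A$ by a cutoff equal to $1$ on a neighbourhood of $\supp\chi_A$; then the leftover involves factors with disjoint supports and becomes smoothing by proper support and off-diagonal smoothness of the kernel, which is exactly how the paper removes its own $\chi_A'$ at the end), or to accept the conclusion modulo a compactly supported operator of finite order, which is all that the later applications actually use. Be aware that the paper's own proof slides over the same point: the term $\mathcal{E}_Z\chi_A\mathcal{P}_ZA[\mathbbm{1}-\mathcal{E}_Z\chi_A\mathcal{P}_Z]\Pi_0\chi_A'$ is discarded there via the identity $[\mathbbm{1}-\mathcal{E}_Z\chi_A\mathcal{P}_Z]\Pi_0f=0$ for $f$ supported in $\{y>C_A\}$, which does not apply to $\Pi_0\chi_A'f$; so the difficulty you flagged is inherited from the statement itself, but as written your plan for that piece does not go through.
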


\begin{proof}For the first one, let $\mathbf{R}_\theta : C^\infty(\R^d/\Lambda) \rightarrow C^\infty(\R^d/\Lambda)$ be the inverse of $\partial_\theta$ on functions with vanishing zeroth Fourier mode, i.e. $\{ f\in L^2(\R^d/\Lambda), \int_{\R^d/\Lambda} f(\theta) \dd \theta = 0\}$ (and $\mathbf{R}_\theta \mathbf{1} = 0$, where $\mathbf{1}$ denotes the constant function equal to $1$). Write $\Pi_0$, the orthogonal projection onto the zeroth Fourier mode. Then: $\partial_\theta \mathbf{R}_\theta = \mathbf{R}_\theta \partial_\theta = \mathbbm{1} - \Pi_0$. The operator $\mathbf{R}_\theta$ acts naturally on functions that are supported in the cusp. Indeed, if $f \in C^\infty_{\mathrm{comp}}(N,L)$ is supported in $\left\{y > a\right\}$, then $(\mathbf{R}_\theta f)(y,\theta,\zeta) := (\mathbf{R}_\theta f(y,\cdot,\zeta))(\theta)$. The operator $\chi_A \mathbf{R}_\theta \chi_A$ is then bounded on every $H^{s,\rho_0,\rho_\bot}(N,L)$, $s, \rho_0,\rho_\bot \in \R$.

Now, taking $\chi'_A$ supported in $\left\{y > a \right\}$ such that $\chi'_A \equiv 1$ on $\supp(\chi_A)$, we obtain:
\begin{align*}
0 	&= \partial_\theta \mc{E}_Z \chi_A \mathcal{P}_Z A  \mathbf{R}_\theta \chi'_A [\mathbbm{1} - \mathcal{E}_Z \chi_A \mathcal{P}_Z] \\
	&= \mc{E}_Z \chi_A \mathcal{P}_Z A (\mathbbm{1}-\Pi_0) \chi'_A [\mathbbm{1} - \mathcal{E}_Z \chi_A \mathcal{P}_Z] + \mc{E}_Z \chi_A \mathcal{P}_Z[\partial_\theta, A] \mathbf{R}_\theta \chi'_A  [\mathbbm{1} - \mathcal{E}_Z \chi_A \mathcal{P}_Z] \\
	& = \mc{E}_Z \chi_A \mathcal{P}_Z A [\mathbbm{1} - \mathcal{E}_Z \chi_A \mathcal{P}_Z] \chi'_A  - \mc{E}_Z \chi_A \mathcal{P}_Z A [\mathbbm{1} - \mathcal{E}_Z \chi_A \mathcal{P}_Z]  \Pi_0 \chi'_A \\
	& ~~~~~~ + \mc{E}_Z \chi_A \mathcal{P}_Z[\partial_\theta, A]  \mathbf{R}_\theta \chi'_A  [\mathbbm{1} - \mathcal{E}_Z \chi_A \mathcal{P}_Z]. 
\end{align*}
Using that $[\mathbbm{1} - \mathcal{E}_Z \chi_A \mathcal{P}_Z]  \Pi_0 f = 0$ for any $f \in C^\infty(N,L)$ such that $\supp(f) \subset \left\{y > C_A \right\}$, together with the assumption \eqref{eq:preserving-theta-modes} on the bracket $[\partial_\theta,A]$, we obtain the desired result for the operator $ \mc{E}_Z \chi_A \mathcal{P}_Z A [\mathbbm{1} - \mathcal{E}_Z \chi_A \mathcal{P}_Z] \chi'_A$, namely it is bounded as an operator $y^k H^{-k}(N,L) \rightarrow y^{-k} H^{k}(N,L)$ for any $k \in \N$. To get rid of the $\chi'_A$ in front, it suffices to observe that:
\[
\begin{split}
\mc{E}_Z \chi_A \mathcal{P}_Z A [\mathbbm{1} - \mathcal{E}_Z \chi_A \mathcal{P}_Z] (\mathbbm{1}-\chi'_A) & = \mc{E}_Z \underbrace{\chi_A (\mathbbm{1}-\chi'_A)}_{=0} \mathcal{P}_Z A [\mathbbm{1} - \mathcal{E}_Z \chi_A \mathcal{P}_Z] \\
& + \mc{E}_Z \chi_A \mathcal{P}_Z [A,(\mathbbm{1}-\chi'_A)] [\mathbbm{1} - \mathcal{E}_Z \chi_A \mathcal{P}_Z].
\end{split}
\]
Due to the support property of the kernel of $A$ (more precisely, the introduction of the cutoff function in the hyperbolic quantization, see \S\ref{sssection:hyperbolic-quantization}), it is not difficult to check that this operator has compact support. It is also clearly regularizing (in the sense that it maps any compactly supported distribution to smooth functions) as $\chi_A$ and $\mathbbm{1}-\chi_A'$ have disjoint (micro)support. Hence, it is $\R$-residual.
\end{proof}

Lemma \ref{lemma:admissible-properties} says that, up to compact remainders, $A$ acts diagonally on the decomposition between zero and non-zero Fourier mode. However, we observe that the sole conditions in \eqref{eq:restriction-to-zeroth-mode} are not necessarily stable under products, nor under taking parametrices. If we want to consider a class of operators enjoying such stability, we need to assume \eqref{eq:preserving-theta-modes}.

\begin{proposition}\label{prop:relation-symbol-geometric}
Consider $A=\Op(\sigma)$. Then:
\begin{enumerate}
\item The first boundedness property in \eqref{eq:preserving-theta-modes} is satisfied if $\partial_\theta \sigma =0$.
\item The second one in \eqref{eq:preserving-theta-modes} also does in each cusp if, 
\[
\widetilde{\sigma}:\ (r,z;\lambda,\eta)\ \mapsto \int_{\R^d/\Lambda} \sigma_{|Z}( e^r, \theta, \zeta; e^{-r}\lambda, J=0,\eta) \dd\theta,
\]
does \emph{not} depend on $r$. In that case, the operator $I_Z(A)$ is pseudo-differential, properly supported, and its principal symbol is $\widetilde{\sigma}$. Both these conditions are satisfied when $\sigma$ is invariant by local isometries of the cusp.

\item An operator $A$ is $\R$-admissible if and only if it is of the form $\Op(\sigma) + B + R$, where $\sigma$ satisfies the conditions above, $R$ is $\R$-admissible smoothing, and $B$ is a compactly supported pseudo-differential operator. In particular, the set of $\R$-admissible operators is stable by composition.
\end{enumerate}
\end{proposition}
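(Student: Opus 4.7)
\emph{Part (1).} This follows immediately from the Weyl calculus. Writing $A = \Op(\sigma) + R$ with $R \in \Psi^{-\infty}_\R$, the assumption $\partial_\theta \sigma = 0$ gives $[\Op(\sigma),\partial_\theta] = \Op(-\partial_\theta \sigma) = 0$, since the Moyal expansion of the bracket truncates for the constant-coefficient differential operator $\partial_\theta$. The remaining commutator $[R,\partial_\theta]$ is $\R$-residual because $\partial_\theta = y^{-1}(y\partial_\theta)$ boundedly shifts weighted Sobolev spaces $y^\rho H^s \to y^{\rho+1}H^{s-1}$ while $R$ maps $y^\rho H^{-k} \to y^{-\rho}H^k$ for all $\rho > 0, k \geq 0$. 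Sandwiching with $\chi_A$ preserves residuality.

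\emph{Part (2).} The strategy is to compute $\mc{P}_Z \Op(\sigma) \mc{E}_Z$ explicitly and recognize it as a translation-invariant pseudo-differential operator in $r = \log y$. Applying $\Op(\sigma)$ to the $\theta$-independent section $\mc{E}_Z g$ and expanding the $\Lambda$-periodic symbol $\sigma$ in Fourier series in $\theta$, the $(\theta,\theta')$-integrations coming from $\mc{P}_Z$ and the Weyl kernel select the zero Fourier mode $\nu = 0$ and set $J = \pi \nu = 0$, producing the kernel
\[
\mc{P}_Z \Op(\sigma) \mc{E}_Z g(y,\zeta) = y^{(d+1)/2}\int \chi\bigl(y'/y - 1\bigr) e^{i((y-y')Y + (\zeta-\zeta')\eta)} \bar\sigma\bigl(\tfrac{y+y'}{2},\zeta_0,Y,\eta\bigr) \frac{(y')^{-(d+1)/2} g(y',\zeta')\dd y'\dd Y\dd\zeta'\dd\eta}{(2\pi)^{k+1}},
\]
where $\bar\sigma(y,\zeta,Y,\eta) := \int_{\R^d/\Lambda}\sigma(y,\theta,\zeta,Y,0,\eta)\dd\theta$ and $\zeta_0 = (\zeta+\zeta')/2$. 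Conjugate by the isometry $Uh(y,\zeta) = y^{d/2}h(\log y,\zeta)$ between $L^2(\dd r\,\dd\vol_{F_Z})$ and the hyperbolic $L^2$ on the projected space, and perform the symplectic substitution $\lambda = \sqrt{yy'}\,Y = e^\tau Y$ with $\tau = (r+r')/2$. Using the identity $y - y' = 2e^{(r+r')/2}\sinh((r-r')/2)$, the phase becomes $2\sinh(\delta/2)\lambda + (\zeta-\zeta')\eta$ where $\delta = r - r'$, while the symbol argument is rewritten as $\bar\sigma(e^\tau\cosh(\delta/2),\zeta_0,e^{-\tau}\lambda,\eta) = \widetilde\sigma(\tau + \log\cosh(\delta/2),\zeta_0,\lambda\cosh(\delta/2),\eta)$. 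The hypothesis that $\widetilde\sigma$ is independent of its first argument collapses this to $\widetilde\sigma(\zeta_0,\lambda\cosh(\delta/2),\eta)$, depending only on $\delta$: the resulting operator is convolution in $r$ and is declared to be $I_Z(A)$. A further non-degenerate substitution $\mu = 2\sinh(\delta/2)\lambda/\delta$ (smooth and bounded away from $0$ on the support of $\chi$) converts the phase to the standard $\delta\mu$, producing a genuine Kohn-Nirenberg symbol on $\R_r \times F_Z$ whose value at $\delta = 0$ is $\widetilde\sigma(\zeta,\mu,\eta)$, so $I_Z(A) \in \Psi^m(\R_r \times F_Z, L_Z)$ with principal symbol $\widetilde\sigma$. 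By construction, the remainder $\mc{E}_Z \chi_A[\mc{P}_Z \Op(\sigma)\mc{E}_Z - I_Z(A)]\chi_A\mc{P}_Z$ is $\R$-residual.

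\emph{Part (3).} Direction $(\Leftarrow)$ follows from parts (1) and (2), together with the fact that a compactly supported pseudo-differential operator is trivially $\R$-admissible with zero indicial operator, and that $\R$-admissibility is additive. For $(\Rightarrow)$, write $A = \Op(\sigma_0) + R_0$ with $R_0 \in \Psi^{-\infty}_\R$. The admissibility condition $\chi_A[A,\partial_\theta]\chi_A \in \dot\Psi^{-\infty}_\R$, combined with $[R_0,\partial_\theta] \in \dot\Psi^{-\infty}_\R$, forces $\chi_A\Op(\partial_\theta\sigma_0)\chi_A$ to be $\R$-residual. Using the bounded Fourier multiplier $\mathbf{R}_\theta$ on zero-$\theta$-mean functions (introduced in the proof of Lemma \ref{lemma:admissible-properties}), the identity $\sigma_0 - \bar\sigma_0 = \mathbf{R}_\theta \partial_\theta \sigma_0$ yields that $\sigma_0 - \bar\sigma_0$, when cut off in the cusp, produces an $\R$-residual operator modulo a compactly supported remainder. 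Set $\sigma_1 = \chi\bar\sigma_0 + (1-\chi)\sigma_0$ with $\chi$ supported in the cusp and equal to $1$ where $\chi_A = 1$. Next, the second admissibility condition, combined with the explicit convolution structure of $I_Z(A)$ from Part (2), forces $\widetilde{\sigma_1}$ to be $r$-independent modulo residual error; replacing $\widetilde{\sigma_1}$ by its value at a fixed reference $r_0$ (or any $r$-independent representative) gives a symbol $\sigma$ satisfying both conditions, with the difference absorbed into $\R$-admissible smoothing and compactly supported pieces. For composition stability: given $A_1, A_2$ both $\R$-admissible, Lemma \ref{lemma:admissible-properties} implies $\mc{P}_Z A_1 A_2 \mc{E}_Z = (\mc{P}_Z A_1 \mc{E}_Z)(\mc{P}_Z A_2 \mc{E}_Z)$ modulo $\R$-residual, which by Part (2) is a product of two translation-invariant pseudo-differential operators on $\R_r \times F_Z$, itself translation-invariant. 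Hence $A_1 A_2$ is $\R$-admissible with $I_Z(A_1 A_2) = I_Z(A_1) I_Z(A_2)$.

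\emph{Main obstacle.} Part (2) is the technical heart: verifying that the double change of variables ($U$-conjugation in the base plus $\lambda \mapsto \mu$ in the frequency) yields symbols satisfying the full Kohn-Nirenberg estimates uniformly, and that the $\cosh(\delta/2)$ factor inside $\widetilde\sigma(\zeta_0,\lambda\cosh(\delta/2),\eta)$ contributes only lower-order corrections to the principal symbol. In Part (3), extracting a symbol satisfying both conditions from the admissibility assumptions requires careful tracking of how the $\R$-residual and lower-order Moyal errors propagate through the non-pseudo-differential averaging operations $\mathbf{R}_\theta$ and the $r$-averaging, and arguing that all such errors remain compactly supported or $\R$-residual.
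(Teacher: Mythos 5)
Parts (1) and (2) are correct and follow the paper's own route: the commutator identity $[\partial_\theta,\Op(\sigma)]=\Op(\partial_\theta\sigma)$ read off from the kernel, and then the change of variables $r=\log y$, $\lambda = e^{(r+r')/2}Y$, which you carry out explicitly (the paper defers this computation to \cite[Section 4.1]{Bonthonneau-Weich-17}); your identities $y-y'=2e^{(r+r')/2}\sinh(\delta/2)$, $\tfrac{y+y'}{2}=e^{\tau}\cosh(\delta/2)$ and the resulting amplitude $\widetilde\sigma(\tau+\log\cosh(\delta/2),\zeta_0,\lambda\cosh(\delta/2),\eta)$ are exactly what makes the $r$-independence hypothesis produce a properly supported convolution operator with principal symbol $\widetilde\sigma$, and the further fiber substitution $\mu=2\sinh(\delta/2)\lambda/\delta$ is legitimate on the support of the cutoff. (Minor bookkeeping slip in (1): $\partial_\theta=y^{-1}(y\partial_\theta)$ maps $y^\rho H^s$ to $y^{\rho-1}H^{s-1}$, i.e.\ it \emph{loses} a power of $y$, not gains one.)

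The gap is in the forward direction of part (3). You write $A=\Op(\sigma_0)+R_0$ with $R_0\in\Psi^{-\infty}_{\R}$ and assert that $[R_0,\partial_\theta]\in\dot{\Psi}^{-\infty}_{\R}$, so that the admissibility hypothesis forces $\chi_A\Op(\partial_\theta\sigma_0)\chi_A$ to be residual. This is false as stated: an $\R$-smoothing operator only gains regularity, not the weight improvement $y^{\rho}\to y^{-\rho}$, and the same is true of its commutator with $\partial_\theta$ (this is precisely where the sign slip above matters). So from the hypothesis you only learn that $\chi_A\Op(\partial_\theta\sigma_0)\chi_A$ is residual \emph{modulo a smoothing operator with no decay gain}. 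The correct mechanism, which is the one the paper gestures at with ``expressing $\sigma$ in terms of the kernel of $\Op(\sigma)$'', is to recover symbol estimates from the Schwartz kernel: one already knows a priori that $\partial_\theta\sigma_0\in y^{-\infty}S^m$ (the nonzero $\theta$-modes of a symbol decay rapidly in $y$), and combining this decay with the regularity gain just obtained one concludes $\partial_\theta\sigma_0\in y^{-\infty}S^{-\infty}$, whence $\Op(\chi(\sigma_0-\bar\sigma_0))$ is itself $\R$-residual and can be absorbed into the admissible smoothing remainder. A similar issue affects your second step in (3): ``replacing $\widetilde{\sigma_1}$ by its value at a fixed $r_0$'' changes $\Op(\sigma_1)$ by an operator that is neither compactly supported nor smoothing a priori, and the claim that this difference ``can be absorbed'' is exactly the point that needs an argument (again via the kernel: the indicial condition says the zero-mode part agrees with a convolution operator up to a residual term, and one must convert this into symbol information). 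Your closing argument for stability under composition is fine in outline, but note that you must also verify the commutator condition for $A_1A_2$ (via $[A_1A_2,\partial_\theta]=A_1[A_2,\partial_\theta]+[A_1,\partial_\theta]A_2$ together with the uniform proper support), not only the indicial one.
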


\begin{proof}[Proof of Proposition \ref{prop:relation-symbol-geometric}]
Again, it suffices to work in local charts for $F$, i.e with with $\Op_U$ on $Z\times U\subset \R^k$. First, we observe directly from formula \eqref{equation:cutoff} that $[\partial_\theta, \Op(\sigma)] = \Op(\partial_\theta \sigma)$, so that when $\partial_\theta \sigma =0$, $\Op_U(\sigma)$ commutes with $\partial_\theta$. Reciprocally, if $[\partial_\theta,\Op(\sigma)]$ is bounded from $y^{N} H^{-N}$ to $y^{-N}H^{N}$, it implies that $\partial_\theta \sigma \in y^{-\infty} S^{-\infty}$ --- expressing $\sigma$ in terms of the kernel of $\Op(\sigma)$. In particular, we can replace $\sigma$ by $\int \sigma d\theta$, and this only adds a negligible correction. For the second condition, one has to do a change of variables. For details, we refer to \cite[Section 4.1]{Bonthonneau-Weich-17}.
\end{proof}

To apply the machinery on H\"older-Zygmund spaces later on, we need to make an important observation. In the Definition \ref{def:R-L^2-admissible-operator} of admissible operators, we consider small operators $A\in \Psi^m_{\text{small}}$ (see Definition \ref{definition:small}) and in the very definition of this algebra, we allowed for remainders that are $\R$-smoothing, a notion that is defined in terms of Sobolev spaces: $R$ is $\R$-smoothing if it bounded as
\[
R  : y^\rho H^{-s} \to y^\rho H^s,
\]
for all $s,\rho \in \R$ (see \S\ref{sssection:smoothing}). Let us even say for a moment that such operators are \emph{Sobolev $\R$-smoothing}. We have also introduced Hölder-Zygmund $\R$-smoothing operators, i.e those $R$'s that are bounded as
\[
R : y^\rho C^{-s}_\ast \to y^{\rho} C^s_\ast,
\]
for all $\rho,s\in\R$ and considered, similarly to the Definition \ref{definition:small} of small pseudo-differential operators, the algebra of operators of the form
\[
\widetilde{\Psi}^m_{\text{small}} := \left\{ \Op(a) + \text{Hölder-Zygmund } \R\text{-smoothing} ~|~ a \in S^m \right\}.
\]
As mentioned in \S\ref{sssection:smoothing}, we insist on the fact that \emph{it is not clear to us that this algebra is the same as the algebra $\Psi^m_{\text{small}}$ of small pseudo-differential operators introduced in Definition \ref{definition:small}.} Nevertheless, it turns out that in the specific case where the operator is admissible, \emph{these two notions agree}.

\begin{proposition}\label{prop:equivalenceL2Linfty-admissible}
If $A \in \widetilde{\Psi}^m_{\text{small}}$ and $A$ is $\R$-admissible, then $A \in \Psi^m_{\text{small}}$. Conversely, if $A \in \Psi^m_{\text{small}}$ and $A$ is $\R$-admissible, then $A \in \widetilde{\Psi}^m_{\text{small}}$
\end{proposition}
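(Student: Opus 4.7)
Both implications follow the same strategy (exchange Sobolev and Hölder-Zygmund throughout), so I focus on the forward direction. Given $\R$-admissible $A = \Op(a) + R$ with $R$ Hölder-Zygmund $\R$-smoothing, the task reduces to showing that $R$ is also Sobolev $\R$-smoothing, since $\Op(a)$ is automatically in both algebras.

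The plan begins with a symbol-level normalization. Averaging $a$ in $\theta$ and adjusting to match the $y,Y$-diagonal prescription of Proposition~\ref{prop:relation-symbol-geometric}(2) produces a decomposition $a = a_0 + a_1$ with $\Op(a_0)$ itself admissible and $I_Z(\Op(a_0)) = I_Z(A)$, while $a_1$ has zero $\theta$-mean. Iterated Wirtinger (write $a_1 = \partial_\theta^N \mathbf{R}_\theta^N a_1$ and use $\partial_\theta = y^{-1}(y\partial_\theta)$, each derivative gaining a factor of $y^{-1}$) places $a_1 \in y^{-\infty}S^m$. The new remainder $R' := A - \Op(a_0) = R + \Op(a_1)$ then inherits both admissibility identities \eqref{eq:preserving-theta-modes} with vanishing indicial operator $I_Z(R') = 0$.

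Lemma~\ref{lemma:admissible-properties} gives $R' = R'_0 + R'_\bot + R_c$, where $R_c$ (cross terms) is $\R$-residual and $R'_0$ (zero-mode compression) is also $\R$-residual thanks to $I_Z(R') = 0$ together with the second condition in \eqref{eq:preserving-theta-modes}. By Proposition~\ref{prop:maximallyresidualholder}, both $R_c$ and $R'_0$ belong to $\Psi^{-\infty}_{\R}$. For $R'_\bot$ we argue summand-by-summand, exploiting the non-zero-mode trade-off furnished by Lemma~\ref{lemma:embedding-nonzero}, namely $y^{\rho+\eta}H^{s+\eta}|_\bot \hookrightarrow y^\rho H^s|_\bot$ for any $\eta > 0$, which converts extra $y$-decay into extra regularity. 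For $\Op(a_1)|_\bot$ this immediately upgrades the infinite $y$-decay to infinite regularity gain, placing it in $\Psi^{-\infty}_{\R}$. For $R|_\bot$, starting from $f \in y^\rho H^{-k}|_\bot$, one chains the embeddings: $f \in y^{\rho+d/2}C_\ast^{-k-(d+1)/2}$ by Lemma~\ref{lemma:embedding-hs-in-ck}; $Rf \in y^{\rho+d/2}C_\ast^N$ for any $N$ by Hölder-Zygmund $\R$-smoothness; $Rf \in y^{\rho+\eta}H^{N-1}$ for any $\eta > 0$ by Lemma~\ref{lemma:embedding-holder-to-sobolev-spaces}; and finally $Rf \in y^\rho H^{N-1-\eta}|_\bot$ by Lemma~\ref{lemma:embedding-nonzero}. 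Letting $N \to \infty$ yields arbitrary regularity at the correct weight, i.e.\ Sobolev $\R$-smoothness of $R|_\bot$. Collecting the pieces, $R' \in \Psi^{-\infty}_{\R}$ and therefore $A = \Op(a_0) + R' \in \Psi^m_\text{small}$.

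The main technical obstacle is the symbol-normalization step: constructing $a_0 \in S^m$ so that $\Op(a_0)$ is $\R$-admissible with the prescribed indicial operator $I_Z(A)$ relies on a somewhat delicate symbol-level construction underlying Proposition~\ref{prop:relation-symbol-geometric}, and one must verify carefully that the correction $a_1$ really has the claimed $y^{-\infty}$-decay (via a term-by-term Wirtinger expansion in the $\theta$ Fourier decomposition). Once this setup is in place, the remainder of the proof is a systematic combination of the embedding lemmas from Section~\ref{sec:embeddings} with the non-zero mode trade-off from Lemma~\ref{lemma:embedding-nonzero}.
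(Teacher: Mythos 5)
Your overall architecture has a genuine gap at exactly the point where the real difficulty of the proposition lies: the zero Fourier mode. You reduce everything to the normalization ``$a = a_0 + a_1$ with $I_Z(\Op(a_0)) = I_Z(A)$'', so that $R' = A - \Op(a_0)$ has $I_Z(R')=0$ and its zero-mode compression becomes residual by the second condition in \eqref{eq:preserving-theta-modes}. But this exact matching is in general impossible: every operator $\Op(a_0)$ produced by the quantization of the paper is uniformly properly supported (because of the cutoff $\chi(y'/y-1)$ in \eqref{equation:cutoff}), so $I_Z(\Op(a_0))$ is a convolution operator whose kernel is supported in $|r-r'|\leq C$, whereas $I_Z(A)$ contains the contribution of the smoothing remainder $R$ and is only a convolution operator with smooth, rapidly decaying (but not compactly supported) kernel. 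The best you can achieve is $I_Z(R')$ equal to such a smooth convolution operator, and then the whole content of the proposition is to show that this zero-mode convolution operator is bounded on the exponentially weighted spaces of the \emph{other} scale ($e^{\rho r}C^s_\ast(\R\times F_Z)$, resp.\ weighted Sobolev) \emph{without} the $\epsilon\langle r\rangle$ loss coming from naive embeddings. Your proposal never addresses this; the paper's proof does, and this is its key idea: after reducing (via Lemma \ref{lemma:admissible-properties} and Proposition \ref{prop:maximallyresidualholder}) to $\chi_A\mc{E}_Z I_Z(A)\mc{P}_Z\chi_A$, it evaluates the convolution kernel as $K(r)=[I_Z(A)\delta](r)$, uses the known (Sobolev) smoothing bounds to get $K\in e^{\rho r}H^N$ for all $\rho\in\R$ and $N$ large, hence $K\in e^{\rho r}C^N_\ast$, and concludes boundedness on all weighted Hölder--Zygmund spaces from this two-sided exponential decay and smoothness of the kernel. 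Some substitute for this step is indispensable.

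There is also a concrete error in your handling of $\Op(a_1)$: from $a_1\in y^{-\infty}S^m$ you conclude that $\Op(a_1)$ restricted to non-zero modes is $\R$-residual, ``upgrading infinite $y$-decay to infinite regularity gain''. Lemma \ref{lemma:embedding-nonzero} goes the other way: on non-zero Fourier modes it converts surplus \emph{regularity} into decay (Wirtinger), never decay into regularity, and an operator of order $m$ with rapidly decaying symbol (think of multiplication by a decaying zero-mean function times $\langle\xi\rangle^m$) gains no derivatives and is not residual. This particular misstep is avoidable — since $a_1\in S^m$ you should simply keep $\Op(a_1)$ in the symbol part rather than in the remainder — but note that your four-step chain of embeddings for the fully off-zero-mode compression of $R$ (which is correct, and close in spirit to Proposition \ref{prop:maximallyresidualholder}) uses the lemma in the legitimate direction only because both the input and the output have been projected off the zeroth mode; without the projection the weight cannot be recovered. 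Once these two points are repaired, what remains is precisely the indicial-operator boundedness discussed above.
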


As we will only work with admissible operators in the following, we can simply drop the notation $\widetilde{\Psi}^m_{\text{small}}$.

\begin{proof}
We will only prove that if $A \in \Psi^m_{\text{small}}$ is $\R$-admissible in the original sense, then $A \in \widetilde{\Psi}^m_{\text{small}}$, i.e. it also is in the ``H\"older-Zygmund'' sense. The other direction is completely similar.

According to Proposition \ref{prop:relation-symbol-geometric}, we can directly assume that $A$ is $\R$-smoothing, and we have to prove that it is Hölder-Zygmund $\R$-smoothing. According to Lemma \ref{lemma:admissible-properties}, we can always replace $A$ by $\chi_A \mathcal{E}_Z I_Z(A) \mathcal{P}_Z \chi_A$, since $\R$-residual operators also map $y^kC^{-k}_* \to y^{-k}C^k_*$ for all $k \in \N$ (see Proposition \ref{prop:maximallyresidualholder}). In this way, we reduce our problem completely to a study of operators on $\R_r\times F$, which are convolution operators in the $r$ variable.

As a consequence, we have to prove that $I_Z(A)$ is bounded from $e^{\rho r}C^{-s}_\ast(\R\times F) \to e^{\rho r}C^{s}_\ast(\R\times F)$ for all $\rho \in \R,s> 0$. From the Sobolev embeddings, we already know that $e^{\rho r} C^{-s}_\ast$ is mapped to $e^{\rho r+\epsilon\langle r\rangle}C^s_\ast$ for all $\epsilon>0$, but we need to avoid this loss. Here, the presence of $F$ is not essential, and we can assume that $F$ is a point. Indeed, denoting by $\mathcal{H}:= \mathcal{L}(C^{-s}_\ast(F),C^s_\ast(F))$, we can see $I_Z(A)$ as convolution operator on $\R$ with values in $\mathcal{H}$ (using the Sobolev embeddings in $F$).

Let us denote by $K(r)$ the convolution kernel of $I_Z(A)$. If $\delta$ is the dirac at $0$, $K(r) = [I_Z(A)\delta](r)$. In particular, since $\delta \in e^{\rho r}H^{-N}$ for all $\rho\in \R$ and $N>0$ large enough, we deduce that $K \in e^{\rho r}H^N$ for all $\rho\in \R$, and $N>>1$. In particular, $K(r)\in e^{\rho r} C^N_\ast$ for all $\rho, N\in \R$, again according to the Sobolev embeddings. This is sufficient to conclude.

\end{proof}

Let us give some examples of admissible (pseudo)differential operators. If $(L,\nabla^L)$ is an admissible vector bundle in the sense of Definition \ref{definition:admissible-bundle}, then from the decomposition \eqref{eq:structure-connection}, we deduce that $\nabla^L$ is an admissible (pseudo)differential operator. More generally, all the differential operators that can be defined completely locally using only the metric structure are bound to be properly supported admissible differential operators (e.g. the Laplacian or the Levi-Civita connection). In the following, the operator $D$ — the symmetric derivative of symmetric tensors, see \S\ref{ssection:xray} for a definition — and $D^*D$ will be local differential operators constructed from the metric, so they will be properly supported admissible differential operators, in the sense of Definition \ref{def:R-L^2-admissible-operator} \\

To complete this section, we prove the following boundedness result:

\begin{lemma}
\label{lemma:boundedness-admissible-pseudo}
Let $A$ be an $\R$-admissible pseudo-differential operator of order $m \in \R$. Then $A$ is bounded as an operator between $H^{s+m,\rho_0,\rho_\bot}$ and $H^{s,\rho_0,\rho_\bot}$, for all $s, \rho_0,\rho_\bot \in \R$.
\end{lemma}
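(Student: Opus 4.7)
The plan is to introduce the projector $\Pi := \mathcal{E}_Z \chi_A \mathcal{P}_Z$ onto the cut-off zero Fourier mode and decompose
\[
A \;=\; (\mathbbm{1}-\Pi)\, A\, (\mathbbm{1}-\Pi) \;+\; \Pi\, A\, \Pi \;+\; \Pi\, A\, (\mathbbm{1}-\Pi) \;+\; (\mathbbm{1}-\Pi)\, A\, \Pi,
\]
and then estimate each of the four pieces on the scale $H^{s+m,\rho_0,\rho_\bot} \to H^{s,\rho_0,\rho_\bot}$. The two cross terms are $\R$-residual by Lemma \ref{lemma:admissible-properties}, hence bounded between any pair of weighted Sobolev spaces, so they cause no trouble.

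For the non-zero to non-zero piece $(\mathbbm{1}-\Pi) A (\mathbbm{1}-\Pi)$, I first unravel the definition of $H^{s+m,\rho_0,\rho_\bot} = \mathcal{Z}(\tilde y^{\rho_0-\rho_\bot})(y^{\rho_\bot} H^{s+m})$: for $f$ in this space, $(\mathbbm{1}-\Pi)f$ differs from an element of $y^{\rho_\bot} H^{s+m}$ only by a term supported where $\chi_A<1$ (a compact region in $y$), and hence sits in $y^{\rho_\bot} H^{s+m}$. Proposition \ref{prop:microlocal-calculus} applied to $A$ then gives $A\,(\mathbbm{1}-\Pi) f \in y^{\rho_\bot} H^s$; applying $(\mathbbm{1}-\Pi)$ preserves that space. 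The resulting function has zero Fourier mode supported in a compact $y$-region, so it trivially belongs to $H^{s,\rho_0,\rho_\bot}$, with norm controlled by $\|f\|_{H^{s+m,\rho_0,\rho_\bot}}$.

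For the zero-to-zero term $\Pi A \Pi$, admissibility furnishes the indicial operator $I_Z(A)$ and the identity
\[
\Pi A \Pi \;-\; \mathcal{E}_Z \chi_A\, I_Z(A)\, \chi_A \mathcal{P}_Z \;\in\; \dot{\Psi}^{-\infty}_{\R},
\]
so it suffices to bound $\mathcal{E}_Z \chi_A\, I_Z(A)\, \chi_A \mathcal{P}_Z$. By Lemma \ref{lemma:relation-projected-spaces}, after the change of variable $r = \log y$ this reduces to showing that $I_Z(A)$ is bounded from $e^{(\rho_0+d/2)r} H^{s+m}(\R\times F_Z, L_Z)$ to $e^{(\rho_0+d/2)r} H^s(\R\times F_Z, L_Z)$. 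Since $[I_Z(A),\partial_r]=0$ and $I_Z(A)$ is properly supported (Proposition \ref{prop:relation-symbol-geometric}), its convolution kernel $K(u)$ in the $r$-direction is compactly supported in $u$; therefore the conjugated kernel $e^{-(\rho_0+d/2)u} K(u)$ is again compactly supported and defines a properly-supported translation-invariant pseudo-differential operator of the same order $m$ on $\R\times F_Z$, which is bounded $H^{s+m}\to H^s$ by the standard Kohn-Nirenberg calculus.

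The main obstacle in this argument is really only conceptual: one must verify that conjugation by the exponential weight $e^{(\rho_0+d/2)r}$ does not take one outside of the pseudo-differential class. In general this would fail — an arbitrary Fourier multiplier does not stay in $\Psi^m$ under such conjugation — but for $I_Z(A)$ the proper support established in Proposition \ref{prop:relation-symbol-geometric} makes the conjugation harmless. Once this is observed, the rest of the proof is straightforward bookkeeping between the two weights $\rho_0$ and $\rho_\bot$, handled by the initial decomposition and the fact that $\R$-residual terms absorb all such discrepancies.
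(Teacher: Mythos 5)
Your decomposition with $\Pi:=\mc{E}_Z\chi_A\mc{P}_Z$, the treatment of the two cross terms via Lemma \ref{lemma:admissible-properties}, and the estimate of $(\mathbbm{1}-\Pi)A(\mathbbm{1}-\Pi)$ through $y^{\rho_\bot}H^{s+m}$ and Proposition \ref{prop:microlocal-calculus} coincide with the paper's argument. The problem is the zero-mode block. Replacing $\Pi A\Pi$ by $\mc{E}_Z\chi_A I_Z(A)\chi_A\mc{P}_Z$ modulo $\dot{\Psi}^{-\infty}_{\R}$ is legitimate (it is exactly condition (3) of Definition \ref{def:R-L^2-admissible-operator}), but the step you single out as the crux — that $I_Z(A)$ is properly supported, so its convolution kernel is compactly supported in $r$ and conjugation by $e^{(\rho_0+d/2)r}$ stays in the calculus — is not justified by Proposition \ref{prop:relation-symbol-geometric}. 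That proposition gives proper support of the indicial operator only for the quantized part: by its item (3), a general $\R$-admissible $A$ has the form $\Op(\sigma)+B+R$ with $R$ an $\R$-admissible \emph{smoothing} operator, and $I_Z(A)=I_Z(\Op(\sigma))+I_Z(R)$. Nothing in Definition \ref{def:R-L^2-admissible-operator} forces $I_Z(R)$ to be properly supported; for instance, convolution in $r$ by a Gaussian with values in smoothing operators on $F_Z$ defines an $\R$-smoothing admissible operator whose indicial kernel has full support in $r$. What is actually true (and is what the paper extracts in the proof of Proposition \ref{prop:equivalenceL2Linfty-admissible}) is only that the kernel of $I_Z(R)$ lies in $e^{\rho r}H^{N}$ for every $\rho$ and $N$; this super-exponential decay does give boundedness on every $e^{\rho r}H^{s}(\R\times F_Z)$ by a Schur-type estimate in $r$, but that argument is absent from your write-up, and the compact-support claim you rely on instead is false for the full admissible class.

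The gap is repairable in two ways. Either run your indicial route only for $\Op(\sigma)$ (where proper support is available from the cutoff $\chi(y'/y-1)$ in the quantization) and treat $R$ separately, or replace ``compact support'' by the exponential-weight decay of the kernel just mentioned. But note that the paper's own proof of this lemma avoids the indicial operator entirely and is shorter: since $\Pi f$ has no nonzero $\theta$-modes above the cusp, the two weights coincide on it, so $\Pi f\in y^{\rho_0}H^{s+m}$; then the boundedness of $A$ on uniformly weighted spaces $y^{\rho_0}H^{s+m}\to y^{\rho_0}H^{s}$ (Proposition \ref{prop:microlocal-calculus} for the quantized part, the defining mapping property for the smoothing remainder) and a final application of $\Pi$ put $\Pi A\Pi f$ in $H^{s,\rho_0,-\infty}\subset H^{s,\rho_0,\rho_\bot}$ — i.e.\ the fourth term is handled ``exactly like the first'', with $\rho_\bot$ replaced by $\rho_0$. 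I would recommend adopting that route, or at least correcting the justification in your indicial detour as above.
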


\begin{proof}
We decompose the operator into four terms:
\[ 
\begin{split}
A & = (\mathbbm{1}-\mc{E}_Z\chi_A\mc{P}_Z)A(\mathbbm{1}-\mc{E}_Z\chi_A\mc{P}_Z)f \\
& + \mc{E}_Z\chi_A\mc{P}_Z A(\mathbbm{1}-\mc{E}_Z\chi_A\mc{P}_Z)f + (\mathbbm{1}-\mc{E}_Z\chi_A\mc{P}_Z)A\mc{E}_Z\chi_A\mc{P}_Zf + \mc{E}_Z\chi_A\mc{P}_Z A\mc{E}_Z\chi_A\mc{P}_Zf
\end{split}
\]
The first term is bounded as a map
\[
\begin{split}
H^{s+m,\rho_0,\rho_\bot} & \overset{\mathbbm{1}-\mc{E}_Z\chi_A\mc{P}_Z}{\rightarrow} H^{s+m,-\infty,\rho_\bot} \\ 
& \hookrightarrow y^{\rho_\bot} H^{s+m} \overset{A}{\rightarrow} y^{\rho_\bot} H^{s}  \overset{\mathbbm{1}-\mc{E}_Z\chi_A\mc{P}_Z}{\rightarrow} H^{s,-\infty,\rho_\bot} \hookrightarrow H^{s,\rho_0,\rho_\bot},
\end{split}
\]
where we have used the boundedness of $A$ obtained in Proposition \ref{prop:microlocal-calculus}. By (\ref{eq:restriction-to-zeroth-mode}), the second and third terms are immediately bounded. As to the last term, it is dealt exactly like the first term.
\end{proof}

We can now complete the proof of Lemma \ref{lemma:relation-projected-spaces}, asserting that the following maps are bounded:
\[
H^{s,\rho_0,\rho_\bot}(N,L) \owns f \mapsto \chi \mathcal{P}_Z f \in e^{(\rho_0+d/2)r} H^s(\R\times F_Z, L_Z),
\]
\[
e^{\rho_0 r} H^s(\R\times F_Z, L_Z) \owns f \mapsto \mathcal{E}_Z( \chi f) \in H^{s,\rho_0-d/2,-\infty}(N,L).
\]

\begin{proof}[Proof of Lemma \ref{lemma:relation-projected-spaces}]
For this lemma, the presence of the compact part is inessential, so we can work completely in the cusps. We can decompose the statement into two parts. The first one is that the map which associates a section $f$ with its zeroth Fourier mode $\mathcal{E}\mathcal{P} f$ is continuous on the weighted Sobolev spaces. 

To prove this, we first observe that the parameter $\rho^\perp$ is not essential here, since the differentiated weight for zeroth and non-zeroth Fourier mode was introduced directly using the Fourier decomposition. In particular, we can assume that $\rho^\perp = \rho_0$. Next, we recall that the spaces $H^s$ are equal to $\Op(\sigma_{-s}) L^2$, where $\sigma_{-s}$ is a symbol, and $\Op(\sigma_{-s})$ is a parametrix for $(-\Delta + C)^{-s/2}$. Since $\Delta$ is $\R$-admissible, this parametrix construction can be done also in an admissible fashion. In particular we can apply Lemma \ref{lemma:admissible-properties} to $\Op(\sigma_{-s})$, and deduce that the action of $\Op(\sigma_{-s})$ and the taking of the zeroth Fourier mode commute modulo $\R$-residual operators, hence proving that the map $\mathcal{E}\mathcal{P}$ is bounded on the relevant spaces. 

Next, the second prove of the argument is to prove that $y^\rho H^s$ intersected with the space of function not depending on $\theta$ can be identified with the usual Sobolev spaces $e^{(\rho_0+d/2)r}H^s(\R_r \times F)$. For this it suffices to observe that the $d/2$ shift is due to the measure $y^{-d}dyd\theta d\vol_F$, and then observe that $y\partial_y = \partial_r$, so that the vector fields one can use to define $H^s$ regularity on the cusp are essentially the same as the ones used to define regularity on $\R\times F$.

%
%
%
%
%
%
%
\end{proof}

\subsubsection{Admissible operators on an interval}

When dealing with differential operators, whose kernel is supported exactly on the diagonal, the assumption that one can work with spaces $H^{s,\rho_0,\rho_\bot}$ for any $\rho_0,\rho_\bot \in \R$ is not very important. However, we will be dealing with pseudo-differential operators that are not properly supported. Moreover, we will also be dealing with parametrices of differential operators, which cannot be $\R$-admissible as some poles appear. As before, we consider two admissible vector bundles $L_1, L_2 \rightarrow N$.

Our more general class of operator differs only in the behaviour of its kernel far from the diagonal, i.e its smoothing part. For this reason, we emphasize the definition of the remainders. 
\begin{definition}
\label{definition:rho-admissible}
Let $\rho_+ > \rho_-$. Given $R : C^\infty_{\mathrm{comp}}(N,L_1) \rightarrow \mc{D}'(N,L_2)$, we say that $R$ is \textbf{$(\rho_-,\rho_+)$-smoothing admissible} if there exists a smooth cutoff function $\chi_R$ supported in $\left\{y > a\right\}$, taking value $1$ for $y$ large enough and such that:
\begin{enumerate}
	\item For all $\rho_0 \in ]\rho_-, \rho_+[$, $\rho_\bot \in \R$ and $k \in \N$,
	\[
	R : H^{-k,\rho_0-d/2,\rho_\bot}(N,L_1) \to H^{k,\rho_0-d/2,\rho_\bot}(N,L_2),
	\]
	is bounded, i.e. in light of \S\ref{sssection:smoothing} $R$ is $(\rho_-,\rho_+)$-smoothing,
	\item For all $\rho_\bot\in\R$ and $k \in \N,\epsilon>0$,
	\[
	\chi_R [\partial_\theta, R]\chi_R : H^{-k, \rho_+ - d/2 -\epsilon,\rho_\bot}(N,L_1) \to H^{k,\rho_- - d/2 +\epsilon,\rho_\bot}(N,L_2),
	\]
	is bounded, i.e. in light of \S\ref{sssection:smoothing} the previous operator is $(\rho_-,\rho_+)$-residual,
	\item There is a convolution operator $I_Z(R)$ such that
	\[
	\mc{E}_Z \chi_R (\mathcal{P}_Z R \mathcal{E}_Z  -  I_Z(R) )\chi_R  \mc{P}_Z
	\]
	is an operator bounded from $H^{-k,\rho_+-d/2-\epsilon,\rho_\bot}(N,L_1) \to H^{k,\rho_- -d/2+\epsilon, \rho_\bot}(N,L_2)$ for all $k \in \N ,\epsilon>0, \rho_\bot \in \R$, i.e. the previous operator is $(\rho_-,\rho_+)$-residual.
\end{enumerate}

We say that an operator $A : C^\infty_{\mathrm{comp}}(N,L_1) \rightarrow \mc{D}'(N,L_2)$ is \textbf{$(\rho_-,\rho_+)$-admissible} of order $m \in \R$ if it can be decomposed as $A = A_{\mathrm{comp}} + A_{\mathrm{cusp}} + R$, where $A_{comp}$ is a compactly supported pseudo-differential operator of order $m$, $A_{cusp} = \Op(\sigma)$ with $\sigma \in S^m(T^*N,\mathrm{Hom}(L_1,L_2))$ satisfying the conditions of Proposition \ref{prop:relation-symbol-geometric} and $R$ is $(\rho_-,\rho_+)$\emph{-smoothing admissible}.
\end{definition}

The difference between being $\R$-admissible and $(\rho_-,\rho_+)$-admissible lies only in the behaviour on the zeroth Fourier mode in the cusps, where certain asymptotic behaviour is allowed. More specifically, it is only related to the decay of the kernel far from the diagonal. In the other Fourier modes in $\theta$, all exponential behaviours are allowed. Each $(\rho_-, \rho_+)$ admissible operator $A$ is associated with a convolution operator $I_Z(A) = I_Z(A_{\mathrm{cusp}}) + I_Z(R)$ in each cusp. Note that $I_Z(A_{\mathrm{cusp}})$ is well-defined since $A_{\mathrm{cusp}}$ is $\R$-admissible.

We have the following equivalent of Lemma \ref{lemma:admissible-properties}, Propositions \ref{prop:relation-symbol-geometric} and \ref{prop:equivalenceL2Linfty-admissible}:
\begin{proposition}\label{prop:algebra-general}
The set of $(\rho_-,\rho_+)$-admissible operators is an algebra of operators. If in Definition \ref{definition:rho-admissible} one replaces the spaces $y^{\rho-d/2}H^s$ by the spaces $y^\rho C^s_\ast$ everywhere, the class of admissible operators obtained is the same. Moreover, if one replaces $\R$ by $I$ in Lemma \ref{lemma:admissible-properties}, the statement still holds
\end{proposition}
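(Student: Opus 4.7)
The three claims are logically interdependent, so I would prove them in the order: analog of Lemma \ref{lemma:admissible-properties}, then Hölder-Zygmund equivalence, then algebra structure.

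\textbf{Analog of Lemma \ref{lemma:admissible-properties}.} The original argument uses only two ingredients: the right-inverse $\mathbf{R}_\theta$ of $\partial_\theta$ on non-zero Fourier modes, which is bounded on every $H^{s,\rho_0,\rho_\bot}$ for any $s,\rho_0,\rho_\bot \in \R$, and the $\R$-residual character of $\chi_A [\partial_\theta, A] \chi_A$. In the $(\rho_-,\rho_+)$-admissible setting this commutator is only $(\rho_-,\rho_+)$-residual by Definition \ref{definition:rho-admissible}(2), but the identical computation shows that $\mc{E}_Z \chi_A \mc{P}_Z A [\mathbbm{1} - \mc{E}_Z \chi_A \mc{P}_Z]$ and its transpose are $(\rho_-,\rho_+)$-residual, up to a compactly supported regularizing error coming from the cutoffs, which is $\R$-residual and hence $(\rho_-,\rho_+)$-residual.

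\textbf{Hölder-Zygmund equivalence.} Following Proposition \ref{prop:equivalenceL2Linfty-admissible}, it suffices to check that a $(\rho_-,\rho_+)$-smoothing admissible operator $R$ satisfies the Hölder analog, and vice versa. By the Lemma analog above, $R$ reduces modulo $(\rho_-,\rho_+)$-residuals to $\mc{E}_Z \chi_R I_Z(R) \chi_R \mc{P}_Z$, a convolution in $r = \log y$ on $\R_r \times F_Z$. Testing $I_Z(R)$ on Dirac masses (which lie in $e^{\rho r} H^{-N}$ for every $\rho \in \R$ and some large $N$) shows that its kernel $K(r)$ lies in $e^{\rho r} H^N$ for all $\rho \in (\rho_-,\rho_+)$ and all $N \in \N$; the embeddings from Lemmas \ref{lemma:embedding-holder-to-sobolev-spaces} and \ref{lemma:embedding-hs-in-ck} then translate boundedness on $y^{\rho-d/2}H^s$ scales into boundedness on $y^\rho C^s_\ast$ scales inside the same interval, and the converse direction is entirely symmetric.

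\textbf{Algebra structure.} Writing $A = A_{\mathrm{comp}} + A_{\mathrm{cusp}} + R_A$ and $B = B_{\mathrm{comp}} + B_{\mathrm{cusp}} + R_B$, I would expand $AB$ into nine terms. The term $A_{\mathrm{cusp}} B_{\mathrm{cusp}}$ is $\R$-admissible by Proposition \ref{prop:relation-symbol-geometric}(3), its symbol retaining the $\theta$-invariance condition. The compact-compact cross terms remain compactly supported pseudo-differential operators modulo $\R$-residual tails produced by the quantization cutoff. Each remaining cross term, involving at least one smoothing remainder, must be shown to be $(\rho_-,\rho_+)$-smoothing admissible: clause (1) of Definition \ref{definition:rho-admissible} follows from Lemma \ref{lemma:boundedness-admissible-pseudo} (extended to the $(\rho_-,\rho_+)$ setting); clause (2) follows from the Leibniz identity $[\partial_\theta, AB] = [\partial_\theta,A] B + A [\partial_\theta, B]$, each factor being already controlled; clause (3) rests on the multiplicativity $I_Z(AB) = I_Z(A) I_Z(B)$ modulo $(\rho_-,\rho_+)$-residuals, which itself follows from the Lemma analog since taking the zeroth Fourier mode commutes with $A$ and $B$ up to residuals.

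\textbf{Main obstacle.} The most delicate step is the bookkeeping of weight losses in the composition law for smoothing remainders. Specifically, composing a $(\rho_-,\rho_+)$-smoothing admissible remainder with a generic $(\rho_-,\rho_+)$-admissible operator must not shrink the admissible interval. The key point is that $A_{\mathrm{cusp}}$, being $\R$-admissible, is bounded on $H^{s,\rho_0,\rho_\bot}$ for \emph{every} $\rho_0 \in \R$, so it never further restricts the interval; it is the smoothing remainders alone that must be estimated on the narrower interval $(\rho_-,\rho_+)$ inherited from the definition, and a careful use of $\mathbf{R}_\theta$ ensures no additional loss occurs.
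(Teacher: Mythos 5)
Your proposal is correct and takes essentially the same route as the paper, whose proof consists of deferring to the arguments of Lemma \ref{lemma:admissible-properties}, Proposition \ref{prop:relation-symbol-geometric} and Proposition \ref{prop:equivalenceL2Linfty-admissible} — precisely the three ingredients you adapt (the $\mathbf{R}_\theta$/commutator computation with interval-restricted residuals, the indicial-kernel argument for the Sobolev/Hölder-Zygmund equivalence, and the decomposition plus Leibniz/indicial multiplicativity for the algebra property). One small caution: the transfer from the $y^{\rho-d/2}H^s$ scale to the $y^{\rho}C^s_\ast$ scale must be run through the convolution-kernel bound $K\in e^{\rho r}H^N$ that you establish with the Dirac test (and the resulting direct convolution estimate), not by sandwiching the operator between the embedding lemmas, since the latter loses a $y^{\epsilon}$ in the weight — the very pitfall the paper points out in the proof of Proposition \ref{prop:equivalenceL2Linfty-admissible}.
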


The proof is the same as that of Lemma \ref{lemma:admissible-properties} and Propositions \ref{prop:relation-symbol-geometric} and \ref{prop:equivalenceL2Linfty-admissible}.

\subsubsection{Main result}

We can now state our main Theorem, using the notions introduced in the previous paragraph:

\begin{theorem}\label{theorem:parametrix-compact}
Let $L$ be an admissible bundle in the sense of Definition \ref{definition:admissible-bundle}. Assume that $L$ is endowed with a $(\rho_-,\rho_+)$-admissible pseudo-differential operator $P$ of order $m \in \R$, in the sense of Definition \ref{definition:rho-admissible}. Also assume that it is uniformly elliptic in the sense of Definition \ref{def:elliptic}. Then there is a discrete set $S(P)\subset (\rho_-,\rho_+)$ such that for each connected component $I := (\rho^I_-,\rho^I_+) \subset (\rho_-,\rho_+) \setminus S(P)$, there is a $I$-admissible pseudo-differential operator $Q_I$ such that
\[
P Q_I = \mathbbm{1} \text{ mod } \dot{\Psi}^{-\infty}_{I}, ~~~~Q_I P  = \mathbbm{1} \text{ mod } \dot{\Psi}^{-\infty}_{I}
\] 
i.e. both $P Q_I - \mathbbm{1}$ and $Q_I P - \mathbbm{1}$ are bounded as operators
\[
H^{-N,\rho^I_+-\epsilon-d/2, \rho_\bot}(L) \to H^{N,\rho^I_-+\epsilon-d/2,\rho_\bot}(L),
\]
(resp. $y^{\rho^I_+-\epsilon}C^{-N}_\ast \to y^{\rho^I_- + \epsilon}C^{N}_\ast$) for all $N>0$ and $\epsilon>0$ small enough. In particular, $P$ is Fredholm with same index on each space $H^{s,\rho_0-d/2,\rho_\bot}$ (resp. $y^{\rho_0}C^s_\ast$) for $s \in \R, \rho_0 \in I, \rho_\bot \in \R$.
\end{theorem}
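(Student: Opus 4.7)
The plan is to prove this by combining the small-calculus parametrix of Proposition \ref{prop:smoothing-remainder} (which inverts $P$ modulo an $\R$-smoothing, but \emph{not} compact, remainder) with an indicial analysis on the zeroth Fourier mode that recovers compactness. Since an admissible operator decomposes, modulo $\R$-residual operators, as a direct sum of its action on the non-zero Fourier modes and its action on the zeroth Fourier mode in each cusp (Lemma \ref{lemma:admissible-properties} and Proposition \ref{prop:algebra-general}), we treat these two components separately and patch the resulting parametrices. By Proposition \ref{prop:algebra-general}, it suffices to carry out the argument on Sobolev spaces; the Hölder-Zygmund version follows because $I$-admissibility and $I$-residuality are equivalent in both scales.

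The first step is to apply Proposition \ref{prop:smoothing-remainder} to the uniformly elliptic $P$ to obtain $Q_{\mathrm{sm}} \in \Psi^{-m}_{\text{small}}$ with $Q_{\mathrm{sm}} P = \mathbbm{1} + R$, $P Q_{\mathrm{sm}} = \mathbbm{1} + R'$, where $R,R' \in \Psi^{-\infty}_{\mathrm{small}}$. Checking that the standard iterative construction of \S\ref{sssection:hyperbolic-quantization} respects the conditions of Proposition \ref{prop:relation-symbol-geometric}, we may assume $Q_{\mathrm{sm}}$ is itself $\R$-admissible, and hence that $R,R'$ are $\R$-admissible smoothing. On the subspace $(\mathbbm{1}-\mathcal{E}_Z\chi\mathcal{P}_Z)$ of non-zero Fourier modes, these smoothing remainders are already compact between $H^{s,\rho_0,\rho_\bot}$ and $H^{s',\rho_0,\rho_\bot}$ by Lemma \ref{lemma:compact-injection-non-constant-mode}, so no further work is needed there.

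The heart of the proof is the indicial analysis on the zeroth Fourier mode in each cusp. Via Lemma \ref{lemma:relation-projected-spaces}, the block $\mathcal{E}_Z\chi\mathcal{P}_Z\, P\, \mathcal{E}_Z\chi\mathcal{P}_Z$ is conjugate, up to $\R$-residual error, to the convolution operator $I_Z(P)$ acting on sections over $\R_r \times F_Z$, with $r = \log y$. Applying the Fourier (Mellin) transform in $r$ yields a holomorphic family
\begin{equation*}
\widehat{I_Z(P)}(\lambda) \in \Psi^m(F_Z, L_Z), \qquad \lambda \in \mathbb{C},
\end{equation*}
of elliptic pseudo-differential operators on the closed manifold $F_Z$; uniform ellipticity of $P$ passes to uniform ellipticity of this family in any horizontal strip. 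Each member is Fredholm on $H^s(F_Z,L_Z)$, and the analytic Fredholm theorem produces a meromorphic inverse $\widehat{I_Z(P)}(\lambda)^{-1}$ whose poles form a discrete subset of $\mathbb{C}$. Define $S(P) \subset (\rho_-,\rho_+)$ to be the set of real parts of those poles lying in the strip $\{\Re\lambda \in (\rho_-,\rho_+)\}$ (shifted by $d/2$ if needed to account for the measure convention in Lemma \ref{lemma:relation-projected-spaces}). On each connected component $I = (\rho_-^I,\rho_+^I)$ of $(\rho_-,\rho_+) \setminus S(P)$, the inverse Fourier transform of $\widehat{I_Z(P)}(\lambda)^{-1}$ along any vertical line in the corresponding strip produces the same convolution operator $G_I$, which inverts $I_Z(P)$ on $e^{\rho_0 r}H^s(\R\times F_Z,L_Z)$ for every $\rho_0 \in I$. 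Shifting the contour across $\partial I$ picks up residues at indicial poles, so $G_I$ only belongs to the $I$-admissible class (not to the $\R$-admissible class).

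Set
\begin{equation*}
Q_I \;:=\; (\mathbbm{1}-\mathcal{E}_Z\chi\mathcal{P}_Z)\, Q_{\mathrm{sm}}\,(\mathbbm{1}-\mathcal{E}_Z\chi\mathcal{P}_Z) \;+\; \mathcal{E}_Z\chi\, G_I\,\chi\mathcal{P}_Z,
\end{equation*}
extended to all cusps and combined with a standard compactly supported parametrix in the interior. A direct computation, using Lemma \ref{lemma:admissible-properties} (in its $I$-version given by Proposition \ref{prop:algebra-general}) to commute $P$ with the projectors modulo $I$-residual terms, shows that $PQ_I - \mathbbm{1}$ and $Q_IP-\mathbbm{1}$ are $I$-residual: on non-zero Fourier modes the contributions are $\R$-residual by construction, while on the zeroth mode they reduce to $\chi I_Z(P) G_I \chi \mathcal{P}_Z - \chi \mathcal{P}_Z = 0$ modulo the support mismatches of the cutoffs, which are $I$-residual. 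Since $I$-residual operators are compact between $H^{s,\rho_0-d/2,\rho_\bot}$ spaces (combine Lemmas \ref{lemma:compact-injection-non-constant-mode} and \ref{lemma:compact-injection} applied to the gain in both regularity and weight implicit in the definition of $\dot{\Psi}^{-\infty}_I$), the Fredholm property follows, and the invariance of the index on $I$ is a consequence of the continuity of the Fredholm index under the norm-continuous family $\rho_0 \mapsto y^{-\rho_0} P y^{\rho_0}$. The main technical obstacle is precisely the meromorphic inversion of $\widehat{I_Z(P)}(\lambda)$ and showing that $G_I$, together with the patched operator $Q_I$, lies in the $I$-admissible class as defined in Definition \ref{definition:rho-admissible}; this requires verifying the three conditions on the smoothing-admissible remainder, in particular the commutation with $\partial_\theta$ and the indicial identity, uniformly as the weight parameter runs through $I$.
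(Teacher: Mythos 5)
Your overall strategy is the same as the paper's (small-calculus parametrix, block decomposition into zero and non-zero Fourier modes, inversion of the indicial family by Mellin transform, patching the indicial inverse into the parametrix on the zeroth mode, compactness from the Lax--Phillips type embeddings, duality for the index), but there is a genuine gap at the central analytic step: you never control the indicial family $I_Z(P,\lambda)$ as $|\Im\lambda|\to\infty$. You invoke the analytic Fredholm theorem to get a meromorphic inverse, but that theorem requires invertibility at at least one point of the strip, which you do not establish; a holomorphic family of elliptic (hence Fredholm) operators on $F_Z$ can perfectly well be nowhere invertible. Worse, even granting a meromorphic inverse with poles discrete in $\C$, the set $S(P)$ of \emph{real parts} of the poles need not be discrete in $(\rho_-,\rho_+)$: poles $\lambda_n$ with $|\Im\lambda_n|\to\infty$ could have real parts accumulating anywhere. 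The paper resolves both issues with Lemma \ref{lemma:indicial-pseudo}: the indicial family is semi-classical with parameter $h=\langle\Im\lambda\rangle^{-1}$, so that $I_Z(Q,\lambda)I_Z(P,\lambda)=\mathbbm{1}+\mathcal{O}_{\Psi^{-\infty}}(\langle\Im\lambda\rangle^{-\infty})$ locally uniformly in $\Re\lambda$; this gives invertibility for $|\Im\lambda|$ large (hence applicability of analytic Fredholm and discreteness of $S(P)$), and it is also exactly what you need at two further places you pass over: the uniform bound of $I_Z(P,\lambda)^{-1}$ along vertical lines $\Re\lambda=\rho_0$ (Lemma \ref{lemma:l2-bound} plus \eqref{eq:parametrix-better-highlambda}), without which the Plancherel argument does not give boundedness of your $G_I$ on $e^{\rho_0 r}H^s$, and the estimate $I_Z(P,\lambda)^{-1}-I_Z(Q_{\mathrm{sm}},\lambda)=\mathcal{O}_{\Psi^{-\infty}}(\langle\Im\lambda\rangle^{-1})$, which is what makes the zeroth-mode correction $G_I-I_Z(Q_{\mathrm{sm}})$ an $I$-smoothing admissible operator, i.e.\ what makes $Q_I$ genuinely $I$-admissible rather than merely bounded. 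You flag this as ``the main technical obstacle'' but supply no mechanism for it; the high-$\Im\lambda$ semiclassical analysis derived from the elliptic parametrix is the missing idea.

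Two smaller points. Your argument for the constancy of the index only addresses the weight $\rho_0$ (via continuity of $\rho_0\mapsto y^{-\rho_0}Py^{\rho_0}$) and says nothing about independence in $s$ and $\rho_\bot$, nor about the agreement of the Sobolev and H\"older-Zygmund indices; the paper gets all of this by showing, through the parametrix and Lemma \ref{lemma:embedding-nonzero}, that the kernel lies in $\cap_{k,\eps,\rho_\bot}H^{k,\rho^I_-+\eps-d/2,\rho_\bot}$ independently of the space, and by dualizing with Lemma \ref{lemma:l2-identification}. Similarly, for the H\"older-Zygmund statement it is not enough to cite Proposition \ref{prop:algebra-general}: one still has to check that the contour-integral inverse of the indicial operator is bounded on $C^s_\ast(\R\times F_Z)$ (convolution operators bounded on $L^2$-based spaces are not automatically bounded on $L^\infty$-based ones) and that the compact embedding for sections with vanishing zeroth Fourier mode holds in the H\"older scale; the paper does both, via the decomposition of $I_Z(P,\lambda)^{-1}$ into a uniformly symbolic part and an exponentially decaying smoothing kernel, and via a Poincar\'e--Wirtinger argument.
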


\subsection{The case of elliptic operators and Sobolev spaces}

In this section, we will prove Theorem \ref{theorem:parametrix-compact} in the case where the operators acts on Sobolev spaces. 

\subsubsection{Indicial family}

\label{sssection:indicial-family}

We now introduce the notion of indicial family. Recall that if $L\to N$ is an admissible bundle, in the cusps it decomposes as $L_{|N} = Z \times L_Z$, $L_Z$ being a bundle over $F_Z$. Given $\rho_-< \rho_+$, it will be convenient to introduce the notation 
\[
\C_{(\rho_-,\rho_+)}:= \{\lambda\in \C,\ \Re\lambda \in (\rho_-, \rho_+) \}.
\] 
In this section, we will fix an interval $I= (\rho_-, \rho_+)\subseteq \R$.

\begin{definition}
\label{definition:indicial-family}
Let $A$ be a $I$-admissible operator of order $m$. We introduce for $\lambda \in \C_I$ the operator acting as a map $C^\infty(F_Z,L_Z) \rightarrow C^\infty(F_Z,L_Z)$ and defined for $f \in C^\infty(F_Z,L_Z)$ by:
\begin{equation}
\label{equation:indicial-family}
I_Z(A,\lambda)f(\zeta) = e^{-\lambda r} I_Z(A)\left[ e^{\lambda r'}f(\zeta')\right].
\end{equation}
The family $(I_Z(A,\lambda))_{\lambda \in \C_I}$ is called the \emph{indicial family associated with $A$.}
\end{definition}

For each $\lambda \in \C_I$, $I_Z(A,\lambda)$ is well-defined as an operator acting on $C^\infty(F_Z,L_Z)$, due to the fact that $I_Z(A)$ is a convolution operator in the $r$-variable. Interpreting $I_Z(A)$ as a convolution operator on $\R$ valued in operators on $F$, we see that \emph{$I_Z(A,\lambda)$ is the Fourier transform of the convolution kernel of $I_Z(A)$.}

More precisely, let $\mc{S}(\R \times F_Z)$ be the space of functions such that $f \in \mc{S}(\R \times F_Z)$ if and only if for all $i,j,k \in \N$:
\[
\sup_{r \in \R} \langle r \rangle^i \|\partial_r^j f (r)\|_{C^k(F_Z)} < \infty.
\]
Given $f \in e^{\rho r} \mc{S}(\R \times F_Z, L)$, we can write
\[
f = \int_{\Re(\lambda) = \rho} e^{\lambda r} \hat{f}(\lambda) \dd \lambda,
\]
where $\hat{f}(\lambda) \in C^\infty(F_Z,L)$ is defined by
\[
\hat{f}(\lambda) = \dfrac{1}{2 \pi} \int_{\R_r} e^{- \lambda r} f(r, \cdot) \dd r,
\]
and we have a Plancherel-type formula
\begin{equation}
\label{equation:plancherel}
\|f\|^2_{L^2(\R \times F_Z, \dd r \dd \zeta)} = \dfrac{1}{2\pi} \|\hat{f}\|^2_{L^2(\R_\lambda ; L^2(F_Z))}.
\end{equation}
Then:
\begin{equation}
\label{equation:diagonal-action}
I_Z(A) f = \int_{\Re(\lambda)=\rho} e^{\lambda r} I_Z(A,\lambda) \hat{f}(\lambda) \dd \lambda.
\end{equation}

To give an accurate meaning to the previous integral, recall that the spaces $H^s(\R \times F_Z)$ are the ones induced by the Laplacian $\Delta := \partial_r^2 + \Delta_{\zeta}$ obtained from the metric $\dd r^2 + g_{F_Z}$, i.e.
\[
H^s(\R \times F_Z) := (\mathbbm{1}-\Delta)^{-s/2}L^2(\R \times F_Z, \dd r \dd \zeta).
\]
This shows that the previous partial Fourier transform in the $r$-variable allows to express the Sobolev norms:
\[
\begin{split}
\|f\|^2_{H^s(\R \times F_Z)} & = \|(\mathbbm{1}-\Delta)^{s/2}f\|^2_{L^2(\R \times F_Z)}  = \int_{\Re(\lambda) = 0} \|(\mathbbm{1}-\lambda^2-\Delta_\zeta)^{s/2}\hat{f}(\lambda)\|^2_{L^2(F_Z)} \dd \lambda.
\end{split}
\]

We have a precise description for the indicial family of an admissible operator. In the following, we denote by $\Op^F$ a quantization on $F_Z$.

\begin{lemma}\label{lemma:indicial-pseudo}
Let $A$ be a $I$-admissible operator. Then $I_Z(A,\lambda)$ is $\langle \Im \lambda\rangle^{-1}$-semi-classical, i.e it can be written as $\Op^F_{h}(\tilde\sigma_\lambda)+\mathcal{O}_{\Psi^{-\infty}}(h^\infty)$, where $h = \langle \Im \lambda\rangle^{-1}$. The estimates are locally uniform in $\Re \lambda \in I$.
\end{lemma}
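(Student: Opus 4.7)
The plan is to decompose $A$ via Proposition~\ref{prop:relation-symbol-geometric}(3), eliminate the smoothing remainder, and then compute the principal part directly, recognizing the result as an $h$-semi-classical pseudodifferential operator on $F_Z$ with $h = \langle \Im\lambda\rangle^{-1}$.

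First I would write $A = \Op(\sigma) + B + R$ with $\sigma$ cusp-invariant (so $\widetilde\sigma$ independent of $r$), $B$ compactly supported, and $R$ an $I$-admissible smoothing remainder. Choosing the cutoff $\chi_A$ past the support of $B$ kills the compact contribution in $I_Z(A)$ modulo the residuals allowed by Definition~\ref{definition:rho-admissible}, leaving $I_Z(A) \equiv I_Z(\Op(\sigma)) + I_Z(R)$. For the remainder term, Lemma~\ref{lemma:relation-projected-spaces} together with the weighted-space bounds on $R$ for every $\rho_0 \in I$ and every order, shows that the convolution kernel $K_R(s)$ of $I_Z(R)$, viewed as an operator-valued function on $\R_r$, is Schwartz in $s$ with values in $\Psi^{-\infty}(F_Z,L_Z)$, uniformly for $\Re\lambda$ in compact subsets of $I$. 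Its Fourier--Laplace transform $I_Z(R,\lambda) = \int K_R(s) e^{-\lambda s}\,ds$ is then rapidly decaying in $\Im\lambda$ in every smoothing seminorm, i.e. $\mc{O}_{\Psi^{-\infty}}(h^\infty)$.

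Next I would compute $I_Z(\Op(\sigma),\lambda)$ directly. Applying $\mc{P}_Z$ to the hyperbolic Weyl kernel from \S\ref{sssection:hyperbolic-quantization} averages $\sigma$ over the torus $\R^d/\Lambda$ (killing the $J$-dependence), and the change of variables $y = e^r$, $Y = e^{-r}\lambda'$ rewrites the result as a Weyl-type kernel on $\R_r\times F_Z$ whose full symbol is precisely $\widetilde\sigma(\zeta;\lambda',\eta)$, independent of $r$ by Proposition~\ref{prop:relation-symbol-geometric}(2). Unwinding~\eqref{equation:indicial-family} then produces the kernel
\[
K_\lambda(\zeta,\zeta') \;=\; \int e^{i(\zeta-\zeta')\cdot\eta}\,\widetilde\sigma\!\left(\tfrac{\zeta+\zeta'}{2};-i\lambda,\eta\right) d\eta \;+\; (\text{lower order}),
\]
the lower order terms coming from the hyperbolic cutoff $\chi(y'/y-1)$ and the Jacobian of $y\mapsto e^r$. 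Finally, the symbol estimates
\[
\bigl|(\partial_\eta)^{\gamma'}(\partial_\zeta)^\gamma \widetilde\sigma(\zeta;-i\lambda,\eta)\bigr| \;\lesssim\; \langle \lambda,\eta\rangle^{m-|\gamma'|}
\]
inherited from $\sigma \in S^m(T^*N)$, together with the rescaling $\eta = \eta'/h$ with $h = \langle \Im\lambda\rangle^{-1}$, show that $h^m\widetilde\sigma(\zeta;-i\lambda,\eta'/h)$ obeys the standard $h$-Kohn--Nirenberg estimates of order $m$ on $T^*F_Z$, uniformly in $\Re\lambda$ locally in $I$. A Borel resummation of the correction terms yields the $h$-semi-classical symbol $\widetilde\sigma_\lambda$, whose principal part is $\widetilde\sigma(\cdot;-i\lambda,\cdot)$.

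The main obstacle is the bookkeeping in the second paragraph: the Jacobian of $y\mapsto e^r$ and the hyperbolic cutoff $\chi(y'/y-1)$ must be carefully propagated through the Weyl kernel, and the resulting correction terms reorganized into an asymptotic expansion in $h$ suitable for Borel resummation. In particular, the identification $\widetilde\sigma(\cdot;-i\lambda,\cdot)$ must be read via the Fourier inversion formula for the convolution kernel of $I_Z(\Op(\sigma))$ at fixed $\Re\lambda \in I$, not as a literal analytic continuation of a real-variable symbol.
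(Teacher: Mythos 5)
Your overall architecture coincides with the paper's: decompose $A=A_{\mathrm{comp}}+\Op(\sigma)+R$ via Proposition \ref{prop:relation-symbol-geometric}, discard the compactly supported part, treat $I_Z(R,\lambda)$ as the Fourier--Laplace transform of a smooth convolution kernel lying in $e^{\rho r}C^\infty$ for all $\rho$ in compact subsets of $I$, hence $\mathcal{O}_{\Psi^{-\infty}}(\langle \Im \lambda\rangle^{-\infty})$, and compute the cusp kernel of the main term after the substitution $y=e^r$. Your treatment of the smoothing remainder is sound and is essentially the paper's argument.

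The gap is in the main term. First, after setting $y=e^r$ and $\lambda'=\tfrac{y+y'}{2}Y$, the hyperbolic Weyl phase $(y-y')Y$ becomes $2\lambda'\tanh\tfrac{r-r'}{2}$, not $(r-r')\lambda'$: the conjugated operator is \emph{not} a Weyl quantization with full symbol $\widetilde\sigma$, and the corrections do not come only from the cutoff $\chi(y'/y-1)$ and the Jacobian, but from the nonlinear phase itself. Second, and more importantly, the estimate that constitutes the actual content of the lemma --- that the exact symbol $\sigma_\lambda$ of $I_Z(\Op(\sigma),\lambda)$ satisfies the joint bounds $|\partial_z^\alpha\partial_\eta^\beta\partial_\lambda^\gamma\sigma_\lambda|\lesssim (1+|\Im\lambda|^2+|\eta|^2)^{(m-|\beta|-\gamma)/2}$ locally uniformly in $\Re\lambda$, i.e.\ \eqref{equation:symbol-sigma-lambda} --- is asserted rather than derived. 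You invoke ``estimates inherited from $\sigma$'' at the evaluation $\widetilde\sigma(\cdot;-i\lambda,\cdot)$; since $\widetilde\sigma$ is not analytic in its frequency variable this evaluation is undefined (as you note), but the substitute argument is then missing: one must express $\sigma_\lambda$ as the oscillatory integral $\frac{1}{2\pi}\int e^{-\lambda u+2i\mu\tanh(u/2)}\frac{\chi(u)}{\cosh(u/2)}\,\widetilde\sigma(z,\mu,\eta)\,\dd u\,\dd\mu$ and run a stationary/non-stationary phase argument (critical point $u=0$, $\mu$ localized near $\Im\lambda$ up to symbolic errors, compact support in $u$) to obtain the joint decay in $(\Im\lambda,\eta)$ and the gain under $\partial_\lambda$. ``Borel resummation'' is neither needed nor a replacement for this step --- the paper does not expand in powers of $h$ and resum, it proves the estimates for the exact symbol directly. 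Without this stationary-phase input, your plan only shows that $I_Z(\Op(\sigma),\lambda)$ is a classical pseudo-differential operator for each fixed $\lambda$, not that it is semi-classical with parameter $h=\langle\Im\lambda\rangle^{-1}$.
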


\begin{proof}
For this, we decompose $A = A_{comp}+ \Op(\sigma) + R$, with $A_{comp}$ is a compactly supported pseudo-differential operator, $\Op(\sigma)$ is supported in the cusp satisfying (i) and (ii) of Proposition \ref{prop:relation-symbol-geometric}, and $R$ is $I$-admissible smoothing.

Certainly, $A_{comp}$ does not contribute to $I_Z(A)$, and we start by studying the contribution from $\Op(\sigma)$. Let us express the kernel of $I_Z(\Op(\sigma))$ (in local charts in $F_Z$) as
\[
\int_{\R^k \times\R} e^{i\Phi(r,r',\lambda,z,\eta)} \chi(r-r')\tilde\sigma\left(\frac{z+z'}{2},\lambda,\eta\right)\frac{2 e^{(r+r')/2} }{e^r + e^{r'}} \frac{\dd \eta \dd \lambda}{(2\pi)^{1+k}}.
\]
($\tilde{\sigma}$ was defined in Proposition \ref{prop:relation-symbol-geometric}) with
\[
\Phi = \langle z-z',\eta\rangle + 2 \lambda \tanh\frac{r-r'}{2}.
\]
As a consequence, $I_Z(\Op(\sigma),\lambda)= \Op(\sigma_\lambda)$ with
\[
\sigma_\lambda = \frac{1}{2\pi} \int_{\R^2} e^{-\lambda u + 2i \mu \tanh\frac{u}{2}}\frac{\chi(u)}{\cosh\frac{u}{2}}\tilde{\sigma}(z,\mu,\eta) \dd u \dd \mu.
\]
This integral is stationary at $\mu=i\Im\lambda$, $u=0$, with compact support in $u$, and symbolic estimates in $\mu$. So we get $\sigma_\lambda \in S^m$, with the refined estimates
\begin{equation}
\label{equation:symbol-sigma-lambda}
|\partial_z^\alpha \partial_\eta^\beta \partial_\lambda^\gamma \sigma_\lambda| \leq C_{\alpha,\beta,\gamma} (1+ |\Im\lambda|^2 + |\eta|^2)^{(m-|\beta|-\gamma)/2},
\end{equation}
with constants $C_{\alpha,\beta,\gamma}$ locally uniform in $\Re \lambda$. We deduce from this that $I_Z(\Op(\sigma),\lambda)$ is semi-classical with parameter $h=\langle \Im\lambda\rangle^{-1}$. 

Let us turn now to $I_Z(R,\lambda)$. We need to prove that its kernel, seen as a smooth function on $C^\infty(F_Z \times F_Z)$ is uniformly bounded, as well as all its derivatives, by $\mathcal{O}(\langle\Im\lambda\rangle^{-\infty})$. Since its kernel is the Fourier transform of the convolution kernel of $I_Z(R)$, this polynomial decay at high frequency is a direct consequence of the smoothness of the kernel of $I_Z(R)$. Indeed, to obtain our bound, it suffices to prove for all $k\geq 0$ that uniformly in $\zeta_1,\zeta_2\in F$, and locally uniformly in $\rho\in I$ that
\[
r \mapsto \langle \delta^{(k)}_{\zeta_1}, I_Z(R) \delta_0(r) \delta^{(k)}_{\zeta_2}\rangle_{F} \in e^{\rho r} C^\infty. 
\]
But this follows from the fact that $R$ is $I$-smoothing, and usual Sobolev injections on $\R$. 
\end{proof}

We can now prove
\begin{lemma}
\label{lemma:homomorphism}
Let $A$ be a $I$-admissible operator. Its indicial family
\[
\C_I \ni \lambda \mapsto I_Z(A,\lambda) \in \Psi^m(F_Z,L_Z)
\]
is holomorphic (as a map taking values in a Fréchet space) and is a homomorphism in the sense that for all $I$-admissible operators $P$ and $Q$, for all $\lambda \in \C_I$:
\[
I_Z(PQ,\lambda) = I_Z(P,\lambda) I_Z(Q,\lambda) \qquad I_Z(P+Q,\lambda) = I_Z(P,\lambda) + I_Z(Q,\lambda).
\]
\end{lemma}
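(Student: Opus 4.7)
The plan is to establish the three properties in order of increasing complexity: additivity, multiplicativity, and then holomorphy. Throughout I will use the uniqueness of the indicial operator stated right after Definition \ref{def:R-L^2-admissible-operator}, together with the stability under composition provided by Proposition \ref{prop:algebra-general}.

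\medskip

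\textit{Additivity.} The map $T \mapsto [f \mapsto e^{-\lambda r}T(e^{\lambda r'}f)]$ is linear in $T$, so it is enough to show $I_Z(P+Q) = I_Z(P) + I_Z(Q)$ as convolution operators. Choosing a common cutoff $\chi$ dominating both $\chi_P$ and $\chi_Q$, both $I_Z(P+Q)$ and $I_Z(P) + I_Z(Q)$ fulfill the defining relation of Definition \ref{definition:rho-admissible} for $P+Q$ modulo $I$-residual, and uniqueness forces equality.

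\medskip

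\textit{Multiplicativity.} The plan is to first prove $I_Z(PQ) = I_Z(P)\, I_Z(Q)$ as convolution operators on $\R_r \times F_Z$, and then read off the claim via conjugation with $e^{\pm\lambda r}$. Inserting $\mathbbm{1} = \mathcal{E}_Z\chi \mathcal{P}_Z + (\mathbbm{1} - \mathcal{E}_Z\chi \mathcal{P}_Z)$ between $P$ and $Q$ in $\mathcal{P}_Z PQ \mathcal{E}_Z$, the second piece contributes an $I$-residual operator by the version of Lemma \ref{lemma:admissible-properties} stated in Proposition \ref{prop:algebra-general} (applied to $P$). Hence, modulo $I$-residual remainders,
\[
\mathcal{P}_Z PQ \mathcal{E}_Z \;\equiv\; (\mathcal{P}_Z P \mathcal{E}_Z)\,\chi\,(\mathcal{P}_Z Q \mathcal{E}_Z) \;\equiv\; I_Z(P)\, \chi\, I_Z(Q),
\]
with cutoffs absorbed at each step. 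Now $I_Z(P)\,I_Z(Q)$ commutes with $\partial_r$ (both factors do) and is pseudo-differential by symbolic calculus on $\R \times F_Z$, hence is a legitimate convolution operator. Uniqueness yields $I_Z(PQ) = I_Z(P)\, I_Z(Q)$. Finally, since $I_Z(Q)[e^{\lambda r'}f(\zeta')](r,\zeta) = e^{\lambda r}[I_Z(Q,\lambda)f](\zeta)$, one gets
\[
I_Z(PQ,\lambda)f(\zeta) = e^{-\lambda r}I_Z(P)\bigl[e^{\lambda r'}(I_Z(Q,\lambda)f)(\zeta')\bigr] = I_Z(P,\lambda)\,I_Z(Q,\lambda)f(\zeta).
\]

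\medskip

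\textit{Holomorphy.} I decompose $A = A_{\mathrm{comp}} + A_{\mathrm{cusp}} + R$ as in Definition \ref{definition:rho-admissible}. The term $A_{\mathrm{comp}}$ does not contribute to $I_Z(A)$. For $A_{\mathrm{cusp}}$, the explicit formula for the symbol extracted from the proof of Lemma \ref{lemma:indicial-pseudo},
\[
\sigma_\lambda(z,\eta) = \frac{1}{2\pi}\int_{\R^2} e^{-\lambda u + 2i\mu\tanh\frac{u}{2}}\frac{\chi(u)}{\cosh\frac{u}{2}}\,\tilde\sigma(z,\mu,\eta)\,du\,d\mu,
\]
is evidently entire in $\lambda$, and the stationary-phase analysis in $\mu$ (uniform on compact $\lambda$-strips) shows the symbolic estimates \eqref{equation:symbol-sigma-lambda} hold locally uniformly in $\Re\lambda$; this is precisely holomorphy in the Fréchet topology of $\Psi^m(F_Z,L_Z)$. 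For the smoothing part $R$, the convolution kernel $K(r,\zeta_1,\zeta_2)$ of $I_Z(R)$ is smooth in $(\zeta_1,\zeta_2)$ and, by the $I$-residual condition in the third bullet of Definition \ref{definition:rho-admissible} (together with Sobolev embedding along $\R$, as at the end of the proof of Lemma \ref{lemma:indicial-pseudo}), decays like $O(e^{\rho r})$ as $r\to\pm\infty$ for any $\rho\in I$. Therefore the Laplace transform $\lambda \mapsto \int_\R e^{-\lambda r} K(r,\cdot,\cdot)\,dr$ extends to a holomorphic function on $\C_I$ with values in smoothing kernels on $F_Z\times F_Z$, completing the proof of holomorphy.

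\medskip

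\textit{Where I expect resistance.} The only delicate step is the identity $\mathcal{P}_Z PQ \mathcal{E}_Z \equiv I_Z(P)\,I_Z(Q)$ modulo $I$-residual: one must carefully reconcile the three a priori different cutoffs $\chi_P, \chi_Q, \chi_{PQ}$, verify that the intermediate error terms lie in the $I$-residual class (not merely a larger class) for the precise interval $I$, and invoke the algebra property of Proposition \ref{prop:algebra-general} to absorb the errors. Once this bookkeeping is done, the homomorphism and holomorphy assertions follow cleanly.
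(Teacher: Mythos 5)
Your overall route is the paper's: additivity and the passage from $I_Z(PQ)=I_Z(P)I_Z(Q)$ to $I_Z(PQ,\lambda)=I_Z(P,\lambda)I_Z(Q,\lambda)$ are immediate from the definition and from uniqueness of the indicial convolution operator; multiplicativity is attacked by inserting $\mathbbm{1}=\mc{E}_Z\chi\mc{P}_Z+(\mathbbm{1}-\mc{E}_Z\chi\mc{P}_Z)$ between $P$ and $Q$ and invoking the $I$-version of Lemma \ref{lemma:admissible-properties} (via Proposition \ref{prop:algebra-general}) to discard the cross terms; and holomorphy follows from the Fourier--Laplace description of $I_Z(A,\lambda)$, which the paper delegates to Bonthonneau--Weich while you re-derive it from the kernel analysis of Lemma \ref{lemma:indicial-pseudo} — that part of your argument is fine and slightly more self-contained.

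The genuine gap is the step you explicitly defer: after substituting $\mc{E}_Z\chi_P\mc{P}_Z P\mc{E}_Z\chi_P\mc{P}_Z \equiv \mc{E}_Z\chi_P I_Z(P)\chi_P\mc{P}_Z$ (and similarly for $Q$), what you obtain is $I_Z(P)\,\chi_P\chi_Q\,I_Z(Q)$ sandwiched between cutoffs, and you still must show that replacing $\chi_P\chi_Q$ by $\mathbbm{1}$ only changes the operator by an $I$-residual term. This is \emph{not} delivered by the algebra property of Proposition \ref{prop:algebra-general}, which concerns compositions of admissible operators on $N$, not indicial operators on $\R_r\times F_Z$ interleaved with cutoffs; appealing to it here is the wrong tool, and without this step the uniqueness argument you invoke cannot be applied, since uniqueness only identifies $I_Z(PQ)$ once the discrepancy is known to be residual. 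The paper closes exactly this point by a pseudo-locality/disjoint-support argument: terms such as $\mc{E}_Z\chi_{PQ}I_Z(P)(\mathbbm{1}-\chi_P)\chi_Q I_Z(Q)\chi_{PQ}\mc{P}_Z$ are shown to be $I$-residual because $\chi_{PQ}$ and $\mathbbm{1}-\chi_P$ have disjoint supports, so $\chi_{PQ}I_Z(P)(\mathbbm{1}-\chi_P)$ has a smooth Schwartz kernel and maps $e^{\rho r}H^{-k}(\R\times F_Z)$ into $e^{\rho' r}H^{k}(\R\times F_Z)$ for every $k\in\N$, every $\rho\in\R$ and every $\rho'\in I$, while $(\mathbbm{1}-\chi_P)\chi_Q I_Z(Q)\chi_{PQ}\mc{P}_Z$ is bounded from the relevant weighted Sobolev spaces into $\min(e^{\rho' r},1)H^{s}$; combining the two mapping properties yields residuality. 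Your proposal needs this (or an equivalent) argument spelled out for the multiplicativity claim to be complete.
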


\begin{proof}
That $I_Z(A,\lambda)$ is defined for $\lambda\in \C_I$ follows from the description of $I_Z(A,\lambda)$ as the ``Fourier transform'' of $I_Z(A)$. This is explained in much detail in Lemmas 3.16 and 3.17 of \cite{Bonthonneau-Weich-17}.

We now show that if $P,Q$ are $I$-admissible, $I_Z(PQ)=I_Z(P)I_Z(Q)$. To this end, we consider a cutoff function $\chi_{PQ}$ supported on $\left\{\chi_P = 1\right\} \cap \left\{\chi_Q =1 \right\}$ taking constant value $1$ as $y \rightarrow +\infty$. Then, using Definition \ref{definition:rho-admissible} of admissible operators, together with Proposition \ref{prop:algebra-general}:
\begin{align*}
\mc{E}_Z \chi_{PQ} \mathcal{P}_Z PQ \mathcal{E}_Z \chi_{PQ} \mc{P}_Z	&= \mc{E}_Z \chi_{PQ} \mathcal{P}_Z P \mc{E}_Z \chi_{P} \mathcal{P}_Z  Q \mathcal{E}_Z \chi_{PQ} \mc{P}_Z \\
&   +  \underbrace{\left(\mc{E}_Z \chi_{PQ} \mathcal{P}_Z P  (\mathbbm{1}-\mc{E}_Z \chi_{P} \mathcal{P}_Z)\right)}_{\text{in }\dot{\Psi}^\infty_I \text{, by Proposition \ref{prop:algebra-general}} } Q \mathcal{E}_Z \chi_{PQ} \mc{P}_Z\\
& \hspace{-10pt}= \chi_{PQ} \underbrace{\left(\mc{E}_Z \chi_P \mc{P}_Z P \mc{E}_Z \chi_P \mc{P}_Z \right)}_{=\mc{E}_Z \chi_P I_Z(P) \chi_P \mc{P}_Z  \mod \dot{\Psi}^\infty_I} Q \mathcal{E}_Z \chi_{PQ} \mc{P}_Z  \mod \dot{\Psi}^\infty_I \\
& \hspace{-10pt}= \chi_{PQ} \mc{E}_Z \chi_P I_Z(P) \chi_P \mc{P}_Z  \mc{E}_Z \chi_Q I_Z(Q) \chi_Q \mc{P}_Z \mathcal{E}_Z \chi_{PQ} \mc{P}_Z \ \mod \dot{\Psi}^\infty_I \\
& \hspace{-10pt}=  \mc{E}_Z \chi_{PQ}  I_Z(P) \chi_P  \chi_Q I_Z(Q)  \chi_{PQ} \mc{P}_Z  \mod \dot{\Psi}^\infty_I.
\end{align*}
We now need to explain how to get rid off the cutoff function $\chi_P  \chi_Q $ in the middle. To that end, it suffices to show that both
\[
\mc{E}_Z \chi_{PQ}  I_Z(P)(\mathbbm{1}- \chi_P)  \chi_Q I_Z(Q)  \chi_{PQ} \mc{P}_Z \text{ and } \mc{E}_Z \chi_{PQ}  I_Z(P)\chi_P  (\mathbbm{1}- \chi_Q) I_Z(Q)  \chi_{PQ} \mc{P}_Z
\]
are $I$-residual. We deal with the first term for instance. First of all, observe that due to the support properties of $\chi_{PQ}$ and $\chi_P$ and the pseudo-locality of $I_Z(P)$, the operator $\chi_{PQ} I_Z(P)(\mathbbm{1}- \chi_P)$ has smooth Schwartz Kernel and maps 
\[
e^{r\rho} H^{-k}(\R \times F_Z) \rightarrow e^{r \rho'} H^k(\R \times F_Z)
\]
for any $k \in \N$, $\rho \in \R$, $\rho'\in I$. Moreover, it is immediate that $(\mathbbm{1}- \chi_P)  \chi_Q I_Z(Q)  \chi_{PQ} \mc{P}_Z  : y^\rho H^s(N,L) \rightarrow \min (e^{\rho' r}, 1) H^s(\R_r \times F_Z)$ is bounded for all $k\in \N$, $\rho\in I$, $\rho'\in \R$. Combining these two facts, we obtain that $\mc{E}_Z \chi_{PQ}  I_Z(P)(\mathbbm{1}- \chi_P)  \chi_Q I_Z(Q)  \chi_{PQ} \mc{P}_Z$ is $I$-residual.
\end{proof}

It is good to have in mind the following bound:
\begin{lemma}
\label{lemma:l2-bound}
Consider $A \in \Psi^0(N,L)$, and $a < b$. Then, there exists $C = C(A)$ such that for all $a < \Re(\lambda) < b$, $\|I_Z(A,\lambda)\|_{\mc{L}(L^2,L^2)} \leq C$.
\end{lemma}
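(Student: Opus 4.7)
The plan is to deduce the bound directly from the semi-classical description of the indicial family provided by Lemma \ref{lemma:indicial-pseudo}, combined with a uniform version of the Calderón-Vaillancourt theorem.

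First, I would recall that since $A \in \Psi^0(N,L)$ is $I$-admissible (with $(a,b) \subset I$), Lemma \ref{lemma:indicial-pseudo} yields a decomposition
\[
I_Z(A,\lambda) = \Op^F_{h}(\tilde{\sigma}_\lambda) + R_\lambda,
\]
with $h = \langle \Im\lambda\rangle^{-1}$, where $\tilde{\sigma}_\lambda$ is a symbol of order $0$ on $T^*F_Z$ satisfying the refined estimates \eqref{equation:symbol-sigma-lambda} (with $m=0$), and $R_\lambda = \mathcal{O}_{\Psi^{-\infty}}(h^\infty)$, both locally uniformly in $\Re\lambda$. In particular, the constants $C_{\alpha,\beta,\gamma}$ governing the symbolic estimates of $\tilde{\sigma}_\lambda$, as well as the seminorms controlling $R_\lambda$, are uniformly bounded for $a < \Re\lambda < b$ (since $[a,b] \Subset I$ is compact).

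Next, I would apply the semi-classical Calderón-Vaillancourt theorem on the compact manifold $F_Z$: a semi-classical pseudo-differential operator of order $0$ whose symbol satisfies the usual estimates with seminorms bounded uniformly in $h \in (0,1]$ is bounded on $L^2(F_Z,L_Z)$ with a norm controlled by finitely many of these seminorms, uniformly in $h$. Concretely, estimate \eqref{equation:symbol-sigma-lambda} with $m=0$ gives, after rescaling $\eta \mapsto h^{-1}\eta$, symbolic bounds of the form $|\partial_z^\alpha \partial_\eta^\beta \tilde\sigma_\lambda(z,h^{-1}\eta)| \lesssim 1$ locally uniformly in $\Re\lambda$, which is precisely the input of Calderón-Vaillancourt. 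Consequently, $\|\Op^F_h(\tilde\sigma_\lambda)\|_{L^2 \to L^2} \leq C$ for all $\lambda$ with $\Re\lambda \in (a,b)$.

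Finally, the remainder $R_\lambda$, being $\mathcal{O}_{\Psi^{-\infty}}(h^\infty)$ with constants locally uniform in $\Re\lambda$, is trivially uniformly bounded on $L^2(F_Z,L_Z)$ — indeed, its smooth kernel has seminorms decaying faster than any polynomial in $\langle\Im\lambda\rangle^{-1}$, so it is even $\mathcal{O}(1)$ (and in fact tends to $0$ as $|\Im\lambda|\to\infty$). Adding the two contributions yields the uniform bound $\|I_Z(A,\lambda)\|_{\mc{L}(L^2,L^2)} \leq C(A)$ for $a < \Re\lambda < b$. The only subtlety to be careful about is that the constants from Lemma \ref{lemma:indicial-pseudo} are locally uniform in $\Re\lambda$; this is exactly why one restricts to a compact subinterval $[a,b]$ of $I$, and there is no obstruction here since the lemma only claims local boundedness.
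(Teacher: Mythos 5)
Your argument is correct, but it takes a different route from the paper: the paper disposes of this lemma by simply citing Lemma 3.16 of \cite{Bonthonneau-Weich-17}, whereas you rederive the bound internally from Lemma \ref{lemma:indicial-pseudo}. Concretely, you use the decomposition $I_Z(A,\lambda)=\Op^F_h(\tilde\sigma_\lambda)+\mathcal{O}_{\Psi^{-\infty}}(h^\infty)$ with $h=\langle\Im\lambda\rangle^{-1}$, observe that for $m=0$ the refined estimates \eqref{equation:symbol-sigma-lambda} give symbol seminorms bounded uniformly in $\lambda$, and invoke (semi-classical) Calder\'on--Vaillancourt on the compact slice $F_Z$, the smoothing remainder being harmless. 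This is exactly the content of the cited external lemma, so nothing is lost; what your version buys is a self-contained proof that makes explicit which uniformity is being used, at the cost of invoking Calder\'on--Vaillancourt, whereas the paper buys brevity by outsourcing. One small point to keep straight: for the $\Op(\sigma)$ contribution the estimates in the proof of Lemma \ref{lemma:indicial-pseudo} are in fact uniform on any bounded strip in $\Re\lambda$ (the factor $e^{-\Re\lambda\, u}$ is controlled on the compact support of the cutoff $\chi$), but the $I$-smoothing remainder is only controlled \emph{locally} uniformly for $\Re\lambda\in I$; so, as you noted, one should either take $[a,b]$ at positive distance from the endpoints of $I$ or assume $A$ is $\R$-admissible. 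This matches the (loosely stated) hypotheses of the lemma and the way it is actually used in the paper, namely on the single line $\Re\lambda=0$ or on compact sub-strips, so it is a caveat rather than a gap.
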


\begin{proof}
This follows from Lemma 3.16 in \cite{Bonthonneau-Weich-17}.
\end{proof}

\subsubsection{Indicial resolvent}

\label{sssection:indicial-resolvent}

In this section, we now pick $I$ as before, $A$ a $I$-admissible operator. Additionally, we assume that $A$ is \emph{uniformly elliptic}, and we find an inverse for $I_Z(A)$.
\begin{proposition}\label{prop:indicial-resolvent-elliptic}
Let $A$ be an $I$-admissible operator. Then $I_Z(A,\lambda)$ is Fredholm of index $0$ on every space $H^s(F_Z,L_Z)$, $s \in \R$, and invertible for $\Im\lambda$ large enough, locally uniformly in $\Re \lambda\in I$. We call \emph{indicial roots} of $A$ (at cusp $Z$) the $\lambda$'s in $\C_I$ such that $I_Z(A,\lambda)$ is not invertible. We also let
\begin{equation}\label{eq:def-S}
S(A):= \left\{ \rho \in I \ \middle|\ \text{ there exists an indicial root of $A$ in $\rho + i \R$} \right\} \subset I.
\end{equation}
This set is discrete in $I$.
\end{proposition}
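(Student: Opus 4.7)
The plan is to combine the semi-classical description of $I_Z(A,\lambda)$ provided by Lemma \ref{lemma:indicial-pseudo} with standard elliptic theory on the compact manifold $F_Z$, and then to invoke analytic Fredholm theory on $\C_I$.

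First, I would observe that for each fixed $\lambda\in \C_I$, Lemma \ref{lemma:indicial-pseudo} presents $I_Z(A,\lambda)$ as a classical pseudo-differential operator of order $m$ on the compact manifold $F_Z$, acting on sections of $L_Z$. The uniform ellipticity of $A$ in the sense of Definition \ref{def:elliptic} translates, via the explicit formula for the symbol $\sigma_\lambda$ written in the proof of Lemma \ref{lemma:indicial-pseudo}, into ellipticity of the principal symbol of $I_Z(A,\lambda)$ as $|\eta|\to\infty$. Since $F_Z$ is compact and the source and target bundles coincide, standard elliptic theory on closed manifolds then yields that $I_Z(A,\lambda)$ is Fredholm on every $H^s(F_Z,L_Z)$; and by Lemma \ref{lemma:homomorphism} the family $\lambda\mapsto I_Z(A,\lambda)$ is holomorphic on the connected open set $\C_I$, so its Fredholm index is locally, hence globally, constant there.

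Second, I would use the semi-classical refinement in Lemma \ref{lemma:indicial-pseudo} to prove invertibility for $|\Im\lambda|$ large. Setting $h=\langle\Im\lambda\rangle^{-1}$, the estimates \eqref{equation:symbol-sigma-lambda} exhibit $\sigma_\lambda$ as a semi-classical symbol, locally uniformly in $\Re\lambda\in I$, and the uniform ellipticity of $A$ makes it semi-classically elliptic in the sense required to build a semi-classical parametrix $Q_h$ such that both $Q_h I_Z(A,\lambda)-\mathbbm{1}$ and $I_Z(A,\lambda) Q_h-\mathbbm{1}$ are of size $\mathcal{O}(h^\infty)$ in operator norm on $H^s(F_Z,L_Z)$. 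For $h$ small enough — equivalently, for $|\Im\lambda|$ above some threshold $M(K)$ depending only on a compact $K\subset I$ containing $\Re\lambda$ — a Neumann series converts this into an exact two-sided inverse, giving invertibility locally uniformly in $\Re\lambda$. Since the index is constant on $\C_I$ and is zero where the operator is invertible, it is identically zero.

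Finally, for the discreteness of $S(A)$, I would apply the analytic Fredholm theorem to the holomorphic family $\lambda\mapsto I_Z(A,\lambda)$ of Fredholm operators of index zero on the connected open set $\C_I$: since it is invertible at some points, the set of indicial roots is a discrete subset of $\C_I$ and $\lambda\mapsto I_Z(A,\lambda)^{-1}$ is meromorphic. The locally uniform invertibility for $|\Im\lambda|\geq M(K)$ then confines all indicial roots with $\Re\lambda$ in a given compact interval $K\subset I$ to a compact strip $K\times[-M(K),M(K)]$; being discrete and relatively compact, such roots are finite in number, so $S(A)\cap K$ is finite and $S(A)$ is therefore discrete in $I$. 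The main technical obstacle is the second step: one must verify that the symbol estimates \eqref{equation:symbol-sigma-lambda}, which mix classical and semi-classical scalings, actually permit a parametrix construction with remainders $\mathcal{O}(h^\infty)$ whose operator norm is controlled locally uniformly in $\Re\lambda$. This amounts to carefully tracking the dependence on $\Re\lambda$ through the iterated semi-classical symbolic calculus, and it is where the substance of the proof lies.
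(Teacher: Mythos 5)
Your outline is correct — parametrix, Neumann series for $|\Im\lambda|$ large, analytic Fredholm theorem, and confinement of the roots to compact strips to get discreteness of $S(A)$ in $I$ — but you produce the parametrix of the indicial family differently from the paper, and this changes where the work sits. The paper does not build anything new on $F_Z$: it takes the $\R$-admissible parametrix $Q$ already constructed on $N$ in Proposition \ref{prop:smoothing-remainder} (admissible because its symbol is obtained by symbolic operations from that of $A$), applies the homomorphism property of Lemma \ref{lemma:homomorphism} to get $I_Z(Q,\lambda)I_Z(A,\lambda)=\mathbbm{1}+I_Z(R,\lambda)$ (and the analogous right identity), and then reads off from Lemma \ref{lemma:indicial-pseudo}, applied to the $I$-smoothing remainder $R$, that $I_Z(R,\lambda)=\mathcal{O}_{\Psi^{-\infty}}(\langle\Im\lambda\rangle^{-\infty})$ locally uniformly in $\Re\lambda$. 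This gives simultaneously the Fredholm property for every $\lambda$ (the remainder is smoothing on the closed manifold $F_Z$, hence compact) and invertibility for $\Im\lambda$ large, with the uniformity in $\Re\lambda$ inherited from Lemma \ref{lemma:indicial-pseudo} rather than re-proved. Your route instead rebuilds a semi-classical parametrix directly on $F_Z$ from the estimates \eqref{equation:symbol-sigma-lambda}; this works, but it forces you to justify two points the paper's argument sidesteps: that the uniform ellipticity of $A$ actually yields (semi-classical and, for fixed $\lambda$, classical) invertibility of $\sigma_\lambda$ for $|\Im\lambda|^2+|\eta|^2$ large — which requires going back through the stationary-phase formula relating $\sigma_\lambda$ to $\widetilde{\sigma}$ at $\mu=i\Im\lambda$, $u=0$ — and that the iterated symbolic calculus with the mixed scaling is uniform in $\Re\lambda$ on compacts; you correctly identify this as the crux, and it is precisely the effort that composing with $I_Z(Q,\lambda)$ renders unnecessary. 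The final steps (constancy of the index on the connected set $\C_I$, hence index $0$; analytic Fredholm theorem plus the large-$\Im\lambda$ invertibility to confine roots with $\Re\lambda$ in a compact of $I$ to a compact strip, hence discreteness of $S(A)$) coincide with the paper's, and your treatment of the discreteness is in fact slightly more explicit than the paper's one-line remark.
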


\begin{proof}
For this we recall that since $A$ is uniformly elliptic, we have built a parametrix $Q$ such that $QA = \mathbbm{1} + R$, where $R$ is a $I$-smoothing. Since $Q$ is constructed as $\Op(\sigma)$, and $\sigma$ is built with symbolic operations from the symbol of $A$, we can impose that $Q$ is $\R$-admissible. We deduce that 
\[
I(Q,\lambda)I(A,\lambda) = \mathbbm{1} + I_Z(R).
\]
Since $R$ is $I$-admissible smoothing, $I_Z(R) = \mathcal{O}(\langle\Im \lambda\rangle^{-\infty})$, and this implies that $I(A,\lambda)$ is left invertible for $\Im \lambda$ large enough, locally uniformly in $\Re \lambda$. Since $Q$ is also a right parametrix, we deduce that $I(A,\lambda)$ is invertible for $\Im \lambda$ large enough, and
\begin{equation}\label{eq:parametrix-better-highlambda}
I(A,\lambda)^{-1}  = I(Q,\lambda) + \mathcal{O}_{\Psi^{-\infty}}(\langle \Im \lambda\rangle^{-\infty}).
\end{equation}
Using the analytic Fredholm theorem, we deduce that for $\lambda \in \C_I$, $I(A,\lambda)^{-1}$ is a meromorphic family of pseudo-differential operators, with poles of finite rank. The discreteness of $S(A)$ follows from the invertibility of $I(A,\lambda)$ for $\Im \lambda$ large enough.
\end{proof}

\begin{proposition}
Let $A$ be an $I$-admissible elliptic operator of order $m$. Then for $\rho\in I\setminus S(A)$, $s\in \R$, $I_Z(A)$ is invertible as a map $e^{\rho r} H^s(\R\times F,L_F) \to e^{\rho r} H^{s-m}(\R\times F,L_F)$. The inverse can be expressed as
\[
I_Z(A)^{-1}_\rho = \int_{\Re \lambda = \rho} e^{\lambda r} I_Z(A,\lambda)^{-1} d\lambda. 
\]
(this is an equality of convolution kernels). It is pseudo-differential, and as an operator $C^\infty_{\mathrm{comp}}(\R \times F, L_F) \to C^\infty(\R \times F, L_F) $, it does not depend on $\rho$, as long as $\rho$ varies in a connected component of $I\setminus S(A)$.
\end{proposition}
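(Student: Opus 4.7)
The plan is to work via the Fourier--Laplace transform $f \mapsto \hat{f}(\lambda)$ along the vertical line $\Re\lambda = \rho$, which by Plancherel \eqref{equation:plancherel} and the discussion around \eqref{equation:diagonal-action} identifies $e^{\rho r}H^s(\R\times F,L_F)$ isometrically with an $L^2$-space on that line valued in $H^s(F,L_F)$ (with the natural $\lambda$-dependent weight accounting for the $r$-derivatives in the Sobolev norm). Because $I_Z(A)$ is a convolution operator in $r$, in this representation it becomes multiplication by the family $I_Z(A,\lambda)$. The natural candidate for the inverse is therefore
\[
I_Z(A)^{-1}_\rho f \;:=\; \int_{\Re\lambda=\rho} e^{\lambda r}\, I_Z(A,\lambda)^{-1}\, \hat{f}(\lambda)\, d\lambda,
\]
and once the integrand is shown to have the right symbolic behaviour, both boundedness and the two-sided inverse property are essentially formal from the Fourier representation.

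The main technical point is obtaining uniform symbolic control of $I_Z(A,\lambda)^{-1}$ as $\lambda$ ranges over the line $\Re\lambda=\rho$. By Lemma \ref{lemma:indicial-pseudo} together with \eqref{equation:symbol-sigma-lambda}, $I_Z(A,\lambda)$ is a semi-classical pseudo-differential operator on $F$ of order $m$ with parameter $h=\langle \Im\lambda\rangle^{-1}$, locally uniformly in $\Re\lambda\in I$. The uniform ellipticity of $A$ transfers to semi-classical ellipticity of $I_Z(A,\lambda)$ (its symbol is bounded below in modulus by a multiple of $(1+|\Im\lambda|^2+|\eta|^2)^{m/2}$ for $|\eta|+|\Im\lambda|$ large), so a standard semi-classical elliptic parametrix construction yields a symbol $q_\lambda$ of order $-m$ in $(\lambda,\eta)$ with $\Op^F(q_\lambda)I_Z(A,\lambda)=\mathrm{id}+\mathcal{O}_{\Psi^{-\infty}}(\langle\Im\lambda\rangle^{-\infty})$. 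Combined with \eqref{eq:parametrix-better-highlambda} (which identifies the true inverse with the parametrix up to $\mathcal{O}(\langle\Im\lambda\rangle^{-\infty})$ for $|\Im\lambda|$ large) and with the continuity, on the compact slice $\Re\lambda=\rho$, $|\Im\lambda|\leq R$, of $\lambda\mapsto I_Z(A,\lambda)^{-1}\in\Psi^{-m}(F,L_F)$ guaranteed by $\rho\notin S(A)$ and analytic Fredholm theory, one gets uniform semi-classical symbolic estimates for $I_Z(A,\lambda)^{-1}$ on the entire line. These estimates give the boundedness $e^{\rho r}H^s\to e^{\rho r}H^{s-m}$, after integrating against the Plancherel measure.

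With this in hand, the remaining points are short. The Fourier representation \eqref{equation:diagonal-action} yields, for $f$ in a dense subspace,
\[
I_Z(A)\,I_Z(A)^{-1}_\rho f \;=\; \int_{\Re\lambda=\rho} e^{\lambda r}\, I_Z(A,\lambda) I_Z(A,\lambda)^{-1}\, \hat{f}(\lambda)\, d\lambda \;=\; f,
\]
and symmetrically on the other side, proving invertibility. To check that $I_Z(A)^{-1}_\rho$ is pseudo-differential on $\R\times F$, I would build, directly from the elliptic symbol of $I_Z(A)$, a right parametrix $Q$ that is translation-invariant in $r$ and pseudo-differential of order $-m$; then $I_Z(A)^{-1}_\rho - Q = I_Z(A)^{-1}_\rho(\mathrm{id}-I_Z(A)Q)$ is $\rho$-smoothing on $e^{\rho r}H^s$, so $I_Z(A)^{-1}_\rho$ is pseudo-differential. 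Finally, for the independence of $\rho$ on $C^\infty_{\mathrm{comp}}$: by Paley--Wiener, $\hat{f}$ extends entirely and is Schwartz on every horizontal strip, so combining with meromorphy of $\lambda\mapsto I_Z(A,\lambda)^{-1}$ in $\C_I$ (with poles only in $S(A)+i\R$) and the uniform symbolic bounds on shifted lines, Cauchy's theorem lets me shift the contour from $\Re\lambda=\rho_1$ to $\Re\lambda=\rho_2$ whenever $\rho_1,\rho_2$ lie in the same component of $I\setminus S(A)$.

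The step I expect to be the main obstacle is the semi-classical symbolic control of the true inverse $I_Z(A,\lambda)^{-1}$ (not merely of its parametrix) uniformly on the line $\Re\lambda=\rho$, especially in the intermediate regime where $|\Im\lambda|$ is bounded, since there the semi-classical parametrix is not yet effective and one must patch it to the analytic-Fredholm inverse on a compact set while preserving symbol estimates in $(\zeta,\eta)$.
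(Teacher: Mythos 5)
Your proposal is correct and follows the same overall architecture as the paper: Fourier--Laplace transform in $r$ via Plancherel \eqref{equation:plancherel}, inversion of the multiplier family $I_Z(A,\lambda)$ along the line $\Re\lambda=\rho$, identification $I_Z(A)^{-1}_\rho = I_Z(Q) \mod \Psi^{-\infty}$ for pseudo-differentiality, and a Cauchy contour shift (with Paley--Wiener decay of $\hat f$) for the independence in $\rho$. The one place where you diverge is precisely the step you flag as the main obstacle, and here the paper takes a lighter route: instead of proving uniform \emph{semi-classical symbolic} estimates for the true inverse $I_Z(A,\lambda)^{-1}$ along the whole line, it first conjugates by powers of the Japanese bracket of the Laplacian (using that $\partial_r^2+\Delta_\zeta$ is the indicial operator of $\Delta_g$) to reduce the boundedness statement to $s=m=0$ and $\rho=0$; after this reduction, only a uniform $\mc{L}(L^2(F),L^2(F))$ operator-norm bound for $I_Z(A,\lambda)^{-1}$ on the line is needed, which follows from \eqref{eq:parametrix-better-highlambda} together with Lemmas \ref{lemma:indicial-pseudo} and \ref{lemma:l2-bound} for $|\Im\lambda|$ large and from continuity plus absence of poles on the compact remainder of the line. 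Your stronger route --- patching the semi-classical parametrix at high $|\Im\lambda|$ to the analytic-Fredholm inverse on the compact piece while keeping symbol estimates --- can be made to work (on the compact piece one writes $I_Z(A,\lambda)^{-1}=I_Z(Q,\lambda)+I_Z(A,\lambda)^{-1}(\mathbbm{1}-I_Z(A,\lambda)I_Z(Q,\lambda))$ and uses continuity in $\lambda$ of the smoothing correction), and it buys you the weighted mapping properties for all $s$ directly without the conjugation trick; but it is technically heavier than what the statement requires, and the paper's reduction shows the uniform symbolic control of the inverse is avoidable.
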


\begin{proof}
For this, we will start by considering the operator in the right-hand side of the formula (denote it $W$), and prove that this is a bounded operator from $e^{\rho r} H^{s-m}(\R\times F,L_F)$ to $H^{s}(\R\times F,L_F)$. Up to conjugating by some powers of the Japanese bracket of the Laplacian, the statement boils down to $s=m=0$ and one can also take $\rho=0$.\footnote{Indeed, using the fact that $\Delta_{r,\zeta} := \partial_r^2 + \Delta_\zeta = I_Z(\Delta_g)$, namely it is the indicial operator associated to the Laplacian on $N$, we obtain by a straightforward computation that the bound
\[
\|I_Z(A)f\|_{H^s} \lesssim \|f\|_{H^{s+m}}
\]
is equivalent to the bound
\[
\|I_Z((\mathbbm{1} - \Delta_g)^{s/2}A(\mathbbm{1} - \Delta_g)^{-(s+m)/2})f\|_{L^2} \lesssim \|f\|_{L^2}
\]
and this last operator is of order $0$.} Then, we observe that on the line $\Re \lambda = 0$, the operator $I_Z(A,\lambda)^{-1}$ is uniformly bounded on $L^2$ by Lemma \ref{lemma:l2-bound}. We can thus apply Plancherel's formula \eqref{equation:plancherel}, and obtain:
\[
\|Wf\|^2_{L^2} = \int_{\Re(\lambda)=0} \|I_Z(A,\lambda)^{-1}\hat{f}(\lambda)\|^2_{L^2(F_Z)} \dd \lambda \lesssim  \int_{\Re(\lambda)=0} \|\hat{f}(\lambda)\|^2_{L^2(F_Z)} \dd \lambda = \|f\|^2_{L^2}.
\]
This bound also ensures the validity of the computation
\[
I_Z(A) W = W I_Z(A) = \int_{\Re \lambda =0} e^{\lambda r} I_Z(A,\lambda)^{-1} I_Z(A,\lambda) d\lambda = \mathbbm{1}.
\]

Next, to check that $I_Z(A)^{-1}_{\rho}$ is pseudo-differential, we observe that since 
\[
I_Z(Q) I_Z(A) = \mathbbm{1} \mod \Psi^{-\infty}
\]
($Q$ being the parametrix already defined), we have necessarily
\[
I_Z(A)_\rho^{-1} = I_Z(Q) \mod \Psi^{-\infty}.
\]

Finally, since $I_Z(A)^{-1}_\rho$ is expressed as a contour integral, testing against $C^\infty_{\mathrm{comp}}$ function (whose Fourier transform in the $r$ variable decay very fast), we can shift contours as long as we do not encounter a pole of $I_Z(A,\lambda)^{-1}$, using Cauchy's formula.
\end{proof}

\subsubsection{Improving Sobolev parametrices}

\label{sec:improving-I}

In this section, we will complete the proof of Theorem \ref{theorem:parametrix-compact} in the case that the operator $A$ is $(\rho_-,\rho_+)$-admissible elliptic (except the part about the Fredholm index \eqref{equation:fredholm-index-theorem} that we will deal with in the next section). Recall that $A = A_{\mathrm{comp}} + A_{\mathrm{cusp}} + R$. Both $A_{\mathrm{cusp}}, R$ are associated with some cutoff function $\chi_{A_{\mathrm{cusp}}},\chi_R$ (as it was introduced in Definitions \ref{def:R-L^2-admissible-operator} and \ref{definition:rho-admissible}). In the following, we will write $\chi_A$ for a smooth cutoff function such that $\chi_A$ is supported in $\left\{\chi_R \equiv 1\right\} \cap \left\{\chi_{A_{\mathrm{cusp}}}\right\}$ and it takes value $1$ near $y=+\infty$.

According to Proposition \ref{prop:smoothing-remainder}, we have a symbol $q$ such that $\Op(q)A - \mathbbm{1}$ is a smoothing operator (but not necessarily compact). From Lemma \ref{lemma:compact-injection}, we deduce that it would suffice to improve $\Op(q)$ only with respect to the action on the zeroth Fourier coefficient in the cusps. As we observed in the previous section, since the symbol $q$ was built using symbolic calculus, we deduce directly that $\Op(q)$ is $\R$-admissible. Consider an open interval $J$ which is a connected component of $I\setminus S(A)$. Denote by $I_Z(A)^{-1}_{J}$ the corresponding inverse of $I_Z(A)$. Then set
\[
Q_J := \Op(q) + \sum_Z \mathcal{E}_Z \chi_A \left[ I_Z(A)^{-1}_{J} - I_Z(\Op(q)) \right] \chi_A\mathcal{P}_Z,
\]
which is now a $J$-admissible pseudo-differential operator. Indeed, to prove this it suffices to prove that the correction we added is $J$-smoothing. Since $I_Z(A)^{-1}_{J} - I_Z(\Op(q)) \in \Psi^{-\infty}$ is a convolution operator, using the Plancherel formula, this follows from the fact that the estimate
\[
I_Z(A,\lambda)^{-1} - I_Z(\Op(q),\lambda) = \mathcal{O}_{\Psi^{-\infty}}(\langle\Im \lambda\rangle^{-1}),
\]
is valid locally uniformly for $\Re \lambda \in J$.

We now write $Q_J A = \mathbbm{1} + R_J$ and we aim to prove that $R_J$ is compact on $H^{s,\rho_0-d/2,\rho_\bot}$ for all $s\in \R$, $\rho_0 \in J$, $\rho_\bot \in \R$. By stability by composition of admissible pseudo-differential operators (see Proposition \ref{prop:algebra-general}), we know that $R_J$ is a smoothing admissible operators. Moreover, the operator $Q_I$ was chosen so that $I_Z(R_J) = 0$ (this can be checked using the calculation rules of Lemma \ref{lemma:homomorphism}). As a consequence, thanks to Lemma \ref{lemma:compact-injection}, the proof of Theorem \ref{theorem:parametrix-compact} (except the Fredholm properties) now boils down to the following Lemma:

\begin{lemma}
\label{lemma:refinement}
Let $A$ be a $(\rho_-,\rho_+)$-admissible pseudo-differential operator of order $-m$ such that $I_Z(A) = 0$. Then $A$ is bounded from $H^{s,\rho_0-d/2,\rho_\bot}$ to $H^{s+m,\rho_0' - d/2,\rho_\bot}$ for $\rho_0,\rho_0' \in (\rho_-,\rho_+)$, $\rho_\bot\in \R$.
\end{lemma}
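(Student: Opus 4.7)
The plan is to use the black-box decomposition from \S\ref{ssection:black-box} to split $A$ into four components based on how they interact with the zero Fourier modes in each cusp. Writing
\[
A = A_1 + A_2 + A_3 + A_4,
\]
where $A_1 = (\mathbbm{1}-\mathcal{E}_Z\chi_A\mathcal{P}_Z) A (\mathbbm{1}-\mathcal{E}_Z\chi_A\mathcal{P}_Z)$, $A_2 = \mathcal{E}_Z\chi_A\mathcal{P}_Z A (\mathbbm{1}-\mathcal{E}_Z\chi_A\mathcal{P}_Z)$, $A_3 = (\mathbbm{1}-\mathcal{E}_Z\chi_A\mathcal{P}_Z) A \mathcal{E}_Z\chi_A\mathcal{P}_Z$, and $A_4 = \mathcal{E}_Z\chi_A\mathcal{P}_Z A \mathcal{E}_Z\chi_A\mathcal{P}_Z$. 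I choose the cutoff $\chi_A$ to equal $1$ in a region where all the cutoffs associated to the decomposition $A = A_{\mathrm{comp}} + A_{\mathrm{cusp}} + R$ of Definition \ref{definition:rho-admissible} equal $1$, and to be supported where $A_{\mathrm{comp}}$ vanishes. Each piece is treated separately.

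The first step is to show that $A_2$, $A_3$ and $A_4$ are all $(\rho_-,\rho_+)$-residual. For the cross terms $A_2$ and $A_3$, this is precisely the $(\rho_-,\rho_+)$-generalization of Lemma \ref{lemma:admissible-properties} guaranteed by Proposition \ref{prop:algebra-general}. For $A_4$, I apply Definition \ref{definition:rho-admissible} to $A_{\mathrm{cusp}}$ and to $R$ separately, obtaining
\[
A_4 = \mathcal{E}_Z \chi_A I_Z(A_{\mathrm{cusp}}) \chi_A \mathcal{P}_Z + \mathcal{E}_Z \chi_A I_Z(R) \chi_A \mathcal{P}_Z + S = \mathcal{E}_Z\chi_A I_Z(A)\chi_A\mathcal{P}_Z + S,
\]
where $S$ is $(\rho_-,\rho_+)$-residual. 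Since by hypothesis $I_Z(A)=0$, the main term disappears and $A_4$ itself is $(\rho_-,\rho_+)$-residual. Unpacking that definition, such operators are bounded $H^{-k,\rho_+-d/2-\epsilon,\rho_\bot} \to H^{k,\rho_--d/2+\epsilon,\rho_\bot}$ for any $k\in\N$, $\epsilon>0$ and $\rho_\bot\in\R$. Combining with the elementary weighted Sobolev embeddings $H^{s,\rho_0-d/2,\rho_\bot} \hookrightarrow H^{-k,\rho_+-d/2-\epsilon,\rho_\bot}$ (valid for $\rho_0 \leq \rho_+-\epsilon$ and $k\geq -s$) and $H^{k,\rho_--d/2+\epsilon,\rho_\bot} \hookrightarrow H^{s+m,\rho_0'-d/2,\rho_\bot}$ (valid for $\rho_-+\epsilon \leq \rho_0'$ and $k\geq s+m$), which hold for any $\rho_0,\rho_0'\in(\rho_-,\rho_+)$ upon choosing $\epsilon$ small and $k$ large, we obtain the desired mapping property for the three pieces $A_2,A_3,A_4$.

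The remaining piece $A_1$ is handled by the observation that it acts entirely on non-zero Fourier modes: the projection $(\mathbbm{1}-\mathcal{E}_Z\chi_A\mathcal{P}_Z)$ sends $H^{s,\rho_0-d/2,\rho_\bot}$ continuously into $H^{s,-\infty,\rho_\bot}$, because the zero Fourier mode of the output is supported where $1-\chi_A$ is, hence compactly in $y$, and thus admits any weight. Then $A$ is bounded $H^{s,\rho_0'',\rho_\bot}\to H^{s+m,\rho_0'',\rho_\bot}$ for any $\rho_0''\in(\rho_-,\rho_+)$, by the straightforward $(\rho_-,\rho_+)$-analog of Lemma \ref{lemma:boundedness-admissible-pseudo}, and the final outer projection produces an element of $H^{s+m,-\infty,\rho_\bot}\subset H^{s+m,\rho_0'-d/2,\rho_\bot}$, as required.

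The main technical obstacle is keeping track of the different cutoffs attached to $A_{\mathrm{comp}}$, $A_{\mathrm{cusp}}$ and $R$ and ensuring that they can be commuted and unified into a single $\chi_A$ modulo residual errors; this is exactly what the computation in the proof of Lemma \ref{lemma:admissible-properties} (and its $(\rho_-,\rho_+)$-version provided by Proposition \ref{prop:algebra-general}) establishes, and the hypothesis $I_Z(A)=0$ is used precisely to cancel the surviving principal term in $A_4$.
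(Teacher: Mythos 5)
Your argument is correct and is essentially the paper's own proof: the same four-term black-box decomposition with a single cutoff $\chi_A$, the cross terms and the purely non-zero-mode term handled by admissibility (Lemma \ref{lemma:admissible-properties} via Proposition \ref{prop:algebra-general}, and the boundedness as in Lemma \ref{lemma:boundedness-admissible-pseudo}), and the hypothesis $I_Z(A)=0$ used precisely to cancel the zero-mode term. You merely spell out the residual mapping properties and weighted embeddings that the paper leaves implicit.
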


\begin{proof}
We have:
\[
\begin{split}
Af & = (\mathbbm{1}-\mc{E}_Z \chi_A \mc{P}_Z)A(\mathbbm{1}-\mc{E}_Z\chi_A\mc{P}_Z)f \\
& + \mc{E}_Z\chi_A\mc{P}_Z A(\mathbbm{1}-\mc{E}_Z\chi_A \mc{P}_Z)f + (\mathbbm{1}-\mc{E}_Z\chi_A\mc{P}_Z)A\mc{E}_Z\chi_A\mc{P}_Zf + \mc{E}_Z\chi_A\mc{P}_Z A\mc{E}_Z\chi_A\mc{P}_Zf
\end{split}
\]
By definition of being admissible, the first three terms directly satisfy the announced bounds. The last one also does since we have assumed that $I_Z(A)=0$.
\end{proof}

\subsubsection{Fredholm index of elliptic operators}

We first prove the following identification:

\begin{lemma}
\label{lemma:l2-identification}
For all $s,\rho_0,\rho_\bot \in \R$, one can identify via the $L^2$ scalar product the spaces $(H^{s,\rho_0,\rho_\bot})' \simeq H^{-s,-\rho_0,-\rho_\bot}$.
\end{lemma}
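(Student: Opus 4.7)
The plan is to reduce the statement to the standard weighted Sobolev duality $(y^\rho H^s)' \simeq y^{-\rho} H^{-s}$ by exploiting the topological isomorphism $\mc{Z}(\tilde y^{\rho_0-\rho_\bot}) : y^{\rho_\bot} H^s \to H^{s,\rho_0,\rho_\bot}$ that is built into Definition~\ref{def:Hsrhospaces}, combined with a clean transpose formula for $\mc{Z}(\phi)$ under the $L^2$ pairing.

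First, I would observe that $\mc{E}_Z$ and $\mc{P}_Z$ are $L^2$-adjoints: this follows directly from \eqref{equation:restriction}-\eqref{equation:extension}, because $\mc{P}_Z$ is literally the $\theta$-average (recall $|\R^d/\Lambda|=1$) and $\mc{E}_Z$ re-extends a slice-function as a $\theta$-independent function in the cusp. Writing $\mc{Z}(\phi)=\mathbbm{1}+\mc{E}_Z\chi(\phi-1)\mc{P}_Z$, this immediately yields the transpose formula ${}^{\mathrm t}\mc{Z}(\phi)=\mc{Z}(\bar\phi)$. In particular the real multiplier $\phi=\tilde y^{\rho_0-\rho_\bot}$ makes $\mc{Z}(\phi)$ $L^2$-symmetric. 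A small computation also shows $\mc{Z}(\phi_1)\mc{Z}(\phi_2)=\mc{Z}(\phi_1\phi_2)$ modulo an operator compactly supported in the transition annulus $\{2a<y<3a\}$, so $T:=\mc{Z}(\tilde y^{\rho_0-\rho_\bot})$ is a topological isomorphism with inverse essentially $\mc{Z}(\tilde y^{\rho_\bot-\rho_0})$, the discrepancy being harmless on weighted Sobolev spaces since it is compactly supported.

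Second, I would exploit $T$ to transfer duality. The classical fact $(y^{\rho_\bot}H^s)'\simeq y^{-\rho_\bot}H^{-s}$ via the $L^2$ pairing (itself following from the isometry $y^{-\rho_\bot}\cdot : y^{\rho_\bot}H^s\to H^s$ and the standard $L^2$-duality of $H^s$) gives: for any $\varphi\in (H^{s,\rho_0,\rho_\bot})'$, the functional $\varphi\circ T\in(y^{\rho_\bot}H^s)'$ is uniquely represented by some $g\in y^{-\rho_\bot}H^{-s}$ via $(\varphi\circ T)(u)=\langle u,g\rangle_{L^2}$. Unwinding for $f=Tu\in H^{s,\rho_0,\rho_\bot}$:
\[
\varphi(f)=(\varphi\circ T)(u)=\langle u,g\rangle_{L^2}=\langle T^{-1}f,g\rangle_{L^2}=\langle f,({}^{\mathrm t}T^{-1})g\rangle_{L^2},
\]
and by the transpose formula ${}^{\mathrm t}T^{-1}=\mc{Z}(\tilde y^{\rho_\bot-\rho_0})$, so $h:=({}^{\mathrm t}T^{-1})g$ lies in $\mc{Z}(\tilde y^{\rho_\bot-\rho_0})(y^{-\rho_\bot}H^{-s})=H^{-s,-\rho_0,-\rho_\bot}$. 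Conversely, any $h\in H^{-s,-\rho_0,-\rho_\bot}$ defines a bounded functional $f\mapsto\langle f,h\rangle_{L^2}$ with $\|h\|_{(H^{s,\rho_0,\rho_\bot})'}\asymp\|h\|_{H^{-s,-\rho_0,-\rho_\bot}}$, and the two correspondences are inverse to each other.

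The main obstacle is bookkeeping around the cutoff $\chi$: because $\mc{Z}(\phi_1)\mc{Z}(\phi_2)$ differs from $\mc{Z}(\phi_1\phi_2)$ by an operator supported in $\{2a<y<3a\}$, one must verify that $T$ remains a topological isomorphism and that its $L^2$-transpose really does map $y^{-\rho_\bot}H^{-s}$ onto $H^{-s,-\rho_0,-\rho_\bot}$. This is however automatic: the weighted Sobolev norms are insensitive to compactly supported perturbations, and the formal transpose computation above is valid in $\mc{D}'(N,L)$. Once this point is settled, the proof reduces to a formal transfer along the explicit isomorphism $T$, the only genuinely non-trivial input being the standard weighted $L^2$-duality of $y^{\rho_\bot}H^s$.
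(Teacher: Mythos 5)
Your argument is correct, but it takes a genuinely different route from the paper. The paper proves boundedness of the $L^2$ pairing on $H^{s,\rho_0,\rho_\bot}\times H^{-s,-\rho_0,-\rho_\bot}$ by reducing to $s=0$ with the admissible operators $\Lambda_{\pm s}$ (the $\Op(\sigma_{\mp s})$ realizing $H^s=\Op(\sigma_s)L^2$), using Lemma \ref{lemma:boundedness-admissible-pseudo} to see that $\Lambda_{\pm s}$ preserve the mixed-weight structure, and then observing that at $s=0$ the weights cancel because zero and non-zero Fourier modes are $L^2$-orthogonal; surjectivity of the representation map is left implicit. You instead work directly with the defining operator $T=\mc{Z}(\tilde y^{\rho_0-\rho_\bot})$ of Definition \ref{def:Hsrhospaces}: the symmetry ${}^{\mathrm t}\mc{Z}(\phi)=\mc{Z}(\bar\phi)$ (which indeed follows from $\mc{Z}(\phi)=\mathbbm{1}+\mc{E}_Z\chi(\phi-1)\mc{P}_Z$ and the fact that $\mc{E}_Z\chi\psi\mc{P}_Z$ is symmetric for the $L^2(N)$ pairing when $\psi$ is real and $\theta$-independent), the composition law $\mc{Z}(\phi_1)\mc{Z}(\phi_2)=\mc{Z}(\phi_1\phi_2)+\mc{E}_Z\chi(\chi-1)(\phi_1-1)(\phi_2-1)\mc{P}_Z$, and the classical spectral-theorem duality $(y^{\rho}H^s)'\simeq y^{-\rho}H^{-s}$. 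This avoids the cusp pseudodifferential calculus entirely and, as a bonus, yields the full identification (every functional is represented, and the correspondence is a topological isomorphism), which the paper's proof does not spell out. What the paper's route buys is economy within the framework already built — everything stays inside the admissible calculus and the argument is three lines once Lemma \ref{lemma:boundedness-admissible-pseudo} is available. Two points in your write-up deserve one explicit line each: (i) ``$\mc{E}_Z$ and $\mc{P}_Z$ are $L^2$-adjoints'' requires fixing the reference density $y^{-d-1}\dd y\,\dd\vol_{F}$ on the half-cylinder, or better, one should simply verify the symmetry of the composite $\mc{E}_Z\chi\psi\mc{P}_Z$ on $N$, which is what is actually used; (ii) ``harmless because compactly supported'' should be justified by noting that the discrepancy operators are of the form $\mc{E}_Z\psi\mc{P}_Z$ with $\psi$ smooth and supported in $\{2a<y<3a\}$, i.e.\ a $\theta$-average followed by multiplication by a fixed compactly supported smooth function, hence bounded on every $y^{\rho}H^{\sigma}$; with these verifications the transpose identity extends from $C^\infty_{\mathrm{comp}}$ by density and the proof is complete.
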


\begin{proof}
We have to prove that the bilinear map
\begin{equation}
\label{equation:bilinear-mapping}
C^\infty_{\mathrm{comp}}(N,L) \times C^\infty_{\mathrm{comp}}(N,L) \ni (u,v) \mapsto \langle u,v \rangle = \int_{N} g^L(u, v) d\vol_N(z)
\end{equation}
extends boundedly as a non-degenerate bilinear map $H^{s,\rho_0,\rho_\bot} \times H^{-s,-\rho_0,-\rho_\bot} \rightarrow \C$. As usual, non-degeneracy is trivial. Up to a smoothing order modification of $\Lambda_s$ which we denote by $\Lambda'_s$, we can assume that $\Lambda_{-s} \Lambda'_s = \mathbbm{1}$. Then, for $u, v \in C^\infty_{c}(N,L)$, one has $\langle u,v \rangle = \langle \Lambda_{-s} \Lambda'_s u,v \rangle = \langle \Lambda_s u, \Lambda'_{-s} v \rangle$. By Lemma \ref{lemma:boundedness-admissible-pseudo}, since $\Lambda_{\pm s}$ is admissible, $\Lambda_{\pm s} : H^{\pm s,\rho_0,\rho_\bot} \rightarrow H^{0,\rho_0,\rho_\bot}$ is bounded. The boundedness of (\ref{equation:bilinear-mapping}) on $H^{0,\rho_0,\rho_\bot} \times H^{0,-\rho_0,-\rho_\bot} \rightarrow \C$ is immediate (these are $L^2$ spaces with weight $y^{\rho_0}$ on the zeroth Fourier mode and $y^{\rho_\bot}$ on the non-zero modes) and thus:
\[
|\langle \Lambda_s u, \Lambda'_{-s} v \rangle| \lesssim \|\Lambda_s u\|_{H^{0,\rho_0,\rho_\bot}} \|\Lambda'_{-s} v\|_{H^{0,-\rho_0,-\rho_\bot}} \lesssim \|u\|_{H^{s,\rho_0,\rho_\bot}} \|v\|_{H^{-s,-\rho_0,-\rho_\bot}}.
\]
We then conclude by density of $C^\infty_{\mathrm{comp}}(N,L)$.
\end{proof}

An immediate computation shows that 
\begin{equation}
\label{equation:indice-adjoint}
I_Z(P^*,\lambda) = I_Z(P,d-\overline{\lambda})^*.
\end{equation}
As a consequence, $\lambda$ is an indicial root of $P^\ast$ if and only if $d-\overline{\lambda}$ is an indicial root of $P$.

\begin{proposition}
\label{proposition:fredholm-index}
Let $P$ be a $(\rho_-,\rho_+)$-admissible elliptic pseudo-differential operator of order $m \in \R$. Let $I$ be a connected component in $(\rho_-,\rho_+)$ not containing the real part of any indicial root. Then $P$ is Fredholm as a bounded operator $H^{s+m,\rho_0 - d/2,\rho_\bot} \to H^{s,\rho_0 - d/2,\rho_\bot}$ with $s \in \R, \rho_0 \in I$, $\rho_\bot \in\R$. The index does not depend on $s,\rho_0,\rho_\bot$ in that range.
\end{proposition}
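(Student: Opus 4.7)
My plan is to read off Fredholmness directly from the parametrix $Q_I$ provided by Theorem \ref{theorem:parametrix-compact}, and then show that the kernel and cokernel do not depend on $(s,\rho_0,\rho_\bot)$ by combining elliptic regularity with the embedding of Lemma \ref{lemma:embedding-nonzero}.

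First, Theorem \ref{theorem:parametrix-compact} yields an $I$-admissible $Q_I$ of order $-m$ with $Q_I P = \mathbbm{1} + R_L$ and $P Q_I = \mathbbm{1} + R_R$, where $R_L, R_R \in \dot{\Psi}^{-\infty}_I$. Regarding $R_L$ as an endomorphism of $H^{s+m,\rho_0-d/2,\rho_\bot}$, the $I$-residual property places its image in $H^{N,(\rho_-^I+\epsilon)-d/2,\rho_\bot}$ for any $N$ and $\epsilon>0$ small; since $\rho_-^I+\epsilon<\rho_0$, Lemma \ref{lemma:compact-injection} renders the re-inclusion into $H^{s+m,\rho_0-d/2,\rho_\bot}$ compact, so $R_L$ is compact. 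The same applies to $R_R$ on the target space, so $P$ has left and right Fredholm inverses modulo compact operators and is therefore Fredholm on each space.

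Next, I would prove that $\ker P$ is the same subspace of distributions for every admissible triple $(s,\rho_0,\rho_\bot)$. The crucial observation is that $Q_I$, $R_L$ and $R_R$ are the same operators throughout; only their domains vary. If $Pu=0$ in $H^{s+m,\rho_0-d/2,\rho_\bot}$, then $u=-R_L u$, and the $I$-residual property forces $u \in H^{N,\rho_0'-d/2,\rho_\bot}$ for every $N\in\R$ and every $\rho_0'\in I$, yielding independence in $s$ and in $\rho_0 \in I$. To handle $\rho_\bot$, I apply Lemma \ref{lemma:embedding-nonzero} in the form $H^{s+m+M,\rho_0,\rho_\bot}\hookrightarrow H^{s+m,\rho_0,\rho_\bot-M}$, so that the arbitrary Sobolev regularity just established converts into arbitrarily negative non-zero mode weight for $u$; conversely, any element of $\ker P$ living in a larger space $H^{s+m,\rho_0-d/2,\rho_\bot'}$ with $\rho_\bot'>\rho_\bot$ is automatically placed back in $H^{s+m,\rho_0-d/2,\rho_\bot}$ by running the same regularity argument inside that space, so all the kernels coincide.

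For the cokernel, Lemma \ref{lemma:l2-identification} identifies $\mathrm{coker}\, P$ with $\ker P^*$ acting on $H^{-s,(d-\rho_0)-d/2,-\rho_\bot}$, and the relation $I_Z(P^*,\lambda)=I_Z(P,d-\overline{\lambda})^*$ from \eqref{equation:indice-adjoint} shows $S(P^*)=d-S(P)$. Hence $d-\rho_0$ lies in the component $d-I$ of the admissible range for $P^*$, and the kernel result already established applies to $P^*$ there, giving the corresponding independence for $\mathrm{coker}\, P$. The main obstacle I anticipate is this last step: one must verify that $P^*$ is itself admissible on the shifted interval $d-(\rho_-,\rho_+)$ and that Theorem \ref{theorem:parametrix-compact} is genuinely applicable to $P^*$ on the dual spaces, so that the arguments for kernel and cokernel run in parallel; this is essentially a bookkeeping issue in the admissible calculus (stability of admissibility under adjunction and the correspondence $I \leftrightarrow d-I$ of components) rather than a genuine analytic difficulty.
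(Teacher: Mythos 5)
Your proof is correct and follows essentially the same route as the paper: Fredholmness from the parametrix of Theorem \ref{theorem:parametrix-compact} together with the compact embeddings of Lemma \ref{lemma:compact-injection}, independence of the kernel via $u=-R_Lu$ combined with Lemma \ref{lemma:embedding-nonzero} to trade regularity for decay in the non-zero Fourier modes, and the cokernel handled by duality through Lemma \ref{lemma:l2-identification} and the adjoint relation \eqref{equation:indice-adjoint}. The admissibility of $P^*$ on the reflected interval, which you flag as the remaining bookkeeping, is indeed the point the paper also leaves implicit when it says the same argument applies to the adjoint.
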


\begin{proof}
We write $I = (\rho^I_-, \rho^I_+)$. First, from the parametrix construction, and the compactness of the relevant spaces, we deduce that the kernel of $P$ is finite dimensional on each of those spaces (and is actually always the same). Indeed, we have 
\[
Q P = \mathbbm{1} + K,
\]
with $K$ mapping $H^{-k, \rho^I_+ -\epsilon - d/2,\rho_\bot} \to H^{k, \rho^I_- +\epsilon - d/2,\rho_\bot}$ for any $k \in \N$, any $\epsilon>0$ small enough and any $\rho_\bot \in \R$, by Lemma \ref{lemma:refinement}. This can actually be upgraded to 
\[
K : H^{-k, \rho^I_+ -\epsilon - d/2,\rho_\bot} \to H^{k, \rho^I_- +\epsilon - d/2,-\infty}
\]
is bounded (in the sense that $K : H^{-k, \rho^I_+ -\epsilon - d/2,\rho_\bot} \to H^{k, \rho^I_- +\epsilon - d/2,-k}$ is bounded for all $k \in \N, \eps > 0$). Indeed, by Lemma \ref{lemma:embedding-nonzero}, we know that 
\[
H^{k, \rho^I_- +\epsilon - d/2,\rho_\bot} \hookrightarrow H^{k/2,  \rho^I_- +\epsilon - d/2, \rho_\bot - k/2},
\]
is a continuous embedding and this proves the claim.

As a consequence, if $Pu = 0$ with $u \in H^{s+m,\rho_0 - d/2,\rho_\bot}$ for $s \in \R$ and $\rho_0 \in I, \rho_\bot \in \R$, then $u = -Ku$, hence
\[
u \in \cap_{k \in \N, \eps > 0,\rho_\bot \in \R} H^{k, \rho^I_- +\epsilon - d/2,\rho_\bot}(N,L_1),
\]
that is the kernel of $P$ is independent of the space (as long as the weight $y^\rho_0$ on the zeroth Fourier mode is in the window $\rho_0 \in I$). Moreover, by compactness of $K$, the kernel of $\mathbbm{1}+K$ is finite-dimensional, and so is the kernel of $P$.

Eventually, using Lemma \ref{lemma:l2-identification}, we can consider the same argument for the adjoint $P^\ast$ (to obtain the codimension of the image of $P$), and this closes the proof.
\end{proof}

\begin{remark}
If on the other hand, $\rho\in S(A)$, then the operator $A : H^{s+m,\rho,\rho_\bot} \rightarrow H^{s,\rho,\rho_\bot}$ is \emph{not Fredholm} for any values $s,\rho_\bot \in \R$. Although we do not make a formal statement of this fact, it could be deduced from the \emph{relative Fredholm index Theorem} (see \eqref{equation:fredholm-index-theorem}) obtained in the next paragraph: indeed, the family $\rho_0 \mapsto y^{\rho_0} A y^{-\rho_0} \in \Psi^m$ is continuous and the relative Fredholm index Theorem indicates that the Fredholm index jumps when crossing an indicial root. By a mere continuity argument, this prevents the operator from being Fredholm.
\end{remark}

\subsubsection{Jumps of Fredholm index}

Let $A$ be an elliptic pseudo-differential operator of order $m > 0$ and assume that it is $\R$-admissible. We want to investigate what happens when one crosses an indicial root: the operator may fail to be injective and/or surjective. For the sake of simplicity, we assume that the operator $A$ has no indicial root on $\Re(\lambda)=d/2$ and that it is an isomorphism as a map $H^{s,\rho,\rho_\bot} \rightarrow H^{s-m,\rho,\rho_\bot}$ for all $s \in \R, \rho_\bot \in \R$ and $\rho$ in a neighbourhood of $0$. Let us investigate its kernel: we consider $u \in H^{0,\rho_0,\rho_\bot}$ such that $Au=0$, where $\rho_0 > 0$ and we assume that $\rho_0+d/2$ is not an indicial root. By Proposition \ref{proposition:fredholm-index}, it implies in particular that $u \in H^{+\infty,\rho_0,-\infty}$ and we recall that this notation means that $u \in H^{k,\rho_0,-k}$ for all $k \in \N$. Moreover, we have
\[
\begin{split}
Au & = 0 =  (\mathbbm{1}-\mc{E}_Z\chi_A \mc{P}_Z)A(\mathbbm{1}-\mc{E}_Z\chi_A\mc{P}_Z)u \\
& + \mc{E}_Z\chi_A \mc{P}_Z  A(\mathbbm{1}-\mc{E}_Z \chi_A \mc{P}_Z)u + (\mathbbm{1}-\mc{E}_Z \chi_A \mc{P}_Z )A\mc{E}_Z \chi_A \mc{P}_Z  u  + \mc{E}_Z\chi_A \mc{P}_Z A\mc{E}_Z \chi_A \mc{P}_Z  u
\end{split}
\]
Since $A$ is $\R$-admissible, the first three terms are respectively in
\[
H^{+\infty,-\infty,\rho_\bot}, H^{+\infty,-\infty,-\infty}, H^{+\infty,-\infty,-\infty}.
\]
In particular, this implies that
\[
\mc{E}_Z \chi_A \mc{P}_Z A \mc{E}_Z \chi_A \mc{P}_Z u =\mc{E}_Z \chi_A I_Z(A) \chi_A \mc{P}_Z u + \mc{O}_{y^{-\infty}H^{\infty}}(1) = \mc{O}_{y^{-\infty}H^{\infty}}(1).
\]
that is $I_Z(A) \chi_A \mc{P}_Z u \in \cap_{\rho \in \R} e^{\rho r} H^\infty(\R \times F_Z, L)$. Since $\rho_0+d/2$ was assumed not to be an indicial root, $I_Z(A)$ is invertible on $e^{\rho_0+d/2} H^{s+m}(\R \times F_Z, L) \rightarrow e^{\rho_0+d/2} H^{s}(\R \times F_Z, L)$, for all $s \in \R$ with inverse $I_Z(A)^{-1}_{\rho_0+d/2}$ and the Schwartz kernel of this inverse does not depend on a small perturbation on $\rho_0$. By Lemma \ref{lemma:relation-projected-spaces}, we have 
\[
f:=\chi_A \mc{P}_Z  u \in e^{(\rho_0+d/2)r} H^{\infty}.
\]
In particular, we deduce that $I_Z(A)^{-1}_{\rho_0+d/2} I_Z(A) f=f$. We also have that $I_Z(A) f \in e^{(\rho+d/2)r}H^\infty$ for all $\rho\in\R$. On the other hand, we know by a classical contour integration argument that 
\[
I(A)^{-1}_{\rho_0+d/2} = I(A)^{-1}_{d/2} + 2i\pi\sum_{\substack{\lambda\text{ indicial root of }A \\ \Re \lambda\in ]d/2, d/2+\rho_0[}} \Pi_\lambda.
\]
Here, $\Pi_\lambda$ is the convolution operator whose kernel is the residue of $e^{\lambda'(r-r')}I_Z(A,\lambda')^{-1}$ at $\lambda'=\lambda$. It is a finite rank operator, whose image is the linear span of sections of the form
\begin{equation}\label{eq:eigensections-at-infinity}
e^{\lambda r} r^k f_k(\zeta),
\end{equation}
($k$ being at most the order of the pole of $I_Z(A,\lambda)$). This implies, using the boundedness properties of $I(A)^{-1}_{d/2}$ that
\[
\begin{split}
f & = I(A)^{-1}_{d/2}(A)I_Z(A)f +  \sum_{\substack{\lambda\text{ indicial root of }A \\ \Re \lambda\in ]d/2, \rho_0[}} \Pi_\lambda I_Z(A)f \\
& =  \sum_{\substack{\lambda\text{ indicial root of }A \\ \Re \lambda\in ]d/2, \rho_0[}} \Pi_\lambda I_Z(A)f \mod e^{dr/2}H^\infty.
\end{split}
\]
Going back to $u$ and writing (note that we put twice the cutoff function here so that each term below makes sense):
\[
\begin{split}
u & = (\mathbbm{1}- \mc{E}_Z\chi_A  \chi_A \mc{P}_Z)u + \mc{E}_Z \chi_A   f \\
& =  (\mathbbm{1}- \mc{E}_Z \chi_A \chi_A \mc{P}_Z)u + \mc{E}_Z  \chi_A \mc{O}_{e^{\frac{d}{2} r}H^\infty}(1) +  \sum_{\substack{\lambda\text{ indicial root of }A \\ \Re \lambda\in ]d/2, \rho_0[}} \mc{E}_Z \chi_A  \Pi_\lambda I_Z(A)f,
\end{split}
\]
we see that the first term is $H^{+\infty, - \infty,-\infty}$ and the second term is in $H^{+\infty,0,-\infty}$. In other words, we can write $u =u_0 + u_1$, where $u_0 \in H^{+\infty, 0,\rho_\bot}$ and
\begin{equation}
\label{equation:u1}
 u_1 = \mc{E}_Z  \sum_{\substack{\lambda\text{ indicial root of }A \\ \Re \lambda\in ]d/2, \rho_0[}} \chi_A \Pi_\lambda \left(I_Z(A) \mc{P}_Z \chi u\right),
\end{equation}
which belongs to a finite-dimensional space. On the other hand,
\[
 Au_1 = Au - Au_0=- Au_0 \in H^{+\infty,-\infty,-\infty}.
\]
By invertibility of $A$ for sections in $H^{s,0,\rho_\bot}, s\in\R,\rho_\bot \in \R$, we obtain that $u_0 = -A^{-1}(Au_1) \in H^{+\infty,0,-\infty}$. In other words, any solution to $Au = 0$, where $u \in H^{+\infty,\rho_0,-\infty}$, can be written as $u = u_0 + u_1$, where $u_0 \in H^{+\infty,0,-\infty}$ and $u_1$ has the explicit form \eqref{equation:u1}.

Now, the converse is also true. For any element in the range of $\Pi_\lambda$, where $0 < \Re(\lambda) < \rho_0$, we can build a solution to $Au = 0$. Indeed, first of all observe that, taking $\lambda$ an indicial root and $\rho > \Re(\lambda)$, and considering $I_Z(A)$ acting on the spaces $e^{\rho r} H^s$, we have $I_Z(A) \Pi_\lambda = 0$ by a simple contour integral argument. As a consequence, defining $u_1 := \chi_A \mc{E}_Z u'$, where $u' \in \ran(\Pi_\lambda)$, we have by construction $A u_1 \in H^{+\infty,-\infty,-\infty}$. We then look for $u= u_0 + u_1$ such that $Au = 0$, $u_0 \in H^{+\infty,0,-\infty}$. For that, it is sufficient to solve $Au_0 = -Au_1 \in H^{+\infty,-\infty,-\infty} \hookrightarrow H^{+\infty,0,-\infty}$. Now, since $A$ is assumed to be invertible on spaces with weight $y^0$ on the zeroth Fourier mode, we obtain $u_0 = -A^{-1}(Au_1) \in H^{+\infty,0,-\infty}$ and $u=u_0+u_1$ solves $Au=0$. To sum up the discussion, we have proved the

\begin{proposition}\label{prop:bigger-kernel}
Let $A$ be a $\R$-admissible elliptic operator of order $m > 0$. Assume that $A$ has no indicial root on $\left\{ \Re(\lambda) = d/2 \right\}$ and that it is invertible as a map $A : H^{s+m, \rho_0, \rho_\bot}(N,L) \rightarrow  H^{s, \rho_0, \rho_\bot}(N,L)$ for $\rho_0$ near $0$ ($s,\rho_\bot \in \R$ being arbitrary). Assume that $Au=0, u \in H^{0,\rho_0,\rho_\bot}$ with $\rho_\bot \in \R$ and $\rho_0+d/2$ not being the real part of an indicial root. Then $u=u_0+u_1$ with
\[
u_1  =   \sum_{\substack{\lambda\text{ indicial root of }A \\ \Re \lambda\in ]d/2, d/2+\rho_0[}} \mc{E}_Z \chi_A \Pi_\lambda I_Z(A) \mc{P}_Z \chi_A u
\]
and $Au_1 \in H^{+\infty,-\infty,-\infty}$, and $u_0=-A^{-1}(Au_1) \in H^{+\infty,0,-\infty}$ (in particular, $u\in H^{\infty,\rho_0,-\infty}$). Conversely, for each indicial root $\lambda$ with $\Re \lambda>d/2$, and each element in the range of $\Pi_\lambda$, we can build such a solution.
\end{proposition}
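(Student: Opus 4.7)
My proof plan follows the structure already laid out in the discussion preceding the proposition, so I only sketch the main moves.

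First, I would begin with an elliptic regularity upgrade. Since $\rho_0 + d/2$ does not hit any indicial root, Proposition \ref{proposition:fredholm-index} implies that $A$ is Fredholm on $H^{s,\rho_0-d/2,\rho_\bot}$, and the kernel in this space is independent of $(s,\rho_\bot)$; in particular $u \in H^{+\infty,\rho_0,-\infty}$. Next, I would decompose $u$ via the black box formalism of \S\ref{ssection:black-box}, writing
\[
u = (\mathbbm{1} - \mc{E}_Z\chi_A\mc{P}_Z)u + \mc{E}_Z\chi_A\mc{P}_Z u,
\]
and apply $A$ to each piece. Thanks to the $\R$-admissibility of $A$ (Lemma \ref{lemma:admissible-properties} and its consequences), the cross terms between zero and non-zero Fourier modes are $\R$-residual, and the diagonal non-zero part automatically lies in $H^{+\infty,-\infty,-\infty}$. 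Hence the equation $Au=0$ forces $\mc{E}_Z\chi_A I_Z(A)\chi_A\mc{P}_Z u \in H^{+\infty,-\infty,-\infty}$, i.e., setting $f := \chi_A\mc{P}_Z u$, one has $I_Z(A) f \in \bigcap_\rho e^{\rho r}H^\infty(\R\times F_Z,L_Z)$.

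Second, I would invert $I_Z(A)$ by a contour argument. Since $\rho_0 + d/2$ is a regular line for $I_Z(A,\lambda)$, the inverse $I_Z(A)^{-1}_{\rho_0+d/2}$ is well defined by the discussion of \S\ref{sssection:indicial-resolvent}, and $f = I_Z(A)^{-1}_{\rho_0+d/2}(I_Z(A) f)$. Using the regularity of the line $\Re\lambda = d/2$ (the other hypothesis), I can shift the contour from $\rho_0 + d/2$ down to $d/2$, picking up the residues
\[
I_Z(A)^{-1}_{\rho_0+d/2} - I_Z(A)^{-1}_{d/2} = 2i\pi \sum_{\substack{\lambda \text{ indicial root of }A\\ \Re\lambda \in (d/2,\,d/2+\rho_0)}} \Pi_\lambda.
\]
Applying this identity to $I_Z(A) f$ and using that $I_Z(A)^{-1}_{d/2}$ maps back into $e^{dr/2}H^\infty$, I obtain $f$ as the prescribed finite sum of residue terms modulo $e^{dr/2}H^\infty$. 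Lifting this back through $\mc{E}_Z\chi_A$, I set
\[
u_1 := \sum_{\substack{\lambda \text{ indicial root of }A\\ \Re\lambda \in (d/2,\,d/2+\rho_0)}} \mc{E}_Z\chi_A \Pi_\lambda I_Z(A)\mc{P}_Z\chi_A u,\qquad u_0 := u - u_1.
\]
By admissibility, $Au_1 \in H^{+\infty,-\infty,-\infty}$, and since $Au = 0$, also $Au_0 \in H^{+\infty,-\infty,-\infty} \hookrightarrow H^{+\infty,0,\rho_\bot}$; invertibility of $A$ at weight $\rho_0 = 0$ then forces $u_0 = -A^{-1}(Au_1) \in H^{+\infty,0,-\infty}$.

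Finally, for the converse, I would start from any $u' \in \ran(\Pi_\lambda)$ with $\Re\lambda \in (d/2, d/2+\rho_0)$ and set $u_1 := \mc{E}_Z\chi_A u'$. A contour integration at weight $\rho > \Re\lambda$ shows $I_Z(A)\Pi_\lambda = 0$ as an operator on $e^{\rho r}H^s$, and combined with admissibility this gives $Au_1 \in H^{+\infty,-\infty,-\infty}$. Then $Au_1 \in H^{+\infty,0,\rho_\bot}$, so I solve $Au_0 = -Au_1$ by the invertibility at $\rho_0 = 0$, obtaining $u_0 \in H^{+\infty,0,-\infty}$; the sum $u := u_0 + u_1$ then satisfies $Au = 0$ and lies in $H^{+\infty,\rho_0,-\infty}$, with the prescribed leading behaviour at infinity.

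The bulk of the argument is fairly mechanical once one believes in the indicial calculus of Lemmas \ref{lemma:homomorphism}--\ref{lemma:indicial-pseudo} and the contour integration of \S\ref{sssection:indicial-resolvent}. The main piece of bookkeeping — which I would treat as the principal technical step — is justifying that the various remainders arising from the decomposition into zero and non-zero Fourier modes all land in $H^{+\infty,-\infty,-\infty}$, so that the whole analysis reduces cleanly to the scalar convolution problem on $\R_r \times F_Z$.
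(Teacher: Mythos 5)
Your proposal is correct and follows essentially the same route as the paper: bootstrap regularity via the Fredholm parametrix, reduce to the indicial equation for the zeroth Fourier mode through the black-box decomposition, shift the contour from $\Re\lambda=\rho_0+d/2$ to $\Re\lambda=d/2$ to isolate the residue terms $\Pi_\lambda$, and use $I_Z(A)\Pi_\lambda=0$ together with invertibility at weight $0$ both to identify $u_0=-A^{-1}(Au_1)$ and to run the converse construction. The only cosmetic difference is that you place the diagonal non-zero-mode term directly in $H^{+\infty,-\infty,-\infty}$ (legitimate after the bootstrap $u\in H^{+\infty,\rho_0,-\infty}$), where the paper records the slightly weaker $H^{+\infty,-\infty,\rho_\bot}$.
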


We also have a similar statement for the resolution of the equation $Au=v$ on smaller spaces than $H^{s,0,\rho_\bot}$.

\begin{proposition}\label{prop:smaller-image}
Let $A$ be a $\R$-admissible elliptic operator of order $m > 0$. Assume that $A$ has no indicial root on $\left\{ \Re(\lambda) = d/2 \right\}$ and that it is invertible as a map $A : H^{s+m, \rho_0, \rho_\bot}(N,L) \rightarrow  H^{s, \rho_0, \rho_\bot}(N,L)$ for $\rho_0$ near $0$ ($s,\rho_\bot \in \R$ being arbitrary). Let $\rho_0 <0$ and assume that $\rho_0+d/2$ is not the real part of an indicial root (in particular, there is no indicial root on $(\rho_0-\eps,\rho_0+\eps)$ for some $\eps >0$). Then, there exists $S \in \Psi^{-m}$, an $(\rho_0-\eps,\rho_0+\eps)$-$L^2$-admissible operator, a linear mapping $G : H^{s,\rho_0,\rho_\bot} \rightarrow e^{\rho r}H^{+\infty}$, bounded on these spaces for all $s, \rho , \rho_\bot \in \R$, such that for all $v \in H^{s,\rho_0,\rho_\bot}, s\in\R, \rho_\bot \in \R$, one has:
\[
A^{-1}v = S v + \chi \mc{E}_Z \sum_{\substack{\lambda\text{ indicial root of }A \\ \Re \lambda\in ]\rho_0, d/2+\rho_0[}} \Pi_\lambda (\mc{P}_Z\chi + G)v.
\]
Moreover, one has $A \chi \mc{E}_Z \Pi_\lambda : e^{r \rho}H^{s} \rightarrow H^{+\infty,-\infty,-\infty}$ for all $\rho < \Re(\lambda)$. 
\end{proposition}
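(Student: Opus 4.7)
The strategy is to construct $S$ and $G$ by decomposing the ``genuine'' inverse of $A$ at weight $y^0$ (which exists by hypothesis) along the zero/non-zero Fourier mode splitting in the cusp, and shifting the contour of the indicial resolvent from $\Re\lambda = d/2$ to $\Re\lambda = d/2 + \rho_0$. Since $H^{s,\rho_0,\rho_\bot} \hookrightarrow H^{s,0,\rho_\bot}$ for $\rho_0 < 0$, for $v$ in the smaller space one may form $u := A^{-1}v \in H^{s+m,0,\rho_\bot}$, and the goal is to rewrite
\[
u = Sv + \chi\mc{E}_Z\sum_{\lambda} \Pi_\lambda(\mc{P}_Z\chi + G)v,
\]
where the first term has improved weight $\rho_0-\eps$ and the sum collects residue corrections over the indicial roots in the strip delimited by the two contours.

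Concretely, let $f := \chi\mc{P}_Z u$. By $\R$-admissibility (Definition \ref{def:R-L^2-admissible-operator}) one has $I_Z(A)f = \mc{P}_Z\chi v + r(v)$, where the remainder gathers (i) the commutator $[\partial_\theta, A]$ applied to the non-zero Fourier part of $u$ and (ii) the off-diagonal coupling between $\mc{E}_Z\chi\mc{P}_Z$ and $\mathbbm{1}-\mc{E}_Z\chi\mc{P}_Z$; both are residual, so $r$ defines a bounded map $G_0 : H^{s,\rho_0,\rho_\bot} \to e^{\rho r}H^{+\infty}$ for every $\rho\in\R$. Writing $f = I_Z(A)^{-1}_{d/2}(\mc{P}_Z\chi v + G_0 v)$ and applying Cauchy's theorem, valid since the two relevant lines contain no indicial roots, yields the resolvent identity
\[
I_Z(A)^{-1}_{d/2} = I_Z(A)^{-1}_{d/2+\rho_0} \;+\; 2i\pi\sum_{\substack{\lambda\text{ indicial root}\\ \Re\lambda\in(d/2+\rho_0,\, d/2)}} \Pi_\lambda,
\]
which isolates the indicial contribution to $f$. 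Setting $G$ equal to $G_0$ (up to the harmless factor $2i\pi$) and $S$ equal to the operator sending $v$ to the complementary term provides the claimed decomposition, with $Sv$ landing in $H^{s+m,\rho_0-\eps,\rho_\bot}$ by Lemma \ref{lemma:relation-projected-spaces} applied to $I_Z(A)^{-1}_{d/2+\rho_0}$.

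The main obstacle is showing that $S$ is genuinely $(\rho_0-\eps,\rho_0+\eps)$-admissible in the sense of Definition \ref{definition:rho-admissible}, and not merely bounded on the correct space. This requires identifying its indicial operator as $I_Z(S) = I_Z(A)^{-1}_{d/2+\rho_0}$, which is pseudo-differential by Proposition \ref{prop:indicial-resolvent-elliptic} and whose indicial family $\lambda\mapsto I_Z(A,\lambda)^{-1}$ is holomorphic on a neighborhood of $\Re\lambda = d/2+\rho_0$ by the assumption that $\rho_0+d/2$ is not the real part of an indicial root; the commutator and off-diagonal conditions for $S$ follow from those of $A$ and from the smoothing character of the residue part already split off. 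Finally, the ``moreover'' claim is proved as follows: each $\Pi_\lambda$ is the residue at $\lambda'=\lambda$ of the meromorphic family $I_Z(A,\lambda')^{-1}$, so contour deformation applied to the identity $I_Z(A,\lambda') I_Z(A,\lambda')^{-1} = \mathbbm{1}$ gives $I_Z(A)\Pi_\lambda = 0$. This kills the zero Fourier mode of $A\chi\mc{E}_Z\Pi_\lambda$ modulo residual terms, while the commutator condition on $A$ kills its non-zero Fourier modes; combined with the exponential decay available for any $\rho<\Re\lambda$, this yields the claimed $e^{r\rho}H^s \to H^{+\infty,-\infty,-\infty}$ mapping.
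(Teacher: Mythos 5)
Your proposal is correct and takes essentially the same route as the paper: the paper's own proof is a short sketch that solves $Au=v$ using invertibility at weight $0$, reduces to the indicial equation $I_Z(A)\tilde u=\tilde f$ with $\tilde f\in e^{(\rho_0+d/2)r}H^s$ and $\tilde u$ a priori in $e^{(d/2)r}H^{s+m}$, and then invokes the contour-shifting/residue analysis already carried out for Proposition \ref{prop:bigger-kernel}, which is exactly what you spell out (including the identity $I_Z(A)\Pi_\lambda=0$ for the ``moreover'' part). Two cosmetic remarks: your strip $\Re\lambda\in(d/2+\rho_0,d/2)$ is the one consistent with the conventions of Proposition \ref{prop:bigger-kernel}, and the term $Sv$ lands at weight $\rho_0$ (via Lemma \ref{lemma:relation-projected-spaces}) rather than $\rho_0-\eps$, neither of which affects the argument.
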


\begin{proof}
Since $A$ is assumed to be invertible on the spaces $H^{s,0,\rho_\bot}, s\in\R,\rho_\bot \in \R$, given $v \in H^{s,\rho_0,\rho_\bot}$ for $\rho_0 < 0$, the equation $Au=v$ admits a solution $u \in H^{s+m,0,\rho_\bot}$ and one needs to prove that $u$ is actually more decreasing than this. The proof follows the same arguments as the ones given in the proof of Proposition \ref{prop:bigger-kernel}, namely one has to solve in the full cusp the equation $I_Z(A)\tilde{u}=\tilde{f}$, where $\tilde{f} \in e^{r(\rho_0+d/2)}H^{s}$ and $\tilde{u}$ is a priori in $e^{rd/2}H^{s+m}$.
\end{proof}

Finally, putting together Propositions \ref{prop:bigger-kernel} and \ref{prop:smaller-image}, we deduce that the Fredholm index of $A$ acting on $H^{s,\rho,\rho_\bot}$, when $\rho>0$ is not the real part of an indicial root, is 
\begin{equation}
\label{equation:fredholm-index-theorem}
\mathrm{ind}(A,H^{s,\rho,\rho_\bot}) = \sum_{\Re \lambda\in ]d/2, d/2+\rho[} \mathrm{rank}(\Pi_\lambda).
\end{equation}
If $\rho<0$, this is minus the sum for $\Re \lambda\in ]d/2+\rho,d/2[$.

\subsection{Generalizations}

The results of the previous section can be extended in several direction that we will explore. We will start by considering the case where the operator is only left or right elliptic. Then, we will consider the action on H\"older-Zygmund spaces. Throughout the section, $A$ denotes an $I$-admissible operator of order $m$, $I$ being some open interval in $\R$.

\subsubsection{The case of left/right-elliptic operators}

\label{sec:leftright-parametrix}

In this section, we will consider the case of left-elliptic operators, the case of right-elliptic operators being mostly similar. We obtain the following extension of Theorem \ref{theorem:parametrix-compact}:
\begin{theorem}\label{theorem:left-parametrix}
Let $A$ be an $I$-admissible pseudo-differential operator, uniformly left-elliptic. Then there exists a discrete set $S(A,W)\subset I$ (depending on the construction of a \emph{tempered inverse} $W$, see below) and for each connected component $J$ of $I\setminus S(A,W)$ an operator $Q_J$, which is $J$-extended admissible (in the sense of Definition \ref{definition:extension}) such that
\[
Q_J A = \mathbbm{1} \mod \dot{\Psi}^{-\infty}_J.
\]
\end{theorem}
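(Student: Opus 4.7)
The plan is to mimic the construction of Section \ref{sec:improving-I}, with the difficulty displaced to the choice of a tempered meromorphic left inverse of the indicial family. First, using Proposition \ref{prop:smoothing-remainder} I would produce a symbolic $\R$-admissible left parametrix $Q_0 \in \Psi^{-m}_{\text{small}}$ with $Q_0 A = \mathbbm{1} + R_0$ and $R_0 \in \Psi^{-\infty}_{\text{small}}$ being $\R$-admissible as well (the symbols are built iteratively from the left-elliptic symbol of $A$, so admissibility propagates). As in the two-sided elliptic case, $R_0$ fails to be compact in the presence of the cusps, so $Q_0$ must be corrected on the zero Fourier mode in each cusp.

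At the level of indicial operators, Lemma \ref{lemma:indicial-pseudo} gives that $I_Z(A,\lambda)$ is a semiclassical pseudo-differential operator on $F_Z$ with parameter $h = \langle \Im \lambda \rangle^{-1}$. Left-uniform ellipticity of $A$ transfers to left-uniform ellipticity of the semiclassical principal symbol of $I_Z(A,\lambda)$. Consequently, for $|\Im \lambda|$ large and locally uniformly in $\Re \lambda \in I$, a genuine left inverse is obtained by Neumann series from $I_Z(Q_0,\lambda)$. The role of $W$ in the statement is to provide a choice of meromorphic continuation of this left inverse to all of $\C_I$ with tempered growth on horizontal strips. Such a $W$ is non-canonical; two natural ways to produce one are either to stabilize $I_Z(A,\lambda)$ to a square Fredholm family (by adjoining an auxiliary finite-rank block that restores two-sided ellipticity) and then take the relevant component of the inverse, or to pick a meromorphic family of projections onto a complement of the cokernel and define $W(\lambda)$ as the associated pseudoinverse. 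Once $W$ is fixed, set
\[
S(A,W) := \bigl\{\Re \lambda \in I \ \bigm|\ W \text{ has a pole at } \lambda \bigr\}.
\]
Temperedness and invertibility for $|\Im\lambda| \gg 1$ force $S(A,W)$ to be discrete.

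For each connected component $J \subset I \setminus S(A,W)$, contour integration of $e^{\lambda(r - r')} W(\lambda)$ at $\Re \lambda = \rho \in J$ defines a convolution left inverse $I_Z(A)^{-1}_J$ acting between the weighted Sobolev spaces $e^{\rho r} H^s(\R \times F_Z, L_Z)$, by the same $L^2$-boundedness and Plancherel argument as in Section \ref{sssection:indicial-resolvent}. I would then set
\[
Q_J := Q_0 + \sum_Z \mc{E}_Z \chi_A \bigl(I_Z(A)^{-1}_J - I_Z(Q_0)\bigr) \chi_A \mc{P}_Z.
\]
Because the correction is built from a merely left (not two-sided) meromorphic inverse, $Q_J$ need not lie in the strict admissible algebra; this is the raison d'\^etre of the enlarged class of \emph{extended admissible} operators referred to in the statement. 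Using the homomorphism property of Lemma \ref{lemma:homomorphism} (extended to this enlarged class), a direct composition shows that the indicial part of $Q_J A - \mathbbm{1}$ on the zero Fourier mode vanishes identically, so that Lemma \ref{lemma:refinement}, whose proof carries over verbatim, yields $Q_J A - \mathbbm{1} \in \dot{\Psi}^{-\infty}_J$.

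The hard part is the construction and analysis of the tempered meromorphic left inverse $W$. In the two-sided elliptic case the analytic Fredholm theorem applied to $I_Z(A,\lambda)^{-1}$ supplies both the meromorphy and the discreteness of poles for free. For a merely left-Fredholm family, one must either control how the cokernel dimension of $I_Z(A,\lambda)$ jumps as $\lambda$ varies, or stabilize $I_Z(A,\lambda)$ to a Fredholm family before inverting; verifying that one such choice gives polynomial control in horizontal strips, so that $S(A,W)$ is discrete, is the genuinely new technical ingredient compared with Theorem \ref{theorem:parametrix-compact}. Once $W$ is in hand, the remaining composition, contour-shift, and $\R$-residual arguments proceed exactly as in the fully elliptic case.
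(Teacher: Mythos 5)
Your overall architecture is the paper's: produce a symbolic left parametrix $Q_0$ with $Q_0A=\mathbbm{1}+R_0$, fix a tempered meromorphic left inverse $W(\lambda)$ of the indicial family, and correct $Q_0$ on the zeroth Fourier mode by the convolution operator obtained by contour integration over $\Re\lambda=\rho\in J$, accepting that the result only lives in the extended admissible class of Definition \ref{definition:extension}. The genuine gap is in your last step, where you assert that the vanishing of the indicial part of $Q_JA-\mathbbm{1}$ lets Lemma \ref{lemma:refinement} ``carry over verbatim'' and give $Q_JA-\mathbbm{1}\in\dot{\Psi}^{-\infty}_J$. This is precisely where the left-elliptic case differs from the elliptic one. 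In the elliptic case the zeroth-mode correction $I_Z(A)^{-1}_J-I_Z(Q_0)$ is itself $J$-smoothing, because $I_Z(A,\lambda)^{-1}-I_Z(Q_0,\lambda)=\mathcal{O}_{\Psi^{-\infty}}(\langle\Im\lambda\rangle^{-1})$ (cf.\ \eqref{eq:parametrix-better-highlambda}), so the remainder is a smoothing admissible operator with zero indicial operator and Lemma \ref{lemma:refinement} finishes the job. Here you only know $\bigl(W(\lambda)-I_Z(Q_0,\lambda)\bigr)I_Z(A,\lambda)=\mathcal{O}(\langle\Im\lambda\rangle^{-\infty})$, and since $I_Z(A,\lambda)$ is not surjective you cannot conclude that $W-I_Z(Q_0)$ is smoothing; in general it is a genuine operator of order $-m$. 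Consequently the composition produces
\[
Q_JA=\mathbbm{1}+\chi_A\mathcal{E}\bigl[W-I_Z(Q_0)\bigr]\mathcal{P}\,[\chi_A,A]\ \text{ mod } \dot{\Psi}^{-\infty}_J,
\]
and the commutator term is \emph{not} residual: modulo $\dot{\Psi}^{-\infty}_J$ it is a compactly supported pseudo-differential operator of order $-1$ acting on the zeroth Fourier mode. Its indicial operator does vanish, but then Lemma \ref{lemma:refinement} only gives a gain of one derivative, not smoothing to all orders, so the claimed conclusion does not follow. The missing ingredient — and the one extra step in the paper's proof — is a second correction of $Q_J$ by compactly supported zeroth-mode pseudo-differential operators, obtained by iterating a parametrix construction as in Proposition \ref{prop:smoothing-remainder}, which pushes this leftover to order $-\infty$; only after that is the remainder residual (and $Q_J$ is still $J$-extended admissible, since one has only added a compactly supported operator of order $-m-1$ on the zeroth mode).

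A secondary remark on $W$: you present the construction of a tempered meromorphic left inverse as the hard new ingredient, to be obtained by stabilizing $I_Z(A,\lambda)$ or choosing a meromorphic family of cokernel projections, but you do not verify temperedness or discreteness of poles for either scheme. In fact the mere \emph{existence} of such a $W$ is the easy part: $W(\lambda):=\bigl(I_Z(Q_0,\lambda)I_Z(A,\lambda)\bigr)^{-1}I_Z(Q_0,\lambda)$ is meromorphic by the analytic Fredholm theorem, since $I_Z(Q_0,\lambda)I_Z(A,\lambda)=\mathbbm{1}+\mathcal{O}_{\Psi^{-\infty}}(\langle\Im\lambda\rangle^{-\infty})$, and it is tempered; its drawback is possibly artificial poles, which is exactly why the statement lets $S(A,W)$ depend on the chosen $W$ and why, in the applications (Lemmas \ref{lemma:ellipticite-nabla} and \ref{lemma:D-elliptic}), $W(\lambda)$ is written down by hand. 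So the real content of the theorem is not the production of $W$ but the surgery argument above that converts a given $W$ into an extended admissible left parametrix with residual remainder.
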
 

In the constructions of the previous section, a crucial step was to find an exact inverse for the indicial operator of $A$. Here, we will have to settle for a left inverse. However, since the indicial family will only be left-invertible a priori, there may be \emph{several left-inverses} of interest. It follows that there are various ways of constructing the parametrix. 

We have identified three different techniques:
\begin{enumerate}
	\item We construct a $C^\infty$ parametrix $Q$, and observe that $I(Q,\lambda)I(A,\lambda) = \mathbbm{1} + \mathcal{O}_{\Psi^{-\infty}}(\langle \Im \lambda\rangle^{-\infty})$ as before, so that $(I(Q,\lambda)I(A,\lambda))^{-1} I(Q,\lambda)$ is a left inverse for $I(A,\lambda)$. We can then build the associated convolution operator.
	\item Let us assume that $d/2 \in I$. We can consider $W:= A^\ast A $, which is $J$-admissible for an interval $J$ containing $d/2$, and is elliptic. Then, we can build a parametrix $Q$ for $W$ as in the previous section and observe that $ (Q A^\ast) A  =  \mathbbm{1} \mod \dot{\Psi}^{-\infty}_{J'}$, where $J' \subset J$ is a subinterval of $J$ without any (projection on the real axis of) indicial roots for $W$.
	\item Finally, it may be more convenient to study directly $I(A,\lambda)$, and find an operator $W(\lambda)$ such that $W(\lambda) I(A,\lambda)=\mathbbm{1}$. Then, the question is to understand what are the conditions required on $W(\lambda)$ so that there exists an admissible operator $Q$ on $M$ such that $I_Z(Q,\lambda)=W(\lambda)$ and $QA = \mathbbm{1} \text{ mod } \dot{\Psi}^{-\infty}_I$. 
\end{enumerate}

This suggests important remarks. First of all, the validity of constructions (1) and (2) follows directly from the results in the previous section, as one recovers the usual elliptic case, so there is nothing to prove, and the result is an \emph{admissible} parametrix (instead of extended admissible). Nevertheless, (1) and (2) are clearly \emph{suboptimal} insofar as they may create \emph{artificial indicial roots} for the operator $A$ (this is exactly what happens for the symmetric derivative $D$ studied in Lemma \ref{lemma:D-elliptic} if one applies construction (2) for instance), and this is due to the fact that there are many possible choices for a left inverse of $I_Z(A,\lambda)$, and some have different poles than others.

We will concentrate on explaining how (3) can be made to work. We can already announce that in (3), we will indeed construct such an operator $Q$ but \emph{we are unable to build it so that it is pseudo-differential}. This is due to that fact that we will have to do some surgery on the zeroth Fourier mode and this operator $Q$ will naturally belong to an \emph{extension of small pseudo-differential operators}, denoted by $\overline{\Psi}^m_{I}$ (see Definition \ref{definition:extension}). Nevertheless, this parametrix operator for $A$ will enjoy all the main properties of pseudo-differential operators (e.g. boundedness on Sobolev spaces) and this will be sufficient for our purposes.

We start by investigating the conditions that may be sufficient for a holomorphic family of operators on $F$ to be the indicial family of an admissible operator. For this, we recall the discussion in the proof of Lemma \ref{lemma:indicial-pseudo}.

\begin{definition}
We say that a holomorphic family $\C_I \ni \lambda\mapsto W(\lambda) \in \Psi^m(F)$ is a \emph{tempered holomorphic family} of pseudo-differential operators if it can be decomposed into 
\[
W(\lambda) = W'(\lambda) + R(\lambda),
\]
where $R(\lambda)= \mathcal{O}_{\Psi^{-\infty}}(\langle\Im \lambda\rangle^{-\infty})$ is a holomorphic family of $\mathcal{O}(\langle\Im \lambda\rangle^{-\infty})$-smoothing operators on $F$, and $W'(\lambda)$ is a holomorphic family of pseudo-differential operators on $F$, supported in a small neighbourhood of the diagonal, such that in local charts, their full symbol satisfy the estimates \eqref{equation:symbol-sigma-lambda}.
\end{definition}

We have already seen that if $A$ is $I$-admissible, then $I_Z(A,\lambda)$ is a tempered holomorphic family of pseudo-differential operators (Lemma \ref{lemma:indicial-pseudo}). We now consider the converse statement

\begin{lemma}
Let $C_I \ni \lambda \mapsto W(\lambda)$ be a tempered holomorphic family of pseudo-differential operators on $F$. Then for $\rho \in I$,
\[
W = \frac{1}{2i\pi}\int_{\Re \lambda = \rho} e^{\lambda r} W(\lambda) d\lambda
\]
defines a convolution operator, pseudo-differential on $\R\times F$. We say that such a $W$ is a $I$-\emph{tempered convolution operator} on $\R \times F$.
\end{lemma}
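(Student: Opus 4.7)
My approach is to split $W(\lambda) = W'(\lambda) + R(\lambda)$ as in the definition of a tempered holomorphic family and treat the two summands separately, then to identify the contour-integral operator with a standard pseudo-differential operator on $\R \times F$, up to conjugation by the weight $e^{\rho r}$, and finally to read off the convolution structure from translation-invariance in $r$.

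First I would dispose of the smoothing part. Since $R(\lambda) = \mathcal{O}_{\Psi^{-\infty}}(\langle \Im \lambda \rangle^{-\infty})$, the contour integral $\frac{1}{2\pi i}\int_{\Re\lambda=\rho} e^{\lambda r} R(\lambda) \dd\lambda$ converges absolutely in every Schwartz semi-norm on the kernel. It produces a convolution operator on $\R \times F$ whose Schwartz kernel takes the form $e^{\rho u}\kappa(u, \zeta, \zeta')$ with $\kappa$ smooth and rapidly decaying in $u$; this is a smoothing contribution, and in particular pseudo-differential.

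For $W'(\lambda)$, the main idea is to exhibit the contour integral as a standard pseudo-differential operator, conjugated by the weight $e^{\rho r}$. Working in local charts on $F$, write $W'(\lambda) = \Op^F(\sigma_\lambda)$ with $\sigma_\lambda$ satisfying the refined estimates \eqref{equation:symbol-sigma-lambda}, and set
\[
\tilde\sigma(\zeta; \mu, \eta) := \sigma_{\rho + i\mu}(\zeta, \eta).
\]
The key point is that $\tilde\sigma$ lies in the Kohn--Nirenberg class $S^m(T^*(\R \times F))$ with dual variables $(\mu, \eta)$: differentiating in $\mu$ amounts to $\partial_\mu^\gamma \tilde\sigma = i^\gamma \partial_\lambda^\gamma \sigma_\lambda$, which by \eqref{equation:symbol-sigma-lambda} is bounded by $C(1 + \mu^2 + |\eta|^2)^{(m-\gamma)/2}$, while derivatives in $\eta$ and $\zeta$ behave tautologically. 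A direct computation, changing variables $\lambda = \rho + i\mu$ in the contour integral and comparing with the standard oscillatory integral defining $\Op^{\R\times F}(\tilde\sigma)$, shows that the Schwartz kernels are related by
\[
K_{W'}(r, \zeta; r', \zeta') = e^{\rho(r-r')}\, \tilde k(r - r', \zeta, \zeta'),
\]
where $\tilde k$ is the kernel of $\Op^{\R\times F}(\tilde\sigma)$. Hence the $W'$-contribution equals $M_{e^{\rho r}}\Op^{\R \times F}(\tilde\sigma) M_{e^{-\rho r}}$, a pseudo-differential operator of order $m$ on $\R \times F$. Since $\tilde\sigma$ is independent of $r$, $\Op^{\R\times F}(\tilde\sigma)$ is translation-invariant; as the weight factor $e^{\rho(r-r')}$ depends only on $r - r'$, this translation-invariance is preserved and yields the convolution structure.

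Finally, independence of the line $\Re\lambda = \rho$ within a connected component of $I$ follows from Cauchy's theorem applied to test functions $f \in C^\infty_{\mathrm{comp}}(\R\times F)$, whose Laplace transforms in the $r$-variable are entire and Schwartz in $\Im\lambda$ uniformly on vertical strips. I expect the main obstacle to be verifying carefully the symbol estimate of the intermediate step, especially matching the gradings of the two symbol classes: the refined $\partial_\lambda$-estimate in \eqref{equation:symbol-sigma-lambda} is exactly what makes $\tilde\sigma$ a bona fide Kohn--Nirenberg symbol in $(\mu, \eta)$, and any weakening of that bound would make the whole identification fail. Patching local charts on $F$ back to a global symbol on $\R \times F$ is routine once this estimate is in hand, and the compactness of $F$ ensures there is no loss when summing over a finite partition of unity.
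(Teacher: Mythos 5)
Your proposal follows essentially the same route as the paper's proof: split off the $\mathcal{O}_{\Psi^{-\infty}}(\langle \Im\lambda\rangle^{-\infty})$ part, pass to local charts on $F$, substitute $\lambda=\rho+i\mu$ so that the refined estimates \eqref{equation:symbol-sigma-lambda} yield a Kohn--Nirenberg symbol in $(\mu,\eta)$ independent of $r$, and handle the factor $e^{\rho(r-r')}$ (your conjugation $M_{e^{\rho r}}\Op^{\R\times F}(\tilde\sigma)M_{e^{-\rho r}}$ is the same step as the paper's ``incorporate $e^{\rho(r-r')}$ in the symbol''), with translation-invariance in $r$ giving the convolution structure. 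The argument is correct at the same level of detail as the paper's, and your closing remark on contour independence, while not part of this lemma, is consistent with the paper's later use of Cauchy's formula.
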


\begin{proof}
First off, if $W$ is a $\mathcal{O}(\langle \Im \lambda\rangle^{-\infty})$ smoothing operator, that the result holds is elementary. We can thus concentrate on the case that $W(\lambda)$ is uniformly supported in a small neighbourhood of the diagonal. As a consequence, we can take local charts, and assume that we are working in $\R^k$ instead of $F$. 

Thus, we can write the kernel of $W(\lambda)$ as
\[
\frac{1}{(2\pi)^k} \int_{\R^k} e^{i\langle\zeta-\zeta', \eta\rangle} \sigma\left(\zeta,\lambda, \eta\right) d\eta.
\]
The symbol $\sigma$ is holomorphic in the $\lambda$ variable, and we have the estimates
\[
|\partial_\zeta^\alpha \partial_\eta^\beta \partial_\lambda^\gamma\sigma| \leq C_{\alpha,\beta} (1+ |\Im\lambda|^2 + |\eta|^2)^{(m-|\beta|-\gamma)/2}.
\]
 The kernel of $W$ is thus
\[
\begin{split}
\frac{1}{i(2\pi)^{k+1}}\int_{\Re \lambda = \rho} e^{i\langle\zeta-\zeta', \eta\rangle + \lambda(r-r')} &\sigma\left(\zeta,\lambda,\eta\right) d\eta d\lambda =\\
	& \frac{1}{(2\pi)^{k+1}}\int_{\R^{k+1}} e^{i(\langle\zeta-\zeta', \eta\rangle + \lambda(r-r'))} \sigma\left(\zeta,\lambda,\eta\right)e^{\rho(r-r')} d\eta d\lambda.
\end{split}
\]
We can incorporate $\exp( \rho (r-r'))$ in the symbol to find that this is a pseudo-differential operator in the usual sense on $\R\times F$.
\end{proof}

We now introduce the natural extension of admissible pseudo-differential operators:

\begin{definition}
\label{definition:extension}
We define the \emph{extended} $I$-admissible operators of order $m \in \R$ as the set $\overline{\Psi}^{m}_{I}$ of operators $P$ such that there exists a cutoff function $\chi$ (with $\chi \equiv 0$ in $\left\{y < a \right\}$ and $\chi \equiv 1$ in $\left\{y > C_P\right\}$ for some constant $C_P > 0$), a small admissible pseudo-differential operator $A \in \Psi^{m}_{\text{small}}$ and a $I$-tempered convolution operator $W$ of order $m$ on $\R \times F$, and finally a compactly supported pseudo-differential operator $W'$ of order $m$ on $\R\times F$ such that:
\[
P = A + \chi \mathcal{E}(W + W')\mathcal{P} \chi.
\]
\end{definition}

The operator $\chi \mc{E} W' \mc{P} \chi$ is actually a compactly supported pseudo-differential operator on the zeroth Fourier mode. In the Definition \ref{def:R-L^2-admissible-operator} of admissible operators , it was assumed that the operator is pseudo-differential. Extended admissible operators are not pseudo-differential anymore, but they still satisfy properties (1)-(3) of Definition \ref{def:R-L^2-admissible-operator}, with the indicial operator $I(P) = I(A) + W$. They also enjoy the boundedness properties of Lemma \ref{lemma:boundedness-admissible-pseudo}. We can now prove the following:

\begin{lemma}
Assume $A \in \Psi^m_{\mathrm{small}}$ is $I$-admissible left-elliptic, and that we have a tempered holomorphic family $\C_I \ni \lambda \mapsto W(\lambda)$ such that $W(\lambda)I(A,\lambda)= \mathbbm{1}$. Then, there exists $Q \in \overline{\Psi}^{-m}_{I}$ such that $I_Z(Q,\lambda)=W(\lambda)$ for all $\lambda \in \C_I$ and
\[
QA = \mathbbm{1}  \mod \dot{\Psi}^{-\infty}_I.
\]
\end{lemma}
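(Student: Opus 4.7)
The plan is to modify the small left parametrix of $A$ by a correction supported in the zeroth Fourier mode of each cusp, chosen precisely so that the indicial family of the corrected operator equals $W(\lambda)$. Applying Proposition \ref{prop:smoothing-remainder} to $A$ yields a left parametrix $Q_0 \in \Psi^{-m}_{\text{small}}$ with $Q_0 A = \mathbbm{1} + R$, and $R \in \Psi^{-\infty}_{\text{small}}$. Since $Q_0$ is constructed from the symbol of $A$ by pure symbolic calculus (as in Section \ref{sec:improving-I}), we may insist that $Q_0$ (hence also $R$) is $\R$-admissible. By Lemma \ref{lemma:homomorphism}, the identity $Q_0 A = \mathbbm{1} + R$ induces $I_Z(Q_0,\lambda) I_Z(A,\lambda) = \mathbbm{1} + I_Z(R,\lambda)$, and by the proof of Lemma \ref{lemma:indicial-pseudo} applied to the smoothing operator $R$, the family $I_Z(R,\lambda)$ is tempered smoothing on $F$.

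Set $V(\lambda) := W(\lambda) - I_Z(Q_0,\lambda)$. Using that $W(\lambda)$ is a left inverse of $I_Z(A,\lambda)$ gives $V(\lambda) I_Z(A,\lambda) = -I_Z(R,\lambda)$, and multiplying on the right by $W(\lambda)$ yields the explicit formula
\[
V(\lambda) = -I_Z(R,\lambda)\, W(\lambda),
\]
so that $V(\lambda)$ is a holomorphic family of tempered \emph{smoothing} operators on $F$ (in particular it is a tempered holomorphic family in the sense required for the extended calculus). Let $V$ denote the associated $I$-tempered convolution operator on $\R_r \times F$. Choose a cutoff $\chi$ that equals $1$ past the region where the admissibility cutoffs of $A$, $Q_0$, and $R$ are already $1$, and define
\[
Q := Q_0 + \chi\, \mc{E}_Z V \mc{P}_Z\, \chi \in \overline{\Psi}^{-m}_I,
\]
with $W' = 0$ in the decomposition of Definition \ref{definition:extension}. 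By construction $I_Z(Q,\lambda) = I_Z(Q_0,\lambda) + V(\lambda) = W(\lambda)$, as required.

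It remains to show $QA - \mathbbm{1} \in \dot{\Psi}^{-\infty}_I$. Writing $QA - \mathbbm{1} = R + \chi \mc{E}_Z V \mc{P}_Z \chi A$ and applying Lemma \ref{lemma:admissible-properties} (via Proposition \ref{prop:algebra-general}) to the admissible $A$, one has $\mc{E}_Z \chi \mc{P}_Z A \mc{E}_Z \chi \mc{P}_Z \equiv \mc{E}_Z \chi I_Z(A) \chi \mc{P}_Z$ and $\mc{E}_Z \chi \mc{P}_Z A [\mathbbm{1} - \mc{E}_Z \chi \mc{P}_Z] \equiv 0$ modulo $\dot{\Psi}^{-\infty}_I$, so that
\[
\chi \mc{E}_Z V \mc{P}_Z \chi A \equiv \chi \mc{E}_Z\, V\, I_Z(A)\, \chi \mc{P}_Z \equiv -\chi \mc{E}_Z\, I_Z(R)\, \chi \mc{P}_Z \pmod{\dot{\Psi}^{-\infty}_I}.
\]
Reapplying admissibility to $R$ in the opposite direction, $\chi \mc{E}_Z I_Z(R) \chi \mc{P}_Z \equiv \mc{E}_Z \chi \mc{P}_Z R \mc{E}_Z \chi \mc{P}_Z$ modulo $\dot{\Psi}^{-\infty}_I$, and the two remaining cross terms $\mc{E}_Z \chi \mc{P}_Z R [\mathbbm{1} - \mc{E}_Z \chi \mc{P}_Z]$ and its transpose are $I$-residual by Lemma \ref{lemma:admissible-properties}. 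We are left with
\[
QA - \mathbbm{1} \equiv [\mathbbm{1} - \mc{E}_Z \chi \mc{P}_Z]\, R\, [\mathbbm{1} - \mc{E}_Z \chi \mc{P}_Z] \pmod{\dot{\Psi}^{-\infty}_I}.
\]
The right-hand side is small $\R$-smoothing, and its range and effective domain both consist of sections with vanishing zeroth Fourier mode; on such sections Lemma \ref{lemma:embedding-nonzero} allows one to trade arbitrary Sobolev regularity for arbitrary polynomial decay in the cusp, promoting small smoothing to $I$-residual and concluding the proof.

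The main obstacle is bookkeeping: coordinating the various admissibility cutoffs so that the successive applications of Lemma \ref{lemma:admissible-properties} all refer to the same cutoff $\chi$. The conceptual heart of the argument is the identity $V = -I_Z(R) W$, which forces $V(\lambda)$ to be tempered \emph{smoothing} rather than merely tempered, and this is what places the correction $\chi \mc{E}_Z V \mc{P}_Z \chi$ genuinely in the extended class $\overline{\Psi}^{-m}_I$ rather than in a more delicate enlargement.
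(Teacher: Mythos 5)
There is a genuine gap, and it sits at what you call the ``conceptual heart'' of your argument: the identity $V(\lambda)=-I_Z(R,\lambda)\,W(\lambda)$. From $V(\lambda)I_Z(A,\lambda)=-I_Z(R,\lambda)$ you multiply on the right by $W(\lambda)$, but $W(\lambda)$ is only a \emph{left} inverse of $I_Z(A,\lambda)$: since $A$ is merely left-elliptic (typically acting between bundles of different ranks, as for $\nabla_S$ or $D$), $I_Z(A,\lambda)W(\lambda)$ is an idempotent projecting onto the range of $I_Z(A,\lambda)$, not the identity, so $V(\lambda)I_Z(A,\lambda)W(\lambda)\neq V(\lambda)$ in general. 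Two left inverses of $I_Z(A,\lambda)$ can differ by a nonsmoothing operator annihilating the range, and indeed the paper's proof explicitly records that $W-I_Z(Q_0)$ is \emph{not} smoothing. Your claim that the smoothingness of $V$ is what forces one into the extended class $\overline{\Psi}^{-m}_{I}$ is backwards: if $V$ were smoothing, the correction $\chi\mc{E}_Z V\mc{P}_Z\chi$ would be admissible smoothing and no extension of the calculus would be needed; the extended class of Definition \ref{definition:extension} (including the compactly supported term $W'$ on the zeroth mode) exists precisely because $V$ is only a tempered convolution operator of order $-m$.

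The error then propagates into the remainder computation. The construction of $Q:=Q_0+\chi\mc{E}_Z V\mc{P}_Z\chi$ and the identity $I_Z(Q,\lambda)=W(\lambda)$ are fine (this is exactly the paper's $\tilde Q$), but in estimating $QA-\mathbbm{1}=R+\chi\mc{E}_Z V\mc{P}_Z\chi A$ your first congruence $\chi\mc{E}_Z V\mc{P}_Z\chi A\equiv\chi\mc{E}_Z V I_Z(A)\chi\mc{P}_Z$ mod $\dot\Psi^{-\infty}_I$ is not justified: writing $\chi A=A\chi+[\chi,A]$, the term $\chi\mc{E}_Z V\mc{P}_Z[\chi,A]$ involves the compactly supported order-$(m-1)$ commutator followed by $V$, which is only of order $-m$; the result is a compactly supported pseudo-differential operator of order $-1$ on the zeroth Fourier mode, not an $I$-residual operator. (Had $V$ been smoothing, this term would indeed be residual, which is why your argument seems to close.) The paper handles exactly this leftover by a further Neumann/parametrix-type improvement on the zeroth mode, adding a compactly supported pseudo-differential correction of order $-m-1$ — the $W'$ term of the extended class — and only after that step does one get $QA=\mathbbm{1}$ mod $\dot\Psi^{-\infty}_I$. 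Your proof needs this additional correction; the final step where you trade regularity for decay on the non-zero modes via Lemma \ref{lemma:embedding-nonzero} is fine, but it does not address the order $-1$ zeroth-mode remainder created by the cutoff commutator.
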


In practice, given $A$, an $I$-admissible left-elliptic operator, we will construct by hand a left indicial inverse $W(\lambda)$ that is \emph{meromorphic on $I$} and without poles in $\left\{ |\Im(\lambda)| \gg 1 \right\}$ (see for instance Lemma \ref{lemma:ellipticite-nabla} where we deal with the gradient of the Sasaki metric). As a consequence, we can define the set
\[
S(A,W) := \left\{ \rho \ \middle|\ W(\lambda) \text{ has a pole on }\rho + i \R  \right\},
\]
that is $\lambda \mapsto W(\lambda)$ is holomorphic in a strip $\left\{ \rho_- < \lambda < \rho_+ \right\}$, where $\rho_\pm \in S(A,W)$ are two consecutive points in this set. The set $S(A,W)$ depends on the choice of left inverse $W$, and the parametrix construction will only work on spaces $y^{\rho-d/2}H^s$, with $\rho \in I\setminus S(A,W)$.\footnote{Although this will not be used in the following, one could go further and define a set $S(A)$ (independent of a choice of $W$) for the left elliptic operator $A$ as:
\[
S(A) := \bigcap_{W \text{ left inverse on } I} S(A,W).
\]
It should be possible to prove that $I \setminus S(A)$ coincides with the set of all $\rho \in I$ such that there exists $\eps > 0$ such that for all $\lambda \in \C$ such that $\Re(\lambda)\in (\rho-\eps,\rho+\eps)$, the operators $I(A,\lambda) \in \Psi^m(F,L_1 \to L_2)$ are injective. As this will not be pursued in the following, we do not make a formal claim out of this.}

\begin{proof}
Let us consider $Q$ a $\R$-admissible $C^\infty$ parametrix for $A$. We want to build a left parametrix for $A$ which acts essentially as $W$ on the zeroth Fourier mode at infinity. For this we start by setting
\[
\tilde{Q} : = Q + \chi \mathcal{E}\left[ W - I(Q) \right]\mathcal{P} \chi.
\]
$\tilde{Q}$ is not pseudo-differential, because we modified $Q$ by an operator which only acts on the zeroth Fourier mode in the cusps, so that $\tilde{Q}$ is an extended admissible operator. If we compute $\tilde{Q}A$, we will obtain 
\[
\tilde{Q}A = \mathbbm{1} + \chi \mathcal{E}\left[ W - I(Q)\right] \mathcal{P} [\chi, A] \mod \dot{\Psi}^{-\infty}_I.
\]

Since $W-I(Q)$ is not smoothing, this is not a parametrix modulo smoothing operators. However, since $[\chi, A]$ is of order $m-1$, and since $A$ is admissible, we can decompose $\chi \mathcal{E}\left[ W - I(Q)\right] \mathcal{P} [\chi, A]$ into the sum of a $I$-residual operator and an operator acting only on the zeroth Fourier mode, which is pseudo-differential as such, and of order $-1$. It further decomposes into the sum of a $I$-residual operator and a compactly supported pseudo-differential operator of order $-1$ (acting only on the zeroth Fourier mode). To improve $\tilde{Q}$ to a parametrix modulo compact \emph{smoothing} remainder, we have to add another modification in the zeroth Fourier mode, via a parametrix construction similar to the original construction of $Q$ in the proof of Proposition \ref{prop:smoothing-remainder}. This will add to $\tilde{Q}$ a compactly supported pseudo-differential operator of order $-m-1$ acting only on the zeroth Fourier mode in the cusps, and we thus obtain as announced an $I$-extended admissible operator.  

\end{proof}

We end this paragraph with the following important comment. In the case where $A$ is only left elliptic, Proposition \ref{prop:smaller-image} can be extended and boils down to saying the following. Assume that $A$ is left elliptic, and invertible on $L^2$. Consider $v \in y^{- \rho_0}L^2$, and $u$ such that $Au=v$, with a priori $u$ in $H^{m}$, then $u$ is actually of the form $u=u_0+u_1$, where $u_0 \in y^{-\rho_0} H^{m}$ and
\[
u_1 = \chi \mc{E}_Z \sum_{\substack{\lambda\text{ left indicial root of }A \\ \Re \lambda\in ]\rho_0, d/2[}} \Pi_\lambda (\mc{P}_Z\chi + G) v,
\]
where $G$ maps into $e^{r\rho}H^{+\infty}$ for all $\rho \in \R$.

\subsubsection{Action on H\"older-Zygmund spaces}

In this section, we will explain how one can prove Theorem \ref{theorem:parametrix-compact} in the case of operators acting on Hölder-Zygmund spaces on cusps.

In the proof of Theorem \ref{theorem:parametrix-compact} in the case of Sobolev spaces, the main ingredients were the existence of the inverse of the indicial operator and the compactness of some injections. Translating the proof to the case of Hölder-Zygmund spaces, we need to check that both ingredient are still available. We start by the compactness argument. As in \S\ref{ssection:black-box}, we consider a smooth cutoff function $\chi \in C^\infty(N,\R)$ such that $\chi|_Z \equiv 1$ for $y > 3a$ and $\chi \equiv 0$ for $y < 2a$.

\begin{lemma}
For any $\rho \in \R, s > s'$, the embedding
\[
\mathbbm{1}-\mathcal{E}_Z \chi \mathcal{P}_Z : y^\rho C^s(N,L) \rightarrow y^\rho C^{s'}(N,L)
\]
is compact. 
\end{lemma}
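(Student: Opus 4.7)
The plan is to mimic the Sobolev argument of Lemma \ref{lemma:compact-injection-non-constant-mode}, replacing Wirtinger's $L^2$ inequality by its H\"older avatar. Since $[\mathbbm{1}-\mathcal{E}_Z \chi \mathcal{P}_Z, y^\rho] = 0$, I may reduce immediately to $\rho = 0$, and for notational simplicity assume a single cusp and $L$ trivial. I then truncate: with $\psi_n \in C^\infty_{\mathrm{comp}}(N)$ equal to $1$ on $\{y \leq n\}$ and vanishing on $\{y \geq 2n\}$, set
\[
T_n := \psi_n\bigl(\mathbbm{1}-\mathcal{E}_Z \chi \mathcal{P}_Z\bigr) : C^s(N) \to C^{s'}(N).
\]
Each $T_n$ is compact by Arzel\`a--Ascoli---its image consists of sections supported in the fixed compact set $\{y\leq 2n\}$ and uniformly bounded in $C^s$, hence precompact in $C^{s'}$ for $s>s'$---so it suffices to show that $T_n$ converges to $\mathbbm{1}-\mathcal{E}_Z \chi \mathcal{P}_Z$ in operator norm, i.e.
\[
\bigl\|(\mathbbm{1}-\psi_n)\bigr\|_{C^s_0 \to C^{s'}} \longrightarrow 0,
\]
where $C^s_0 := \ker \mathcal{P}_Z \cap C^s$ is the subspace of sections whose zeroth $\theta$-Fourier mode vanishes in the cusp.

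The heart of the argument is an oscillation-type H\"older estimate exploiting the hyperbolic geometry. Fix $\alpha \in (0, \min(s,1))$. For any $f \in C^s_0$, the restriction $f(y,\cdot,\zeta)$ to a $\theta$-slice at fixed $y,\zeta$ has zero mean on $\R^d/\Lambda$, so its $L^\infty$ norm is bounded by its oscillation on that slice. Since the $\theta$-torus has euclidean diameter $\mathcal{O}(1)$ and the hyperbolic metric contracts by $1/y$, the $g$-diameter of the slice is $\mathcal{O}(1/y)$; H\"older continuity then yields
\[
\|f(y,\cdot,\zeta)\|_{L^\infty} \leq [f]_{C^\alpha}\cdot (C/y)^\alpha,
\]
and consequently $\|(\mathbbm{1}-\psi_n)f\|_{L^\infty} \leq C n^{-\alpha} \|f\|_{C^s}$.

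To conclude, I would interpolate this decay against the trivial uniform bound $\|(\mathbbm{1}-\psi_n)f\|_{C^s} \leq C\|f\|_{C^s}$. The H\"older--Zygmund interpolation inequality
\[
\|g\|_{C^{s'}} \leq C\,\|g\|_{L^\infty}^{1-s'/s}\,\|g\|_{C^s}^{s'/s}
\]
follows directly from the Paley--Littlewood definition of $C^s_\ast$ combined with Lemma \ref{lemma:paley-ordrem}, and applied to $g = (\mathbbm{1}-\psi_n)f$ gives
\[
\|(\mathbbm{1}-\psi_n)f\|_{C^{s'}} \leq C\, n^{-\alpha(1-s'/s)} \|f\|_{C^s},
\]
which vanishes as $n \to \infty$. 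The only genuinely new step compared to the Sobolev proof is the H\"older Wirtinger estimate: one must recognize that, in H\"older norms, the shrinking of the $\theta$-torus in the cuspidal metric converts the vanishing of the zeroth Fourier mode into a quantitative $y^{-\alpha}$ gain in $L^\infty$, playing the role of the $y^{-1}$ gain coming from Wirtinger in the $L^2$ setting. Everything else is routine: Arzel\`a--Ascoli for the compactness of the truncations $T_n$, and standard interpolation for the passage from $L^\infty$ decay to $C^{s'}$ decay.
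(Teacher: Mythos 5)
Your proof is correct and follows essentially the same route as the paper's: truncation by $\psi_n$, compactness of the compactly supported truncated operators, a Wirtinger-type gain of a negative power of $y$ in the cusp coming from the vanishing of the zeroth $\theta$-Fourier mode together with the $1/y$-shrinking of the torus, and an interpolation step converting the $L^\infty$ decay into $C^{s'}$ decay. The only (harmless) variation is that you extract the decay $n^{-\alpha}$ directly from the $C^\alpha$ oscillation on each slice, whereas the paper gains a full factor $1/n$ via the $L^\infty$ Poincar\'e--Wirtinger inequality $\|f(y)\|_{L^\infty(\T^d)} \leq C\|\partial_\theta f(y)\|_{L^\infty(\T^d)}$ applied through the bounded vector field $y\partial_\theta$, and then interpolates from $C^1$.
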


In other words, the restriction of the injection $y^\rho C^s(N,L) \hookrightarrow y^\rho C^{s'}(N,L)$ to functions with vanishing zeroth Fourier mode is compact. 

\begin{proof}
We follow the proof of Lemma \ref{lemma:compact-injection-non-constant-mode}. As in that proof, it is sufficient to prove that $\|(1-\psi_n)f\|_{C^0_*} \leq C/n \|f\|_{C^{s_0}_*}$ for some $s_0 > 0, C >0$ and then to conclude by interpolation. Since $L^\infty \hookrightarrow C^0_*$ and $C^{1+\epsilon}_* \hookrightarrow C^1$ (for any $\epsilon > 0$), it is therefore sufficient to prove that $\|(1-\psi_n)f\|_{L^\infty} \leq C/n \|f\|_{C^1}$. By Poincaré-Wirtinger's inequality, there exists a constant $C > 0$ (only depending on the lattice $\Lambda$) such that for any $f$ such that $\int f d\theta = 0$, $\|f(y)\|_{L^\infty(\T^d)} \leq C \|\partial_\theta f(y)\|_{L^\infty(\T^d)}$, for all $y > a$. Thus, $\|(1-\psi_n)f(y)\|_{L^\infty(\T^d)} \leq C/n \|y\partial_\theta f(y)\|_{L^\infty(\T^d)}$ and passing to the supremum in $y$, we obtain the result we yearned for.
\end{proof}

Next, we turn to the fact that the indicial operator has a bounded inverse. This is a bit more subtle. For simplicity, assume there are no indicial roots in $\{ \Re \lambda \in I\}\supset i\R$, and consider the action of
\begin{equation}
\label{equation:inverse-holder-zygmund}
I_Z(A)^{-1} =\int_{i\R} e^{\lambda(r-r')} (I_Z(A,\lambda))^{-1} \dd \lambda,
\end{equation}
on $C^s_\ast(\R \times F_Z)$. While the action of convolution operators on $L^2$ spaces is very convenient to analyze, it is not so easy for Hölder-Zygmund spaces. First, from the computations in the proof of Lemma \ref{lemma:relation-projected-spaces}, we deduce that the $C^s_\ast$ spaces of $L\to N$, correspond with the usual $C^s_\ast$ spaces of $L_Z \to\R \times F_Z$.

Next, we recall that according to Lemma \ref{lemma:indicial-pseudo}, $I_Z(A,\lambda)^{-1}$ is a tempered family of holomorphic operators in $\C_I$. In particular, we can decompose it into the sum of a smoothing family, and a pseudo-differential family supported in a small neighbourhood of the diagonal. We denote by $S_I^{(1)}$ the contribution of the pseudo-differential family, and $S_I^{(2)}$ that of the smoothing family.

Choosing local patches in $F_Z$, we can write
\[
S_I^{(1)} f(r,\zeta) = \int_{\R \times \R^n} e^{i\lambda(r-r')} e^{i \langle \zeta-\zeta', \eta \rangle} \widetilde{\sigma}(\lambda,z,\xi) f(r',\zeta') \dd r' \dd \zeta' \dd\lambda \dd\eta,
\]
and this is a classical pseudo-differential operator of order $-m$ on $R \times F_Z$ which is bounded as a map $C^s_*(\R \times F_Z) \rightarrow C^{s+m}_*(\R \times F_Z)$. 

It remains to study $S_I^{(2)}$. For the sake of simplicity, we will identify in our notations the operator and its kernel. We pick $z,z'\in F_Z$ and $r>1$. When $|\rho|<\epsilon$,
\[
S_I^{(2)}(r,z,z') =  \int_\R  e^{ it r } R_{it} (z,z') dt = e^{\rho r} \int_{\R} e^{itr} R_{it+\rho} (z,z') \dd t,
\]
where $R_{it+\rho}$ is $\mathcal{O}( \langle t \rangle^{-\infty})$ in $C^\infty(F_Z\times F_Z)$, for $|\rho|<\epsilon$. We deduce that $S_I^{(2)}(r,z,z')$ is $\mathcal{O}( e^{-\epsilon|r|})$ in $C^\infty(\R\times F_Z\times F_Z)$. In particular, $S_I^{(2)}$ acts boundedly as a map $C_*^s(\R \times F_Z) \rightarrow C_*^{s+m}(\R \times F_Z)$.
Now that we have checked that $I_Z(A)^{-1}$ is bounded on the appropriate spaces, the proof of Section \S\ref{sec:improving-I} applies. 

At this point, we also observe that our arguments can be combined directly with the arguments of section \ref{sec:leftright-parametrix} to deal with the case of left/right elliptic operators on H\"older-Zygmund spaces.

To finish the proof of Theorem \ref{theorem:parametrix-compact}, we consider the Fredholm index of elliptic operators acting on Hölder-Zygmund spaces. It is similar to Proposition \ref{proposition:fredholm-index}.

\begin{proposition}
\label{proposition:fredholm-index-ii}
Let $P$ be a $(\rho_-,\rho_+)$-admissible elliptic pseudo-differential operator of order $m \in \R$. Let $I$ be a connected component in $(\rho_-,\rho_+)$ not containing any indicial root. Then, the Fredholm index of the bounded operator $P : y^{\rho} C_*^{s+m} \to y^{\rho} C_*^s$ is independent of $s \in \R, \rho \in I$. Moreover, the Fredholm index coincides with that of Proposition \ref{proposition:fredholm-index}, that is of $P$ acting on Sobolev spaces $H^{s+m,\rho-d/2,\rho_\bot} \rightarrow H^{s, \rho-d/2,\rho_\bot}$, for $s, \rho_\bot \in \R$.
\end{proposition}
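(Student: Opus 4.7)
The plan is to mirror the proof of Proposition \ref{proposition:fredholm-index}, using the parametrix construction of Theorem \ref{theorem:parametrix-compact} in the Hölder–Zygmund category (already established in the preceding paragraphs), and then to match the two Fredholm indices by means of the embedding Lemmas \ref{lemma:embedding-holder-to-sobolev-spaces} and \ref{lemma:embedding-hs-in-ck}.

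First I would run the elliptic argument of Proposition \ref{proposition:fredholm-index} in the Hölder–Zygmund setting. Theorem \ref{theorem:parametrix-compact} produces a parametrix $Q_I \in \overline{\Psi}^{-m}_I$ with $Q_I P = \mathbbm{1} + K_1$ and $P Q_I = \mathbbm{1} + K_2$, where each $K_j$ is bounded from $y^{\rho^I_+ - \epsilon}C^{-N}_\ast$ into $y^{\rho^I_- + \epsilon}C^N_\ast$ for every $N$ and every small $\epsilon > 0$. Combining this with the compactness of the injection $y^\rho C^s_\ast \hookrightarrow y^{\rho'} C^{s'}_\ast$ on non-zero Fourier modes (established earlier in this subsection via a Poincaré–Wirtinger argument) and with an analog of Lemma \ref{lemma:compact-injection} for the zero mode, the remainders $K_1, K_2$ are compact on $y^\rho C^{s+m}_\ast$ and $y^\rho C^s_\ast$ respectively, and $P$ is Fredholm. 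Any $u \in \ker P$ satisfies $u = -K_1 u$, placing $u$ in every $y^{\rho^I_- + \epsilon}C^N_\ast$, so $\ker P$ is independent of $s \in \R$ and $\rho \in I$.

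Next I would establish the same independence for $\mathrm{coker}\, P$. The formal adjoint $P^\ast$ is again $(\rho_-, \rho_+)$-admissible elliptic, with indicial roots reflected across $\{\Re\lambda = d/2\}$ by \eqref{equation:indice-adjoint}, so the same parametrix argument applies to $P^\ast$ acting on dual Hölder–Zygmund spaces and produces a finite-dimensional kernel made of smooth, rapidly decaying sections. Identifying $\mathrm{coker}\, P$ with this $\ker P^\ast$ via the $L^2$ pairing yields the independence of $\mathrm{coker}\, P$ and hence of the index.

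Finally, for the matching with the Sobolev index: by Step 1, elements of $\ker P|_{y^\rho C^{s+m}_\ast}$ lie in $\bigcap_{N,\epsilon} y^{\rho^I_- + \epsilon} C^N_\ast$, which by Lemma \ref{lemma:embedding-holder-to-sobolev-spaces} embeds into $\bigcap_{k,\epsilon,\rho_\bot} H^{k,\, \rho^I_- + \epsilon - d/2,\, \rho_\bot}$; conversely, by Proposition \ref{proposition:fredholm-index} applied to $P$ on Sobolev spaces combined with Lemma \ref{lemma:embedding-hs-in-ck}, the Sobolev kernel lies in $\bigcap_{N} y^{\rho^I_- + \epsilon} C^N_\ast$. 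Hence the two kernels coincide, and the same comparison applied to $P^\ast$ gives equality of cokernels, so the two Fredholm indices agree.

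The main obstacle I anticipate is the duality step needed to identify $\mathrm{coker}\, P$ with $\ker P^\ast$ in the Hölder–Zygmund category: Lemma \ref{lemma:l2-identification} was stated only for Sobolev spaces. One can bypass this by observing that elements of $\mathrm{coker}\, P$ are represented by distributions annihilating $\mathrm{Im}\, P$, and applying the $P^\ast$-parametrix directly to these distributions places them in a space common to both the Sobolev and Hölder–Zygmund dual frameworks, reducing the comparison of cokernels to the (already established) comparison of kernels for $P^\ast$.
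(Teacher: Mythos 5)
Your proposal is correct and follows essentially the same route as the paper: the paper's proof is simply the observation that the statement follows from Proposition \ref{proposition:fredholm-index} together with the embedding Lemmas \ref{lemma:embedding-holder-to-sobolev-spaces} and \ref{lemma:embedding-hs-in-ck}, which is exactly the kernel/cokernel identification you carry out (via the parametrix, kernel elements on either scale lie in $\bigcap_{N,\epsilon} y^{\rho^I_-+\epsilon}C^N_\ast$ resp.\ $\bigcap_{k,\epsilon,\rho_\bot}H^{k,\rho^I_-+\epsilon-d/2,\rho_\bot}$, so the two kernels, and by the adjoint argument the two cokernels, agree). Your extra care with the duality step for the cokernel in the H\"older--Zygmund category (bypassing the absence of a Lemma \ref{lemma:l2-identification} analogue by bootstrapping annihilating functionals through the transposed parametrix into the Sobolev scale) is a sensible filling-in of a point the paper leaves implicit, not a different method.
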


\begin{proof}
This is a rather straightforward consequence of Proposition \ref{proposition:fredholm-index} combined with the embedding estimates of Lemma \ref{lemma:embedding-holder-to-sobolev-spaces} and Lemma \ref{lemma:embedding-hs-in-ck}.
\end{proof}

\section{X-ray transform and symmetric tensors}

In this Section, we apply the previous theory of inversion of elliptic pseudo-differential operators to the three operators $\nabla_S, D$ and $D^*D$ and prove that the X-ray transform is solenoidal injective on $2$-tensors.

\label{section:geometry}

\subsection{Gradient of the Sasaki metric}

A first step towards the Livsic Theorem \ref{theorem:livsic} is the analytic study of the gradient $\nabla_S$ induced by the Sasaki metric $g_S$ (itself induced by $g$) on the unit tangent bundle $SM$ of $(M,g)$. Let $\pi : SM \rightarrow M$ be the projection. We recall (see \cite{Paternain-99} for further details) that the tangent bundle to $SM$ can be decomposed according to the splitting
\[ T(SM) = \mathbb{V} \oplus^\bot \mathbb{H} \oplus^\bot \R X, \]
where $\mathbb{V} = \ker d \pi$ is the \emph{vertical bundle}, $\mathbb{H} = \ker \mc{K} \cap (\R X)^\bot$ is the \emph{horizontal bundle}\footnote{We use the convention that $\HH := (\V \oplus \R X)^\bot$, and not $\V^\bot$ as usual. In particular, if $M$ is $(d+1)$-dimensional, then $\HH$ is $d$-dimensional.}, $\mc{K} : T(SM) \rightarrow TM$ is the \emph{connection map} defined as follows: consider $(x,v) \in SM, w \in T_{(x,v)}(SM)$ and a curve $(-\eps,\eps) \ni t \mapsto z(t) \in SM$ such that $z(0)=(x,v), \dot{z}(0)=w$; write $z(t)=(x(t),v(t))$; then $\mc{K}_{(x,v)}(w) := \nabla_{\dot{x}(t)} v(t)|_{t=0}$. Note that $d \pi : \V \rightarrow TM, \mc{K} : \HH \oplus \R X \rightarrow TM$ are both isometries. We denote by $g_S$ the Sasaki metric on $SM$ defined by:
\[
g_S(w,w') := g(d \pi(w), d\pi(w')) + g(\mc{K}(w),\mc{K}(w')).
\]
Let $\nabla_S$ be the Levi-Civita connection induced by the Sasaki metric $g_S$ on $SM$. Given $u \in C^\infty(SM)$, one can decompose its gradient according to:
\begin{equation}
\nabla_S u = \nabla^v u + \nabla^h u + Xu \cdot X, 
\end{equation}
where $\nabla^{v,h}$ are the respective vertical and horizontal gradients (the orthogonal projection of the gradient on the vertical and horizontal bundles), that is $\nabla^v u \in C^\infty(SM,\mathbb{V})$ and $\nabla^h u \in C^\infty(SM,\mathbb{H})$. 

\begin{lemma}
\label{lemma:ellipticite-nabla}
The gradient $\nabla_S : C^\infty(SM) \rightarrow C^\infty(SM, T(SM))$ is a left-elliptic $\R$-admissible differential operator of order $1$. Its only indicial root is $0$. Moreover, there exists a $]0,+\infty[$-extended admissible operator $Q$ of order $-1$ and $R$ a $]0;+\infty[$-residual operator such that:
\[
Q\nabla_S = \mathbbm{1} + R 
\]
\end{lemma}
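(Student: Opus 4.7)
The plan is to verify admissibility and left-ellipticity of $\nabla_S$, compute the indicial family to locate the indicial roots, and then invoke Theorem \ref{theorem:left-parametrix} to produce the parametrix.

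First, the Sasaki metric $g_S$ on $SM$ is uniformly equivalent to a product metric over the cuspidal ends and, by the computations in \cite[Appendix C]{Bonthonneau-16}, its Christoffel symbols expressed in the natural frame $(y\partial_y, y\partial_\theta, \partial_\zeta)$ (together with all their derivatives in that frame) are uniformly bounded and independent of $y,\theta$. The operator $\nabla_S$ is a first-order differential operator built locally from $g_S$, so its symbol satisfies the invariance and decay conditions of Proposition \ref{prop:relation-symbol-geometric}, making $\nabla_S$ an $\R$-admissible operator of order $1$. Left-ellipticity is immediate: after identifying $T(SM) \simeq T^*(SM)$ via $g_S$, the principal symbol at $(z,\xi) \in T^*(SM)\setminus 0$ is the injective map $f \mapsto i f \xi$, which admits the symbolic left inverse $v \mapsto -i\, g_S(\xi,v)/|\xi|^2_{g_S}$.

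Second, setting $r = \log y$ and restricting to the zeroth Fourier mode in $\theta$, a direct computation starting from the decomposition $\nabla_S = \nabla^v + \nabla^h + X\cdot X$ gives the explicit form of the indicial family $I_Z(\nabla_S, \lambda)$ as an operator on sections of an appropriate bundle over the fiber $F_Z$. It splits into the vertical gradient on $F_Z$ (which is itself left-elliptic, since it is the gradient on a closed Riemannian manifold and its kernel consists of the constants) plus a zeroth-order term linear in $\lambda$ whose coefficients come from the expressions of $X$ and $\nabla^h$ in the cusp coordinates. I expect to verify directly that this indicial family is left-invertible for every $\lambda \neq 0$ and that its kernel at $\lambda = 0$ consists only of constants, so that $0$ is the unique indicial root.

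Third, I would construct an explicit tempered holomorphic left inverse $W(\lambda)$ of $I_Z(\nabla_S, \lambda)$ that is meromorphic on $\C$ with a single simple pole at $\lambda = 0$, so that the obstruction set $S(\nabla_S, W) = \{0\}$ and the interval $(0, +\infty)$ is a connected component of $\R \setminus S(\nabla_S, W)$. Applying Theorem \ref{theorem:left-parametrix} on this interval then yields a $(0, +\infty)$-extended admissible operator $Q$ of order $-1$ together with a $(0, +\infty)$-residual operator $R$ such that $Q \nabla_S = \mathbbm{1} + R$. The main difficulty will be the careful construction of $W(\lambda)$ so as to avoid introducing spurious poles away from $0$: as noted in Section \ref{sec:leftright-parametrix}, the naive symmetrization $W(\lambda) = (I_Z(\nabla_S, \lambda)^* I_Z(\nabla_S, \lambda))^{-1} I_Z(\nabla_S, \lambda)^*$ generically produces additional poles, which would prevent working on the full half-line $(0, +\infty)$, so strategy (3) of Section \ref{sec:leftright-parametrix} — building the left inverse by hand so as to preserve the single-pole structure at $\lambda = 0$ — is essential.
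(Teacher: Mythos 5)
Your outline is the right one (admissibility via local-isometry invariance, left-ellipticity from injectivity of the symbol, then strategy (3) of Section \ref{sec:leftright-parametrix}), but the heart of the lemma is precisely the step you defer: the explicit construction of a tempered holomorphic left inverse $W(\lambda)$ of $I_Z(\nabla_S,\lambda)$ whose only pole is a simple one at $\lambda=0$. Knowing that $I_Z(\nabla_S,\lambda)$ is injective for $\lambda\neq 0$ and has constants as kernel at $\lambda=0$ does not by itself produce such a $W(\lambda)$ with uniform $\mathcal{O}(|\lambda|^{-1})$-type bounds as $\Im\lambda\to\pm\infty$; and as you yourself note, the generic recipes (e.g.\ $ (I^*I)^{-1}I^*$) introduce spurious poles. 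So as written the proposal stops exactly where the actual proof begins. There is also a structural point you gloss over: on $SM$ the natural fiber coordinates over a cusp depend on $y$ (the unit sphere is $\{v_y^2+|v_\theta|^2=y^2\}$ in the coordinate frame), so the product decomposition $Z\times F_Z$ needed to even define the indicial family is not immediate, and your description of $I_Z(\nabla_S,\lambda)$ as ``vertical gradient plus a zeroth-order term linear in $\lambda$'' omits the first-order fiber derivatives coming from the horizontal lifts of $y\partial_{\theta_\ell}$.

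The paper resolves both issues with one device: extend $u\in C^\infty(SM)$ to $TM\setminus\{0\}$ by $0$-homogeneity, so that $\nabla_S u=(\nabla_S^{TM}\tilde u)|_{SM}$ and the fiber becomes the fixed compact manifold $F_Z=\R^{d+1}/\R_+^*$, independent of $(y,\theta)$. In the cusp the horizontal lifts are $U\propto y\partial_y+\mathcal{E}$ ($\mathcal{E}$ the Euler field) and $V_\ell\propto y\partial_{\theta_\ell}+v_y\partial_{v_{\theta_\ell}}-v_{\theta_\ell}\partial_{v_y}$; since $\mathcal{E}f=0$ and $y\partial_y f=0$ for $f$ a $0$-homogeneous function of $(v_y,v_\theta)$, one gets
\[
y^{-\lambda}\nabla_S^{TM}(fy^\lambda)=\nabla^v_{TM}f+\lambda f\cdot U+\sum_\ell (V_\ell f)\cdot V_\ell ,
\]
so the entire $\lambda$-dependence sits in the coefficient of $U$, which is $g_S$-orthogonal to the vertical part and to the $V_\ell$. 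Hence $W(\lambda)w:=\lambda^{-1}g_S(w,U)$ is an \emph{exact} left inverse, holomorphic in $\C\setminus\{0\}$ with a simple pole at $0$ and norm $\mathcal{O}(|\lambda|^{-1})$, which is exactly the input Theorem \ref{theorem:left-parametrix} needs to give the $]0,+\infty[$-extended admissible $Q$ and the $]0,+\infty[$-residual $R$. Without this (or an equivalent explicit computation), your argument does not yet establish that $0$ is the only indicial root, nor that the parametrix exists on the whole half-line $(0,+\infty)$.
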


\begin{proof}
As before, we let $Z := [a,+\infty[ \times \R^d/\Lambda$ be (one of) the cuspidal part of the manifold $(M,g)$. The fact that $\nabla_S$ is an elliptic admissible differential operator of order $1$ is immediate. Observe that, if $f \in C^\infty(SM)$, then $f$ can be extended as a function $\tilde{f}$ in $C^\infty(TM \setminus \left\{ 0 \right\})$ by $0$-homogeneity. Then, writing $\nabla_S^{TM}$ for the gradient on $TM$ (induced by the Sasaki metric), we have $\nabla_S f = (\nabla_S^{TM} \tilde{f})|_{SM}$. As a consequence, it is equivalent to study the action of $\nabla_S^{TM} : C^\infty(TM) \rightarrow C^\infty(T(TM))$ on $0$-homogeneous functions, which we are going to do from now on. Taking advantage of the global trivialization over the cusp, we can write $TZ \simeq Z \times \R^{d+1}$; we use coordinates $(y,\theta, v_y, v_\theta)$.

We now compute the indicial operator of the gradient, following the techniques of the previous sections. In order to do so, we let $f \in C^\infty(\R^{d+1})$ be a function depending on the variables $(v_y,v_\theta)$ which is \emph{$0$-homogeneous}. In other words, $f$ is a smooth function on the quotient $F_Z := \R^{d+1}/\R_+^*$ which is a smooth \emph{compact} manifold diffeomorphic to the $d$-dimensional sphere. Therefore, we are really in the setting of the previous two sections. We introduce the vector fields $U := d \pi^{-1}(y\partial_y), V_\ell := d\pi^{-1}(y\partial_{\theta_\ell})$ for $\ell=1,...,d$. They belong to the space $\HH \oplus \R X$. An elementary (although tedious) computation using Christoffel symbols in coordinates (see \cite[Appendix A.3.2]{Bonthonneau-thesis}) allows to show that: 
\[
U \propto y \partial_y + v_y \partial_{v_y} + \sum_\ell v_\theta \partial_{v_\theta} = y\partial_y + \mc{E},
\]
(modulo normalization), where $\mc{E}$ is the Euler vector field and
\[
V_\ell \propto y \partial_{\theta_\ell} + v_y \partial_{v_{\theta_\ell}} - v_{\theta_\ell} \partial_{v_y},
\]
(modulo normalization). Then, writing $\nabla_{TM}^v$ for the (total) vertical gradient on $TM$, and using the formula $d\pi^{-1}(Y) \cdot \pi^*h = \pi^* (Y \cdot h)$ for $Y \in C^\infty(M,TM), h \in C^\infty(M)$, we obtain:
\[
\begin{split}
y^{-\lambda}\nabla^{TM}_S(f y^\lambda) & = \nabla_{TM}^v f + y^{-\lambda} \left( U(fy^\lambda) \cdot U + \sum_\ell V_\ell(f y^\lambda) \cdot V_\ell \right) \\
& =  \nabla_{TM}^v f + (Uf + y^{-\lambda} f y \partial_y(y^\lambda)) \cdot U + \sum_\ell (V_\ell f + y^{-\lambda} f y \partial_{\theta_\ell}(y^\lambda) ) \cdot V_\ell \\
& = \nabla_{TM}^v f + \lambda f \cdot U + \sum_\ell V_\ell f \cdot V_\ell,
\end{split}
\]
because $Uf = 0$ (note that the $0$-homogeneity condition is used here in $\mc{E} f = 0$). We then set $W(\lambda)(w) := \lambda^{-1} g_S(w,U)$, for $w \in C^\infty(\R \times \R^{d+1}, T(\R \times \R^{d+1}))$. Then:
\[
W(\lambda) I(\nabla_S,\lambda) f = f
\]
The only indicial root of $\nabla_S$ is thus $\lambda=0$. As a consequence, Theorem \ref{theorem:left-parametrix} applies immediately (using construction (3), presented after Theorem \ref{theorem:left-parametrix}) and yields the results of Lemma \ref{lemma:ellipticite-nabla}.
\end{proof}

\subsection{Exact Livsic theorem}

We recall that $\mathcal{C}$ is the set of hyperbolic free homotopy classes on $M$ and that for each such class $c\in \mathcal{C}$ of $C^1$ curves on $M$, there is a unique representant $\gamma_g(c)$ which is a \emph{geodesic} for $g$. 

In this section, we prove an exact \textit{Livsic theorem} asserting that a function whose integrals over closed geodesic vanish is a \emph{coboundary}, namely a derivative in the flow direction. For $f\in C^0(SM)$, we can define
\[
I^gf(c) = \frac{1}{\ell(\gamma_g(c))} \int_{0}^{\ell(\gamma_g(c))} f(\gamma(t),\dot{\gamma}(t)) \dd t,
\]
for $c \in \mc{C}$.

\begin{theorem}[Livsic theorem]
\label{theorem:livsic}
Let $(M^{d+1},g)$ be a negatively-curved complete manifold whose ends are real hyperbolic cusps. Denote by $-\kappa_0$ the maximum of the sectional curvature. Let $0 < \alpha < 1$ and $0 < \beta < \sqrt{\kappa_0}\alpha$. Let $f \in y^\beta C^\alpha(SM) \cap H^1(SM)$ such that $I^g f = 0$. Then there exists $u\in y^\beta C^\alpha(SM) \cap H^1(SM)$ such that $f=Xu$. Moreover, $\nabla^v X u, \nabla_X \nabla^v u \in L^2(SM, \V)$ and $u$ thus satisfies the Pestov identity (Lemma \ref{lemma:Pestov}).
\end{theorem}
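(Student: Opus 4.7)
The plan is to adapt the classical Livsic scheme to the hyperbolic cusp setting, in four main steps.

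\textbf{Construction of $u$ along a dense orbit.} By standard results on finite-volume negatively curved manifolds, the non-wandering set of the geodesic flow on $SM$ is topologically transitive, and in fact the geodesic flow admits a forward-dense orbit starting from some point $z_0 \in SM$. I would set $u(\phi_t z_0) := \int_0^t f(\phi_s z_0)\,ds$, so that $Xu = f$ is automatic on this dense set; the whole task is then to prove that $u$ extends to $SM$ with the right regularity. Note that on the cusp region, although trajectories may escape to infinity, only those returning to a compact part contribute to the non-wandering set, and the non-wandering set of a finite-volume negatively curved manifold is contained in the unit tangent bundle over a compact subset $M_0 \subset M$.

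\textbf{H\"older regularity via the closing lemma.} Given two points $z_1=\phi_{t_1}z_0$ and $z_2=\phi_{t_2}z_0$ close in $SM$, one applies the Anosov closing lemma to produce a nearby closed geodesic $\gamma$ of length close to $t_2-t_1$; since $I^g f(\gamma)=0$, the H\"older control of $f$ yields $|u(z_2)-u(z_1)| \lesssim \|f\|_{y^\beta C^\alpha}\, d(z_1,z_2)^\alpha$, after paying the weight at the height of the excursion. On the compact part $M_0$ the flow is uniformly Anosov and this is standard. The subtlety is that $z_1,z_2$ may lie high in a cusp: here I exploit the explicit structure of hyperbolic cusps, where a geodesic with small horizontal velocity describes a circular arc climbing to a maximum height $y_{\max}$ in time $\sim \log y_{\max}$. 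Along such an excursion $|f|\lesssim \|f\|_{y^\beta C^\alpha}\,y_{\max}^\beta$, so the excursion contribution to $u$ is $\mathcal{O}(y_{\max}^\beta \log y_{\max})$, which matches the target norm $y^\beta C^\alpha$ precisely because the contraction rate in the cusp is at least $\sqrt{\kappa_0}$ and we assumed $\beta<\sqrt{\kappa_0}\alpha$. This quantitative margin is exactly what lets one interpolate between the Hölder estimate coming from the Anosov closing on $M_0$ and the purely geometric estimate on cusp excursions. I expect this step, and in particular the uniform control of the cusp excursions, to be the main analytic obstacle.

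\textbf{Sobolev regularity.} Once $u \in y^\beta C^\alpha(SM)$ with $Xu=f$ is known, the $H^1$ regularity follows from a microlocal argument in the calculus developed in Sections \ref{section:pseudo}--\ref{section:parametrices1}: one combines radial estimates at the conormal to the stable/unstable bundles (in the style of Faure-Sj\"ostrand and the Bonthonneau-Weich adaptation to cusps) with the hypothesis $f \in H^1$ to propagate the regularity from the elliptic directions to the flow direction. Concretely, writing $X u = f$ and applying horizontal and vertical derivatives, commutator identities reduce the regularity of $u$ to bounds on $f$ and $u$ itself, and one concludes by a bootstrap using the $L^2$ control already in hand.

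\textbf{Pestov regularity.} Finally, the conditions $\nabla^v X u,\nabla_X \nabla^v u \in L^2(SM,\V)$ for the Pestov identity are essentially consequences of the preceding: $\nabla^v Xu = \nabla^v f$ is controlled by $\|f\|_{H^1}$, while $\nabla_X \nabla^v u$ is handled via the commutator identities on $(SM, g_S)$ together with the $H^1$ bound on $u$ and the horizontal/vertical splitting. The key remark is that all the operators involved are admissible in the sense of Definition \ref{def:R-L^2-admissible-operator} over the cusps, so that boundedness on weighted Sobolev spaces applies uniformly, and the cusps introduce no new obstruction beyond the weight constraint already absorbed in step two.
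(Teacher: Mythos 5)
Your step one contains a false claim that undermines the way you organize the H\"older estimate: on a finite-volume negatively curved manifold the Liouville measure is finite and flow-invariant, so by Poincar\'e recurrence the non-wandering set is \emph{all} of $SM$; it is not contained in the unit tangent bundle over a compact part, and both the dense orbit and the shadowing periodic orbits make arbitrarily high excursions into the cusps. Consequently you cannot separate the argument into ``standard Anosov closing on a compact core'' plus ``geometric control of cusp excursions''. Moreover, your bound $\mathcal{O}(y_{\max}^\beta\log y_{\max})$ on the contribution of an excursion controls the \emph{size} of $u$, not its modulus of continuity, and it never gets combined with the closing lemma into the required estimate $|u(z_1)-u(z_2)|\lesssim y(z_1)^\beta d(z_1,z_2)^\alpha$. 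The actual mechanism, which is where the hypothesis $\beta<\sqrt{\kappa_0}\,\alpha$ is used, is a pointwise comparison of $f$ along the true orbit segment and along the shadowing periodic orbit: the shadowing estimate gives closeness $\lesssim \epsilon\, e^{-\sqrt{\kappa_0}\min(s,T-s)}$ as in \eqref{equation:hyperbolicite}, while the height of the shadowing orbit grows at most like $e^{\min(s,T-s)}y(x_1)$ as in \eqref{equation:hauteur-borne}; the integrand is then bounded by $y(x_1)^\beta\,\epsilon^\alpha\, e^{(\beta-\alpha\sqrt{\kappa_0})\min(s,T-s)}$, whose integral converges precisely when $\beta<\sqrt{\kappa_0}\,\alpha$. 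Your proposal names the right threshold but does not supply this competition between the shadowing rate and the height-growth rate, which is the heart of the weighted Livsic statement.

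The Sobolev step is also not a proof as written. Radial estimates at the conormals of the stable/unstable bundles are not the relevant tool here, and your ``bootstrap using the $L^2$ control already in hand'' presumes $u\in L^2$, which is not known a priori: $u\in y^\beta C^\alpha$ only embeds into $y^{\beta-d/2+\epsilon}H^s$, and for $d=1$ with $\beta\ge 1/2$ this need not be contained in $L^2$. The argument that actually closes this step is: from $f=Xu\in H^1$ one gets $\nabla^v Xu\in L^2$; the Pestov identity (Lemma \ref{lemma:Pestov}) with pinched negative curvature then yields $\nabla_X\nabla^v u,\ \nabla^v u\in L^2$; the commutator $[X,\nabla^v]=-\nabla^h$ gives $\nabla^h u\in L^2$, hence $\nabla_S u\in L^2$; and finally one needs the weighted parametrix for the left-elliptic $\R$-admissible operator $\nabla_S$ (Lemma \ref{lemma:ellipticite-nabla}), whose only indicial root is $0$, to convert $\nabla_S u\in L^2$ together with $u\in y^\beta C^\alpha$, $\beta>0$, into $u\in H^1(SM)$. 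This last use of the cusp calculus on weighted spaces is exactly the point where the non-compactness must be confronted, and it is absent from your outline; without it (or an equivalent substitute) the passage from $\nabla_S u\in L^2$ to $u\in H^1$ does not follow.
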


We will denote by $\N_\bot$ the subbundle of $\pi^* TM \rightarrow SM$ (where $\pi : SM \rightarrow M$ denotes the projection) whose fiber at $(x,v) \in SM$ is given by $\N_\bot(x,v) := \left\{v\right\}^{\bot}$. The picture to have in mind is the following: above each point $(x,v) \in SM$, we glue the fiber $(\R \cdot v)^\bot$. Using the maps $d\pi$ and $\mc{K}$, the vectors $\nabla^{v,h} u$ can be identified with elements of $\N_\bot$, i.e. $\mc{K}(\nabla^v u), d\pi(\nabla^h u) \in \N_\bot$. For the sake of simplicity, we will drop the notation of these projection maps in the following and consider $\nabla^{v,h} u$ as elements of $\N_\bot$. We have a natural $L^2$-scalar product on $L^2(SM,\N_\bot)$ given by:
\[
\langle w, w' \rangle := \int_{SM} g_x(w(x,v),w'(x,v)) \dd \mu(x,v),
\]
where $\mu$ stands for the Liouville measure. We refer to \cite[Section 2]{Paternain-Salo-Uhlmann-15} for further details. Before starting with the proof of the Livsic Theorem \ref{theorem:livsic}, we recall the celebrated \emph{Pestov identity}:

\begin{lemma}[Pestov identity]
\label{lemma:Pestov}
Let $(M^{d+1},g)$ be a cusp manifold. Let $u\in H^2(SM)$. Then
\[
\begin{split}
\|\nabla^v X u \|_{L^2(SM,\N_\bot)}^2 \hspace{-35pt}& \\
&=\|\nabla_X \nabla^v u  \|_{L^2(SM,\N_\bot)}^2  	-\int_{SM} \kappa(v, \nabla^vu) \|\nabla^v u\|^2 \dd \mu(x,v)  + d  \| X u \|_{L^2(SM)}^2,
\end{split}
\]
where $\kappa$ is the sectional curvature.
\end{lemma}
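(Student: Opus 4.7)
The plan is to first establish the identity for $u \in C^\infty_{\mathrm{comp}}(SM)$, where it reduces to the classical Pestov identity on a Riemannian manifold, and then to extend it to all $u \in H^2(SM)$ by a cutoff/density argument adapted to the cuspidal geometry.

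For smooth compactly supported $u$, I would rely on the standard structural commutators on the unit tangent bundle, namely $[X,\nabla^v]u = -\nabla^h u$ and $[X,\nabla^h]u = R(v,\nabla^v u)v$ (with $R$ the Riemann tensor, the identification $\mathcal{K}:\HH\to TM$ being implicit), together with the skew-adjointness of $X$ on $L^2(SM,\dd\mu)$ which follows from invariance of the Liouville measure under the geodesic flow. Expanding
\[
\|\nabla^v Xu\|_{L^2}^2 = \|\nabla_X\nabla^v u - \nabla^h u\|_{L^2}^2 = \|\nabla_X\nabla^v u\|_{L^2}^2 - 2\langle \nabla_X\nabla^v u,\nabla^h u\rangle + \|\nabla^h u\|_{L^2}^2
\]
and integrating the cross term by parts along the flow — using the curvature commutator to convert it into a curvature integral plus $d\,\|Xu\|_{L^2}^2$ minus $\|\nabla^h u\|_{L^2}^2$ — produces exactly the curvature and dimensional terms of the identity, while the $\|\nabla^h u\|_{L^2}^2$ contribution cancels. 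This computation is classical; I would refer to \cite[Section 2]{Paternain-Salo-Uhlmann-15} for the details, which transfer verbatim to our setting since $u$ is compactly supported.

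The main obstacle is the extension to general $u \in H^2(SM)$ in view of the non-compactness of $SM$. I would handle this by an explicit cutoff argument: choose a family $\chi_R \in C^\infty_{\mathrm{comp}}(SM)$ depending only on $r=\log y$ in each cusp, with $\chi_R \equiv 1$ on $\{y \leq R\}$, $\chi_R \equiv 0$ on $\{y \geq 2R\}$, and $\|\nabla^j \chi_R\|_{L^\infty}$ uniformly bounded in $R$ for $j=1,2$. Such a choice is possible because $y\partial_y$ and $y\partial_\theta$ are $C^\infty$-bounded vector fields on the cusp; see \eqref{eq:equivalent-local-Ck-norm}. A standard commutator estimate then gives $\chi_R u \to u$ in $H^2(SM)$ for every $u\in H^2(SM)$.

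Finally, every term in the Pestov identity depends continuously on $u$ with respect to the $H^2(SM)$-norm: the operators $\nabla^v X$ and $\nabla_X \nabla^v$ are $\R$-admissible differential operators of order two, hence bounded from $H^2(SM)$ to $L^2(SM,\N_\bot)$ by Lemma \ref{lemma:boundedness-admissible-pseudo}, and the sectional curvature $\kappa$ is uniformly bounded on $M$. Applying the identity to $\chi_R u$ and passing to the limit $R\to\infty$ concludes the proof.
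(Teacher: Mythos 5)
Your proposal is correct and takes essentially the same route as the paper: the paper likewise reduces to the local commutator/integration-by-parts computation of Paternain--Salo--Uhlmann, remarking that it applies verbatim because $SM$ has finite volume and no boundary, and then concludes by a density argument, which your cutoff construction merely makes explicit (consistent with $H^2(SM)$ being defined as a completion of $C^\infty_{\mathrm{comp}}$). One small slip: with the convention $[X,\nabla^v]u=-\nabla^h u$ one gets $\nabla^v Xu=\nabla_X\nabla^v u+\nabla^h u$ (plus, not minus), but this does not affect the argument since the detailed derivation is deferred to the reference, which carries the correct signs.
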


In the compact case, the proof is based on the integration of local commutator formulas and clever integration by parts (see \cite[Proposition 2.2]{Paternain-Salo-Uhlmann-15}). Since the manifold has finite volume and no boundary, the proof is identical and we do not reproduce it here. By a density argument and using the fact that the sectional curvature is pinched negative, assuming only $\nabla^v X u \in L^2(SM)$, we deduce that $\nabla_X \nabla^v u, \nabla^v u \in L^2(SM)$ and
\[
\|\nabla_X \nabla^v u  \|,\ \|\nabla^v u\| \lesssim \|\nabla^v X u \|.
\]

\begin{proof}[Proof of Theorem \ref{theorem:livsic}]
In this proof, we will first build $u$, and then determine its exact regularity. For the construction, we follow the usual tactics, but we give the details since we want to let the H\"older constant grow at infinity. For the sake of simplicity, we will denote by $y : M \rightarrow \R_+$ a smooth extension of the height function (initially defined in the cusps) to the whole unit tangent of the manifold, such that $0 < c < y$ is uniformly bounded from below and $y \leq a$ on $M \setminus \cup_\ell Z_\ell$. The case of uniformly H\"older functions was dealt with in \cite[Remark 3.1]{Paulin-Pollicott-Schapira-15}. Since the flow is transitive, we pick a point with dense orbit $x_0$, and define
\[
u(\varphi_t(x_0)) = \int_0^t f(\varphi_s(x_0))\dd s.
\]
Obviously, we have $X u = f$, so it remains to prove that it is locally uniformly H\"older to consider the extension of $u$ to $SM$. Pick $x_1 =\varphi_t(x_0)$ and $x_2 = \varphi_{t'}(x_0)$, with $t'>t$. Pick $\epsilon>0$, and assume that $d(x_1,x_2)=\epsilon$. By the Shadowing Lemma, there is a periodic point $x'$ with $d(x_1,x')<\epsilon$ and period $T<|t'-t| + C\epsilon$, for some uniform constant $C > 0$ depending on the dynamics, which shadows the segment $(\varphi_s(x_0))_{s \in [t,t']}$. Moreover, there exists a time $\tau \leq C\epsilon$ such that we have the following estimate:
\begin{equation}
\label{equation:hyperbolicite}
d(\varphi_s(\varphi_\tau(x_1)), \varphi_s(x')) \leq C\epsilon e^{-\sqrt{\kappa_0} \min(s, |t'-t|-s)}
\end{equation}
This is a classical bound in hyperbolic dynamics (see \cite[Proposition 6.2.4]{Fisher-Hasselblatt} for instance). The constant $\sqrt{\kappa_0}$ follows from the fact the maximum of the curvature is related to the lowest expansion rate of the flow (see \cite[Theorem 3.9.1]{Klingenberg-95} for instance). 

Then, using the assumption that $\int_0^T f(\varphi_s(x')) ds = 0$, we write:
\[
\begin{split}
u(x_2)&-u(x_1)  = \int_0^{t'-t} f(\varphi_s(x_1)) \dd s \\
&\hspace{-10pt} = \int_0^{t'-t-\tau} f(\varphi_s(\varphi_\tau(x_1))) - f(\varphi_s(x')) \dd s - \int_{t'-t-\tau}^{T} f(\varphi_s(x')) \dd s + \int_0^{\tau} f(\varphi_s(x_1)) \dd s
\end{split}
\]
The last two terms are immediately bounded by $\lesssim \epsilon y(x_1)^\beta$ because $\tau < C \epsilon$ and $T-(t-t') < C \eps$. As to the first one, using \eqref{equation:hyperbolicite}, it is controlled by
\[
\left| \int_0^{t'-t-\tau} f(\varphi_s(\varphi_\tau(x_1))) - f(\varphi_s(x')) \dd s\right| \lesssim \int_0^{t'-t} y(\varphi_s(x'))^\beta d(\varphi_s(x_1), \varphi_s(x'))^\alpha \dd s,
\]
thanks to the assumption on $f$ (namely it is $y^\beta C^\alpha(SM)$). (Note that changing $y(\varphi_s(x'))^\beta$ by $y(\varphi_s(x_1))^\beta$ or $y(\varphi_s(\varphi_\tau(x_1)))^\beta$ does not change anything in the previous integral on the right-hand side as the ratios
\[
y(\varphi_s(x'))/y(\varphi_s(x_1)), y(\varphi_s(x'))/y(\varphi_s(\varphi_\tau(x_1)))
\]
are uniformly contained in an interval $[1/C,C]$ for some constant $C > 0$). Let us find an upper bound on $y(\varphi_s(x'))$. Of course, when a segment of the trajectory $(\varphi_s(x'))_{s \in [0,T]}$ is included in a compact part of the manifold (say of height $y \leq a$), $y(\varphi_s(x'))$ is uniformly bounded by $a$, so the only interesting part is when the trajectory is contained in the cusps. In time $|t'-t|$, the segment $(\varphi_s(x'))_{s \in [0,T]}$ has started and returned at height $y(x_1)$. Thus, it can only go up to a height
\begin{equation}
\label{equation:hauteur-borne}
y(\varphi_s(x')) \leq e^{\min(s,|t'-t|-s)} y(x_1).
\end{equation}

Combining (\ref{equation:hyperbolicite}) and (\ref{equation:hauteur-borne}), this leads to:
\[
\begin{split}
\int_0^{t'-t} y(\varphi_s(x'))^\beta & d(\varphi_s(x_1), \varphi_s(x'))^\alpha \\
& \lesssim \int_0^{t'-t} y(x_1)^\beta e^{\beta \min(s,|t'-t|-s)} d(x_1,x_2)^\alpha e^{-\alpha \sqrt{\kappa_0} \min(s,(t'-t)-s)}  \dd s \\
& \lesssim y(x_1)^\beta d(x_1,x_2)^\alpha \int_0^{t'-t} e^{(\beta -\alpha \sqrt{\kappa_0}) \min(s,|t'-t|-s)} \dd s
\end{split}
\]
As long as $\sqrt{\kappa_0} \alpha>\beta$, this is uniformly bounded as $|t'-t|\to +\infty$. In particular, we conclude that $u$ is $y^\beta C^\alpha$, and we can thus extend it to a global $y^\beta C^\alpha$ function on $SM$.

We now have to prove that $u \in H^1(SM)$ and to this end, we will use a kind of bootstrap argument. Since $f \in H^1(SM)$ and $f = Xu$, we obtain that $\nabla^vXu \in L^2(SM)$. Moreover, as discussed after the Pestov identity, we obtain directly that $\nabla_X \nabla^vu, \nabla^v u \in L^2(SM)$.

By using the commutator identity $[X,\nabla^v] = - \nabla^h$ (see \cite[Lemma 2.1]{Paternain-Salo-Uhlmann-15}), we deduce $\nabla^h u \in L^2(SM)$. Thus, $\nabla_S u \in L^2$. By Lemma \ref{lemma:ellipticite-nabla}, we deduce that $u \in H^{1}(SM)$
\end{proof}

\subsection{X-ray transform and symmetric tensors}

\label{ssection:xray}

Although we will mostly use $1$- and $2$-tensors, it is convenient to introduce notations for general symmetric tensors. We will be using the injection
\[
\pi_m : v \in C^\infty(M, SM) \to v \otimes \dots \otimes v \in C^\infty(M, SM^{\otimes m}).
\]
Given a symmetric $m$-tensor $h\in C^\infty(M,S^m(T^\ast M))$, we can define a function on $SM$ by pulling it back via $\pi_m$:
\[
\pi_m^\ast h: (x,v) \mapsto h_x ( v \otimes \dots \otimes v).
\]

\begin{definition}
\label{definition:xray}
The X-ray transform on symmetric $m$-tensors is defined in the same way as for $C^0$ functions on $SM$: if $h$ is a symmetric $m$-tensor, 
\[
I_m^g h(c) = \frac{1}{\ell(\gamma_g(c))} \int_{0}^{\ell(\gamma_g(c))} \pi_m^\ast h(\gamma(t),\dot{\gamma}(t)) dt,
\]
where $t \mapsto \gamma(t)$ is a parametrization by arc-length, $c \in \mc{C}$.
\end{definition}

Given a symmetric $m$-tensor $h$, we can consider its covariant derivative $\nabla h$, which is a section of 
\[
T^\ast M \otimes S^m( T^\ast M)\to M.
\]
If $\mc{S}$ denotes the symmetrization operator from $\otimes^{m+1} T^\ast M$ to $S^{m+1}(T^\ast M)$, we define the \emph{symmetric derivative} as
\[
D h = \mc{S} (\nabla h) \in C^\infty( M, S^{m+1}( T^\ast M)).
\]
Given $x \in M$, the pointwise scalar product for tensors in $\otimes^m T^\ast_xM$ is defined by
\[
\langle v_1^* \otimes ... \otimes v_m^*, w_1^* \otimes ... \otimes w_m^* \rangle_x = \prod_{j=1}^m g(v_j,w_j),
\]
where $v_j, w_j \in T_xM$ and $v_j^*,w_j^*$ denotes the dual vector given by the musical isomorphism. We can then endow the spaces $C^\infty(M, S^m (T^\ast M))$ with the scalar product 
\begin{equation}
\label{equation:metric-tensors}
\langle h_1, h_2 \rangle =  \int_{M} \langle h_1(x), h_2(x) \rangle_x d\vol(x)
\end{equation}
We obtain a global scalar product on $\oplus_{m \geq 0} C^\infty(M, S^m(T^\ast M))$ by declaring that whenever $m\neq m'$, $C^\infty(M, S^m (T^\ast M))$ is orthogonal to $C^\infty(M, S^{m'} (T^\ast M))$. Following conventions we denote by $-D^\ast$ the adjoint of $D$ with respect to this scalar product. One can compute that for a tensor $T$, for any orthogonal frame $e_1,\dots,e_{d+1}$,
\[
D^\ast T(\cdot) = \Tr (\nabla T)(\cdot) = \sum_i  \nabla_{e_i} T(e_i, \cdot).
\]
The operator $D^\ast$ is called the \emph{divergence}, and one can check that it maps symmetric tensors to symmetric tensors.
\begin{definition}\label{def:solenoidal}
Let $f$ be a tensor so that $D^\ast f =0$. Then we say that $f$ is \emph{solenoidal}.
\end{definition}
We can also define $\pi_{m\ast}$, which is the formal adjoint of $\pi_m^\ast$ --- with respect to the usual scalar product on $L^2(SM)$. Moreover, one can check\footnote{The fastest way to check this is the following: first of all, observe that $\pi^*_{m+1} D = \pi^*_{m+1} \nabla$ (all the antisymmetric parts vanish); then consider at a point $p \in M$ local normal coordinates $(x_1,...,x_{d+1})$; in these coordinates at $p$, $\nabla f (p)= \sum_i \partial_{x_i} f d x_i$ and $X|_{T_pM}=\sum_i v_i \partial_{x_i}$ which is now sufficient the claim.}
\[
\pi^\ast_{m+1} D = X\pi^\ast_m,
\]
see \cite[Lemma B.1.4]{Lefeuvre-thesis} for instance. Through $\pi_m^\ast$ we obtain another scalar product on symmetric tensors:
\[
[ u,v] = \int_{SM} \pi_m^\ast u \overline{\pi_m^\ast v}.
\]
Representing $[u,v] = \langle A u, v\rangle$, one can check that there are universal constants $C_m>0$ such that $\| A\| \leq C_m$, $\| A^{-1}\|\leq C_m$ when restricted to $m$-tensors. (This simply follows from the fact that this is a statement pointwise in $x \in M$; as a consequence, the constant $C_m > 0$ is universal and does not depend on the geometry.)  In the following, we will restrict our study to the map
\[
D : C^\infty(M,T^*M) \rightarrow C^\infty(M,S^2(T^*M))
\]
but it is very likely that most of the results still hold for tensors of general order $m \in \N$. As in the compact case, we obtain:

\begin{lemma}\label{lemma:D-elliptic}
The symmetric derivative of $1$-forms
\[
D : C^\infty(M,T^*M) \rightarrow C^\infty(M,S^2(T^*M))
\]
is $\R$-admissible and left uniformly elliptic. Its only indicial root is $-1$. Additionally, it is injective on $y^{\rho}H^s$ and $y^\rho C^s_\ast$ for all $\rho,s \in \R$. Moreover, there is $]-1,+\infty[$-extended admissible pseudo-differential operators $Q$ of order $-1$ and $R$, a $]-1,+\infty[$-residual operator such that:
\[
QD = \mathbbm{1} + R.
\]
In particular, the image of $D(H^{s+1}(M,T^*M)) \subset H^s(M,S^2(T^*M))$ is closed, for all $s \in \R$.
\end{lemma}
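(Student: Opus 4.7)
The plan is to verify each assertion in turn, relying on Theorem \ref{theorem:left-parametrix} for the parametrix construction. The $\R$-admissibility of $D$ follows from the geometric nature of the operator: in the orthonormal frame $e_0 = y\partial_y$, $e_a = y\partial_{\theta_a}$ on each cusp, the Christoffel symbols of the cusp metric $y^{-2}(dy^2+d\theta^2)$ are constants, independent of both $r = \log y$ and $\theta$. Hence, when $D$ is expressed in the associated frames and coframes, its coefficients are $\theta$-independent and invariant under dilation in $r$, so Proposition \ref{prop:relation-symbol-geometric} applies. Left ellipticity is a standard pointwise algebraic check: the principal symbol $\sigma_D(x,\xi)(v^\flat) = \tfrac12(\xi \otimes v^\flat + v^\flat \otimes \xi)$ can vanish only if $v^\flat$ is parallel to $\xi$, and substituting back forces $v^\flat = 0$.

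To compute the indicial family, one uses the Christoffel data $\nabla_{e_0}e_0 = 0$, $\nabla_{e_0}e_a = 0$, $\nabla_{e_a}e_0 = -e_a$, $\nabla_{e_a}e_b = \delta_{ab}e_0$ to evaluate $D$ on a zeroth-Fourier-mode $1$-form $p = f_0(y) e_0^* + \sum_a f_a(y) e_a^*$. One obtains
\[
(Dp)(e_0,e_0) = \partial_r f_0,\qquad (Dp)(e_0,e_a) = \tfrac12(\partial_r f_a + f_a),\qquad (Dp)(e_a,e_b) = -\delta_{ab} f_0.
\]
The ansatz $f_j = e^{\lambda r}\phi_j$ converts this to the three algebraic equations $\lambda\phi_0 = 0$, $(\lambda+1)\phi_a = 0$, and $\phi_0 \delta_{ab} = 0$; the last forces $\phi_0 = 0$, so a kernel exists only at $\lambda = -1$, where it is the $d$-dimensional space spanned by the $e_a^*$, $a = 1,\dots,d$. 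An explicit tempered left inverse is then given by
\[
W(\lambda)(H)_0 \;=\; -\tfrac{1}{d}\sum_{a=1}^d H_{aa}, \qquad W(\lambda)(H)_a \;=\; \frac{2H_{0a}}{\lambda+1},
\]
which is meromorphic on $\C$ with a single simple pole at $\lambda=-1$. The $]-1,+\infty[$-extended admissible parametrix is then supplied by construction (3) of Theorem \ref{theorem:left-parametrix}, producing an order $-1$ operator $Q$ with $I_Z(Q,\lambda) = W(\lambda)$ and $QD = \mathbbm{1}$ modulo a $]-1,+\infty[$-residual operator; closedness of the image of $D$ follows immediately.

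The main difficulty lies in establishing injectivity on the full range $y^\rho H^s$ and $y^\rho C^s_\ast$ for \emph{all} $\rho,s \in \R$, which I would handle via a Bochner argument. The equation $Dp = 0$ expresses exactly that $p^\sharp$ is a Killing vector field; by elliptic regularity $p$ is smooth. The Bochner identity for Killing fields then yields
\[
\tfrac12\Delta|p^\sharp|_g^2 \;=\; |\nabla p^\sharp|^2 - \mathrm{Ric}(p^\sharp, p^\sharp) \;\geq\; \kappa_0 \, d\, |p^\sharp|_g^2,
\]
using the hypothesis that the sectional curvature is at most $-\kappa_0$. To apply the maximum principle to the nonnegative subharmonic function $|p^\sharp|^2$, it remains to show $|p^\sharp|_g \to 0$ at infinity independently of the weight $\rho$. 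For this I would invoke the indicial analysis of Proposition \ref{prop:bigger-kernel}: since $\lambda = -1$ is the only indicial root, the zeroth Fourier mode of $p$ in each cusp is asymptotically $y^{-1}\sum_a \phi_a e_a^*$ plus a rapidly decaying remainder, giving $|p^\sharp|_g = O(y^{-1})$ on this mode; the non-zero Fourier modes, analyzed ODE by ODE in $y$, admit only exponentially decaying bounded solutions. The main obstacle is thus to make this asymptotic decay precise for arbitrarily large $\rho$ and to extend Proposition \ref{prop:bigger-kernel} to the present left-elliptic setting, after which the maximum principle for subharmonic functions tending to zero at infinity forces $p \equiv 0$.
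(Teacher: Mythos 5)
Your admissibility, symbol, and indicial computations are essentially those of the paper, and using construction (3) of Theorem \ref{theorem:left-parametrix} is exactly the intended route. One technical flaw in that part: your left inverse $W(\lambda)$ has a trace row that is independent of $\lambda$, so $\|W(\lambda)\| = \mathcal{O}(1)$ rather than $\mathcal{O}(|\lambda|^{-1})$ as $|\Im\lambda|\to\infty$. It is therefore a tempered family of order $0$, not $-1$, and the extended admissible parametrix it produces is only of order $0$ on the zeroth Fourier mode, so it does not deliver the operator $Q$ \emph{of order $-1$} claimed in the lemma. The paper avoids this by choosing the left inverse whose first row is $(\lambda+1)^{-1}\bigl(H_{00}-H_{11}\bigr)$ (in its basis): still a left inverse, sole pole at $\lambda=-1$, and decaying like $|\lambda|^{-1}$, consistent with the estimates \eqref{equation:symbol-sigma-lambda} for an order $-1$ family. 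This is easy to repair, but as written your explicit choice does not prove the stated lemma.

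The genuine gap is the injectivity on $y^\rho H^s$ and $y^\rho C^s_\ast$ for \emph{all} $\rho,s\in\R$. Your Bochner/maximum-principle strategy hinges on showing $|p^\sharp|_g\to 0$ at infinity for a solution of $Dp=0$ lying in an arbitrarily fast growing weighted space, and you acknowledge that this requires extending Proposition \ref{prop:bigger-kernel} to the overdetermined left-elliptic setting and controlling the non-zero Fourier modes for arbitrary $\rho$ --- none of which is carried out, and none of which is available off the shelf in the paper (Proposition \ref{prop:bigger-kernel} is stated for elliptic, invertible-on-$L^2$ operators). So the key step of your injectivity argument is missing. The paper's proof is much shorter and purely dynamical, requiring no decay at all: if $Dp=0$ then $p$ is smooth by elliptic regularity, and $X\pi_1^\ast p=\pi_2^\ast Dp=0$, so $\pi_1^\ast p$ is constant along every geodesic orbit; since the geodesic flow on $SM$ is transitive, continuity forces $\pi_1^\ast p$ to be globally constant, and the oddness $\pi_1^\ast p(x,-v)=-\pi_1^\ast p(x,v)$ then gives $p\equiv 0$. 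This works uniformly in $\rho$ and $s$, which is precisely the point where your approach stalls; either complete the asymptotic analysis you sketch (a substantial undertaking) or substitute the dynamical argument.
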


\begin{proof}
For the moment, we deal with the general case
\[
D : C^\infty(M,S^m(T^*M)) \rightarrow C^\infty(M,S^{m+1}(T^*M)).
\]
Since $D$ is a differential operator, it makes no difference to work with Sobolev or Hölder-Zygmund spaces. The first step is to prove that $D$ is uniformly elliptic. By taking local coordinates around a point $(x,\xi) \in T^*M \setminus \left\{ 0 \right\}$ for instance, one can compute the principal symbol of the operator $D$ which is $\sigma(D)(x,\xi) : u \mapsto \mc{S}(\xi \otimes u)$, where $u \in S^m(T^*_xM)$  (see \cite[Theorem 3.3.2]{Sharafutdinov-94}). Then, using the fact that the antisymmetric part of $\xi \otimes u$ vanishes in the integral:
\[ 
\| \sigma(D)u \|^2 \geq C_m^{-1} \int_{\Ss^d} \langle\xi,v\rangle^2 {\pi_m^\ast u}^2(v) \dd v = C_m^{-1}|\xi|^2 \int_{\Ss^d} \langle\xi/|\xi|,v\rangle^2 {\pi_m^\ast u}^2(v) \dd v > 0, 
\]
unless $u \equiv 0$. Since $S^m(T^\ast_x M)$ is finite dimensional, the map
\[ 
(u,\xi/|\xi|) \mapsto \| \sigma(D)(x,\xi/|\xi|)u \|,
\]
defined on the compact set $\left\{u\in S^m, |u|^2=1\right\} \times \Ss^d$ is bounded and attains its lower bound $C^2 > 0$ (which is independent of $x$). Thus $\|\sigma(x,\xi)u\| \geq C|\xi| \| u\|$. It is then not hard to check from this lower bound that the operator is uniformly left elliptic in the sense of Definition \ref{def:elliptic} (actually, this could have been taken as an equivalent definition of left ellipticity).

We now assume $m=1$ and consider
\[
D : C^\infty(M,T^*M) \rightarrow C^\infty(M,S^2(T^*M)).
\]
Let us give a word on the injectivity of this operator. Consider a $1$-form $f$ such that $Df = 0$, and $f$ is either in some $y^\rho H^s$ or some $y^\rho C^s_\ast$. Then $f$ is smooth by the elliptic regularity Theorem. As a consequence $\pi_1^\ast f$ is a smooth function on $SM$. Recall that $X \pi_1^*f  = \pi_2^* Df = 0$. Additionally, the geodesic flow admits a dense orbit; we deduce that $\pi_1^\ast f$ is a constant. However, since $f$ is a $1$-form, $\pi_1^*f(x,-v)=-\pi_1^*f(x,v)$ for all $(x,v) \in SM$, thus $f = 0$.

Now, we recall the results from Sections \S\ref{sec:parametrix-modulo-compact} and \S\ref{section:holder-zygmund}. Since $D$ is a differential operator that is invariant under local isometries, it is a $\R$-admissible (left elliptic) operator. In particular, it suffices to determine whether its associated indicial family $I_Z(D,\lambda)$ has a left inverse. In the present case, since $D$ is an operator on sections of a bundle over $M$, the indicial operator is just a matrix. We consider a $1$-form $\alpha$ in the cusp in the form
\[
y^\lambda \left[ a \frac{dy}{y} + \sum b_i\frac{d\theta_i}{y},  \right]
\]
where $a, b_i \in \R$. Then we find that
\[
D \alpha = y^\lambda\left[ a\left( \lambda \frac{dy^2}{y^2} - \sum \frac{d\theta_i^2}{y^2}\right) + \sum b_i (\lambda+1)\frac{ d\theta_i dy + dy d\theta_i}{y^2}\right].
\]
The matrix $I_Z(D,\lambda)$ is thus the transpose of
\[
\begin{pmatrix}
\lambda 	& - 1 	& -1 	&\dots 	& - 1 	& 0 		& 0 		& \dots 	& 0 \\
0 		& 0 	& 0 	&0  	&0 		& 2(\lambda+1) &0   	 	& \dots 	& 0 \\
0 		& 0 	& 0 	&0  	&0 		& 0			&2(\lambda+1) 	& \dots 	& 0 \\
\dots 	& \dots	& \dots	&\dots 	&\dots	& \dots		&\dots 		& \dots 	& \dots \\
0 		& 0 	& 0 	&0  	&0 		& 0			&0 			& \dots 	& 2(\lambda + 1)
\end{pmatrix}
\]
In particular, with 
\[
J(\lambda) = \begin{pmatrix}
(\lambda+1)^{-1} 	& -(\lambda+1)^{-1}	& 0 	&\dots  & 0 				& \dots 	& 0 \\
0 					& 0 				& 0 	&0  	& (2(\lambda+1))^{-1}& \dots 	& 0 \\
\dots 				& \dots				& \dots	&\dots 	& \dots	 			& \dots 	& \dots \\
0 					& 0 				& 0 	&0   	& 0		 			& \dots 	& (2(\lambda + 1))^{-1}
\end{pmatrix}
\]
we get
\[
J(\lambda)I_Z(D,\lambda) = \mathbbm{1};\quad \|J(\lambda)\| = \mathcal{O}(|\lambda|^{-1}) \text{ as }\Im \lambda \to \pm \infty.
\]
We deduce that $D$ has $-1$ for sole indicial root. As a consequence, we can apply Theorem \ref{theorem:left-parametrix} (using again construction (3)):
\begin{equation}\label{eq:parametrix-D}
Q D = \mathbbm{1} + R,
\end{equation}
with $R$ bounded from $H^{s,\rho}$ to $H^{N, -d/2 -1 + \epsilon}$ and from $C^{s,\rho}_\ast$ to $C^{s, -1 + \epsilon}_\ast$, for all $d/2 >\epsilon>0$, $s\in \R$, $\rho > -d/2 + 1$.

Let us now prove that the image of $D$ is closed (for the Sobolev spaces, the case of Hölder-Zygmund spaces is similar). This is rather classical argument once one has an inverse for the operator modulo a compact remainder, but we reproduce it here for the reader's convenience. For a sequence $(u_n)$ of elements of $H^{1+s}$ such that $Du_n \to f \in H^{s}$, $QD u_n = u_n + R u_n$ also converges since $Q$ is continuous. By extraction, since $R$ is compact we can assume that $R(u_n/\|u_n\|)$ converges also, to some $v$. Then, we have
\[
u_n + \|u_n\|(v+ o(1)) =Q f + o(1).
\]
Assume that $\|u_n\|$ is bounded. Then we obtain that $u_n$ itself converges in $H^{1+s}$, to some $u$, and $Du = f$. Otherwise, we can decompose $u_n = \lambda_n v + w_n$, with $w_n\perp v$, $w_n$ bounded and $\lambda_n \to \infty$. We deduce that $R v = -v$, and $QD u_n = QD w_n$, so that we can extract $w_n$ to make it converge to some $w$, and $Dw=f$.
\end{proof}

Since the image of $D$ is closed, it is the orthogonal of the kernel of $D^\ast$, and each $f\in H^s(M, S^2(T^\ast M))$ can be written as
\[
f = f^s + Du,
\]
with $D^\ast f^s = 0$, and $f^s \in H^s(M, S^2(T^\ast M))$, $u\in H^{1+s}(M, S^1(T^\ast M))$. The tensor $f^s$ is called the \emph{divergence-free} part or the \emph{solenoidal part} of $f$, and $Du$ the \emph{exact part} or the \emph{potential part} of $f$. This can be naturally generalized to tensors of any order and Hölder-Zygmund spaces, following the same scheme of proof.

To close this section, remark that the X-ray transform satisfies $IX = 0$ and thus $0 = I X \pi_m^\ast = I \pi_m^\ast D = I_m D$. Thus in general it is impossible to recover the exact part $Dp$ of a tensor $f$ from the knowledge of $I_m f$. We will say that the X-ray is \emph{solenoidal injective} on smooth symmetric $m$-tensors if it is injective when restricted to $\ker D^*$.

\subsection{Projection on solenoidal tensors}

In this section, we will study the symmetric Laplacian on $1$-forms, that is the operator $\Delta := D^*D$ acting on sections of $S^1(T^\ast M) \rightarrow M$. We will denote by $\lambda_d^\pm = d/2 \pm\sqrt{d+d^2/4}$. Observe that $\lambda_d^- < 0$ (this will be used later).

\begin{lemma}\label{lemma:laplacian-symmetric}
For all $s \in \R,\rho\in ]\lambda^-_d,\lambda^+_d[, \rho_\bot \in \R$, the operator $\Delta$ is invertible on the spaces $H^{s,\rho - d/2,\rho_\bot}(M,S^1(T^\ast M))$ and on $y^\rho C^s_\ast(M,S^1(T^\ast M))$. Its inverse $\Delta^{-1}$ is a pseudo-differential operator of order $-2$.
\end{lemma}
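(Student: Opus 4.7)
The plan is to apply the elliptic Fredholm theory of Section \ref{section:parametrices1} to $\Delta=D^*D$, after an explicit computation of its indicial roots. First, I would observe that $\Delta$ is a second-order $\R$-admissible uniformly elliptic differential operator: $\R$-admissibility follows from Proposition \ref{prop:algebra-general} applied to the composition of the two $\R$-admissible operators $D$ and $D^*$ (both built canonically from the metric), and ellipticity is inherited from the left ellipticity of $D$ via $\sigma_2(\Delta)=\sigma_1(D)^*\sigma_1(D)$, which is positive definite off the zero section.

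The key step is to identify the indicial roots. By Lemma \ref{lemma:homomorphism} and identity \eqref{equation:indice-adjoint},
$I_Z(\Delta,\lambda)=I_Z(D,d-\bar\lambda)^*\,I_Z(D,\lambda)$.
Plugging in the explicit matrix for $I_Z(D,\lambda)$ given in the proof of Lemma \ref{lemma:D-elliptic} and computing the product, one finds that $I_Z(\Delta,\lambda)$ acts diagonally on the $1$-form $a\,dy/y+\sum_i b_i\,d\theta_i/y$: with eigenvalue $-\lambda^2+d\lambda+d$ on the $a$-coordinate, and $(\lambda+1)(d+1-\lambda)/2$ on each $b_i$-coordinate. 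The first polynomial vanishes exactly at $\lambda_d^\pm=d/2\pm\sqrt{d+d^2/4}$, while the second vanishes at $\lambda=-1$ and $\lambda=d+1$. An elementary estimate (squaring both sides of $d/2+1>\sqrt{d+d^2/4}$) gives $-1<\lambda_d^-<0<d/2<\lambda_d^+<d+1$, so the open interval $(\lambda_d^-,\lambda_d^+)$ contains no indicial root of $\Delta$.

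Theorem \ref{theorem:parametrix-compact} together with Propositions \ref{proposition:fredholm-index} and \ref{proposition:fredholm-index-ii} then ensures that $\Delta$ is Fredholm on every $H^{s,\rho-d/2,\rho_\bot}$ and every $y^\rho C^s_\ast$ for $s,\rho_\bot\in\R$ and $\rho\in(\lambda_d^-,\lambda_d^+)$, with Fredholm index independent of these parameters. To pin down this index and prove invertibility, I would specialize to $\rho=d/2$, corresponding to the unweighted Sobolev scale, where $\Delta$ is formally self-adjoint and non-negative. Any $u\in L^2$ with $\Delta u=0$ is smooth by elliptic regularity, and $\|Du\|_{L^2}^2=\langle\Delta u,u\rangle=0$ forces $Du=0$, hence $u=0$ by the injectivity of $D$ in Lemma \ref{lemma:D-elliptic}. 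Self-adjointness then yields index $0$ on $L^2$, hence invertibility. Since the kernel cannot grow without crossing an indicial root (Proposition \ref{prop:bigger-kernel}), it remains trivial throughout $(\lambda_d^-,\lambda_d^+)$, and combined with index $0$ this gives invertibility on every space in the statement.

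Finally, $\Delta^{-1}$ is pseudo-differential of order $-2$: the parametrix furnished by Theorem \ref{theorem:parametrix-compact} is some $Q\in\Psi^{-2}_{\text{small}}$ satisfying $Q\Delta=\mathbbm{1}+R$ with $R$ residual. Multiplying by $\Delta^{-1}$ on the right gives $\Delta^{-1}=Q-R\Delta^{-1}$, and since $R$ is residual and $\Delta^{-1}$ is bounded in the relevant weighted Sobolev scale, the composition $R\Delta^{-1}$ is again smoothing. Hence $\Delta^{-1}$ differs from $Q$ by a smoothing operator and is itself pseudo-differential of order $-2$. The main technical obstacle is the explicit diagonalization of $I_Z(\Delta,\lambda)$ and the positioning of its roots; once done, everything else is a routine application of the framework developed in Section \ref{section:parametrices1}.
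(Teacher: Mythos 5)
Your proposal is correct and follows essentially the same route as the paper: compute the indicial roots of $\Delta$ (namely $-1$, $d+1$ and $\lambda_d^{\pm}$, none of which lie in $]\lambda_d^-,\lambda_d^+[$), apply Theorem \ref{theorem:parametrix-compact} together with Propositions \ref{proposition:fredholm-index} and \ref{proposition:fredholm-index-ii} to get Fredholmness with constant index on that window, and use self-adjointness on $L^2$ plus injectivity of $D$ to get trivial kernel and index $0$, hence invertibility on all the weighted spaces; the paper simply computes $I_Z(\Delta,\lambda)$ directly on $y^{\lambda}\bigl(a\,dy/y+\sum_i b_i\,d\theta_i/y\bigr)$ instead of via the composition $I_Z(D^*,\lambda)I_Z(D,\lambda)$. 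Only note that with the paper's convention (the formal adjoint of $D$ is $-D^*$) your formula $I_Z(\Delta,\lambda)=I_Z(D,d-\bar\lambda)^*I_Z(D,\lambda)$ misses an overall sign, and your eigenvalues also differ by a basis-normalization constant, but neither affects the zero set of the indicial family nor any later step.
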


\begin{proof}
The operator $\Delta = D^*D$ is elliptic since $D$ is elliptic, and it is also invariant under local isometries, and differential. In particular, it is $\R$-admissible, so we can apply Theorem \ref{theorem:parametrix-compact}. Let us compute its indicial operator: we find
\[ 
\begin{array}{l} 
I(\Delta,\lambda)\left(a \dfrac{dy}{y}\right) = (\lambda^2-\lambda d -d) a \dfrac{dy}{y} \\
I(\Delta,\lambda)\left(b_i \dfrac{d\theta_i}{y}\right) = \dfrac{1}{2}(\lambda+1)(\lambda-(d+1)) b_i \dfrac{d\theta_i}{y} \end{array} 
\]
$I(\Delta,\lambda)$ is a diagonal matrix which is invertible for
\[ 
\lambda \notin \left\{-1,d+1,\underbrace{d/2\pm \sqrt{d+d^2/4}}_{=\lambda^\pm_d}\right\} 
\]
The interval $]\lambda^-_d,\lambda^+_d[$ does not contain other any root, so we can apply directly Theorem \ref{theorem:parametrix-compact}, and get a pseudo-differential operator $Q$ of order $-2$, bounded on the relevant Sobolev and Hölder-Zygmund spaces such that
\begin{equation}
\label{equation:parametrix-laplacien-1}
Q\Delta = \mathbbm{1} + K,
\end{equation}
with $K$ bounded from $y^{\rho} H^{-N}$ to $y^{-\rho}H^{N}$, $y^{\rho+d/2}C^{-N}_\ast$ to $y^{d/2-\rho}C^{N}_\ast$ for all $\rho\in [0,\lambda^+_d-d/2[$. We can also do this on the other side:
\begin{equation}
\label{equation:parametrix-laplacien-2}
\Delta Q = \mathbbm{1} + K',
\end{equation}
$K'$ satisfying the same bounds. We deduce that $\Delta$ is Fredholm. Additionally, from the parametrix equation, we find that any element of its kernel (on any Sobolev or Hölder-Zygmund space we are considering) has to lie in $L^2(SM)$. However, on $L^2$, $\Delta u = 0$ implies $Du =0$, and $u=0$. Additionally, on $L^2$, $\Delta$ is self-adjoint, so it is invertible and its Fredholm index is $0$. We then conclude using Propositions \ref{proposition:fredholm-index} and \ref{proposition:fredholm-index-ii}.

\end{proof}

As a consequence, we obtain the

\begin{lemma}\label{lemma:projection-solenoidal}
$\pi_{\ker D^*} = \mathbbm{1}-D \Delta^{-1} D^*$ is the orthogonal projection on solenoidal tensors. It is a $] \lambda_d^-, \lambda_d^+[$-admissible pseudo-differential operator of order $0$.
\end{lemma}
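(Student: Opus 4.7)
The plan has two parts: first, verify the algebraic identity that $\pi := \mathbbm{1} - D\Delta^{-1}D^*$ is the orthogonal projection onto $\ker D^*$ (initially on $L^2(M,S^2T^*M)$, which is contained in $H^{0,0,0}$ where $\Delta^{-1}$ is well-defined by Lemma \ref{lemma:laplacian-symmetric}); second, read off the admissibility and order from the composition.

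For the first part, self-adjointness of $\pi$ follows from self-adjointness of $\Delta = D^*D$ (and hence of $\Delta^{-1}$). Idempotency is the standard calculation
\[
(D\Delta^{-1}D^*)(D\Delta^{-1}D^*) = D\Delta^{-1}(D^*D)\Delta^{-1}D^* = D\Delta^{-1}D^*,
\]
whence $\pi^2 = \pi$. Moreover $D^*\pi = D^* - (D^*D)\Delta^{-1}D^* = 0$, so $\operatorname{ran}\pi \subset \ker D^*$, and if $D^*f = 0$ then $\pi f = f$. This identifies $\pi$ as the $L^2$-orthogonal projector onto $\ker D^*$.

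For the second part, I invoke the algebra structure of admissible operators from Proposition \ref{prop:algebra-general}. By Lemma \ref{lemma:D-elliptic}, $D$ is $\R$-admissible of order $1$; since $D^*$ is the formal adjoint of a differential operator constructed locally from the metric (invariant under isometries in the cusps), it is likewise $\R$-admissible of order $1$. By Lemma \ref{lemma:laplacian-symmetric}, $\Delta^{-1}$ is $]\lambda_d^-,\lambda_d^+[$-admissible of order $-2$. Since $\R$-admissibility implies $J$-admissibility for any interval $J \subset \R$, all three factors lie in the algebra of $]\lambda_d^-,\lambda_d^+[$-admissible pseudo-differential operators, and hence so does their composition, which has order $1 + (-2) + 1 = 0$. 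Adding $\mathbbm{1}$ (admissible of order $0$ on every interval) preserves both properties, yielding the claim.

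The only real bookkeeping subtlety---which I do not expect to be a genuine obstacle---is that the restricted admissibility interval is inherited from the weakest factor, namely $\Delta^{-1}$; the indicial operator of $\pi$ at each cusp $Z$ can then be computed via the homomorphism property (Lemma \ref{lemma:homomorphism}) as $I_Z(\pi,\lambda) = \mathbbm{1} - I_Z(D,\lambda)I_Z(\Delta,\lambda)^{-1}I_Z(D^*,\lambda)$, which is well-defined and tempered holomorphic on the strip $\Re\lambda \in\,]\lambda_d^-,\lambda_d^+[$ precisely because this strip avoids the indicial roots of $\Delta$ computed in the proof of Lemma \ref{lemma:laplacian-symmetric}.
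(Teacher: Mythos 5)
Your proposal is correct and follows the route the paper intends: the paper states this lemma without proof, as an immediate consequence of Lemma \ref{lemma:laplacian-symmetric} together with the algebra structure of admissible operators, and your write-up simply supplies the omitted details (the standard projector algebra using $\Delta=D^*D$ self-adjoint and invertible on the weighted spaces containing $L^2$, plus composition of admissible operators with the interval inherited from $\Delta^{-1}$). No gaps.
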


We also observe that if we had used the construction (2) (see section \ref{sec:leftright-parametrix}) to find a parametrix for $D$, we would obtain a set of indicial roots strictly larger than necessary, since it would include the roots of $\Delta$, and not only $-1$.

\subsection{Solenoidal injectivity of the X-ray transform}

We now prove Theorem \ref{theorem:xray-injectivite}. As usual, the proof relies on the \textit{Pestov identity} combined with the Livsic theorem. It follows exactly that of \cite{Croke-Sharafutdinov-98}; nevertheless, we thought it was wiser to include it insofar as we only work in $H^1$ regularity on a noncompact manifold (where as \cite{Croke-Sharafutdinov-98} is written in smooth regularity on a compact manifold). 

We recall that there exists a canonical splitting
\[ T_{(x,v)}(TM) = \V_{(x,v)} \oplus^\bot \HH_{(x,v)}, \]
where $(x,v) \in TM$ which is orthogonal for the Sasaki metric. We insist on the fact that \emph{we now work on the whole tangent bundle $TM$ and no longer on the unit tangent bundle $SM$}. As a consequence, the horizontal space $\HH$ is the same but the vertical space $\V$ sees its dimension increased by $1$. These two spaces are identified to the tangent vector space $T_xM$ via the maps $d\pi$ and $\mc{K}$.

Given $u \in C^\infty(TM)$, we can write $\nabla_S u = \nabla^v u + \nabla^h u$, where $\nabla^v u \in \V, \nabla^h u \in \HH$. We denote by $\Div^{v,h}$ the formal adjoints of the operators $\nabla_S^{v,h}$ (see \cite[Section 2]{Paternain-Salo-Uhlmann-15} for further details).

\begin{proof}
We first start with an elementary inequality. Let $u \in C^\infty(SM)$. We extend $u$ to $TM \setminus \left\{0\right\}$ by $1$-homogeneity. The local Pestov identity \cite[Equation (2.14)]{Croke-Sharafutdinov-98} at $(x,v) \in TM$ reads:
\[
2 \langle \nabla^h u, \nabla^v(Xu) \rangle = |\nabla^h u|^2 + \Div^h Y + \Div^v Z - \langle R(v,\nabla^v u)v,\nabla^v u \rangle
\]
where
\[
Y := \langle \nabla^h u, \nabla^v u \rangle v - \langle v, \nabla^h u \rangle \nabla^v u \qquad  Z := \langle v, \nabla^h u\rangle \nabla^h u
\]
Moreover, $\langle v , Z \rangle = |Xu|^2$. Integrating over $SM$ and using the Green-Ostrogradskii formula \cite[Theorem 3.6.3]{Sharafutdinov-94} together with the assumption that the curvature is nonpositive, we obtain:
\begin{equation}
\label{equation:pestov-integree}
\int_{SM} \|\nabla^h u\|^2 d\mu \leq 2 \int_{SM} \langle \nabla^h u, \nabla^v(Xu) \rangle d\mu - (3+d) \int_{SM} \underbrace{\langle v, Z \rangle}_{=|Xu|^2} d\mu
\end{equation}
Note that by a density argument, the previous formula extends to functions $u \in H^1(SM)$ such that $\nabla^v(Xu) \in L^2(SM)$.

We now consider the case where $\pi_m^* f = Xu$ with $f \in H^1$ (and thus $u \in H^1$ and $\nabla^v(Xu) \in L^2$ by the arguments given in the proof of Livsic theorem). Following \cite[Equation (2.18)]{Croke-Sharafutdinov-98}, one obtains the following equality almost-everywhere in $TM$:
\[
2 \langle \nabla^h, \nabla^v(Xu) \rangle = \Div^h W - 4 \times u \pi_m^*(D^*f),
\]
with $W(x,v) = 4u(x,v) (f_x(\cdot,v,...,v))^\sharp$ (where $\sharp : T^*M \rightarrow TM$ is the musical isomorphism). In (\ref{equation:pestov-integree}), this yields
\begin{equation}
\label{equation:pestov-cle}
\int_{SM} \left( |\nabla^h u|^2 + (3+d)|Xu|^2 \right) d\mu \leq - 4 \int_{SM} u \pi_m^*(D^*f) d\mu
\end{equation}

We now assume that $f$ is a symmetric $m$-tensor in
\[
y^\beta C^\alpha(M,S^m(T^*M)) \cap H^1(M,S^m(T^*M)),
\]
such that $D^\ast f = 0$ and $I_m(f)=0$. By the Livsic Theorem \ref{theorem:livsic}, there exists $u\in y^\beta C^\alpha (SM) \cap H^1(SM)$ such that $\pi_m^*f=Xu$. By (\ref{equation:pestov-cle}), we obtain $Xu=0$, thus $f=0$.
\end{proof}

\subsection{Proof of the spectral rigidity}

This section is devoted to the proof of Corollary \ref{corollary:xray-injectivite}. We consider thus as in the statement a family $(g_{\lambda})_{\lambda\in[-1,1]}$ of cusp metrics on a given cusp manifold $(M,g_0)$. We assume that the resonant set of $g_\lambda$ does not depend on $\lambda$. The standard argument in the compact case (see \cite{Guillemin-Kazhdan-80, Guillemin-Kazhdan-80-2} for the original reference) is to say that the trace of the wave-group $(e^{it\sqrt{-\Delta}})_{t \in \R}$ is exactly singular at the length of the periodic orbits (this follows from the trace formula of Duistermaat-Guillemin \cite{Duistermaat-Guillemin-75} or also by earlier work of Colin de Verdière \cite{ColinDeVerdiere-73}), and determined by the spectrum, so that the lengths of the periodic orbits do not change when $\lambda$ varies. 

In this non-compact case, the same argument applies, once the suitable modifications have been made. We explain this without entering into much detail, as it would require to rewrite the content of other articles.

The first point is that the trace of the wave-group is not well defined; it has to be replaced by the \emph{$0$-trace} of the wave-group, a sort of Hadamard regularization in the cusp. In \cite[Section 2.1]{Bonthonneau-4}, it is explained how this regularized trace can be expressed in terms of the discrete spectrum and the scattering determinant. The latter is a meromorphic function which is almost entirely determined by the resonances, as explained in \cite[Theorem 3.32]{Muller-92} for surfaces. The same facts also holds for higher dimension with a similar proof (see \cite[Proposition 1.1.3]{Bonthonneau-thesis}). In any case, the part of the scattering determinant which is \emph{not} determined by the resonances contributes only to a Dirac mass (or derivative of a Dirac mass) at $0$ in the $0$-trace of the wave group. 

On the other hand, the regularization at infinity introduced by the $0$-trace does not introduce some exotic singularities (as explained in section 2.4.2 of \cite{Bonthonneau-4}), so that the arguments of Duistermaat-Guillemin \cite{Duistermaat-Guillemin-75}, which can certainly be carried out in any compact part of the manifold, imply that the singularities of the $0$-trace take place exactly at the algebraic lengths of the periodic orbits of the geodesic flow.

From these considerations, we conclude that the marked length spectrum is constant as $\lambda$ varies. However, as we have seen, for any free hyperbolic homotopy class $c \in \mc{C}$:
\[
\frac{d}{d\lambda} L_{g_\lambda}(c) = \int_{\gamma_{g_\lambda}(c)} \partial_\lambda g_\lambda = 1/2 \times L_{g_\lambda}(c) I_2^{g_\lambda} [\partial_\lambda g_\lambda](c).
\]
Now, since $\partial_\lambda g_\lambda$ is assumed to have compact support, it certainly is an element of $y^{-\infty} C_\ast^N$ for any $N\in \R$. Writing the $L^2$-orthogonal decomposition
\[
\partial_\lambda g_\lambda = f^s_\lambda + D_{g_\lambda} u_\lambda,
\]
where $D_{g_\lambda}$ is the symmetric derivative induced by the metric $g_\lambda$ (see \S\ref{ssection:xray}), $u_\lambda$ is a $1$-form and $f^s \in \ker D_{g_\lambda}^*$ is the solenoidal part of the symmetric $2$-tensor $\partial_\lambda g_\lambda$, we can apply the Lemma \ref{lemma:projection-solenoidal} to deduce that $f^s_\lambda$ and $u_\lambda$ are in $y^{-\epsilon}C_\ast^N$ for some $\epsilon>0$, and every $N>0$ (using here that $\lambda_d^- < 0$). In particular, we can now use the injectivity of the X-ray transform Theorem \ref{theorem:xray-injectivite} to deduce that $f^s_\lambda = 0$. 

Identifying the $1$-form $u_\lambda$ with a vector field $X_\lambda$, we deduce that $\partial_\lambda g_\lambda = \mathcal{L}_{X_\lambda} g_\lambda$ (where $\mc{L}$ stands for the Lie derivative), so that defining the isotopy $(\phi_\lambda)_{\lambda \in [-1,1]}$ such that $\phi_0$ is the identity and
\[
\partial_\lambda \phi_\lambda = X_\lambda \circ \phi_\lambda,
\]
we have $\phi_\lambda^\ast g_\lambda = g_0$. It may be worthwhile to observe here that a priori, $\phi_\lambda$ is not compactly supported, even though we only allowed a compactly supported deformation in the first place. This is somehow a linear version of the solenoidal reduction which will be a key object in our second article.

\bibliographystyle{alpha}
\bibliography{biblio}

\end{document}